\theoremstyle{plain}
\newtheorem{theorem}{Theorem}[section]
\newtheorem{lemma}[theorem]{Lemma}
\newtheorem{proposition}[theorem]{Proposition}
\newtheorem{corollary}[theorem]{Corollary}
\theoremstyle{remark}
\newtheorem*{remark}{Remark}
\theoremstyle{definition}
\newtheorem{definition}[theorem]{Definition}
\newtheorem{assumption}[theorem]{Assumption}
\newcommand{\eps}{\varepsilon}
\title{Multiplicative functions in short intervals {II}}
\author{Kaisa Matom\"aki and Maksym Radziwi{\l}{\l}}
\begin{document}
\maketitle

\begin{abstract}  
  We determine the behavior of multiplicative functions vanishing at a positive
  proportion of prime numbers in almost all short intervals. Furthermore we
  quantify ``almost all'' with uniform power-saving upper bounds, that is,
  we save a power of the suitably normalized length of the
  interval regardless of how long or short the interval is.
  Such power-saving bounds are new even in the special case of the M\"obius
  function.

  These general results are motivated by several applications. First, we
  strengthen work of Hooley on sums of two squares by establishing an asymptotic
  for the number of integers that are sums of two squares in almost all short
  intervals. Previously only the order of magnitude was known. Secondly, we
  extend this result to general norm forms of an arbitrary number field $K$
  (sums of two squares are norm-forms of $\mathbb{Q}(i)$).
  Thirdly, Hooley determined the order of magnitude of the sum of $(s_{n + 1} -
  s_{n})^{\gamma}$ with $\gamma \in (1, 5/3)$ where $s_{1} < s_2 < \ldots$
  denote integers representable as sums of two squares. We establish a similar
  results with $\gamma \in (1, 3/2)$ and $s_n$ the sequence of integers
  representable as norm-forms of an arbitrary number field $K$. This is the first such result for a number field of degree greater than two. Assuming the Riemann Hypothesis for all Hecke $L$-functions
  we also show that $\gamma \in (1,2)$ is admissible. Fourthly, we improve on a recent result of Heath-Brown about gaps between $x^{\varepsilon}$-smooth numbers. More generally, we obtain results about gaps between multiplicative sequences. Finally our result is useful in other contexts aswell, for instance in our forthcoming work on Fourier uniformity (joint with Terence Tao, Joni Terav\"ainen and Tamar Ziegler). 
\end{abstract}
\setcounter{tocdepth}{1}
\tableofcontents

\section{Introduction}
\subsection{Special cases of results for multiplicative functions}
\label{ssec:intro}

Let $f : \mathbb{N} \rightarrow [-1, 1]$ be a multiplicative function\footnote{We focus on real-valued multiplicative functions for now, for the sake of exposition}. In our previous paper \cite{MainPaper} we have shown that ``short averages'' of $f$ are most of the time close to ``long averages'' of $f$, that is, for all $x \in [X, 2X]$ outside of a subset of cardinality $o(X)$ we have,
\begin{equation} \label{eq:mainxx}
\frac{1}{h} \sum_{x < n \leq x + h} f(n) - \frac{1}{X} \sum_{X < n \leq 2X} f(n) = o(1)
\end{equation}
provided that $h \rightarrow \infty$ with $X \rightarrow \infty$. Roughly speaking this result can be interpreted as saying that the way integers factorize in a typical interval $[x, x + h]$ is similar to the way integers factorize in the long interval $[X, 2X]$.

The importance of this result is that it allows $h$ to grow arbitrarily slowly with $X$. In particular even for $f$ equal to the M\"obius function \eqref{eq:mainxx} is not
implied by the Riemann Hypothesis. Nonetheless there are still a few drawbacks. First, if $f$ is lacunary, the mean-value of $|f|$ itself is $o(1)$ and \eqref{eq:mainxx} is trivial. This is the case for many interesting multiplicative functions, such as for example the indicator function of integers that are representable as sums of two squares. Secondly, while it is not possible to replace $o(1)$ in \eqref{eq:mainxx} by $h^{-c}$ for some $c > 0$ we can still hope to show that the exceptional set of $x \in [X, 2X]$ for which \eqref{eq:mainxx} does not hold is $\ll_{c} X h^{-c}$ for some $c > 0$\footnote{It is reasonable to conjecture that this exceptional set is $\ll_{A} X h^{-A}$ for any given $A > 0$, but this conjectures is far out of reach}. Thirdly, for many applications one requires \eqref{eq:mainxx} for complex valued multiplicative functions, and this requires a change in the main term. In this paper we address all these issues. 

We will say that a subset $\mathcal{N} \subset \mathbb{N}$ is multiplicative if for any $m,n \geq 1$ with $(m,n) = 1$ we have $m,n \in \mathcal{N}$ if and only if $m n \in \mathcal{N}$. For example one might think of $\mathcal{N} = \mathbb{N}$ or $\mathcal{N}$ equal to the set of integers that can be represented as sums of two squares. In general such a subset can be quite arbitrary and it is natural to require (from the point of view of sieve theory) that there exist a constant $\alpha > 0$ such that for all $2 \leq w \leq z$, 
\begin{equation} \label{eq:req}
\sum_{\substack{w < p \leq z \\ p \in \mathcal{N}}} \frac{1}{p} > \alpha \sum_{w < p \leq z} \frac{1}{p} - O \Big ( \frac{1}{\log w} \Big ). 
\end{equation}
If \eqref{eq:req} holds, then the density of $\mathcal{N}$ in $[1, X]$ is approximately given by 
$$
\delta(\mathcal{N}; X) := \prod_{\substack{p \leq X \\ p \not \in \mathcal{N}}} \Big ( 1 - \frac{1}{p} \Big ) .
$$
so that the average spacing between consecutive elements of $\mathcal{N} \cap [1, X]$ is $\delta(\mathcal{N}; X)^{-1}$. We are now ready to state our first main result.

\begin{corollary}\label{cor:main1}
  Let $\mathcal{N}$ be a multiplicative subset of $\mathbb{N}$.
  Let $f : \mathbb{N} \rightarrow [-1, 1]$ be a multiplicative function. 
  Suppose that \eqref{eq:req} holds for some $\alpha > 0$. 
  Then there exists a constant $\kappa := \kappa(\alpha) > 0$, such that, for all $\delta \in (0, 1/1000)$ and $2 \leq h_0 \leq X$, $$
  \Big | \frac{1}{h_0} \sum_{\substack{x < n \leq x + h_0 \delta(\mathcal{N}; X)^{-1} \\ n \in \mathcal{N}}} f(n) - \frac{1}{X \delta(\mathcal{N}; X)} \sum_{\substack{X < n \leq 2X \\ n \in \mathcal{N}}} f(n) \Big | < \delta 
  $$
  outside of a set of $x \in [X, 2X]$ of cardinality
  $
  \ll X h_0^{-\delta^{\kappa}}.
  $
  Moreover if $\mathcal{N} = \mathbb{N}$ and $\delta \geq (\log h_0)^{-1/300}$, then the exceptional set is bounded by $\ll X (h_0^{-\delta / 15} + X^{- \delta^4 / 10^{16}})$. 
\end{corollary}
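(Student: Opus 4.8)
The plan is to deduce Corollary~\ref{cor:main1} from a more general (and more technical) theorem about an arbitrary bounded multiplicative function, which is established in the body of the paper; below I describe both the deduction and the strategy for that theorem. Since $\mathcal N$ is a multiplicative set and $f$ is multiplicative, the function $g:=1_{\mathcal N}\cdot f$ is again multiplicative with $|g|\le1$ and supported on $\mathcal N$. Write $D:=\delta(\mathcal N;X)$ and $H:=h_0 D^{-1}$. Applying \eqref{eq:req} with $w=2$, $z=X$ gives $\sum_{p\le X,\,p\notin\mathcal N}1/p\le(1-\alpha)\log\log X+O(1)$, hence
\[
D=\prod_{\substack{p\le X\\ p\notin\mathcal N}}\Bigl(1-\tfrac1p\Bigr)\gg(\log X)^{-(1-\alpha)},
\]
so $D^{-1}\ll(\log X)^{1-\alpha}$ and $H$ lies, up to this harmless logarithmic factor, in the admissible range $H\le X$ of the general theorem. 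Multiplying the inequality of the corollary through by $D$ and using $h_0=HD$ turns it into
\[
\Bigl|\frac1H\sum_{x<n\le x+H}g(n)-\frac1X\sum_{X<n\le 2X}g(n)\Bigr|<\delta D .
\]
Since $g$ is supported on $\mathcal N$, the long average is $O(D)$ by a sieve upper bound, so we are really asking for the short and long averages of $g$ to agree up to \emph{relative} error $\delta$. Hence the general theorem must bound the discrepancy by a quantity scaling with the density of the support of $g$, and \eqref{eq:req} is precisely what makes this possible: it guarantees that the primes lying in $\mathcal N$ carry a proportion $\ge\alpha$ of $\sum 1/p$ in \emph{every} dyadic range, so a typical $n\in\mathcal N$ has proportionally many prime factors in each dyadic block and the prime-extraction step of the Matom\"aki--Radziwi{\l}{\l} method never runs dry.

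\emph{The general theorem.} It should provide, for multiplicative $g$ with $|g|\le1$ and $3\le H\le X$, an estimate of the shape
\[
\frac1X\int_X^{2X}\Bigl|\frac1H\sum_{x<n\le x+H}g(n)-\frac1X\sum_{X<n\le2X}g(n)\Bigr|^2dx\ \ll_\alpha\ \bigl(H^{-c}+X^{-c}\bigr)\,S(g;X)^2,
\]
where $c=c(\alpha)>0$ and $S(g;X)$ is the typical short-interval size of $g$, which in our application is $\ll D$, again by sieving. Granting this, Chebyshev's inequality bounds the exceptional set of the corollary by $\ll X(H^{-c}+X^{-c})\delta^{-2}\le X(h_0^{-c}+X^{-c})\delta^{-2}$, using $H=h_0D^{-1}\ge h_0$. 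Choosing $\kappa=\kappa(\alpha)$ large enough that $\delta^\kappa<c(\alpha)$ for all $\delta\in(0,1/1000)$, and distinguishing whether $\log h_0$ is large or small compared with $\log(1/\delta)$ --- in the latter range the target $Xh_0^{-\delta^\kappa}$ already dominates the trivial bound $X$ --- converts this into the claimed $\ll Xh_0^{-\delta^\kappa}$. When $\mathcal N=\mathbb N$ one has $D=1$, $H=h_0$ and $S(g;X)\asymp1$, and running the proof with the dependence of $c$ on all parameters made explicit --- this is where one chooses the number and sizes of the dyadic ranges as functions of $\delta$, forcing the restriction $\delta\ge(\log h_0)^{-1/300}$ --- yields the two explicit terms $h_0^{-\delta/15}$, from the short-interval contribution away from the main term, and $X^{-\delta^4/10^{16}}$, from the zero-density input.

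\emph{Proof of the general theorem --- outline.} (i) Via a Ram\'ar\'e-type weighted identity, write $g(n)$, for $n$ in the ``typical'' set of integers having close to the expected number of prime factors in each of $\asymp(\log H)/(\log\log H)$ carefully chosen dyadic intervals $(P_j,Q_j]$, as a combination of terms $g(p)\,g(n/p)$ with $p\in(P_j,Q_j]$; bound the contribution of the atypical $n$ trivially, using the fundamental lemma of the sieve and \eqref{eq:req} to show that for all but a negligible set of $x$ their number in $(x,x+H]\cap\mathcal N$ is $o(HD)$. (ii) For the main term, expand the square and apply Parseval, reducing $X^{-1}\int_X^{2X}|\sum_{x<n\le x+H}\cdots|^2dx$ to (schematically) $\int_{|t|\le X/H}\prod_j|F_j(\tfrac12+it)|^2\,|M(\tfrac12+it)|^2\,dt$, with $F_j$ a Dirichlet polynomial over the primes in $(P_j,Q_j]$ and $M$ one over the complementary factors. (iii) Isolate the contribution of $t$ near $0$, which reconstructs the long average; on the rest bound each $|F_j|$ either pointwise via Hal\'asz's inequality or in mean square via a Huxley-type large-values estimate, so that multiplying the savings from $\asymp(\log H)/(\log\log H)$ ranges produces the power saving $H^{-c(\alpha)}$. (iv) For $\mathcal N=\mathbb N$, refine (iii) using log-free zero-density estimates to confine the large values of the $F_j$ to a set of $t$ of measure $X^{-c'}$, upgrading a $(\log X)^{-c'}$ saving to $X^{-c'}$.

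\emph{Main obstacle.} The genuinely new part, relative to \cite{MainPaper}, is step (iii): extracting a \emph{power} saving in $H$ (and in $X$), uniformly in the relationship between $H$ and $X$, with all estimates kept scale-invariant so that the lacunary case of general $\mathcal N$ is handled on the same footing as $\mathcal N=\mathbb N$. This forces one to use many dyadic ranges simultaneously and to combine Hal\'asz's inequality with large-values and zero-density estimates efficiently; controlling how the per-range saving degrades as the ranges are pushed towards $\log H$ is what ultimately fixes $\kappa(\alpha)$ and the explicit constants $1/15$, $1/10^{16}$ and $1/300$.
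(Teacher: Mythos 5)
Your overall plan --- pass to the bounded multiplicative function $g=f\mathbf{1}_{\mathcal N}$, prove a general mean-square theorem for bounded multiplicative functions, and conclude by Chebyshev --- is in spirit the paper's route, but the general theorem you invoke is not the one that is available, and in the form you state it it is false. You ask for a saving $(H^{-c}+X^{-c})\,S(g;X)^2$ uniformly in $3\le H\le X$, where $S(g;X)$ is the typical (mean) size of $|g|$. The example $g(n)=(1-\varepsilon)^{\Omega(n)}$, discussed right after Theorem~\ref{th:MT}, rules this out: the mean value of $g$ is carried by integers with about $(1-\varepsilon)\log\log X$ prime factors, whose mean spacing is a positive power of $\log X$, so for $H$ a smaller power of $\log X$ almost every interval $(x,x+H]$ misses them entirely and the discrepancy is $\asymp S(g;X)$, not $o(S(g;X))$; no bound of the shape $H^{-c}S^2$ can hold once $H\to\infty$ arbitrarily slowly. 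This is precisely why Theorem~\ref{th:MT} normalizes the interval length by $H(f;X)=\prod_{p\le X}\bigl(1+\frac{(|f(p)|-1)^2}{p}\bigr)$ rather than by the inverse mean value or by $\delta(\mathcal N;X)^{-1}$, and these quantities are not comparable in general when $|f|$ is not concentrated on $\{0,1\}$.

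Because of this mismatch, your deduction has a genuine gap exactly in the regime the corollary is meant to cover, namely when $|f|$ is small on many primes of $\mathcal N$. The paper splits according to whether $\sum_{p\le X,\,p\in\mathcal N}(1-|f(p)|)/p>5+\log(1/\delta)$, i.e.\ condition \eqref{eq:holds}. If \eqref{eq:holds} fails, one verifies $H(f\mathbf{1}_{\mathcal N};X)\le 500\,\delta^{-1}\delta(\mathcal N;X)^{-1}$, so the interval of length $h_0\delta(\mathcal N;X)^{-1}$ is of length $h_0'H(f\mathbf{1}_{\mathcal N};X)$ with $h_0'\ge h_0^{1/2}$, and Theorem~\ref{th:MT} (in its almost real-valued form, so that $\widehat{t}_{f,X}$ may be replaced by $0$) applies. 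If \eqref{eq:holds} holds, no comparison with the long average at relative precision $\delta\,\delta(\mathcal N;X)$ is attempted: instead both averages are shown to be individually $<\tfrac{\delta}{2}\delta(\mathcal N;X)$ --- the long one via Lemma~\ref{le:GrKoMa}(i), and the short one, outside a set of $x$ of size $\ll Xh_0^{-1/4}$, via the sieve-majorant result Proposition~\ref{prop:sieveaaintervals} (Brun--Hooley weights combined with Friedlander's $L^2$ estimate), which is what produces a power saving in $h_0$ here; a naive Chebyshev against the mean value of $|g|$ would not. Your proposal contains neither the case distinction nor any substitute for Proposition~\ref{prop:sieveaaintervals}, so as written it does not yield the corollary. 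Two smaller points: the ``Moreover'' clause for $\mathcal N=\mathbb N$ is in the paper an immediate consequence of the separate Theorem~\ref{th:MTDense}, and the power savings in the underlying theorems come from Hal\'asz--Montgomery and Huxley large-value estimates after forcing two large prime factors into the support, not from zero-density estimates as your outline suggests.
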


In \cite{MainPaper} we obtained results only for $\mathcal{N} = \mathbb{N}$ and in that case the cardinality of our exceptional set was $\ll X h^{-c}$ for some $c > 0$ only for $h \leq \log^{\nu} X$ for some small $\nu > 0$. If $|f| \equiv 1$ then intervals of length $\delta(\mathcal{N}; X)^{-1}$ are the shortest intervals for which Corollary \ref{cor:main1} can hold. However if $|f|$ is not close to $1$ then it is possible to obtain meaningful results for shorter intervals. This is accomplished in the more technical Theorem \ref{th:MT} below. Furthermore Theorem \ref{th:MT} describes explicitely the exponent $\kappa$ appearing in Corollary \ref{cor:main1} and establishes results for complex valued multiplicative functions. On the other hand the special case $\mathcal{N} = \mathbb{N}$ follows immediately from Theorem \ref{th:MTDense} below. 

It would be possible to prove a variant of Theorem \ref{th:MT} for multiplicative functions $f$ such that $f(n) = O_\varepsilon(n^\varepsilon)$ for every $n$ and $\varepsilon > 0$ and $f(p^{k}) = O_{k}(1)$ for every prime $p$ and integer $k \geq 1$. We refrain from doing this in this paper. 


We can obtain much stronger results if we seek only the
order of magnitude and not asymptotics. 

\begin{corollary} \label{cor:HooleyGen}
  Let $\mathcal{N}$ be a multiplicative subset of $\mathbb{N}$.
  Suppose that \eqref{eq:req} holds for some $\alpha > 0$ and all $w \leq z \leq X^{\alpha}$. 
  \begin{enumerate}[(i)] \item
  Let $\varepsilon > 0$ be given. There exists a constant $\delta = \delta(\alpha, \varepsilon) > 0$ such that, for all $2 \leq h_0 \leq X$, the number of $x
\in [X, 2X]$ for which 
$$ \sum_{\substack{x < n \leq x + h_0
  \delta(\mathcal{N}; X)^{-1} \\ n \in \mathcal{N}}} 1 \leq
 \delta h_0
$$
is $\ll_{\alpha, \varepsilon} X h_0^{-1/2 + \varepsilon}$. 
\item Let $\gamma \in [1, 3/2)$ be given. If $1 \leq
n_1 < n_2 < \ldots$ is an enumeration of elements of $\mathcal{N}$, then
$$
\sum_{n_i \leq X} (n_{i + 1} - n_{i})^{\gamma} \asymp_{\alpha, \gamma} X \delta(\mathcal{N};X)^{1 -
  \gamma}.
$$
\end{enumerate}
\end{corollary}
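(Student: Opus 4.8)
The plan is to derive both parts from the quantitative ``short interval'' results already available. For part (i), I would take $f = \mathbf{1}_{\mathcal N}$ and apply Corollary \ref{cor:main1} (or the more precise Theorem \ref{th:MT}). The long average $\tfrac{1}{X\delta(\mathcal N;X)}\sum_{X<n\le 2X,\,n\in\mathcal N}1$ is $\asymp 1$ by the density estimate implied by \eqref{eq:req} (here one uses \eqref{eq:req} for $w\le z\le X^{\alpha}$, which is exactly the hypothesis stated). Hence for all but $\ll X h_0^{-\delta^{\kappa}}$ values of $x\in[X,2X]$ the short sum is $\gg h_0$; choosing $\delta$ small enough in terms of $\alpha$ (so that $\delta^\kappa\ge 1/2$ is \emph{not} quite what we want — rather we want the exceptional exponent to beat $1/2-\varepsilon$) gives the claim. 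The cleanest route is: pick $\delta_0=\delta_0(\alpha)$ so that the honest lower bound $\tfrac1X\sum_{X<n\le 2X,\,n\in\mathcal N}1\ge 2\delta_0 \delta(\mathcal N;X)$ holds, then apply the theorem with a parameter $\delta'$ chosen so that $(\delta')^{\kappa}\ge 1/2$ — if $\kappa\le 1/2$ this forces $\delta'$ bounded below by an absolute constant, and one simply takes $\delta=\min(\delta_0,\delta')$, absorbing the loss of $\delta h_0$ versus $2\delta_0 h_0$ into the $\varepsilon$. (If $\kappa>1/2$ one gets $h_0^{-1/2}$ outright and $\varepsilon$ is not even needed.) The point is that the power saving in the exceptional set is \emph{uniform in $h_0$}, which is what makes the $h_0^{-1/2+\varepsilon}$ bound possible for all $h_0\le X$.

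For part (ii), I would split the sum $\sum_{n_i\le X}(n_{i+1}-n_i)^\gamma$ according to the size of the gap $g_i:=n_{i+1}-n_i$. Write $D:=\delta(\mathcal N;X)^{-1}$ for the mean gap. The upper bound is the substantive direction. Dyadically decompose: for each scale $G=2^j D$ with $j\ge 0$, I want to bound the number $N_G$ of $i$ with $n_i\le X$ and $g_i\in(G,2G]$. Each such gap produces an interval of length $\sim G$ containing no element of $\mathcal N$; covering $[X,2X]$ (say) by $\asymp X/(GD^{-1}\cdot)$ — more carefully, by $\asymp X/G$ disjoint intervals of length $cG$ for a suitable small constant $c$, each gap of size $>G$ contains at least one such interval entirely free of $\mathcal N$-elements. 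Apply part (i) with $h_0=cG/D=c\,2^j$ (valid since $2\le h_0\le X$ once $G$ ranges over the relevant scales; the scales with $G<D$, i.e. $g_i$ below the mean, contribute $\le \sum g_i\cdot D^{\gamma-1}\ll X D^{\gamma-1}$ trivially and are handled separately). This shows the number of $cG$-length intervals in $[X,2X]$ free of $\mathcal N$ is $\ll X h_0^{-1/2+\varepsilon}=X(2^j)^{-1/2+\varepsilon}$, hence $N_G\ll X 2^{-j(1/2-\varepsilon)}$. Therefore
\[
\sum_{n_i\le X}g_i^\gamma \ll X D^{\gamma-1}+\sum_{j\ge 0}N_{2^jD}\,(2^{j}D)^\gamma \ll X D^{\gamma-1}\Bigl(1+\sum_{j\ge 0}2^{j(\gamma-1/2+\varepsilon)}\Bigr),
\]
and the geometric series converges precisely when $\gamma<3/2$ (taking $\varepsilon<3/2-\gamma$). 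For the matching lower bound one uses only that $\sum_{n_i\le X}g_i=X+O(D)$ and that the number of terms is $\asymp X/D$, so by convexity (Jensen / power mean inequality) $\sum g_i^\gamma\ge (X/D)\cdot\bigl((X/D)^{-1}\sum g_i\bigr)^\gamma\asymp (X/D)\cdot D^{\gamma}\cdot(X/(X))^\gamma$ — more simply, $\sum g_i^\gamma\ge D^{\gamma-1}\sum g_i\gg X D^{\gamma-1}$ since $g_i\ge 1$ forces nothing, but $\sum g_i^\gamma\ge (\sum g_i)\cdot\min$ is the wrong direction; instead use $\sum g_i^\gamma\ge (\#i)^{1-\gamma}(\sum g_i)^\gamma\gg (X/D)^{1-\gamma}X^\gamma= X D^{\gamma-1}$ by the power-mean inequality since $\gamma\ge 1$.

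The main obstacle is the uniformity in $h_0$ in part (i) across the full dyadic range of gap-scales: the power-saving exceptional-set bound must hold for every $h_0$ from $2$ up to $X$ with the \emph{same} implied constant and exponent, since the largest gaps (those of size nearly $X$) correspond to $h_0$ comparable to $X$, and without uniformity the series for $\sum g_i^\gamma$ could not be summed. This is exactly the feature emphasized in the introduction and supplied by Theorem \ref{th:MT}; the rest of the argument is a routine dyadic decomposition and convexity. One minor technical point to watch is the interplay between the interval $[X,2X]$ in the hypothesis and the half-open counting $n_i\le X$ — one does a standard dyadic decomposition of $[1,X]$ into ranges $[Y,2Y]$, applies the above on each, and sums; since $\delta(\mathcal N;Y)\asymp\delta(\mathcal N;X)$ up to acceptable factors (or one simply keeps the $Y$-dependence and sums the resulting geometric-type series in $Y$ as well), this causes no difficulty.
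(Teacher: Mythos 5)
The derivation of part (i) is where your proposal breaks down. You try to extract the exceptional-set exponent $1/2-\varepsilon$ from Corollary \ref{cor:main1} (equivalently Theorem \ref{th:MT}), but those results cannot supply it: in Theorem \ref{th:MT} the exceptional set is $\ll X\bigl(h_0^{-(\delta/2000)^{1/\alpha}}+X^{-\theta^3(\delta/2000)^{6/\alpha}}\bigr)$ with the accuracy parameter constrained to $\delta\in(0,1/1000)$, and in your application $\delta$ must in addition be taken below the $\alpha$-dependent constant in the lower bound for the long average $\frac{1}{X\delta(\mathcal N;X)}\sum_{X<n\le 2X,\,n\in\mathcal N}1$ (Lemma \ref{le:GrKoMa}(iii) gives only a constant $c(\alpha)$, not an absolute one), since otherwise ``short average within $\delta$ of long average'' yields no lower bound at all. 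Hence the saving exponent $(\delta/2000)^{1/\alpha}$ is at most about $10^{-6}$ and shrinks further with $\alpha$; it is never $1/2-\varepsilon$. Your suggestion to force $(\delta')^{\kappa}\ge 1/2$ by taking $\delta'$ of constant size is incompatible both with $\delta'<1/1000$ and with $\delta'<c(\alpha)$, and moreover the second term $X^{1-\theta^3(\delta/2000)^{6/\alpha}}$ alone already exceeds $Xh_0^{-1/2+\varepsilon}$ once $h_0$ is a modest power of $X$. This is exactly why the paper proves the separate Theorem \ref{th:LowerBound} (positive-proportion set $\mathcal S_\varepsilon$, Huxley's large-value theorem, the $P_1^{-1/2+\varepsilon}$ saving), whose statement applied to $f=\mathbf{1}_{\mathcal N}$ (noting $H(\mathbf{1}_{\mathcal N};X)\asymp\delta(\mathcal N;X)^{-1}$ and $\frac1X\sum_{X<n\le 2X}\mathbf{1}_{\mathcal N}(n)\gg_\alpha\delta(\mathcal N;X)$) gives part (i) immediately, with an exponent that does not shrink with $\alpha$ --- the feature the asymptotic-type statements cannot deliver.

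Part (ii) follows the same route as the paper (the paper bounds $M(w)=\sum_{n_{i+1}-n_i\ge w}(n_{i+1}-n_i)\ll X(w/h_1)^{-1/2+\varepsilon}$ via Theorem \ref{th:LowerBound} and uses partial summation; your dyadic decomposition is equivalent, and your lower bound via the power-mean/H\"older inequality together with the Shiu upper bound for $\#(\mathcal N\cap[1,X])$ matches the paper's). However your counting of $N_G$ loses a factor of $G$: with ``one free grid interval per gap'' you only get $N_G\ll X(G/D)^{-1/2+\varepsilon}$, and then the geometric series is $\sum_j 2^{j(\gamma-1/2+\varepsilon)}$, which diverges already for $\gamma\ge 1/2$; the display you wrote (with $D^{\gamma-1}$ and convergence for all $\gamma<3/2$) requires the stronger bound $N_G\ll (X/G)(G/D)^{-1/2+\varepsilon}$, obtained by observing that \emph{every} integer $x$ in the middle $\gg G$ portion of a gap of length $>G$ is exceptional for $h_0\asymp G/D$, so each gap accounts for $\gg G$ exceptional starting points. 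That slip is fixable, but as written the exponent in your series is inconsistent with the claimed range $\gamma<3/2$, and of course the whole of part (ii) still rests on part (i), i.e.\ ultimately on Theorem \ref{th:LowerBound} rather than on Theorem \ref{th:MT}.
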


We refer the reader to Theorem \ref{th:LowerBound} below for a stronger but more technical variant. 
The second part of Corollary \ref{cor:HooleyGen} is a simple consequence of the
first part. An important feature of Corollary \ref{cor:HooleyGen} is that the exponent in the
exceptional set does not shrink with $\alpha$ in \eqref{eq:req}. 


\subsection{Applications to smooth numbers}

Corollary \ref{cor:HooleyGen} has immediate consequences for smooth numbers. 

\begin{corollary} \label{cor:main4}
Let $\theta > 0$ be given.
\begin{enumerate}[(i)]
  \item Let $\varepsilon > 0$ be given. For all $2 \leq h \leq X$, the number of intervals $(x, x + h]$ with $x \in [X, 2X]$ 
that do not contain an $x^{\theta}$-smooth
number is $\ll_{\varepsilon, \theta} X h^{-1/2 + \varepsilon}$. 
\item Let $\gamma \in [1, 3/2)$ be given. Let $1 \leq n_1 < n_2 < \ldots$ denote the
sequence of integers $n$ such that all prime factors of $n$ are $\leq
n^{\theta}$. Then 
$$
\sum_{n_i \leq x} (n_{i + 1} - n_i)^{\gamma} \asymp_{\gamma, \theta} x.
$$
\end{enumerate}
\end{corollary}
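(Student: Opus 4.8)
The plan is to derive both parts of Corollary~\ref{cor:main4} from Corollary~\ref{cor:HooleyGen} applied to the multiplicative set
$$
\mathcal{N} = \mathcal{N}_{X^\theta} := \{\, n \ge 1 : \text{every prime factor of } n \text{ is } \le X^\theta \,\}
$$
of $X^\theta$-smooth numbers. We may assume $0 < \theta < 1$, since otherwise every integer is admissible and both statements are trivial, and we may assume $X \ge X_0(\theta)$, since for bounded $X$ the exceptional sets have size $O_\theta(1)$, which is admissible. First I would verify the hypotheses of Corollary~\ref{cor:HooleyGen}: smoothness is preserved under products and divisors, so $\mathcal{N}_{X^\theta}$ is a multiplicative subset of $\mathbb{N}$; and \eqref{eq:req} holds with $\alpha = \theta$ for all $2 \le w \le z \le X^\alpha = X^\theta$ for the trivial reason that every prime $p \le z \le X^\theta$ already lies in $\mathcal{N}_{X^\theta}$. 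Finally, by Mertens' theorem,
$$
\delta(\mathcal{N}_{X^\theta};X) = \prod_{X^\theta < p \le X} \Bigl( 1 - \tfrac1p \Bigr) = (1+o(1))\,\theta \asymp_\theta 1 .
$$

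For part (i) I would apply Corollary~\ref{cor:HooleyGen}(i) to this $\mathcal{N}$ with the given $\varepsilon$, producing a constant $c = c(\theta,\varepsilon) > 0$. Setting $h = h_0\,\delta(\mathcal{N}_{X^\theta};X)^{-1}$, so that $h \asymp_\theta h_0$ and $2 \le h_0 \le X$ corresponds to $h$ ranging over essentially all of $[2,X]$, the conclusion is that, for all but $\ll X h_0^{-1/2+\varepsilon} \asymp_{\theta,\varepsilon} X h^{-1/2+\varepsilon}$ of the $x \in [X,2X]$, the interval $(x,x+h]$ contains at least $c h_0$ elements of $\mathcal{N}_{X^\theta}$; once $h$ exceeds a constant $h_\ast(\theta,\varepsilon)$ this lower bound is $\ge 1$. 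Since $x \ge X$, every $X^\theta$-smooth number is a fortiori $x^\theta$-smooth, so for such $x$ the interval $(x,x+h]$ contains an $x^\theta$-smooth number. For the remaining range $2 \le h \le h_\ast(\theta,\varepsilon)$ the trivial bound $X + 1$ on the exceptional set is already $\ll_{\theta,\varepsilon} X h^{-1/2+\varepsilon}$ (we may of course assume $\varepsilon < 1/2$), which proves (i).

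For part (ii) set $S_\theta = \{\, n \ge 1 : P^+(n) \le n^\theta \,\}$, where $P^+(n)$ is the largest prime factor of $n$ (and $P^+(1)=1$), and enumerate its elements $n_1 < n_2 < \cdots$. This set is not itself multiplicative, but the crucial observation is that any $x^\theta$-smooth number lying in an interval $(x,x+h]$ belongs to $S_\theta$, since then $P^+(n) \le x^\theta < n^\theta$. Hence part (i) already yields, for $2 \le h \le Y$,
$$
\#\{\, x \in [Y, 2Y] : S_\theta \cap (x, x+h] = \emptyset \,\} \ll_{\theta,\varepsilon} Y\, h^{-1/2+\varepsilon} .
$$
The lower bound in (ii) is immediate from $\gamma \ge 1$ and telescoping: $\sum_{n_i \le x} (n_{i+1}-n_i)^\gamma \ge \sum_{n_i \le x}(n_{i+1}-n_i) = n_{k+1}-1 \gg x$, where $n_k$ is the largest element of $S_\theta$ not exceeding $x$. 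For the upper bound I would use the displayed estimate twice. First, applying it to a single gap $(n_i,n_{i+1})$ with $h = \lfloor (n_{i+1}-n_i)/2\rfloor$, which forces $\gg n_{i+1}-n_i$ exceptional $x$, gives the maximal-gap bound $n_{i+1}-n_i \ll_{\theta,\varepsilon} Y^{2/3+\varepsilon}$ whenever $n_i \asymp Y$. Secondly, distributing the displayed estimate dyadically over the possible sizes of $n_{i+1}-n_i$ gives, for each dyadic block $I_j = (2^j, 2^{j+1}]$,
$$
\sum_{\substack{i:\ n_i,\, n_{i+1} \in I_j}} (n_{i+1}-n_i)^\gamma \ll_{\theta,\gamma} 2^j ,
$$
the geometric series in the gap size converging precisely because $\gamma < 3/2$ (choose $\varepsilon$ small in terms of $3/2 - \gamma$). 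Summing over $j$ with $2^j \le x$ and bounding the $O(\log x)$ gaps straddling a power of $2$ by the maximal-gap estimate, with combined contribution $\ll_{\theta,\gamma} x^{(2/3+\varepsilon)\gamma} = o(x)$ for $\varepsilon$ small, yields $\sum_{n_i \le x}(n_{i+1}-n_i)^\gamma \ll_{\theta,\gamma} x$.

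The bookkeeping (the evaluation of $\delta(\mathcal{N}_{X^\theta};X)$, the verification of \eqref{eq:req}, the degenerate ranges of $h$ and $X$) is routine. The one genuinely delicate point is the passage in part (ii) from an exceptional-set bound to a moment bound without losing a power of $x$: this is exactly where part (i) is invoked a second time, to guarantee that every gap is $\ll_\theta Y^{2/3+\varepsilon}$, which is comfortably smaller than $Y^{1/\gamma}$ when $\gamma < 3/2$, so that the boundary terms of the dyadic decomposition are negligible.
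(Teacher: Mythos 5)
Your proposal is correct and follows essentially the route the paper intends: Corollary \ref{cor:main4} is deduced from Corollary \ref{cor:HooleyGen} applied to the multiplicative set of $X^\theta$-smooth numbers, whose density $\delta(\mathcal{N};X)\asymp_\theta 1$. Your extra step in part (ii) — rerunning the dyadic gap-counting argument at each scale $Y$ from the exceptional-set bound of part (i), rather than quoting Corollary \ref{cor:HooleyGen}(ii) directly — is exactly the same mechanism as the paper's own proof of Corollary \ref{cor:HooleyGen}(ii), and it correctly handles the (implicit in the paper) point that $\{n : P^+(n)\le n^\theta\}$ is not itself a multiplicative set.
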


 Part (i) improves on the result in \cite{MainPaper} where weaker bounds
 on the exceptional set are obtained. Moreover a minor modification of Corollary
 \ref{cor:main4} also improves the result on sign changes of multiplicative functions in \cite{MainPaper}. 
 Part (ii) improves on a result of Heath-Brown~\cite[Theorem
 2]{Heath-Brown18} who established the weaker upper bound $\ll_{\varepsilon} x^{1+\varepsilon}$ for
 any $\varepsilon > 0$. 

\subsection{Applications to norm-forms}

Specializing Corollary \ref{cor:main1} to the set
of integers representable as sums of two squares improves on a result of Hooley \cite{Hooley4} who showed that if $1
\leq s_1 < s_2 < \ldots$ is the sequence of integers representable as sums of
two
squares, then, for any $h_0 \to \infty$ with $X \to \infty$, one has
\begin{equation} \label{eq:hooley01}
\sum_{x < s_i \leq x + h_0 \sqrt{\log x}} 1 \asymp h_0
\end{equation}
for almost all $x \in [X, 2X]$. Moreover in an earlier paper Hooley \cite{Hooley71} established that
\begin{equation} \label{eq:hooley02}
\sum_{s_n \leq x} (s_{n + 1} - s_{n})^{\gamma} \asymp x (\log x)^{\tfrac 12
  (\gamma - 1)}
\end{equation}
for $\gamma \in [1, 5/3)$. The much more general Corollary~\ref{cor:HooleyGen} gives this only in the range $\gamma \in [1, 3/2)$ but in a forthcoming work we will establish~\eqref{eq:hooley02} for every $\gamma \in [1, 2)$.

Sums of two squares are norm-forms of $\mathbb{Q}(i)$ and it is natural to wonder
to what extent \eqref{eq:hooley01} and \eqref{eq:hooley02}
generalize to norm-forms of other number fields.
Recall that an integer $n$ is a norm-form of a number field $K$ if $n$ is equal to the norm of an algebraic integer in $K$. Alternatively the set of norm forms of $K$ corresponds to the image of the homogeneous polynomial $Q(x_1, \ldots, x_k) = N_{K / \mathbb{Q}}(x_1 \omega_1 + \ldots + x_k \omega_k)$ where $x_1, \ldots, x_k$ ranges over integers and $\omega_1, \ldots, \omega_k$ is a $\mathbb{Z}$-basis of the ring of algebraic integers of $K$.

Following Odoni \cite{Odoni75} the density in $[1,X]$ of norm-forms of an algebraic number field $K$ is  
\begin{equation} \label{eq:density}
\delta_K(X) := \prod_{\substack{p \leq X \\ p \neq N \mathfrak{a} \\ \mathfrak{a} \text{ integral ideal}}} \Big (1 -
\frac{1}{p} \Big )
\end{equation}
If $K$ is a normal extension of $\mathbb{Q}$ of degree $k$, then $\delta_K(X) \asymp (\log
X)^{-1 + 1/k}$. 

The main arithmetic input in Hooley's work on \eqref{eq:hooley01} and \eqref{eq:hooley02}
is a solution to the shifted convolution problem,
\begin{equation} \label{eq:scp}
\sum_{n \leq x} r_{K}(n) r_{K}(n + h)
\end{equation}
with $r_{K}(n)$ the coefficients of the Dedekind zeta function of $K = \mathbb{Q}(i)$. 
Estimating \eqref{eq:scp} is completely open as soon as the degree of $K$ exceeds two.
For this reason Hooley's approach does not generalize beyond quadratic fields.
Furthermore, when the class number of $K$ differs from one, being a norm-form is no
longer a multiplicative condition\footnote{If $K$ is not a principal ideal domain, then a positive proportion of the prime factors of a typical norm-form are not themselves norm-forms}. This presents additional difficulties. 
Nonetheless we obtain the following generalization of Hooley's result \eqref{eq:hooley01} to arbitrary number
fields. 

\begin{theorem} \label{thm:main2}
  Let $K$ be a number field over $\mathbb{Q}$. Let $\delta_K(X)$
  be defined as in \eqref{eq:density}.
  Let $1 \leq n_1 < n_2 < \ldots$ be the sequence
  of non-negative norm-forms of $K$. Then, as $X \rightarrow \infty$, uniformly in $2 \leq h_0 \leq X$ and $\delta \in (0, 1/1000)$,
  \begin{equation} \label{eq:new2}
  \Big | \sum_{x < n_k \leq x + h_0 \delta_K(X)^{-1}} 1 - C_K h_0 \Big | \leq \delta h_0
  \end{equation}
  for all $x \in [X, 2X]$ with at most $O(X h_0^{-c \delta^{\kappa}})$ exceptions where $C_K > 0$ and $c, \kappa > 0$ are
  three constants that depend solely on $K$. 
\end{theorem}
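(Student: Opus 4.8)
The plan is to deduce Theorem~\ref{thm:main2} from Corollary~\ref{cor:main1}, the essential difficulty being that ``$n$ is a norm-form'' ceases to be a multiplicative condition once the class number $h_K>1$. The right multiplicative proxy is the set $\mathcal{N}_K$ of rational integers that are the norm of \emph{some} integral ideal of $K$. By unique factorisation of ideals the admissible exponents $v_p(n)$ for membership in $\mathcal{N}_K$ are governed by a condition local at $p$, so $\mathcal{N}_K$ is a multiplicative set; it contains every norm-form; and its density in $[1,X]$ is exactly $\delta_K(X)$ from \eqref{eq:density}. The first step is to check \eqref{eq:req} for $\mathcal{N}_K$: a prime $p$ lies in $\mathcal{N}_K$ iff it has a prime-ideal factor of residue degree one, which by the Chebotarev density theorem holds for a positive proportion of $p$ (the Frobenius class must fix a point of the degree-$[K:\mathbb{Q}]$ permutation representation, and the identity does), and the same input yields \eqref{eq:req} with the stated $O(1/\log w)$ error for all $2\le w\le z$. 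Thus Corollary~\ref{cor:main1} applies to $\mathcal{N}=\mathcal{N}_K$ with any real $f$ of modulus $\le 1$.

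The second, and main, step is an algebraic description of which $n\in\mathcal{N}_K$ are norm-forms. There is a finite abelian group $G_K$ --- a quotient of the ideal class group $\mathrm{Cl}(K)$, essentially $\mathrm{Cl}(K)$ modulo the classes of all ideals whose norm equals that of another ideal --- and a \emph{genus invariant} $\sigma_K\colon\mathcal{N}_K\to G_K$ sending $n$ to the (well-defined) image in $G_K$ of any integral ideal of norm $n$. It is additive over coprime arguments and locally determined: $\sigma_K(n)=\overline{R}+\sum_{p\mid n}(\text{contribution of }p)$, where the contribution of $p$ depends only on $v_p(n)$ and the splitting type of $p$, and $\overline{R}$ is a fixed shift from the ramified primes. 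Vanishing of $\sigma_K(n)$ is plainly necessary for $n$ to be a norm-form; an analysis of the Minkowski sums of achievable ideal classes shows it is also sufficient for $n$ outside a set $\mathcal{T}\subset\mathcal{N}_K$ of density $o_K(\delta_K(X))$ --- those $n$ having $O_K(1)$ prime factors, together with those whose split prime factors have classes confined to a fixed proper subgroup of $\mathrm{Cl}(K)$, the latter a finite union of multiplicative sets. Writing $\mathbf{1}[\sigma_K(n)=0]=|G_K|^{-1}\sum_{\psi}\psi(\sigma_K(n))$ over the characters $\psi$ of $G_K$ exhibits it as a fixed finite combination $\sum_\psi c_\psi\,g_\psi(n)$, where $g_\psi(n):=\psi(\sigma_K(n))$ is (away from the ramified primes) completely multiplicative with values in roots of unity, $g_0\equiv 1$, and for $\psi\ne 0$ one has $g_\psi(p)\ne 1$ for a positive proportion of primes $p$ (Chebotarev again), so that $g_\psi\mathbf{1}_{\mathcal{N}_K}$ is non-pretentious. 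Combining,
\[
\#\{\text{norm-forms in }(x,x+h_0\delta_K(X)^{-1}]\}=\sum_\psi c_\psi\!\!\sum_{\substack{x<n\le x+h_0\delta_K(X)^{-1}\\ n\in\mathcal{N}_K}}\!\! g_\psi(n)\;+\;O\bigl(\#(\mathcal{T}\cap(x,x+h_0\delta_K(X)^{-1}])\bigr).
\]

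The third step applies Corollary~\ref{cor:main1} --- or, if $G_K$ is not $2$-torsion so some $g_\psi$ is complex, its complex-valued refinement Theorem~\ref{th:MT} --- to each pair $(\mathcal{N}_K,g_\psi)$, and to $(\mathcal{N}_K,\mathbf{1})$ and the multiplicative pieces of $\mathcal{T}$ in order to bound the error term. For all $x\in[X,2X]$ outside a set of size $\ll X h_0^{-\delta^{\kappa}}$, each short average of $g_\psi$ over $\mathcal{N}_K$ is within $O(\delta)$ of $h_0$ times the long average $\tfrac{1}{X\delta_K(X)}\sum_{X<n\le 2X,\,n\in\mathcal{N}_K}g_\psi(n)$; for $\psi\ne 0$ this long average tends to $0$ as $X\to\infty$ by Halász's theorem (the function being non-pretentious), while for $\psi=0$ it tends to a positive constant, the limiting relative density of $\mathcal{N}_K$, and the thin set $\mathcal{T}$ contributes $O(\delta h_0)$ by the same mechanism. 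Collecting terms gives the right-hand side of \eqref{eq:new2} with $C_K=c_0\cdot(\text{that positive constant})>0$, an error $O(\delta h_0)$, and an exceptional set $\ll X h_0^{-c\delta^{\kappa}}$ after renaming the constants.

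The crux is the second step: isolating the additive genus invariant $\sigma_K$ and, above all, controlling $\mathcal{T}$ --- both showing it is sparse and trapping it inside multiplicative sets (with a very sparse ``few prime factors'' remainder handled directly via the large prime factor of such $n$) so that Corollary~\ref{cor:main1} still yields a \emph{power-saving} bound for its contribution to the exceptional set, uniformly in $h_0$ and $\delta$. A further technical layer is the case of non-normal $K$, where the prime ideals above each $p$ and their classes have to be tracked through the Galois closure.
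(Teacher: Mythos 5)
Your route (a genus-theory invariant $\sigma_K$ with values in a finite quotient $G_K$ of the narrow class group, a character decomposition of the condition, and an exceptional set $\mathcal{T}$) is genuinely different from the paper's, which uses Odoni's generating-function identity to write the norm-form indicator \emph{exactly} as $g_K=\sum_{\ell}c_\ell f_\ell$ with multiplicative $f_\ell$ (Lemma~\ref{lem:normforms}), so that no thin set $\mathcal{T}$ ever appears and positivity of the main term is extracted by an auxiliary-primes argument. Your outline could in principle be made to work, but as written it has two genuine gaps.

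First, non-pretentiousness of the twisted pieces. For $\psi\neq 0$ you must control not just the long average of $g_\psi$ but the main term that Theorem~\ref{th:MT} actually produces, namely $\frac1X\sum_{X<n\le 2X}g_\psi(n)n^{-i\widehat t_{g_\psi,X}}$, and hence you need $\widehat M(g_\psi;X)\to\infty$ \emph{uniformly over the minimization range} $|t|\le X$. Chebotarev only rules out pretentiousness at $t=0$ (or bounded $t$); since $g_\psi(p)$ is a complex root of unity determined by splitting/class data, ruling out correlation with $p^{it}$ for $|t|$ up to $X$ requires zero-free regions of Vinogradov--Korobov type for the associated Hecke/Artin $L$-functions. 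This is exactly what the paper's Lemma~\ref{lem:viko}(ii) (via Coleman) supplies in order to get $\widehat M(f_\ell;X)\ge\rho\log\log X$, after which Lemma~\ref{le:SparseHalaszComplex} kills both main terms; your ``Chebotarev again'' does not reach this.

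Second, the exceptional set $\mathcal{T}$. Its pieces with ``classes confined to a proper subgroup'' are multiplicative sets and can indeed be fed into Corollary~\ref{cor:main1}, but the remaining part --- integers with fewer than $O_K(1)$ prime factors of some flexible splitting type (in particular your ``few prime factors'' remainder) --- is \emph{not} a multiplicative set, and ``handled directly via the large prime factor'' is not an argument: a pointwise or first-moment (Markov) bound only saves powers of $\log X$, whereas Theorem~\ref{thm:main2} demands that the bad $x$ form a set of size $O(Xh_0^{-c\delta^\kappa})$ uniformly for $h_0$ as large as $X$, i.e.\ a power saving in $h_0$ (hence in $X$ when $h_0$ is a power of $X$). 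Controlling such sparse, non-multiplicative configurations in almost all short intervals with power-saving exceptional sets is precisely the kind of work done by the paper's $L^2$/sieve machinery (e.g.\ Proposition~\ref{prop:sieveaaintervals} and the Dirichlet-polynomial estimates), and your proposal does not indicate how to do it. Two smaller points: the correct condition is $\sigma_K(n)\in\overline{\mathcal N}$ (image of the classes admitting a generator of positive norm), not $\sigma_K(n)=0$, though the character argument is unchanged; and the ``full coset'' lemma (that $R(n)$ exhausts a coset of $H$ once $n$ has enough primes of each flexible type), which you correctly identify as the crux, still needs a proof --- in the paper this combinatorial content is subsumed in Odoni's identity rather than proved as a structure theorem.
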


Furthermore we obtain the following generalization of Hooley's result
\eqref{eq:hooley02}
to arbitrary number fields.

\begin{theorem} \label{thm:NormFormLowBound}
Let $K$ be a number field over $\mathbb{Q}$ and let $\delta_K(X)$ be as in
\eqref{eq:density}. 
Let $1 \leq n_1 < n_2 < \ldots$ denote an enumeration of
positive norm-forms of $K$.
\begin{enumerate}[(i)]
\item  Let $\varepsilon > 0$ be given. There exists a constant $\delta = \delta(K, \varepsilon) > 0$ such that, for all $2 \leq h_0 \leq \delta_K(X) X$, the number of $x \in [X, 2X]$
for which
$$
\sum_{x < n_i \leq x + h_0 \delta_K(X)^{-1}} 1 \leq
\delta h_0
$$
is $\ll_{\varepsilon, K} X h_0^{-1/2 + \varepsilon}$.
Moreover if the Riemann Hypothesis holds for all Hecke $L$-functions then the exceptional set has size $\ll_{\varepsilon, K} X h_0^{-1 + \varepsilon}$.
\item Let $\gamma \in [1, 3/2)$ be given. Then
  \begin{equation} \label{eq:new}
  \sum_{n_i \leq x} (n_{i + 1} - n_{i})^{\gamma} \asymp_{\gamma, K} x \delta_K(X)^{1 - \gamma}.
  \end{equation}
  Moreover if the Riemann Hypothesis holds for all Hecke $L$-functions then the above holds for every $\gamma \in [1, 2)$. 
\end{enumerate} 
\end{theorem}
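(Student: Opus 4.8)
The plan is to deduce part~(ii) from part~(i), and to obtain part~(i) by applying Corollary~\ref{cor:HooleyGen} to the multiplicative set of norms of integral ideals and then recovering the (non‑multiplicative) set of norm‑forms inside it via a Fourier expansion over the class group of $K$.

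\emph{Reducing (ii) to (i).} Since $\gamma\ge 1$, the map $t\mapsto t^{\gamma}$ is convex, so with $N(x):=\#\{i:n_i\le x\}$ one has $\sum_{n_i\le x}(n_{i+1}-n_i)^{\gamma}\ge N(x)^{1-\gamma}\bigl(\sum_{n_i\le x}(n_{i+1}-n_i)\bigr)^{\gamma}\gg x^{\gamma}N(x)^{1-\gamma}$; as every norm‑form lies in the multiplicative set $\mathcal{N}_{\mathrm{id}}$ of norms of integral ideals, which has density $\asymp\delta_K(x)$, we get $N(x)\ll x\delta_K(x)$ and hence the lower bound in~\eqref{eq:new}. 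For the upper bound, a gap $n_{i+1}-n_i\ge G$ about a point $x$ forces $(x,x+G]$ to be free of norm‑forms; applying part~(i) with $h_0=G\delta_K(X)$ and a fixed $\delta$ bounds the measure of such $x$ by $\ll X(G\delta_K(X))^{-1/2+\varepsilon}$, so a dyadic decomposition over $G\in[2^{j}\delta_K(X)^{-1},2^{j+1}\delta_K(X)^{-1})$ gives $\sum_{n_i\le x}(n_{i+1}-n_i)^{\gamma}\ll X\delta_K(X)^{1-\gamma}\sum_{j\ge 0}2^{j(\gamma-3/2+\varepsilon)}$, convergent for $\gamma<3/2$ (the gaps below the mean spacing contributing $\ll X\delta_K(X)^{1-\gamma}$ trivially). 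Under the Riemann Hypothesis for all Hecke $L$‑functions the exponent $-1/2+\varepsilon$ in part~(i) improves to $-1+\varepsilon$ and the same series converges for every $\gamma<2$.

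\emph{The multiplicative backbone of (i).} The set $\mathcal{N}_{\mathrm{id}}=\{N\mathfrak{a}:\mathfrak{a}\text{ an integral ideal of }\mathcal{O}_K\}$ is a multiplicative subset of $\mathbb{N}$ with $\delta(\mathcal{N}_{\mathrm{id}};X)=\delta_K(X)$, and \eqref{eq:req} holds for it with some $\alpha=\alpha(K)>0$ because, by the Chebotarev density theorem, a positive proportion of primes split completely in the Galois closure of $K$ and so carry a degree‑one prime ideal. Corollary~\ref{cor:HooleyGen}(i) applied to $\mathcal{N}_{\mathrm{id}}$ shows that $(x,x+h_0\delta_K(X)^{-1}]$ contains $\gg h_0$ elements of $\mathcal{N}_{\mathrm{id}}$ for all $x\in[X,2X]$ outside a set of size $\ll Xh_0^{-1/2+\varepsilon}$. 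Now an $n\in\mathcal{N}_{\mathrm{id}}$ is a norm‑form precisely when some integral ideal of norm $n$ lies in the principal narrow ideal class; expanding this condition over the characters $\chi$ of the narrow class group brings in the multiplicative coefficients $a_{\chi}(n)=\sum_{N\mathfrak{a}=n}\chi(\mathfrak{a})$ of the Hecke $L$‑functions $L(s,\chi)$. The trivial character reproduces a positive constant multiple of $1_{\mathcal{N}_{\mathrm{id}}}(n)$, controlled by the previous display; each non‑trivial $\chi$ contributes a bounded, non‑pretentious ``genus‑type'' multiplicative function whose mean value over $[X,2X]$ is $o(\delta_K(X))$ (by the prime ideal theorem for $L(s,\chi)$, i.e.\ equidistribution of prime ideals among ideal classes), so by Theorem~\ref{th:MT} its average over $(x,x+h_0\delta_K(X)^{-1}]$ is $o(h_0)$ outside a set of size $\ll Xh_0^{-1/2+\varepsilon}$ — here I would invoke the sharper exceptional‑set bound that Theorem~\ref{th:MT} affords when there is no main term to track. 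Subtracting the genus‑type contributions from the trivial‑character term yields $\gg h_0$ norm‑forms in $(x,x+h_0\delta_K(X)^{-1}]$ outside $\ll Xh_0^{-1/2+\varepsilon}$ values of $x$, which is part~(i); under RH for the $L(s,\chi)$ one inserts the conditional strengthenings of the mean‑value inputs to reach the exponent $-1+\varepsilon$.

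\emph{The main obstacle.} The crux is that being a norm‑form is a \emph{positivity} statement about $\tfrac1{h_K}\sum_{\chi}a_{\chi}(n)$, not a linear identity: a rational prime may distribute several ideal classes among the prime ideals above it, so the naive class‑group expansion has coefficients that are not indicators, and an $n$ with $t\ge 1$ representations as the norm of a principal ideal is counted with weight $t$ rather than $1$. Removing this over‑counting without losing a power of $\log X$ forces a passage to a positive‑proportion sub‑family of norm‑forms admitting a genuinely multiplicative (or at least sieve‑theoretically clean) description — for instance those norm‑forms possessing a prime factor in a suitable range that ``witnesses'' principality — and it is precisely for such sub‑families that one needs the stronger, more technical Theorem~\ref{th:LowerBound} rather than Corollary~\ref{cor:HooleyGen} alone. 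A secondary, more routine, point is to verify that every multiplicative function so produced is bounded (or $\ll n^{\varepsilon}$ with bounded values on prime powers) and non‑pretentious, so that Theorem~\ref{th:MT} applies uniformly all the way down to $h_0=2$.
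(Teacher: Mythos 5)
Your reduction of (ii) to (i) is fine and matches what the paper does, but your plan for (i) has a genuine gap which you yourself flag under ``the main obstacle'' and then do not close. The class-group character expansion $\tfrac1{h_K}\sum_\chi a_\chi(n)$ with $a_\chi(n)=\sum_{N\mathfrak a=n}\chi(\mathfrak a)$ does not produce the indicator $g_K(n)$ of norm-forms: it counts $n$ with weight equal to the number of (narrow-)principal ideals of norm $n$, the trivial character contributes the unbounded function $r_K(n)$ rather than a multiple of $\mathbf 1_{\mathcal N_{\mathrm{id}}}(n)$, and ``$n$ is a norm-form'' is the set-intersection condition $R(n)\cap\mathcal N\neq\emptyset$ (with $R(n)$ the set of narrow classes occupied by ideals of norm $n$), which is not linear in the characters. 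The paper's resolution is precisely the missing ingredient: following Odoni, one encodes the multiplicative set-valued data $R(n)$ by a multivariable generating function and a contour integral, and the residues at roots of unity give an exact finite decomposition $g_K=\sum_\ell c_\ell f_\ell$ into \emph{bounded} multiplicative functions $f_\ell:\mathbb N\to\mathbb U$, with the $\ell\le M$ pieces almost real-valued and $(\alpha,X)$-non-vanishing, the remaining pieces non-pretentious ($\widehat M(f_\ell;X)\gg\log\log X$), and $\sum_{\ell\le M}c_\ell>0$ (Lemmas~\ref{lem:normforms} and~\ref{lem:viko}). Your sketch gestures at restricting to a sub-family with a ``witness'' prime but never constructs a decomposition to which the short-interval machinery applies, so the positivity/over-counting problem is left unresolved; this is the heart of the proof, not a routine verification.

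A second, quantitative gap: the exceptional-set exponent $-1/2+\varepsilon$, uniform in $2\le h_0\le\delta_K(X)X$, cannot be obtained by citing Corollary~\ref{cor:HooleyGen} or Theorem~\ref{th:MT} as you propose. Theorem~\ref{th:MT} only gives exceptional sets of size $\ll X(h_0^{-(\delta/2000)^{1/\alpha}}+X^{-\theta^3(\delta/2000)^{6/\alpha}})$, and there is no ``sharper bound when there is no main term'' built into it; moreover Corollary~\ref{cor:HooleyGen} applies to a multiplicative \emph{set}, not to the complex-valued combination $\sum_\ell c_\ell f_\ell$. The paper instead reruns the proof of Theorem~\ref{th:LowerBound} for each $f_\ell$: for $h\le X^{\varepsilon^3/20000}$ it applies Theorem~\ref{th:ThminS} with the narrow first interval $(P_1,Q_1]=(h_0^{1-\varepsilon/1000},h_0]$ (using $g_K(mn)\ge g_K(m)g_K(n)$, Lemma~\ref{lem:viko}(iii) and Lemma~\ref{le:GrKoMa}(iii) to secure a positive long-interval main term, and Hal\'asz for the non-pretentious pieces), and for larger $h$ it invokes Proposition~\ref{pr:largevalues}; the conditional exponent $-1+\varepsilon$ likewise needs the square-root cancellation input~\eqref{eq:cancel} in Theorem~\ref{th:ThminS} for small $h_0$ and the pointwise bound of Lemma~\ref{lem:normforms}(vi) for large $h_0$, not merely ``conditional strengthenings of the mean-value inputs.'' So even granting the decomposition, your proposal would need to be upgraded from citing the corollaries to running this large-values argument.
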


As we pointed out already, previously there was not a single tuple $(K, \gamma)$ with $K$ a number field of
degree $> 2$ and $\gamma > 1$ for which \eqref{eq:new} or \eqref{eq:new2} was known. We believe that
a remarkable feature of \eqref{eq:new} is that the exponent $\gamma$ does not
shrink when the degree $k = [K:\mathbb{Q}]$ increases. We note also that given
the current technology $\gamma \leq 2$ is the best exponent for which one can
hope. Incidentally note that \eqref{eq:new2} implies \eqref{eq:new} for
$\gamma < 1 + c$ with  some $c > 0$.

\subsection{Applications to Fourier Uniformity}

We also note that the power-saving for the exceptional set that we obtain for example in Corollary \ref{cor:main1} is an ingredient in our forthcoming work on Fourier Uniformity \cite{FourierUniformity}, in which we establish that, for any given $k \in \mathbb{N}$, and any multiplicative function $f$ that is not $\chi(n) n^{it}$ pretentious for some $|t| \leq X^{k + 1}$ and Dirichlet character $\chi$ with bounded conductor, 
$$
\int_{X}^{2X} \sup_{\substack{P(Y) \in \mathbb{R}[Y] \\ \text{deg } P = k}} \Big | \sum_{x < n \leq x + H} f(n) e(P(n)) \Big | dx = o (H X)
$$
as $X \rightarrow \infty$, uniformly in $\exp(\log^{5/8 + \varepsilon} X) \leq H \leq X^{1/2 - \varepsilon}$. The results of the present work come into play when we prove the theorem for small $H$, in particular when $H$ is below the threshold $\exp(\log^{2/3} X)$. The latter is a natural threshold because of the limitations of the Vinogradov-Korobov zero-free region.

\subsection{Precise results for multiplicative functions}
\label{ssec:MultFunc}

We are now ready to discuss the main theorems from which all of the previous
corollaries eventually follow. In order to obtain results for complex-valued multiplicative functions $f$ we introduce a parameter $t_{f,X}$ that roughly measure the ``complex part'' of $f$ in the sense that $f(n) n^{-i t_{f,X}}$ essentially behaves as a  real-valued function for $n \leq X$.

\begin{definition} \label{def:complex}
  Let $f : \mathbb{N} \to \mathbb{U} := \{ z\in \mathbb{C} : |z| \leq 1\}$ be a multiplicative function.
  We define
  \begin{equation} \label{eq:tfx}
  \widehat{M}(f; X) := \min_{|t| \leq X} \sum_{p \leq X} \frac{|f(p)| - \Re f(p) p^{-it}}{p}.
  \end{equation}
  and let $\widehat{t}_{f,X}$ be (one of) $t \in [-X, X]$ that attains the minimum. Similarly we define   
  \begin{equation} \label{eq:MfXdef}
  M(f; X) := \min_{|t| \leq X} \sum_{p \leq X} \frac{1 - \Re f(p) p^{-it}}{p}.
  \end{equation}
  and let $t_{f,X}$ be (one of) $t \in [-X, X]$ that attains the minimum.
  
  Moreover we will say that $f$ is almost real-valued if
  $$
  \sum_{\substack{ p \in \mathbb{P} \\ f(p) \not \in \mathbb{R}}} \frac{|f(p)|}{p} < \infty. 
  $$
\end{definition}

We note that for the theorems that we are about to state
the condition $|t| \leq X$ in Definition~\ref{def:complex} can be relaxed but not significantly: specifically the theorems remain true if we
require that $|t| \leq X / h^{1 - \varepsilon}$ for some $\varepsilon > 0$,
but become false if we require $|t| \leq X / h^{1 + \varepsilon}$. 

We first record the following direct improvement of the main theorem from \cite{MainPaper} which however does not yet
address the case of sparse multiplicative functions $f$. 

\begin{theorem} \label{th:MTDense}
Let $f: \mathbb{N} \rightarrow \mathbb{U}$ be a multiplicative function. Fix $\rho < \rho_1 := 1/3 -2/(3\pi)$. There exists a constant $C' > 1$ such that, for any $2 \leq h \leq X^{1/2}$ and $\delta \in (0, 1/1000)$, 
\begin{equation} \label{eq:mainresulttt}
\begin{split}
&\Big | \frac{1}{h} \sum_{x < n \leq x + h} f(n) - \frac{1}{h} \int_{x}^{x + h} u^{i t_{f,X}} du \cdot \frac{1}{X} \sum_{X < n \leq 2X} f(n) n^{-it_{f, X}} \Big | \\
&\qquad \qquad \leq \delta + C'\frac{\log \log h}{\log h} + \frac{1}{(\log X)^{\rho/36}}
\end{split}
\end{equation}
for all but at most
$$
\ll_\rho  X \Bigl( \frac{1}{h^{\delta/15}} +  \frac{1}{X^{\delta^4/10^{16}}}\Bigr)
$$
integers $x \in [X, 2X]$.
Moreover, if $f$ is almost real-valued, then the claim also holds with $t_{f,X}$ replaced by $0$.
\end{theorem}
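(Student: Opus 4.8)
The plan is to run the circle--method--free scheme of \cite{MainPaper}, upgrading it at two points: the key Dirichlet polynomial estimates must yield a genuine power of $h$ (rather than merely $o(1)$) together with a Hal\'asz-type saving in $X$, and the main term must be extracted around the frequency $t_{f,X}$ of \eqref{eq:MfXdef} instead of around $0$. \emph{Step~1 (Parseval reduction).} Put $T:=X/h$. Expanding $\frac1h\sum_{x<n\le x+h}f(n)$ through the Fourier transform of $\mathbf 1_{(0,h]}$ and applying Plancherel, I would reduce \eqref{eq:mainresulttt} to a moment estimate: for an integer $k\ge1$ the $2k$-th moment over $x\in[X,2X]$ of the left-hand side of \eqref{eq:mainresulttt} is, up to an admissible error $O_k(1/T)$, controlled by a $k$-fold integral of $\bigl|\frac1X\sum_{X<n\le 2X}f(n)n^{-it}\bigr|$ against a weight concentrated on $1/h\le|t-t_{f,X}|\le T$ of size $\asymp\min\bigl(1,(h|t-t_{f,X}|)^{-2}\bigr)$. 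The role of the subtracted main term is precisely to cancel the contribution of the only possible ``spike'' of $t\mapsto\frac1X\sum f(n)n^{-it}$, which by the definition of $t_{f,X}$ can sit only at $t=t_{f,X}$; this cancellation is what makes square-root-type savings conceivable on the remaining frequencies. When $f$ is almost real-valued, a conjugate-symmetry argument shows that $f$ cannot be strongly $n^{it}$-pretentious for $t$ away from $0$, so $t_{f,X}$ may be replaced by $0$, which gives the last sentence.

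\emph{Step~2 (the engine: minor-arc Dirichlet polynomial estimates).} On $1/h\le|t-t_{f,X}|\le T$ I would apply Ramar\'e's identity to rewrite $\frac1X\sum_{X<n\le 2X}f(n)n^{-it}$, up to the negligible contribution of integers without a suitable prime factor, as a sum over $O(\log\log X)$ blocks of products $\bigl(\sum_{P_j<p\le Q_j}f(p)p^{-1-it}\bigr)\bigl(\sum_m a_{m,j}m^{-1-it}\bigr)$, the ranges chosen with $\log Q_j=(\log P_j)^{1+\eta_j}$: the first block $[P_1,Q_1]$ is taken so short that only Hal\'asz's theorem applies to it, producing the term $(\log X)^{-\rho/36}$ of \eqref{eq:mainresulttt} with the admissible $\rho$ constrained, through the Hal\'asz constant $1-\tfrac{2}{\pi}$, to $\rho<\rho_1=\tfrac{1}{3}(1-\tfrac{2}{\pi})$; the last block has $Q_J$ as large as the constraint $Q_J\ll h^{1-o(1)}$ permits, this constraint (the divisor sum, of length $\asymp X/Q_J$, must exceed the frequency range $T=X/h$ for the mean value theorem to be efficient) being responsible for the $\log\log h/\log h$ loss and for the ceiling on the $h$-aspect. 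The heart of the argument is a bound, uniform over the block, for the mean value of $\bigl|\sum_{P_j<p\le Q_j}f(p)p^{-1-it}\bigr|^{2\ell}\bigl|\sum_m a_{m,j}m^{-1-it}\bigr|^2$ over the minor arcs: one couples the mean value theorem for Dirichlet polynomials with a large-values argument showing that a large value of the prime sum at scale $[P_j,Q_j]$ forces large values at wider scales, and iterates through the $O(\log\log X)$ blocks while optimising $\ell$, so that the cumulative saving is a genuine negative power of $Q_J$ --- hence of $h$ --- together with a factor $\exp(-c\,M(f;X))$ from Hal\'asz. With $k=1$ this yields, on taking square roots, the floor $C'\frac{\log\log h}{\log h}+(\log X)^{-\rho/36}$ of \eqref{eq:mainresulttt}; for general $k$ it yields the analogous bound for the $2k$-th moment, the per-block estimates losing only a factor $O(\ell)$ per unit of exponent.

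\emph{Step~3 (from moments to the exceptional set).} Separating the minor-arc contribution from the two floor terms, the former is bounded by a genuine negative power of $h$; a second moment and Chebyshev already give a power-saving exceptional set, and after tracking the mild losses in the large-values step one obtains the exponent in $Xh^{-\delta/15}$ (the linear dependence on $\delta$ and the small constant $1/15$ being artifacts of that step; for $h$ a bounded power of $1/\delta$ the assertion offers little over the trivial bound $X$). The $X^{-\delta^4/10^{16}}$ term comes from the rarity of frequencies $t$ at which the multiplicative Dirichlet polynomial $\frac1X\sum f(n)n^{-it}$ exceeds $\delta^2$ in modulus --- equivalently, from splitting off the set of $x$ for which a short Euler factor of $f$ is abnormally large --- which is again controlled by the quantitative Hal\'asz input of Step~2.

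\emph{Main obstacle.} The crux is the minor-arc estimate of Step~2. In \cite{MainPaper} the corresponding bound is only $o(1)$, power-saving in $h$ merely for $h$ up to a fixed power of $\log X$; making it genuinely power-saving in $h$ uniformly for $2\le h\le X^{1/2}$, while staying uniform in the Hal\'asz distance so as to generate the saving in $X$, requires a more efficient iteration across the $O(\log\log X)$ prime blocks together with a careful, scale-by-scale and moment-by-moment, accounting of the measure of the frequency sets on which the prime sums are large. All the other ingredients --- the Parseval reduction, the extraction of the main term at $t_{f,X}$, the passage from moments to the exceptional set --- are comparatively routine; the technical core is isolated in sharper and more general form in Theorem~\ref{th:MT}.
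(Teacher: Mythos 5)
Your outline reproduces the general machinery of \cite{MainPaper} (Ramar\'e decomposition, iterated large-value estimates over blocks, Hal\'asz input, Parseval, main term at $t_{f,X}$), but it misses the single new ingredient on which the stated exceptional-set bound rests, and the substitute you propose in Step~1 does not work. Plancherel controls only the \emph{second} moment over $x$ of the short-interval average; your claim that the $2k$-th moment is controlled by a $k$-fold integral of $\bigl|\frac1X\sum_{X<n\le 2X} f(n)n^{-it}\bigr|$ has no justification — expanding the $2k$-th power produces a $2k$-fold frequency integral whose off-diagonal kernel requires additive information about the set of large frequencies, which is exactly what is unavailable. The paper never takes high moments in $x$: it stays with the second moment and instead restricts to integers having \emph{two large prime factors} (in $(X^{\nu_1},X^{\sqrt{\nu_1\nu_2}}]$ and $(X^{\sqrt{\nu_1\nu_2}},X^{\nu_2}]$, part of the set $\mathcal{S}$ of Theorem~\ref{th:ThminS}), which makes it possible to excise an essentially arbitrary sparse set $\mathcal{U}$ of bad frequencies from the Perron integral \emph{before} squaring, at negligible cost, via the sieve-weighted Hal\'asz--Montgomery inequality for primes (Lemma~\ref{lem:primeshalasz}, resting on Ford's Vinogradov-type bound for $\zeta$); this is Proposition~\ref{prop:L1work}. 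Without this device (or an equivalent), a pure Chebyshev/second-moment argument cannot produce the term $X^{1-\delta^4/10^{16}}$ at all — that term comes precisely from the $X^{-\nu_1^3/160}$ saving in~\eqref{eq:A2bound} with $\nu_1\asymp\delta^2$, not from "rarity of frequencies where the polynomial exceeds $\delta^2$" or from Hal\'asz.

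Your mechanism for the power of $h$ is also misdescribed, in a way that matters. The block iteration does not accumulate "a negative power of $Q_J$, hence of $h$": in these arguments the final saving is a power of the \emph{shortest} prime polynomial, i.e. $P_1^{-2\alpha_1}$ with $\alpha_1<\tfrac14$ and $(P_1,Q_1]\subset(1,h]$, and the constraint is on the first block, not the last (the later $Q_j$ grow far beyond $h$, up to $\exp(\sqrt{\log X})$, and the two extra blocks sit near $X^{\nu_1},X^{\nu_2}$). The power of $h$ in the exceptional set is then manufactured by choosing $P_1=Q_1^{c\delta}$ with $Q_1=\min\{h,X^{\nu_1}\}$, accepting an error $\asymp \log P_1/\log Q_1\asymp\delta$ from the integers lacking a factor in $(P_1,Q_1]$ (handled by a sieve upper bound; this, in the extreme case $P_1\approx(\log h)^{C}$, is also the true source of the $\log\log h/\log h$ term, not a constraint $Q_J\ll h^{1-o(1)}$), and applying Chebyshev to the $L^2$ bound $\ll\delta^{-2}(\log h)^2P_1^{-(1/2-\nu_2-3\eta)}$. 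If you repair Step~1 by adopting the two-large-prime-factor restriction and the pre-removal of $\mathcal{U}$, and rework Steps~2--3 around the saving in $P_1$ with the $\delta$-dependent choice of $P_1$, your plan becomes essentially the paper's proof of Theorem~\ref{th:ThminS} combined with the sieve step; as written, the claimed bounds on the exceptional set are not reachable by the argument you describe.
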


We notice that the theorem would not be true if $t_{f, X}$ was the smallest real-number in $[-X / h^{1 + \varepsilon}, X / h^{1 + \varepsilon}]$ minimizing the expression inside the minimum in~\eqref{eq:MfXdef}. 

In order to extend Theorem \ref{th:MTDense} to multiplicative functions that vanish on many primes, we restrict our attention to a wide sub-class of multiplicative functions that we call $(\alpha,\Delta)$-non-vanishing. This is a weighted analogue of the condition \eqref{eq:req}. 

\begin{definition} \label{def:vanishing}
  Given $\alpha \in (0,1]$ and $\Delta \geq 1$, a multiplicative function $f : \mathbb{N} \rightarrow \mathbb{U}$ is said to be $(\alpha, \Delta)$-non-vanishing if, for all $2 \leq w \leq z \leq \Delta$, we have
  \begin{equation} \label{eq:defnonvani}
  \sum_{w < p \leq z} \frac{|f(p)|}{p} \geq \alpha \sum_{w < p \leq z} \frac{1}{p} - O\Big(\frac{1}{\log w}\Big),
  \end{equation}
  where the implied constant is understood to be fixed, and other constants are allowed to depend on it.
\end{definition}

Note that if $f : \mathbb{N} \rightarrow [0,1]$ is $(\alpha, X^{\theta})$-non-vanishing with $\alpha \in (0,1]$ and $\theta > 0$, then,
\begin{equation} \label{eq:fundm}
\sum_{n \leq X} f(n) \asymp X \prod_{p \leq X} \Big ( 1 + \frac{f(p) - 1}{p} \Big )
\end{equation}
with the implicit constant in $\asymp$ depending only on $\alpha, \theta$ and the implied constant in \eqref{eq:defnonvani}\footnote{see Lemma~\ref{le:GrKoMa}(iii) below for a stronger version of this claim}. One can think of \eqref{eq:fundm} as a weighted version of the fundamental lemma of sieve theory.
Finally, given a multiplicative function $g : \mathbb{N} \rightarrow \mathbb{U}$, define
$$
H(g; X) := \prod_{p \leq X} \Big (1 + \frac{(|g(p)| - 1)^2}{p} \Big )
$$
and given $\alpha \in (0, 1]$ set,
  \begin{equation} \label{eq:etaadef}
  \rho_{\alpha} := \frac{\alpha}{3} - \frac{2}{3\pi} \sin \Big ( \frac{\pi \alpha}{2} \Big ) > 0.
  \end{equation}
We are now ready to state our main theorem.

\begin{theorem} \label{th:MT}
  Let $\alpha \in (0, 1], \theta \in (0, 1/16]$ and $0 < \rho < \rho_\alpha$.
  Let $f : \mathbb{N} \rightarrow \mathbb{U}$ be an $(\alpha, X^{\theta})$-non-vanishing multiplicative function.
  Given $h_0 \in [2, X^{\theta}]$ set $h := h_0 H(f; X)$. There exists a constant $C' > 1$ depending only on $\theta$ such that, for any $\delta \in (0, 1/1000)$,
      \begin{equation} 
      \label{eq:ress}
      \begin{split}
\Big |\frac{1}{h} & \sum_{x < n \leq x + h} f(n) - \frac{1}{h} \int_{x}^{x+ h} u^{i \widehat{t}_{f, X}} du \cdot \frac{1}{X} \sum_{X < n \leq 2X} f(n) n^{-i\widehat{t}_{f, X}} \Big | \\
& \leq \Big (\delta + C'\Big(\frac{\log \log h_0}{\log h_0}\Big)^{\alpha} + \frac{1}{(\log X)^{\alpha \rho / 36}} \Big) \prod_{p \leq X}\Big(1+\frac{|f(p)|-1}{p}\Big )
      \end{split}
      \end{equation}
for all but at most
$$
  \ll_{\rho, \theta} X\Bigl(\frac{1}{h_0^{(\delta / 2000)^{1 / \alpha}}} + \frac{1}{X^{\theta^3 (\delta / 2000)^{6 / \alpha}}}\Bigr)
$$
integers $x \in [X, 2X]$. Moreover, if $f$ is almost real-valued, then the claim also holds with $\widehat{t}_{f, X}$ replaced by $0$.
\end{theorem}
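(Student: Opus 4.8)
The plan is to run the Dirichlet polynomial machinery behind Theorem~\ref{th:MTDense}, with two modifications forced by the hypotheses: the ``effective'' interval length is $h_0$ rather than $h=h_0H(f;X)$ (the factor $H(f;X)$ absorbs the variance of $\log|f|$ coming from $|f(p)|$ being bounded away from $1$, just as $\sqrt{\log x}$ does for sums of two squares), and wherever the dense argument used $\mathcal N=\mathbb N$ we feed in $(\alpha,X^\theta)$-non-vanishing through the weighted fundamental lemma~\eqref{eq:fundm}. First I would reduce to a mean value estimate for Dirichlet polynomials. With $S(x)=\sum_{x<n\le x+h}f(n)$ and $M(x)=\frac1h\int_x^{x+h}u^{i\widehat t_{f,X}}\,du\cdot\frac1X\sum_{X<n\le2X}f(n)n^{-i\widehat t_{f,X}}$, a Perron-type representation expresses $\tfrac1hS(x)-M(x)$ in terms of $F(\tfrac12+it):=\sum_{X<n\le2X}f(n)n^{-\frac12-it}$ on $|t|\gg 2X/h$, arranged so that the low-frequency part $|t|\ll 2X/h$ together with the window about $\widehat t_{f,X}$ reproduces $M(x)$ (this is also why the cutoff $|t|\le X$ in Definition~\ref{def:complex} cannot be weakened much). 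So it suffices to bound $\int_{\mathcal T}|F(\tfrac12+it)|^2\,dt$, with $\mathcal T\subset\{2X/h\ll|t|\ll X\}$ minus a window about $\widehat t_{f,X}$, by $\ll\eta^2\,(X/h)\,\bigl(\prod_{p\le X}(1+\tfrac{|f(p)|-1}{p})\bigr)^2$ for the deterministic error $\eta=C'(\log\log h_0/\log h_0)^\alpha+(\log X)^{-\alpha\rho/36}$, together with a separate estimate, for the $\delta$-dependent part, saving a power of $h_0$ off the measure of the bad $x$.

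For the mean value bound I would use the Ramar\'e/sieve decomposition over prime scales. Along a geometric-in-$\log$ progression of scales $(P_j,P_{j+1}]$, the bottom ones below $h_0$ and the top ones reaching $X^\theta$, hypothesis~\eqref{eq:fundm} --- which is exactly where $(\alpha,X^\theta)$-non-vanishing enters --- shows that the $|f|$-weighted mass of $n\in(X,2X]$ lacking a prime factor in the part of this range below $h_0$ is $\ll(\log\log h_0/\log h_0)^\alpha\sum_{X<n\le2X}|f(n)|$, hence negligible; for the rest, isolating one such prime gives $F=\sum_j\mathcal P_j\mathcal R_j+(\text{small})$ with $\mathcal P_j$ a prime polynomial over $(P_j,P_{j+1}]$ and $\mathcal R_j$ the complementary factor. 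On $\mathcal T$ I split according to whether $|\mathcal P_j(\tfrac12+it)|$ is below or above a threshold: on the sub-threshold set the mean value theorem for Dirichlet polynomials applied to $\mathcal R_j$, combined with a Hal\'asz bound, produces the normalizing factor $\prod_{p\le X}(1+\frac{|f(p)|-1}{p})$; on the super-threshold set the large-values ($L^{2k}$) estimate for the logarithmically short prime polynomial $\mathcal P_j$ bounds its measure, and $\mathcal R_j$ is estimated trivially there. Optimizing the threshold and the geometry of the scales, using the precise shape of Hal\'asz's inequality for unimodular-dominated multiplicative functions that are $(\alpha,\cdot)$-non-vanishing, is what manufactures the exponent $\rho_\alpha=\alpha/3-\tfrac2{3\pi}\sin(\tfrac{\pi\alpha}{2})$ and the forms $C'(\log\log h_0/\log h_0)^\alpha$ and $(\log X)^{-\alpha\rho/36}$ of the deterministic error.

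For the power-saving estimate I would run the same dichotomy at the bottom scales, but now demanding that $\mathcal P_j$ exceed its typical size by a large factor depending on $\delta$; a $2k$-th moment computation for these short-prime polynomials, with $k$ taken to be a suitable power of $1/\delta$, bounds the measure of the corresponding $t$, hence of the bad $x$, by a power of $h_0$. The loss of $\delta$ through the moment count is why the exponents appear as $(\delta/2000)^{1/\alpha}$ and $\theta^3(\delta/2000)^{6/\alpha}$ rather than linear in $\delta$, the second term coming from the complementary regime where $t$ lies between a small power of $X/h$ and the window about $\widehat t_{f,X}$, handled by a mean value/zero-density argument whose exponent also degrades polynomially in $k$. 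Finally, the reduction to the real case is by twisting: set $g(n)=f(n)n^{-i\widehat t_{f,X}}$, so $|g|=|f|$ (the non-vanishing hypothesis is preserved verbatim) and, by the definition of $\widehat M(f;X)$, $g$ is as close to real-valued as an admissible twist allows; running the argument for $g$ and reassembling the $u^{i\widehat t_{f,X}}$ and $n^{-i\widehat t_{f,X}}$ factors yields~\eqref{eq:ress}, while for almost real-valued $f$ the convergence of $\sum_{f(p)\notin\mathbb R}|f(p)|/p$ forces $\widehat t_{f,X}=0$ up to an error absorbed in $(\log X)^{-\alpha\rho/36}$. The main obstacle is the power saving: the large-values bookkeeping must be carried out across all $O(\log h_0)$ scales simultaneously, so that the union bound does not erase the gain --- which rules out a fixed-moment Chebyshev argument and forces the high-moment estimate for the short-prime polynomials --- all while still extracting the sharp Hal\'asz exponent $\rho_\alpha$. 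This is precisely where the present paper improves on~\cite{MainPaper}, where the corresponding exceptional bound was power-saving only in the range $h_0\le(\log X)^\nu$.
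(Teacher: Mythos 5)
Your skeleton (Perron/Parseval reduction, Ramar\'e decomposition over prime scales, a threshold dichotomy for the short prime polynomials, high moments to count large values, Hal\'asz input adapted to $(\alpha,X^\theta)$-non-vanishing $f$) is the common frame of \cite{MainPaper} and of this paper, but as written it reproduces the argument of \cite{MainPaper} and cannot yield the stated exceptional set. On the set of $t$ where the prime polynomial is large at \emph{every} available scale, bounding the long factor $\mathcal R_j$ trivially and the measure by an $L^{2k}$ moment count only saves powers of $\log X$; this is exactly why \cite{MainPaper} carries an $X(\log X)^{-c}$ term and has power saving in $h_0$ only for $h_0\le(\log X)^{\nu}$. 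The paper's key new device, which your proposal does not contain, is to force members of $\mathcal S$ to have \emph{two} large prime factors (near $X^{\nu_1}$ and $X^{\nu_2}$) and to excise, \emph{before} the mean square over $x$, the set $\mathcal U$ of frequencies where the dichotomy fails at all scales (Proposition~\ref{prop:L1work}); the excised set is then controlled by factoring through the two large-prime polynomials and applying the Hal\'asz--Montgomery inequality (Lemma~\ref{le:Hallargevalint}) together with the sieve-weighted Hal\'asz--Montgomery inequality for primes (Lemma~\ref{lem:primeshalasz}), which rests on Ford's Vinogradov-type bound for $\zeta$. That is the source of the term $X^{1-\theta^3(\delta/2000)^{6/\alpha}}$; your appeal to ``a mean value/zero-density argument'' for this regime has no unconditional substitute, since no zero-density information exists for a general bounded multiplicative $f$.

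A second genuine gap is your treatment of integers lacking a prime factor in the designated ranges below $h_0$: controlling only their long average via~\eqref{eq:fundm} does not control them in almost all short intervals with a power-saving exceptional set. These sums are nonnegative with positive mean, so Markov with Shiu's bound, or Chebyshev with the Henriot-type mean value, saves at best a power of $\delta$, never a power of $h_0$: one must compare the short-interval count with its mean to good accuracy. The paper does this in Proposition~\ref{prop:sieveaaintervals} via a Brun--Hooley/linear-sieve majorant $\sum_{d\mid n}\lambda_d$ of level $X^{2/5}$ for $|f(n)|\mathbf 1_{p\mid n\Rightarrow p\notin(P,Q]}$ combined with Friedlander's $L^2$ estimate (Lemma~\ref{le:Fried}), giving an exceptional set $\ll X/h_0^{4/5}$ for the non-$\mathcal S$ contribution; nothing in your plan plays this role. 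Finally, the complex case cannot be dispatched by twisting $g(n)=f(n)n^{-i\widehat t_{f,X}}$ and ``reassembling'': $|\widehat t_{f,X}|$ may be as large as $X$ while $h$ is a power of $X$, so $n^{i\widehat t_{f,X}}$ is far from constant on $(x,x+h]$; the paper instead keeps the frequency window $|t-\widehat t_{f,X}|\le T_0$ inside the Perron integral and extracts the factor $\int_x^{x+h}u^{i\widehat t_{f,X}}du$ there, comparing long averages via the Lipschitz estimate (Lemma~\ref{le:Lipschitz}).
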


A few features of this theorem deserve further comment.

First, one might believe that the shortest intervals for which Theorem \ref{th:MT} should hold are of length
\begin{equation} \label{eq:naive} 
\prod_{p \leq X} \Big ( 1 - \frac{|f(p)| - 1}{p} \Big )
\end{equation}
since this is the inverse of the mean-value of $|f|$. However this is larger than $H(f; X)$ unless $|f|$ is concentrated in $\{0,1\}$ ! Therefore for functions $f$ such that $|f|$ is not concentrated in $\{0,1\}$ we obtain a result in intervals shorter than one would naively expect. 

In fact, it should be possible to obtain non-trivial results in intervals of length shorter than $H(f;X)$. The example $f(n) = (1 - \varepsilon)^{\Omega(n)}$ is in this respect instructive. It can be shown that the main contribution to the mean value of $f$ comes from integers in $[1,X]$ having $(1 - \varepsilon + o(1)) \log\log X$ prime factors. Since the mean-spacing of integers $n \leq X$ with $\Omega(n) = (1 - \varepsilon + o(1)) \log\log X$ is $(\log X)^{\varepsilon^2 / 2 + O(\varepsilon^3)}$ we expect that Theorem \ref{th:MT} should hold on intervals of length $(\log X)^{\varepsilon^2 / 2 + O(\varepsilon^3)}$. For comparison Theorem \ref{th:MT} gives results in intervals of length $\gg H(f; X) = (\log X)^{\varepsilon^2}$ while the ``naive length'' \eqref{eq:naive} is $(\log X)^{\varepsilon}$. In this example it may actually be possible to reach shorter intervals than we do here by restricting first to integers that have $(1 - \varepsilon + o(1)) \log\log X$ prime factors (see \cite{Correlations2} for such a manoeuver at work and \cite{Goudout, Teravainen} for results on the distribution of integers $n$ with $\omega(n) = k$ in almost all short intervals). 

Secondly, in many applications we work with multiplicative functions for which $\sum_{X < n \leq 2X} f(n)n^{-i\widehat{t}_{f, X}}$ is small compared to $\sum_{X < n \leq 2X} |f(n)|$. In that case the main term in Theorem \ref{th:MT} can be removed. A simple sufficient condition for this to happen is that $\widehat{M}(f; X)$ is large. We provide for convenience the corollary below. 

\begin{corollary} \label{cor:convenient}
\begin{enumerate}[(i)]
\item Suppose that the assumptions of Theorem \ref{th:MT} hold. Then
\[
\begin{split}
  &\Big |\frac{1}{h}  \sum_{x < n \leq x + h} f(n) \Big | \\
  &\leq \Big ( \delta + C' \Big (\frac{\log\log h_0}{\log h_0} \Big )^{\alpha} + C' \frac{\widehat{M}(f; X)}{\alpha \exp(\widehat{M}(f; X))} + \frac{1}{\alpha (\log X)^{\alpha \rho/36}} \Big ) \prod_{p \leq X} \Big ( 1 + \frac{|f(p)| - 1}{p} \Big )  
\end{split}
\]
  for all but at most
  $$
  \ll_{\rho, \theta} X\Bigl(\frac{1}{h_0^{(\delta / 2000)^{1 / \alpha}}} + \frac{1}{X^{\theta^3 (\delta / 2000)^{6 / \alpha}}}\Bigr)
  $$
  integers $x \in [X, 2X]$. 
\item Suppose that the assumptions of Theorem~\ref{th:MTDense} hold. Then
\[
\Big | \frac{1}{h} \sum_{x < n \leq x + h} f(n)\Big | \leq \delta + C'\frac{\log \log h}{\log h} + C' \frac{M(f; X)}{\exp(M(f; X))} + \frac{1}{(\log X)^{\rho/36}}
\]
for all but at most
$$
\ll_\rho  X \Bigl( \frac{1}{h^{\delta/15}} +  \frac{1}{X^{\delta^4/10^{16}}}\Bigr)
$$
integers $x \in [X, 2X]$.
\end{enumerate}
  \end{corollary}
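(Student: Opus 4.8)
The plan is to deduce both parts directly from the corresponding main theorems by bounding, uniformly in $x$, the ``main term''
\[
T_t(x) := \frac1h \int_x^{x+h} u^{it}\, du \cdot \frac1X \sum_{X < n \le 2X} f(n) n^{-it},
\]
taken with $t = \widehat t_{f,X}$ in part (i) and with $t = t_{f,X}$ in part (ii). Since $|u^{it}| = 1$ we have $|T_t(x)| \le \bigl|\tfrac1X\sum_{X<n\le 2X} f(n)n^{-it}\bigr|$ for every $x$, so the task reduces to an upper bound for the long twisted average $\bigl|\tfrac1X\sum_{X<n\le 2X}f(n)n^{-it}\bigr|$ via a Halász-type theorem. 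As this bound is independent of $x$, feeding it into the triangle inequality alongside Theorem~\ref{th:MT} (resp. Theorem~\ref{th:MTDense}) changes neither the shape of the right-hand side nor the exceptional set, which is therefore inherited verbatim.

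For part (ii) put $g(n) := f(n) n^{-it_{f,X}}$, a multiplicative function into $\mathbb{U}$. By the defining property of $t_{f,X}$ in~\eqref{eq:MfXdef} one has $\sum_{p\le X}\frac{1-\Re g(p)}{p} = M(f;X)$, and more generally $\sum_{p\le X}\frac{1-\Re(g(p)p^{-it})}{p}\ge M(f;X)$ for all $t$ with $|t_{f,X}+t|\le X$; in particular the pretentious distance of $g$ from $1$, measured over the (short) range of $t$ relevant to Halász's theorem, is still $\ge M(f;X)$. The sharp form of Halász's theorem (e.g. in the formulation of Granville--Soundararajan or Tenenbaum), applied to $g$ on $[1,2X]$ and on $[1,X]$ and then subtracted, gives
\[
\Bigl|\frac1X\sum_{X<n\le 2X} f(n)n^{-it_{f,X}}\Bigr| \ll \bigl(1+M(f;X)\bigr)e^{-M(f;X)} + \frac{1}{\log X}.
\]
Since $\rho/36 < 1$ the error term is absorbed into $(\log X)^{-\rho/36}$, and the leading term is recorded in the stated form $C'\,M(f;X)e^{-M(f;X)}$; combining with Theorem~\ref{th:MTDense} by the triangle inequality yields part (ii).

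For part (i) the same reduction applies with $g(n):= f(n)n^{-i\widehat t_{f,X}}$. Here $|g| = |f|$, so $g$ is again $(\alpha, X^{\theta})$-non-vanishing with the same constants, $\prod_{p\le X}(1+\tfrac{|g(p)|-1}{p}) = \prod_{p\le X}(1+\tfrac{|f(p)|-1}{p})$, and by~\eqref{eq:tfx} one has $\sum_{p\le X}\frac{|g(p)|-\Re g(p)}{p} = \widehat M(f;X)$, which is minimal in the sense that $\sum_{p\le X}\frac{|g(p)|-\Re(g(p)p^{-it})}{p}\ge \widehat M(f;X)$ over the short range of $t$ entering Halász's theorem. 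The required input is now a \emph{weighted} (sieve-theoretic) Halász estimate for non-vanishing multiplicative functions, in the spirit of~\eqref{eq:fundm} and of the Granville--Koukoulopoulos--Matom\"aki ``when the sieve works'' circle of ideas: for $g$ as above,
\[
\Bigl|\frac1X\sum_{X<n\le 2X} g(n)\Bigr| \ll_{\alpha,\theta} \Bigl(\frac{\widehat M(f;X)}{\alpha\, e^{\widehat M(f;X)}} + \frac{1}{\alpha(\log X)^{\alpha\rho/36}}\Bigr)\prod_{p\le X}\Bigl(1+\frac{|f(p)|-1}{p}\Bigr),
\]
a bound that will be recorded as part of Lemma~\ref{le:GrKoMa}. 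Inserting it into Theorem~\ref{th:MT} and merging the two $(\log X)^{-\alpha\rho/36}$-terms produces part (i).

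The substantive point is the weighted Halász estimate: in contrast with the classical $|f|\equiv 1$ case one must carry the sieve density $\prod_{p\le X}(1+\tfrac{|f(p)|-1}{p})$ as a separate factor, and the loss of a factor $1/\alpha$ (which is exactly why $\alpha$ appears in the denominators) arises from invoking the non-vanishing hypothesis~\eqref{eq:defnonvani} to pass between $\sum_{n\le X}|g(n)|$ and the corresponding Euler product. What makes the corollary non-vacuous is precisely that $\widehat M(f;X)\,e^{-\widehat M(f;X)}\to 0$, so the main term is genuinely negligible against the mean value of $|f|$ once $\widehat M(f;X)$ is large; a minor technical wrinkle, the mismatch between the range $|t|\le X$ in Definition~\ref{def:complex} and the much shorter range needed by Halász's theorem, is harmless because shrinking the range of $t$ can only increase the relevant distance.
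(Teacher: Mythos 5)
Your overall strategy (triangle inequality against Theorem~\ref{th:MT}/\ref{th:MTDense}, then a Hal\'asz-type bound for the long twisted average) is the same as the paper's, but there is a genuine gap at the very first step, and it is exactly the point the paper's proof is organized around. By bounding $|T_t(x)| \le \bigl|\tfrac1X\sum_{X<n\le 2X} f(n)n^{-it}\bigr|$ you discard the factor $\tfrac1h\int_x^{x+h}u^{it}\,du$, and you then need the unconditional estimate $\bigl|\tfrac1X\sum_{X<n\le 2X} f(n)n^{-it_{f,X}}\bigr| \ll (1+M(f;X))e^{-M(f;X)}+ (\log X)^{-1}$ (and its weighted analogue with $\widehat M$). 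This does not follow from the minimality in \eqref{eq:MfXdef}/\eqref{eq:tfx}: Hal\'asz's theorem applied to $g(n)=f(n)n^{-it_{f,X}}$ involves $\min_{|\tau|\le T_0}\sum_{p\le X}\frac{1-\Re f(p)p^{-i(t_{f,X}+\tau)}}{p}$, i.e.\ a minimum over a window \emph{centred at} $t_{f,X}$, not at $0$. Your remark that ``shrinking the range of $t$ can only increase the relevant distance'' addresses the wrong containment: when $|t_{f,X}|$ is within $T_0$ of $X$, part of that window lies outside $[-X,X]$, where Definition~\ref{def:complex} gives no lower bound. Concretely, nothing in your argument excludes the scenario $f(n)=n^{it'}$ with $t'=X+5$ and the minimum in \eqref{eq:MfXdef} attained at some $t_{f,X}$ within $O(1)$ of $X$; then $M(f;X)\gg\log\log X$, so your claimed Hal\'asz bound forces the twisted long sum to be $o(1)$, whereas it is in fact $\asymp (1+|t'-t_{f,X}|)^{-1}\gg 1$. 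The corollary is still true for such $f$ only because of the factor you threw away: $\tfrac1h\int_x^{x+h}u^{it_{f,X}}\,du\ll X/(h|t_{f,X}|)\ll 1/h$.

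This is precisely how the paper argues: it splits on $|\widehat t_{f,X}|\le X/2$ versus $|\widehat t_{f,X}|>X/2$. In the first case the Hal\'asz window stays inside $[-X,X]$, and the required sparse/weighted Hal\'asz bound is Lemma~\ref{le:SparseHalaszComplex}(iv) (built on Lemma~\ref{le:sparseDistEst2}(iv)), which is stated only for $|t|\le X/2$ for exactly this reason; in the second case one uses the decay of the oscillatory integral together with Shiu's bound (Lemma~\ref{le:Shiu}). Your part (i) also cites the weighted Hal\'asz input as ``part of Lemma~\ref{le:GrKoMa}''; in the paper it is Lemma~\ref{le:SparseHalaszComplex}, but that is a citation issue, not a mathematical one. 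To repair your proof, keep the factor $\tfrac1h\int_x^{x+h}u^{it}\,du$ and add the case distinction on the size of $|t_{f,X}|$ (resp.\ $|\widehat t_{f,X}|$); as written, the boundary case is a genuine hole.
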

Investigating the proofs rather than directly applying Theorems~\ref{th:MTDense} and~\ref{th:MT} one could obtain better $\log$-powers for this corollary.

We also have the following ``weighted'' analogue of Corollary \ref{cor:HooleyGen}.

\begin{theorem} \label{th:LowerBound}
  Let $\alpha \in (0, 1]$, $\theta \in (0, 1/9)$ and $\varepsilon > 0$. Let $f : \mathbb{N} \rightarrow [0,1]$ be an $(\alpha, X^{\theta})$-non-vanishing multiplicative function. Let $2 \leq h_0 \leq X/H(f; X)$. Then, there exists a positive constant $\delta = \delta(\alpha, \theta, \varepsilon)$ such that the the number of $x \in [X, 2X]$ for which 
    $$
    \frac{1}{h_0 H(f; X)} \sum_{x < n \leq x + h_0 H(f; X)} f(n) \leq \frac{\delta}{X} \sum_{X < n \leq 2X} f(n)
    $$
    is
    $$
    \ll_{\alpha, \varepsilon, \theta} X h_0^{-1/2 + \varepsilon}.
    $$
\end{theorem}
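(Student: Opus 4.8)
The plan is to derive the lower bound on the short interval sum as a consequence of Theorem~\ref{th:MT} applied with a carefully chosen $\delta$, combined with a second-moment (variance) argument to control the exceptional set with the sharper exponent $h_0^{-1/2+\varepsilon}$. First I would observe that by Theorem~\ref{th:MT}, applied with the parameter there equal to some fixed small $\delta_0 = \delta_0(\alpha,\theta)$ and with the same $h_0$, one obtains that for all but $\ll X(h_0^{-c(\delta_0)} + X^{-c'(\delta_0)})$ integers $x\in[X,2X]$ the short average of $f$ over $(x, x+h_0 H(f;X)]$ is within $\bigl(\delta_0 + \text{small}\bigr)\prod_{p\le X}(1+\tfrac{|f(p)|-1}{p})$ of the long average $\frac1X\sum_{X<n\le 2X} f(n)n^{-i\widehat t_{f,X}}$ times an oscillatory factor; but since $f\ge 0$ here, $\widehat t_{f,X}=0$ (this is exactly the ``$f$ real-valued / nonnegative'' regime), the oscillatory factor is $1$, and by \eqref{eq:fundm} the long average is $\asymp \prod_{p\le X}(1+\tfrac{f(p)-1}{p})=\prod_{p\le X}(1+\tfrac{|f(p)|-1}{p})$, so the short average is $\gg \prod_{p\le X}(1+\tfrac{|f(p)|-1}{p})$ for a suitable absolute implied constant once $\delta_0$ is chosen small enough and $X$ is large. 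Comparing with the target threshold $\tfrac{\delta}{X}\sum_{X<n\le2X}f(n)$, which is $\asymp \delta \prod_{p\le X}(1+\tfrac{f(p)-1}{p})$, we see that for $\delta=\delta(\alpha,\theta,\varepsilon)$ small enough the short average exceeds the threshold off the exceptional set of Theorem~\ref{th:MT}. The only issue is that Theorem~\ref{th:MT}'s exceptional set exponent depends on $\delta_0$, hence on $\alpha,\theta$, and need not be as large as $1/2-\varepsilon$.

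To get the exponent $h_0^{-1/2+\varepsilon}$ I would instead run a direct variance computation, which is where the real work lies. Write $H=H(f;X)$, $S(x)=\sum_{x<n\le x+h_0H}f(n)$ and $A = \frac{h_0 H}{X}\sum_{X<n\le 2X}f(n)$ for the expected value. The set of bad $x$ is contained in $\{x: S(x)\le \tfrac12 A\}$ (say, after the reduction above shrinks $\delta$), so it suffices to bound $\int_X^{2X} (S(x)-A)^2\,dx$. Expanding the square gives a main term and a shifted-convolution term $\sum_{|k|\le h_0H}\bigl(h_0H-|k|\bigr)\sum_{X<n\le 2X}f(n)f(n+k)$; for $f\ge 0$ an $(\alpha,X^\theta)$-non-vanishing multiplicative function the correlation sums $\sum_{n\le X}f(n)f(n+k)$ can be bounded from above, uniformly in $k\neq 0$, by $\ll X \bigl(\prod_{p\le X}(1+\tfrac{f(p)-1}{p})\bigr)^2 \prod_{p\mid k}(1+O(1/p))$ via a standard sieve / Shiu-type argument (the $(\alpha,X^\theta)$-non-vanishing hypothesis is precisely what makes \eqref{eq:fundm}-type upper bounds available, and the extra factor over $p\mid k$ averages to $O(1)$). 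This yields $\int_X^{2X}(S(x)-A)^2\,dx \ll h_0 H \cdot X \cdot \bigl(\prod_{p\le X}(1+\tfrac{f(p)-1}{p})\bigr)^2 = h_0 H \cdot X \cdot (A/(h_0H/X))^2 \cdot X^{-2}\cdot\ldots$; bookkeeping the powers of $X$ and $H$ correctly gives a variance of size $\ll (h_0H)\cdot X\cdot (A/(h_0 H))^2 \asymp \frac{X A^2}{h_0 H}$, which by Chebyshev makes $\{x: |S(x)-A|\ge \tfrac12 A\}$ have measure $\ll \frac{X}{h_0 H}$. Since $H\ge 1$ this is $\ll X/h_0$, already better than $X h_0^{-1/2+\varepsilon}$; and one can afford to throw in an extra diagonal/short-range contribution that only costs $X h_0^{-1+\varepsilon}$ at worst, comfortably within the claimed bound.

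The main obstacle, and the step I would spend the most care on, is the upper bound for the correlation sums $\sum_{n\le X} f(n)f(n+k)$ that is simultaneously uniform in $k$ and tight enough (i.e.\ matches $X\prod_{p\le X}(1+\tfrac{f(p)-1}{p})^2$ up to the $\prod_{p\mid k}$ factor and up to the constant depending only on $\alpha,\theta$ and the implied constant in \eqref{eq:defnonvani}); this is really a two-dimensional sieve upper bound and the $(\alpha,X^\theta)$-non-vanishing condition with $\theta<1/9$ is exactly what gives enough room (the fundamental lemma of the sieve needs the level of distribution $X^\theta$ to dominate a fixed power of the sifting limit). A secondary subtlety is handling the range $h_0$ close to $X/H$, where the interval $(x,x+h_0H]$ is almost as long as $[X,2X]$ and the shifts $k$ range up to $X/\log$-ish; there the trivial bound $S(x)\le \sum_{X<n\le 3X}f(n)$ is essentially of the right order and no exceptional set is needed, so I would dispose of that case separately. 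Finally one must check that the reduction in the first paragraph — replacing the threshold $\tfrac{\delta}{X}\sum f(n)$ by $\tfrac12 A$ — is legitimate: choose $\delta$ smaller than a fixed multiple of the implied constant in \eqref{eq:fundm}, so that $\tfrac{\delta}{X}\sum_{X<n\le2X}f(n) \le \tfrac12 A$ outright, and then the whole argument is just Chebyshev on the variance with no further input from Theorem~\ref{th:MT} at all.
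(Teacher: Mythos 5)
There is a genuine gap, and it is the central one: your variance computation does not deliver the saving you claim. Writing $S(x)=\sum_{x<n\le x+h_0H}f(n)$, $A=\frac{h_0H}{X}\sum_{X<n\le 2X}f(n)$ and $P=\prod_{p\le X}(1+\frac{f(p)-1}{p})$, the off-diagonal part of $\int_X^{2X}S(x)^2\,dx$ is $\sum_{0<|k|\le h_0H}(h_0H-|k|)\sum_n f(n)f(n+k)$, and the sieve/Shiu--Henriot input you invoke (essentially Lemma~\ref{le:Henriot}) only gives the \emph{upper bound} $\sum_n f(n)f(n+k)\ll XP^2\prod_{p\mid k}(1+O(1/p))$ with an implied constant $C>1$ depending on $\alpha,\theta$ and the constant in~\eqref{eq:defnonvani}. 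Summing over $k$ this yields $\int S^2\ll C\,(h_0H)^2XP^2\asymp C\,XA^2$, i.e.\ the second moment is only bounded by a constant multiple of the square of the mean; there is no cancellation against $-2A\int S+XA^2$, so all you can conclude is $\int(S-A)^2\ll XA^2$, and Chebyshev then bounds the exceptional set by $O(X)$ --- the trivial bound. To get your claimed variance $\ll XA^2/(h_0H)$ you would need an \emph{asymptotic} for $\sum_n f(n)f(n+k)$, uniform in $k$ up to $h_0H$ and with main term matching $\frac1X(\sum f(n))^2$ times a singular series averaging to $1$; for general $(\alpha,X^\theta)$-non-vanishing $f$ (e.g.\ indicators of norm forms of fields of degree $>2$) this is exactly the open shifted-convolution problem highlighted in the introduction, and it is the reason a direct second-moment argument is unavailable. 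Your final sentence, that the whole proof is ``just Chebyshev on the variance,'' therefore rests on a step that fails. (Your first paragraph correctly identifies that quoting Theorem~\ref{th:MT} as stated cannot give the exponent $1/2-\varepsilon$, since its exceptional-set exponent shrinks with $\delta$; but you then discard the only viable route.)

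For comparison, the paper proves the theorem without any correlation asymptotics, in two regimes. For $h_0\le X^{\varepsilon^3/20000}$ it applies the intermediate Theorem~\ref{th:ThminS}, whose exceptional-set bound $\ll \frac{X}{\delta^2}\frac{(\log h_0)^2}{P_1^{1/2-\nu_2-3\eta}}+X^{1-\nu_1^3/200}$ has an exponent governed by the first sifting interval rather than by $\delta$; choosing $(P_1,Q_1]=(h_0^{1-\varepsilon/1000},h_0]$ gives $\ll Xh_0^{-1/2+\varepsilon}$, and a lower bound for the mean value restricted to $\mathcal{S}$ comes from Lemma~\ref{le:GrKoMa}. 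For $h_0>X^{\varepsilon^3/20000}$ it uses Proposition~\ref{pr:largevalues}: the count is factored through products of $k$ short Dirichlet polynomials over primes near $X^{\varepsilon^{10}}$, and the large values of the resulting polynomial are handled with the Hal\'asz--Montgomery inequality for primes and Huxley's large value theorem, which is where the exponent $1/2$ ultimately comes from. If you want to salvage your plan, you would have to replace the physical-space variance by this Dirichlet-polynomial mean-square framework; the correlation-sum route cannot work with upper bounds alone.
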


Finally, in principle it would be possible to extend both Theorem \ref{th:MT} and Theorem \ref{th:LowerBound} to unbounded multiplicative functions $f$ satisfying $f(n) = O_\varepsilon(n^\varepsilon)$ for every $n$ and $\varepsilon > 0$ and $f(p^{k}) = O_{k}(1)$ for every prime $p$ and integer $k \geq 1$ though one might need to change the definition of $H(f; X)$ somewhat. To obtain this extension requires one to follow through the proofs of Theorem \ref{th:MT} and Theorem \ref{th:LowerBound} paying close attention to when the bound $|f| \leq 1$ is being used. Moreover this extension requires one to use slightly more general sieve weights than the ones used in section \ref{se:sieve} and to generalize the results of Section \ref{se:halasz}. 
We do not do this extension in this paper, since considering these new cases would increase the length and complexity of this paper.

We note that establishing Theorem \ref{th:MT} and Theorem \ref{th:LowerBound} with optimally short intervals for general $f$ is likely to be a difficult question requiring new ideas. At the moment we do not have even a good conjectural understanding of what this shortest length should be, and the situation is well understood essentially only when $f$ is such that $|f(p)| \in \{0,1\}$ for all primes $p$. 

\section{Outline of the argument}

The notation employed throughout this outline might differ from the notation
employed in the proofs. We therefore caution the reader to take this outline
merely as a quick indication of the new and interesting points of our proof.

\subsection{The proof of Theorem \ref{th:MT}}

To make the presentation simpler we will ignore some complications
and focus only on the proof of Corollary \ref{cor:main1} with an exceptional set $O_\delta(X h_0^{-\kappa})$ for some $\kappa = \kappa(\delta)$. This is a special case of Theorem \ref{th:MT}
but already highlights all the main ideas. Furthermore we will assume that the multiplicative
function $f$ has mean-value zero and in fact does not pretend to be $n^{it}$ for any $|t| \leq 4 X$ so that in particular
\[
\sup_{|t| \leq 4X} \left|\sum_{X < n \leq 2X} f(n)n^{-it}\right| = o (\delta(\mathcal{N}; X)).
\] 
This  eliminates minor difficulties related to handling the main terms. 

Let $\varepsilon > 0$ be given. Let $\mathcal{S}_{\varepsilon}$ denote
the set of integers $n \in [X, 2X]$ that have a prime factor in intervals $(P_1, Q_1] \subset (1, h_0]$
and $(P_j, Q_j]$ for $2 \leq j \leq J$, where 
$$
P_j = \exp(j^{8j/\alpha^2} (\log Q_1)^{j - 1} \log P_1) \ , \ Q_j = \exp(j^{(8j+6)/\alpha^2}(\log Q_1)^j)
$$
and $J$ is the largest index for which $Q_J \leq \exp(\sqrt{\log X})$, say.
Moreover (as a novelty compared to \cite{MainPaper}) we require that integers in $\mathcal{S}_{\varepsilon}$ have two large
prime factors, say in
$$
(X^{\varepsilon^3}, X^{\varepsilon^2}] \text{ and } (X^{\varepsilon^2}, X^{\varepsilon}]
$$
respectively. The majority of integers $n \in [X, 2X]$ belongs to $\mathcal{S}_\varepsilon$
provided that $\varepsilon$ and $\log P_1/\log Q_1$ are sufficiently small.

Write $H =  h_0 \delta(\mathcal{N};X)^{-1}$.We start by noticing that if
$$
\Big | \sum_{\substack{x < n \leq x + H \\ n \in \mathcal{N}}} f(n) \Big | > \delta h_0
$$
then either
\begin{equation} \label{eq:alternative}
\Big | \sum_{\substack{x < n \leq x + H \\ n \in \mathcal{S}_{\varepsilon}\cap \mathcal{N}}} f(n) \Big | > \frac{\delta h_0}{2} \quad \text{ or } \quad \Big | \sum_{\substack{x < n \leq x +H \\ n \in \mathcal{S}_{\varepsilon}^{c} \cap \mathcal{N}}} f(n) \Big | > \frac{\delta h_0}{2},
\end{equation}
where $\mathcal{S}_{\varepsilon}^{c}$ denotes the complement of $\mathcal{S}_{\varepsilon}$ in $[X, 2X]$. 
To handle the contribution of the second term we use a sharp sieve upper bound, bounding, 
$$
\mathbf{1}_{n \in \mathcal{S}_{\varepsilon}^{c} \cap \mathcal{N}} \leq \sum_{\substack{d \leq X^{\theta} \\ d | n}} \lambda_d
$$
for some small $\theta > 0$. We construct these sieve majorants using the Brun-Hooley sieve (which is more clearly useful when proving the full Theorem \ref{th:MT}, in fact our sieve majorants give an alternative to the construction used by Matthiesen \cite{Matthiesen}). Subsequently we use the work of Friedlander (see \cite[Chapter 6.10]{Opera}) to show that, after the application of these sieve majorants, the exceptional set of $x \in [X, 2X]$ for which the second inequality in \eqref{eq:alternative} holds is bounded by $\ll_{\eta, \varepsilon'} X h^{-1 + \varepsilon'}$. This is the optimal bound given the $L^2$ techniques that are currently available (in reality one would expect that the exceptional set is $\ll_{A} X h^{-A}$ for any given $A > 0$ but we have no idea how to prove this).

Therefore it remains to address the case in which the first inequality in \eqref{eq:alternative} holds. By replacing $f(n)$ by $f(n) 1_{n \in \mathcal{N}}$, we can assume that $f$ is supported on $\mathcal{N}$.
By Perron's formula, we can more or less write
\begin{equation} \label{eq:integral1}
\begin{split}
\sum_{\substack{x < n \leq x + H \\ n \in \mathcal{S}_{\varepsilon} \cap \mathcal{N}}} f(n)& \approx \frac{1}{2\pi i} \int_{- X / H}^{X / H} \sum_{\substack{n \sim X \\ n \in \mathcal{S}_{\varepsilon}}} \frac{f(n)}{n^{1 + it}} \frac{(x+H)^{1+it} - x^{1+it}}{1+it} dt \\
&\approx \frac{H}{2\pi i} \int_{- X / H}^{X / H} \sum_{\substack{n \sim X \\ n \in \mathcal{S}_{\varepsilon}}} \frac{f(n)}{n^{1 + it}} x^{it} dt.
\end{split},
\end{equation}
If one estimated the right hand side by adding absolute values, then, even with square-root cancellation, one would only obtain a bound like $O(X^{1/2})$ which is much worse than the trivial bound $O(H)$ for the left hand side (unless $h_0$ is very large). 

Hence one must take advantage of the averaging over $x$ in the problem. Typically (for example in~\cite{MainPaper}) one studies the mean square of~\eqref{eq:integral1} over $x$, obtaining something like
\begin{equation}
\label{eq:MeanSquare}
\begin{split}
&\frac{1}{X} \int_X^{2X} \Big|\frac{1}{H} \sum_{\substack{x < n \leq x + H \\ n \in \mathcal{S}_{\varepsilon}}} f(n) \Big|^2 dx \\
& \approx \frac{1}{X} \int_{X}^{2X} \Big |\int_{-X / H}^{X/H} \Big( \sum_{\substack{n \sim X \\ n \in \mathcal{S}_{\varepsilon}}} \frac{f(n)}{n^{1 + it}} \Big ) \cdot x^{it} dt \Big |^2 dx \approx \int_{-X / H}^{X/H} \Big | \sum_{\substack{n \sim X \\ n \in \mathcal{S}_{\varepsilon}}} \frac{f(n)}{n^{1 + it}} \Big |^2 dt.
\end{split}
\end{equation}
To obtain the claim one would need the bound $O_\delta(\delta(\mathcal{N}; X)^2 h_0^{-\kappa})$ for this. However, this is not true in general --- there might be points $t$, where the integrand has size like $\delta(\mathcal{N}; X)^2 (\log X)^{-\nu}$. 

Our key new idea is to handle these ``bad'' points $t$ before taking the mean square. This is where the additional requirement (which is new compared to \cite{MainPaper}) that $\mathcal{S}_{\varepsilon}$ consists of integers having two large prime factors comes into play. Using the fact that integers in $\mathcal{S}_{\varepsilon}$ have two large prime factors we can more or less write, 
\begin{equation} \label{eq:integral2}
\begin{split}
\sum_{\substack{x < n \leq x + H \\ n \in \mathcal{S}_{\varepsilon} \cap \mathcal{N}}} f(n) & \approx \frac{H}{2\pi i} \int_{- X / H}^{X / H} \sum_{\substack{n \sim X \\ n \in \mathcal{S}_{\varepsilon}}} \frac{f(n)}{n^{1 + it}} x^{it} dt \\ & \approx \sum_{\substack{P_1 \in (X^{\varepsilon^3}, X^{\varepsilon^2}] \\ P_2 \in (X^{\varepsilon^2}, X^{\varepsilon}]}} \frac{H}{2\pi i} \int_{- X / H}^{X / H} \Big ( \sum_{p \sim P_1} \frac{f(p)}{p^{1 + it}} \Big ) \Big ( \sum_{p \sim P_2} \frac{f(p)}{p^{1 + it}} \Big ) \Big ( \sum_{\substack{n \sim X / (P_1 P_2) \\ n \in \mathcal{S}'_{\varepsilon}}} \frac{f(n)}{n^{1 + it}} \Big ) x^{it} dt,
\end{split}
\end{equation}
where $\mathcal{S}_{\varepsilon}'$ is the set of integers having a prime factor in each interval $(P_j, Q_j]$ with $1 \leq j \leq J$.  

The advantage that the introduction of these two large prime factors confers is that it essentially allows us to remove from the integration range, at the price of a negligible error term, any subset $\mathcal{T} \subset [-X/H, X/H]$ of measure $\ll X^{1/2 - \varepsilon}$ (more precisely: any set $\mathcal{T}$ that can be covered by a union of $\ll X^{1/2 - \varepsilon}$ unit intervals).

The reason for this is the following : Given an arbitrary subset $\mathcal{T} \subset [-X / H, X / H]$ that can be covered by $\ll X^{1/2 - \varepsilon}$ unit intervals we partition $\mathcal{T} = \mathcal{T}_1 \cup \mathcal{T}_2$, where
\begin{equation} \label{eq:smallll}
\mathcal{T}_1 := \left\{t \in \mathcal{T} \colon \Big | \sum_{p \sim P_1} \frac{f(p)}{p^{1 + it}} \Big | \leq P_1^{-\varepsilon^6}\right\}
\end{equation}
and $\mathcal{T}_2 = \mathcal{T} \setminus \mathcal{T}_1.$ The contribution of $t \in \mathcal{T}_1$ to the right hand side of~\eqref{eq:MeanSquare} is negligible using \eqref{eq:integral2}, the definition of $\mathcal{T}_1$, the Hal\'asz-Montgomery inequality (Lemma~\ref{le:Hallargevalint} below), and the assumption on the cardinality of $\mathcal{T}$. On the other hand $\mathcal{T}_2$ is a very small set of cardinality $\ll X^{3\varepsilon^6}$.  
We can bound the total contribution of $t \in \mathcal{T}_2$ to the integral \eqref{eq:integral2} by
$$
H \sum_{\substack{P_1 \in [X^{\varepsilon^3}, X^{\varepsilon^2}] \\ P_2 \in [X^{\varepsilon^2}, X^{\varepsilon}]}} \Big ( \sup_{|t| \leq 4 X} \Big | \sum_{\substack{n \sim X / (P_1 P_2) \\ n \in \mathcal{S}_{\varepsilon}' \cap \mathcal{N}}} \frac{f(n)}{n^{1 + it}} \Big | \Big ) \cdot \Big ( \sum_{t \in \mathcal{T}_2'} \Big | \sum_{\substack{p \sim P_1}} \frac{f(p)}{p^{1 + it}} \Big | \cdot \Big | \sum_{p \sim P_2} \frac{f(p)}{p^{1 + it}} \Big | \Big ) 
$$
where $\mathcal{T}_2'$ is a set of $\ll X^{3\varepsilon^6}$ one-spaced points. 
Since we assumed in this sketch that $f$ does not pretend to be $n^{it}$ for any $|t| \leq 4X$ we obtain a little bit of cancellation in the sum over $n \sim X / (P_1 P_2)$ (in case $f$ is pretentious we would need to handle a main term separately). As a result it remains to show that
\begin{equation} \label{eq:prer}
\sum_{t \in \mathcal{T}_2'} \Big | \sum_{p \sim P_1} \frac{f(p)}{p^{1 + it}} \Big | \cdot \Big | \sum_{p \sim P_2} \frac{f(p)}{p^{1 + it}} \Big | \ll \frac{1}{\log P_1 \log P_2}. 
\end{equation}
In other words we need to show that there is essentially at most one term $t \in \mathcal{T}'_2$ at which there is no cancellation, and this term dominates the whole sum. We establish this by applying Cauchy-Schwarz and using a sieved variant of Hal{\'a}sz-Montgomery inequality that relies on Vinogradov's bounds for exponential sums (see Lemma~\ref{lem:primeshalasz} below). We need to appeal to Vinogradov's bounds because the Dirichlet polynomials over primes in \eqref{eq:prer} are short, of length as short as $X^{\varepsilon^3}$. 

As a result of this operation we can take away an arbitrary set $\mathcal{T}$ from \eqref{eq:integral1} as long as this set consists of no more than $X^{1/2 - \varepsilon}$ neighborhoods of well-spaced points. In particular, in view of \eqref{eq:integral1}, to prove the corollary it suffices to show that,
\begin{equation} \label{eq:parse}
\frac{1}{X} \int_{X}^{2X} \Big |\int_{\substack{|t| \leq X / H \\ t \not \in \mathcal{T}}} \Big ( \sum_{\substack{n \sim X \\ n \in \mathcal{S}_{\varepsilon} \cap \mathcal{N}}} \frac{f(n)}{n^{1 + it}} \Big ) \cdot x^{it} dt \Big |^2 \asymp \int_{\substack{|t| \leq X / H \\ t \not \in \mathcal{T}}} \Big | \sum_{\substack{n \sim X \\ n \in \mathcal{S}_{\varepsilon} \cap \mathcal{N}}} \frac{f(n)}{n^{1 + it}} \Big |^2 dt \ll \frac{\delta(\mathcal{N}; X)^2}{h_0^{\kappa}}
\end{equation}
  for some $\kappa > 0$ and an essentially arbitrary $\mathcal{T}$ of our choosing, as long as $\mathcal{T}$ is not too large in measure.

  We choose now $\mathcal{T}$ to be the set of points $t$ at which at least one of the Dirichlet polynomials
  \begin{equation} \label{eq:reqw}
  \sum_{p \sim P} \frac{f(p)}{p^{1 + it}} \ , \ X^{\varepsilon^3} \leq P \leq X^{\varepsilon}
  \end{equation}
  with $P$ varying over powers of two in $[X^{\varepsilon^3}, X^{\varepsilon}]$ is 
  $\gg P^{-1/4+\varepsilon}$. Taking moments we see that $\mathcal{T}$ can indeed be covered by neighborhoods of fewer than $X^{1/2 - \varepsilon}$ points. With this choice of $\mathcal{T}$ we now repeat (with some minor technical innovations) the argument from our earlier paper \cite{MainPaper} to bound \eqref{eq:parse}. This produces a bound for \eqref{eq:parse} that saves $P_1^{-1/2 + \varepsilon} + X^{-\varepsilon^3 \cdot (1/2 - \varepsilon)}$.


  Two things are important to note. First, the bound $P_1^{-1/2 + \varepsilon} + X^{- \varepsilon^3 \cdot (1/2 - \varepsilon)}$ that the argument of \cite{MainPaper} produces is tied to our choice of the set $\mathcal{T}$ as the set at which the Dirichlet polynomial in \eqref{eq:reqw} is greater than $P^{-1/4 + \varepsilon}$. More precisely if we had choosen $\mathcal{T}$ as the set of $t$ on which the Dirichlet polynomial in \eqref{eq:reqw} is greater than $P^{-\beta}$ for some $\beta > 0$ then the argument from \cite{MainPaper} can only produce a bound that is $P_1^{- 2 \beta} + X^{- 2 \varepsilon^3 \beta}$ at best. The saving of $P_1^{-1/2 + \varepsilon}$ with an exponent $\tfrac 12$ is important for the proof of our second result, Theorem \ref{th:LowerBound}. Secondly, unconditionally the point-wise bounds for \eqref{eq:reqw} are weak (or unavailable for \eqref{eq:reqw} if $f$ is arbitrary), but since we took the set $\mathcal{T}$ away from the integral in \eqref{eq:parse}, it is as if we had pointwise power-savings in \eqref{eq:reqw}. This accounts for the term $X^{-\varepsilon^3 \cdot (1 /2 - \varepsilon)}$ in the bound $P_1^{-1/2 + \varepsilon} + X^{-\varepsilon^3 \cdot (1/2 - \varepsilon)}$. As a result the arguments from \cite{MainPaper} produce a saving in \eqref{eq:parse} that is of the form $P_1^{-1/2 + \varepsilon} + X^{-\varepsilon^3(1/2-\varepsilon)}$. Choosing $P_1$ to be $h^{\varepsilon^3}$ and $Q_{1} = h$ then gives the desired bound.
  
While running the argument from \cite{MainPaper} we need to also introduce an additional modification. Specifically, we need to keep track of the fact that $n$ is supported on a sparse set of integers. We accomplish this by using throughout a mean-value theorem that incorporates sieve estimates for $\mathcal{N}$. 

\subsection{The proof of Theorem \ref{th:LowerBound}}

Again for simplicity we will only discuss the proof of Corollary \ref{cor:HooleyGen} as this already illustrates all the main ideas that also enter in the proof of Theorem \ref{th:LowerBound}. First, (ii) of Corollary \ref{cor:HooleyGen} is an elementary consequence of (i), so we will only discuss the proof of (i). 
Second, unlike in the proof of Corollary \ref{cor:main1} we no longer need to select $\mathcal{S}_{\varepsilon}$ so that it consists of almost all the integers. Instead it suffices to choose a set $\mathcal{S}_{\varepsilon}$ such that a positive proportion of integers belong to $\mathcal{S}_{\varepsilon}$. 

For $h_0 \leq X^{\varepsilon^3/20000}$ we take $\mathcal{S}_{\varepsilon}$ similarly to the proof of Corollary \ref{cor:main1} but with a narrow first interval $(P_1, Q_1] = (h_0^{1 - \varepsilon}, h_0]$. Then,
\begin{equation} \label{eq:loww}
\sum_{\substack{x < n \leq x + h_0 \delta(\mathcal{N}; X)^{-1}}} 1 \geq \sum_{\substack{x < n \leq x + h_0 \delta(\mathcal{N}; X)^{-1} \\ n \in \mathcal{S}_{\varepsilon} \cap \mathcal{N}}} 1 
\end{equation}
  and repeating the argument of the proof of Corollary \ref{cor:main1} we can show that outside of an exceptional set of cardinality $\ll X P_1^{-1/2 + \varepsilon} + X^{1 - \varepsilon^3/5} \ll X h_0^{-1/2 + 2\varepsilon}$ the left-hand side of \eqref{eq:loww} is $\gg \varepsilon h_0$. Therefore we are done for $h_0 \leq X^{\varepsilon^3/20000}$.

  Therefore in the remainder assume that $h_0 > X^{\varepsilon^3/20000}$. In this case we pick $\mathcal{S}_{\varepsilon}$ to consist of the set of integers in $[X, 2X]$ that can be written as $p_1 \ldots p_k m$ with $p_1, \ldots p_k$ distinct primes in the interval $(X^{\varepsilon^{10}(1-\varepsilon^{20})}, X^{\varepsilon^{10}(1+\varepsilon^{20})}]$. We pick $k$ so large that $m$ is also essentially of size $X^{\varepsilon^{10}}$.

Write $H = h_0 \delta(\mathcal{N}; X)^{-1}$. We want to essentially bound, for some $\delta > 0$ depending only on $\alpha$ and $\varepsilon$, the frequency of those $x$ for which,
\begin{equation} \label{eq:mainqq}
\delta h_0 \geq \sum_{\substack{x < n \leq x + H \\ n \in \mathcal{S}_{\varepsilon} \cap \mathcal{N}}} 1 
\approx \sum_{\substack{P_1, \dotsc, P_k \\ P_1 \ldots P_k \asymp X^{1-\varepsilon^{10}}}}  \frac{H}{2\pi i} \int_{\substack{|t| \leq X / H}} 
\prod_{i = 1}^{k} \Big ( \sum_{\substack{p \sim P_i \\ p \in \mathcal{N}}} \frac{1}{p^{1 + it}} \Big ) \cdot \Big ( \sum_{\substack{m \asymp X/(P_1 \dotsm P_k) \\ m \in \mathcal{N}}} \frac{1}{m^{1 + it}} \Big ) x^{it} dt,
\end{equation}
where $P_i$ run over powers of two in $(X^{\varepsilon^{10}(1-\varepsilon^{20})}, X^{\varepsilon^{10}(1+\varepsilon^{20})}]$. The part of the integral with small $t$, say $|t| \leq T_0 := (\log X)^{\varepsilon'
}$ for some small $\varepsilon' > 0$ contributes to the main term, which is larger than $\delta h$ provided that $\delta > 0$ is chosen sufficiently small in terms of $\alpha$ and $\varepsilon$. Therefore our main task is to show that the contribution of the integral with $|t| > T_0$ is bounded by $o(h_0)$ outside of an exceptional set of cardinality $\ll X h_0^{-1/2 + \varepsilon}$.

In particular given $P_1, \ldots, P_k$ it suffices to show that outside of a small exceptional set of cardinality $\ll X h_0^{-1/2 + \varepsilon}$,
\begin{equation} \label{eq:inte2}
\int_{T_0}^{X / H} F(1 + it) x^{it} dt = o \Big ( \frac{\delta(\mathcal{N}; X)}{\log^k X} \Big ) 
\end{equation}
where $F(1 + it) = (P_1 \ldots P_k M)(1 + it)$,
$$
P_i(s) := \sum_{\substack{p \sim P_i \\ p \in \mathcal{N}}} \frac{1}{p^s}, \quad \text{ and } \quad M(s) = \sum_{\substack{m \asymp X/(P_1 \dotsm P_k) \\ m \in \mathcal{N}}} \frac{1}{m^{s}}.
$$
We first show that for all $x$ we can exclude from the integral \eqref{eq:inte2} the set of $t$ that belongs to either $\mathcal{U}_1$ or $\mathcal{U}_2$, where
\begin{align*}
  \mathcal{U}_1 & := \{T_0 \leq |t| \leq X / H : |F(1 + it)| > X^{-\varepsilon^{100}} \} \\
  \mathcal{U}_2 & := \{ T_0 \leq |t| \leq X / H : X^{-\varepsilon^{100}} \geq |F(1 + it)| > h_0^{-1/2+\varepsilon/2} \}. 
\end{align*}
For this it is enough to show that 
\begin{equation} \label{eq:ine1}
\int_{\substack{t \in \mathcal{U}_j}} |F(1 + it)| dt = o \Big ( \frac{\delta(\mathcal{N}; X)}{\log^k X} \Big ). 
\end{equation}
for $j \in \{1,2\}$.

The proof of \eqref{eq:ine1} for $j = 1$ is similar to an argument that we described in the proof sketch of Corollary \ref{cor:main1}. Specifically, if $|F(1 + it)|$ is large then at least one of $P_i(1 + it)$ is large, consequently the set $\mathcal{U}_1$ is small (because $P_1$ is short and therefore we can estimate the frequency with which $P_1(1 + it)$ is large by taking high moments). We then apply a point-wise bound on $M(1 + it)$ obtaining a small saving, and a variant of the Hal\'asz-Montgomery inequality on the remaining Dirichlet polynomials $P_i$, showing thus that the remaining integral $\int_{\mathcal{U}_1} |P_1 \ldots P_k| dt$ is dominated by at most one term that exhibits no cancellations (the application of Hal\'asz-Montgomery inequality relies on the fact that the set $\mathcal{U}_1$ is small). Since we obtained some cancellations in $|M(1 + it)|$ we win.

The proof of \eqref{eq:ine1} for $j = 2$ uses the fact that if $|F(1 + it)|$ is larger than $X^{-\beta}$ for some $\beta$ then at least one of the Dirichlet polynomials $|P_i(1 + it)|$ or $|M(1 + it)|$ is larger than $P_i^{-\beta}$ or $M^{-\beta}$. For simplicity let us assume that $|P_1(1 + it)|$ is larger than $P_1^{-\beta}$. Since $P_1$ is short we can then apply Huxley's large value estimate to $P_1^{k}$ with $k$ a conveniently choosen large power to estimate the frequency with which $|P_1(1 + it)| > P_1^{-\beta}$. This allows to exploit the full strength of Huxley's large value estimates. The process will involve an error of the size of the longest Dirichlet polynomial $P_i$ or $M$, and an additional fixed logarithmic loss, and here having $h_0 > X^{\varepsilon^3/20000}$ is useful in neutralizing these losses. 

After these reductions, in view of the claim \eqref{eq:inte2}, we see that it is enough to show that
$$
\frac{1}{X} \int_{X}^{2X} \Big |\int_{\substack{T_0 \leq |t| \leq X / H \\ t \not \in \mathcal{U}_1 \cup \mathcal{U}_2}} F(1 + it) x^{it} dt \Big |^2 dx \asymp \int_{\substack{T_0 \leq |t| \leq X / H \\ t \not \in \mathcal{U}_1 \cup \mathcal{U}_2}} |F(1 + it)|^2 dt \ll_{\varepsilon} \frac{1}{h_0^{1/2 - \varepsilon/2}}.
    $$
We separate the remaining values of $t \not \in \mathcal{U}_1 \cup \mathcal{U}_2$ into two sets,
  \begin{align*}
    \mathcal{T}_1 & :=  \{ |t| \leq X / H : X^{-1/4+\varepsilon/8} \leq |F(1 + it)| \leq h_0^{-1/2+\varepsilon/2} \} \\
    \mathcal{T}_2 & :=  \{ |t| \leq X / H : |F(1 + it)| \leq X^{-1/4+\varepsilon/8} \}. 
  \end{align*}

  Whenever $|F(1 + it)| \leq X^{-1/4+\varepsilon/8}$ we can write $F(1 + it)$ as $R(1 + it) N(1 + it)$ with $R(s)$ of length $\leq h_0$ and such that $|R(1 + it)| \leq h_0^{-1/4+\varepsilon/4}$. There is only a bounded number of possible choices for $R(s)$. As a result we can bound the integral over $\mathcal{T}_1$ simply by applying the point-wise bound to $R$ and the standard mean-value theorem to $N$.

  It remains to deal with the integral over the range $\mathcal{T}_2$ and this is addressed once again by appealing to Huxley's large value estimate. Since we now work with $|F|^2$ instead of $|F|$ this leads to a slightly different choice of parameters, so the repeated application of Huxley's estimate yields different result than when we applied it to deal with $\mathcal{U}_2$. 

\subsection{The proofs of the corollaries}

Regarding the result for norm-forms we notice that if $K$ is a number field with class number one,
then weaker versions of Theorems~\ref{thm:main2} and~\ref{thm:NormFormLowBound} in which we look at integers representable as $|N_{K / \mathbb{Q}}(x)|$ with $x \in \mathcal{O}_K$ are immediate consequences of Corollaries \ref{cor:main1} and~\ref{cor:HooleyGen}. 

Therefore the main difficulty that we are facing concerns number fields of class number exceeding one. We resolve this difficulty by showing that the indicator function $g_K(n)$ of the event ``$n$ is a norm-form of $K$'' can be expressed as a linear combination of multiplicative function. This is essentially implicit in the work of Odoni \cite{Odoni75} and we follow his argument to a large extent. Thanks to this we can prove Theorems~\ref{thm:main2} and~\ref{thm:NormFormLowBound} using similar arguments as in proofs of Theorems~\ref{th:MT} and~\ref{th:LowerBound}.

The proof of the conditional part of Theorem \ref{thm:NormFormLowBound} is different depending on whether $h$ is small or large. If $h_0 \leq X^{\varepsilon^3/20000}$, then the result follows from a minor modification of Corollary \ref{cor:main1} which uses the fact that on the Riemann Hypothesis for Hecke L-functions we have square-root cancellation in
$$
\sum_{p \leq x} g_{K}(p) p^{it}.
$$
This allows us to show that, outside of an exceptional set of cardinality $\ll_{\eta} X P_1^{-1 + \eta} + X^{1 - \varepsilon^3/20000}$, we have
$$
\sum_{\substack{x < n \leq x + h_0 \delta_K(X)^{-1} \\ n \in \mathcal{S}_{\varepsilon}}} g_K(n) \gg_\varepsilon h_0
$$
with $\mathcal{S_{\varepsilon}}$ the set of integers that have a prime factor in every interval $[P_i, Q_i]$ with $1 \leq i \leq J$ and $[P_1, Q_1] \subset [1, h_0]$. Choosing $P_1 = h_0^{1 - \varepsilon}$ and $Q_1 = h_0$ and the rest of the intervals $[P_i, Q_i]$ as in the proof of Corollary \ref{cor:main1} then leads to the result for  $h_0 \leq X^{\varepsilon^3/20000}$. 

For $h_0 > X^{\varepsilon^3/20000}$ we use the fact that the Riemann Hypothesis for Hecke $L$-functions implies square-root cancellation in
$$
\sum_{n \leq x} g_K(n) n^{it}
$$
and the argument in this case is rather simple, relaying essentially only on this point-wise bound. 
\addtocontents{toc}{\setcounter{tocdepth}{-10}}
\subsection*{Acknowledgments}
\addtocontents{toc}{\setcounter{tocdepth}{1}} 
The authors would like to thank Lilian Matthiesen and Jesse Thorner for pointing out some helpful references and Andrew Granville for discussions concerning the correct main term in the complex case. The second author would like to thank {\'E}tienne Fouvry for comments about normforms and for the hospitality at Universit{\'e} Orsay-Paris Sud in Winter 2018 where part of this work was completed. The first author was supported by Academy of Finland grant no. 285894. The second author was supported by a Sloan Fellowship and NSF grant DMS-1902063. Part of this work was completed while the authors were in residence at MSRI in Spring 2017, which was supported by NSF grant DMS-1440140.

\section{Mean value theorem for sparse Dirichlet polynomials}
For a Dirichlet polynomial $A(s) = \sum_{n \leq N} a_n n^{-s}$, the mean value theorem for Dirichlet polynomials (see~\cite[Theorem 9.1]{IwKo04}) gives, for any $T \geq 1$,
\begin{equation}
\label{eq:contMVT}
\int_{-T}^T |A(it)|^2 dt = (2T+O(N)) \sum_{n \leq N} |a_n|^2.
\end{equation}
On the right hand side the first term is supposed to reflect the contribution coming from $t$ with $|A(it)|^2$ of typical size. On the other hand the second term is supposed to reflect the contribution coming from a small set of $t$ for which $|A(it)|^2$ is close to its maximal size $(\sum_{n \leq N} |a_n|)^2$, but when $a_n$ is supported on a thin set, $N \sum_{n \leq N} |a_n|^2$ is not a good approximation to this. In order to get optimal results, we need to be very careful about such losses, and for this reason we use the following variant of the mean value theorem. The lemma below has been previously used in \cite{Teravainen, Goudout} to understand the distribution of integers with a fixed number of prime factors. 

\begin{lemma}
\label{le:contMVT2}
Let $A(s) = \sum_{n \leq N} a_n n^{-s}$ and $T \geq 1$. Then
\[
\begin{split}
\int_{-T}^T |A(it)|^2 dt &\ll T \sum_{n \leq N} |a_n|^2 + T \sum_{n \leq N} \sum_{0 < |k| \leq n/T} |a_n| |a_{n+k}| \\
&\ll T \sum_{n \leq N} |a_n|^2 + T \sum_{0 < |k| \leq N/T} \sum_{n \leq N} |a_n| |a_{n+k}|.
\end{split}
\]
\end{lemma}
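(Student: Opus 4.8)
\emph{Proof plan.} The plan is to reduce everything to the standard smoothing device: majorize the sharp cutoff $\mathbf 1_{[-T,T]}$ by a fixed nonnegative function with compactly supported Fourier transform, and then simply read off the off-diagonal range from the support. First, though, the second inequality is trivial and I would dispose of it immediately: since $n\le N$ the condition $0<|k|\le n/T$ is contained in $0<|k|\le N/T$, so enlarging the $k$-range and swapping the two summations gives it. Hence the whole content is the first inequality.

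For the first inequality I would fix once and for all a nonnegative $\Phi\in L^1(\mathbb R)$ — for concreteness a suitably rescaled Fej\'er kernel, say $\Phi(t)=(\sin(\delta t)/(\delta t))^2$ with $\delta$ a small absolute constant — enjoying the two properties that $\Phi(t)\ge c_0>0$ for all $|t|\le 1$ and that $\widehat\Phi$ is supported in $[-1,1]$; here $c_0$ and $\|\widehat\Phi\|_\infty=\widehat\Phi(0)$ are absolute constants. Since $|A(it)|^2\ge 0$ this yields $\int_{-T}^T|A(it)|^2\,dt\le c_0^{-1}\int_{\mathbb R}|A(it)|^2\Phi(t/T)\,dt$. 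Next I would expand $|A(it)|^2=\sum_{m,n\le N}a_m\overline{a_n}(n/m)^{it}$ and integrate term by term (licit because $\Phi\in L^1$), reaching
\[
\int_{\mathbb R}|A(it)|^2\Phi(t/T)\,dt \;=\; T\sum_{m,n\le N}a_m\overline{a_n}\,\widehat\Phi\bigl(-T\log(n/m)\bigr).
\]
The support hypothesis on $\widehat\Phi$ now annihilates every pair with $|\log(n/m)|>1/T$, while each surviving summand is $\ll T|a_m||a_n|$ since $\widehat\Phi$ is bounded by $\widehat\Phi(0)$.

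It then remains to translate $|\log(n/m)|\le 1/T$ into a bound on $k:=m-n$. Writing $m=n+k$ and treating $k>0$ and $k<0$ separately, the inequality $|\log(1+k/n)|\le 1/T$ forces $|k|\ll n/T$ uniformly for $T\ge 1$ — one uses $\log(1+x)\asymp x$ on the bounded range of $x=k/n$ that actually occurs, which incidentally also gives $m\asymp n$ — and choosing the rescaling parameter $\delta$ small enough makes this literally $|k|\le n/T$. Splitting off the diagonal $k=0$, which contributes $\ll T\sum_{n\le N}|a_n|^2$, from the off-diagonal, which contributes $\ll T\sum_{n\le N}\sum_{0<|k|\le n/T}|a_n||a_{n+k}|$, then gives exactly the first bound. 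I do not expect any genuine obstacle: the only step requiring a little care is this last passage from the Fourier-support condition to the admissible range of $k$, uniformly in $T\ge 1$, and it is an elementary one-line estimate.
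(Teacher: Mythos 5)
Your argument is correct, and it is essentially the paper's approach made self-contained: the paper simply quotes \cite[Lemma 7.1]{IwKo04} with $x_m=\frac{1}{2\pi}\log m$ and $Y=10T$, and that lemma is proved by exactly the Fej\'er-type majorant/Fourier-support argument you carry out (majorize $\mathbf 1_{[-T,T]}$ by a nonnegative kernel whose transform has small compact support, expand the square, and translate $|\log(n/m)|\ll 1/T$ into $|k|\le n/T$). Your handling of the final translation, including shrinking the support parameter so the range is literally $|k|\le n/T$ uniformly in $T\ge 1$, is sound, as is the trivial deduction of the second displayed inequality.
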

\begin{proof}
This follows from \cite[Lemma 7.1]{IwKo04} taking $Y = 10T$ and $x_m = \frac{1}{2\pi}\log m$ there.
\end{proof}

To effectively use the previous lemma in our setting, we need to, for a multiplicative function $f$, understand the sums $\sum_{x< n \leq 2x} |f(n)|^2$ and $\sum_{0 < |k| \leq K} \sum_{x < n \leq 2x} |f(n)f(n+k)|$. 
The average of $|f(n)|^2$ as well as several other averages we will encounter can be estimated by the following result of Shiu~\cite{Shiu80}.
\begin{lemma}
\label{le:Shiu}
Let $\theta \in (0,1)$ and let $f \colon \mathbb{N} \to \mathbb{U}$ be multiplicative. Then, for $x \geq y \geq x^{\theta}$, one has
\[
\sum_{x < n \leq x+y} |f(n)| \ll_\theta y \prod_{p \leq x} \Big(1+\frac{|f(p)|-1}{p}\Big).
\]
\end{lemma}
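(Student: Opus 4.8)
The plan is to carry out the combinatorial sieve argument of Shiu \cite{Shiu80}. Put $g := |f|$, a non-negative multiplicative function with $g \le 1$; it suffices to bound $\sum_{x < n \le x+y} g(n)$. I would first replace the right-hand side by a product over small primes: since $g(p) \le 1$, every factor of $\prod_{p \le x}\bigl(1+\tfrac{g(p)-1}{p}\bigr)$ lies in $[1-\tfrac1p,1]$, so for $z := x^{\eta}$ with a fixed $\eta \in (0,\theta/2)$ we have $\prod_{z < p \le x}\bigl(1-\tfrac1p\bigr) \le \prod_{z < p \le x}\bigl(1+\tfrac{g(p)-1}{p}\bigr) \le 1$, and Mertens' theorem makes this tail product $\asymp_{\eta}1$. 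Hence $\prod_{p \le x}\bigl(1+\tfrac{g(p)-1}{p}\bigr) \asymp_{\theta} G_z$ with $G_z := \prod_{p \le z}\bigl(1+\tfrac{g(p)-1}{p}\bigr) \asymp (\log z)^{-1}\exp\bigl(\sum_{p \le z} g(p)/p\bigr)$, and it is enough to prove $\sum_{x < n \le x+y} g(n) \ll_{\theta} y\,G_z$.

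The core is a combinatorial decomposition. Factor each $n \in (x,x+y]$ as $n = ab$ with $a$ the $z$-smooth part of $n$ and $b$ the $z$-rough part; since $b \le 2x$ and every prime dividing $b$ exceeds $z = x^{\eta}$, the quantity $\Omega(b)$ is at most $R := \lceil 1/\eta \rceil$. Writing $b = q_1\cdots q_r$ with primes $q_i > z$ and $r \le R$, and setting $\widetilde g(q) := \sup_{\nu\ge1}g(q^\nu)^{1/\nu} \le 1$, one has $g(n) \le g(a)\,\widetilde g(q_1)\cdots\widetilde g(q_r)$. Summing over the $q_i$ and interchanging, the contribution of those $n$ with $a \le y^{1/2}$ is
\[
\le \sum_{r=0}^{R}\ \sum_{\substack{a \le y^{1/2}\\ z\text{-smooth}}} g(a) \sum_{\substack{q_1,\dots,q_r > z\\ q_1\cdots q_r \in (x/a,\,(x+y)/a]}}\widetilde g(q_1)\cdots\widetilde g(q_r).
\]
Here $q_1\cdots q_r$ runs over $z$-rough integers in a window of length $y/a \ge y^{1/2} \ge z$, which the fundamental lemma of sieve theory counts with total $\ll y/(a\log z)$; since each such integer arises from $O_\theta(1)$ tuples and $\widetilde g \le 1$, the inner double sum is $\ll_\theta y/(a\log z)$. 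Combined with $\sum_{a\ z\text{-smooth}} g(a)/a \asymp \exp\bigl(\sum_{p\le z}g(p)/p\bigr) \asymp G_z\log z$ (Rankin's trick and Mertens) this part is $\ll_\theta y\,G_z$, as needed.

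There remain the $n$ whose $z$-smooth part $a$ exceeds $y^{1/2}$ — in particular the $z$-smooth $n \in (x,x+y]$ themselves — for which the complementary window is too short for the sieve to gain the factor $1/\log z$. These are treated by a more elaborate version of the same decomposition: one uses that a $z$-smooth number of size $\ge y^{1/2}$ has a divisor $d$ with $y^{1/2}/z \le d \le y^{1/2}$, writes $n = de$, and recurses on $e$, which now ranges over an interval of length $\ge y^{1/2}$; a careful iteration of this step, redistributing the weight $g$ over the pieces, reduces everything to the first case. This iteration is the technical heart of \cite{Shiu80}, and it is where the hypothesis $y \ge x^{\theta}$ with a fixed $\theta$ enters, so that all implied constants depend only on $\theta$.

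The main obstacle is exactly this last part: the sieve produces the saving $1/\log z$ only in windows whose length is at least (a fixed power of) the sieving level $z$, so one must organize the decomposition so as to prevent the $n$ with a large smooth part from overwhelming the normalization $y/\log x$. The remaining ingredients — Rankin's trick for the smooth sums, the fundamental lemma of the sieve to count rough integers in sufficiently long intervals, and Mertens' theorem for the short tail over primes in $(z,x]$ — are routine. Since all of this is carried out in \cite{Shiu80}, in the body of the paper we simply invoke Shiu's theorem.
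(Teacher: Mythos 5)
Your proposal ends exactly where the paper does: Lemma~\ref{le:Shiu} is stated in the paper without proof as a direct consequence of Shiu's theorem, and since $|f| \leq 1$ trivially places $|f|$ in Shiu's class while Mertens' theorem converts $\frac{y}{\log x}\exp\big(\sum_{p \leq x}|f(p)|/p\big)$ into the stated Euler product, simply invoking \cite{Shiu80} is all that is required. Your sketch of the main case of Shiu's argument is broadly accurate, and the large-smooth-part case you defer to \cite{Shiu80} is indeed the technical heart, so the approach coincides with the paper's.
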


The previous lemma as well as many results below actually work for a certain class of unbounded multiplicative functions but here we restrict our attention to the bounded functions. To estimate the shifted convolution sum we use the following lemma which is a consequence of the work of Henriot~\cite{Henriot}.

\begin{lemma}
\label{le:Henriot}
Let $\theta \in (0, 1]$ and let $f \colon \mathbb{N} \to \mathbb{U}$ be multiplicative. Let $1 \leq r_1, r_2 \leq x^{3\theta/7}$ be integers. Assume that $f(pm) = f(p) f(m)$ whenever $p \mid r_1 r_2$.

Then, for all $x \geq y \geq x^\theta$ and $K \in [1, x]$, one has
\[
\begin{split}
&\sum_{\substack{0 \neq |k| \leq K \\ (r_1, r_2) \mid k}}
\sum_{\substack{x < n \leq x + y \\ r_1 \mid n, r_2 \mid n+k}} |f(n) f(n + k)| \\
&\ll_\theta K \frac{|f(r_1) f(r_2)|}{r_1 r_2} y \prod_{p \leq x} \Big ( 1 + \frac{2|f(p)|-2}{p}
\Big ) \prod_{\substack{p \mid r_1 r_2 \\ p > K}} \Big(1+\frac{1-|f(p)|}{p}\Big).
\end{split}
\]
\end{lemma}
\begin{proof}
Writing $k = k_0 (r_1, r_2)$ and $n = n_0 (r_1, r_2)$, we can re-write the left hand side as
\[
|f((r_1, r_2))|^2 \sum_{\substack{0 \neq |k_0| \leq K/(r_1, r_2)}}
\sum_{\substack{\frac{x}{(r_1, r_2)} < n_0 \leq \frac{x + y}{(r_1, r_2)} \\ \frac{r_1}{(r_1, r_2)} \mid n_0, \frac{r_2}{(r_1, r_2)} \mid n_0+k_0}} |f(n_0) f(n_0 + k_0)|.
\]
Hence it suffices to prove the claim with the additional assumption that $(r_1, r_2) = 1$.

Write $n = r_1 m_1$ and $n+k = r_2m_2$, so that $r_2 m_2 - r_1 m_1 = k$. The solutions of this system can be parametrised as
\[
\begin{cases}
m_1 = x_1(k) + l r_2 &\\
m_2 = x_2(k) + l r_1, &
\end{cases}
\]
where $l$ runs through $\mathbb{Z}$ and $(x_1(k), x_2(k))$ is any solution. Let us choose $x_1'$ and $x_2'$ such that $r_2 x_2' - r_1 x_1' = 1$. By Bezout's theorem we can choose these such that $|x_1'| \leq r_2$ and $|x_2'| \leq r_1$ and furthermore necessarily $(x_1', x_2') = (x_1', r_2) = (x_2', r_1) = 1$. Then we take $x_1(k) = k x_1'$ and $x_2(k) = k x_2'$ getting that
\[
\begin{cases}
n = r_1m_1 = r_1(kx_1'+lr_2) = r_1 \cdot (k, r_2) \cdot \Big( \frac{k}{(k, r_2)} x_1' + l \frac{r_2}{(k, r_2)}\Big) &\\
n + k = r_2 m_2 = r_2(kx_2'+lr_1) = r_2 \cdot (k, r_1) \cdot \Big( \frac{k}{(k, r_1)} x_2' + l \frac{r_1}{(k, r_1)}\Big). &
\end{cases}
\]

Writing $S$ for the left hand side of the claim, we get
\begin{equation}
\label{eq:Shenupp}
\begin{split}
S &\leq |f(r_1) f(r_2)| \sum_{\substack{0 \neq |k| \leq K}} |f((k, r_2)) f((k, r_1))|  \\
& \qquad \cdot \sum_{\substack{\frac{x- r_1 k x_1'}{r_1 r_2} < l \leq \frac{x - r_1 k x_1' + y}{r_1 r_2}}} \Big|f\Big( \frac{k}{(k, r_2)} x_1' + l \frac{r_2}{(k, r_2)}\Big) f\Big( \frac{k}{(k, r_1)} x_2' + l \frac{r_1}{(k, r_1)}\Big)\Big|.
\end{split}
\end{equation}
We shall use Henriot's result~\cite[Theorem 3]{Henriot} (see also~\cite{Henriot2}). In his notation we have $Q_1(n) = \frac{r_2}{(k,r_2)} n + \frac{kx_1'}{(k, r_2)}, Q_2(n) = \frac{r_1}{(k,r_1)} n + \frac{kx_2'}{(k, r_1)}, F(n_1, n_2) = |f(n_1) f(n_2)|$ and
\[
D = \Big(\frac{r_2}{(k,r_2)} \cdot \frac{kx_2'}{(k, r_1)} - \frac{r_1}{(k,r_1)} \cdot \frac{kx_1'}{(k, r_2)}\Big)^2 = \Big(\frac{k(r_2 x_2'-r_1x_1')}{(k, r_1)(k, r_2)}\Big)^2 = \Big(\frac{k}{(k, r_1) (k,r_2)}\Big)^2,
\]
so that in particular $p \mid D \implies p \mid k$. Furthermore
\[
\rho_{Q_1}(p) =
\begin{cases}
1 & \text{if $p \nmid \frac{r_2}{(k, r_2)}$;} \\
0 & \text{otherwise,}
\end{cases}
\quad
\rho_{Q_2}(p) =
\begin{cases}
1 & \text{if $p \nmid \frac{r_1}{(k, r_1)}$;} \\
0 & \text{otherwise,}
\end{cases}
\]
and
\[
\rho(p) =
\begin{cases}
2 & \text{if $p \nmid \frac{r_1}{(k, r_1)} \frac{r_2}{(k, r_2)} D$;} \\
1 & \text{otherwise.}
\end{cases}
\]
Also
\[
\Delta_D = \prod_{p \mid D} \Big(1+\frac{O(1)}{p}\Big).
\]
We get from~\cite[Theorem 3]{Henriot} that, uniformly for $0 \neq |k| \leq K$, the sum over $l$ in~\eqref{eq:Shenupp} is bounded by
\[
\begin{split}
&\ll_\theta \frac{y}{r_1 r_2} \Delta_D \prod_{\substack{p \leq x/(r_1 r_2)}} \Big(1-\frac{\rho(p)}{p}\Big) \sum_{\substack{n_1 n_2 \leq x/(r_1 r_2) \\ (n_1 n_2, D) = 1}} |f(n_1)f(n_2)| \cdot \frac{\rho_{Q_1}(n_1) \rho_{Q_2}(n_2)}{n_1 n_2} \\
&\ll \frac{y}{r_1 r_2} \prod_{p \mid D}\Big(1+\frac{O(1)}{p}\Big) \prod_{\substack{p \leq x}} \Big(1-\frac{2}{p}\Big) \prod_{\substack{p \leq x}} \Big(1+\frac{|f(p)|}{p}\Big)^2 \prod_{p \mid \frac{r_1}{(k, r_1)} \cdot \frac{r_2}{(k, r_2)}} \Big(1 + \frac{1-|f(p)|}{p}\Big) \\
&\ll \frac{y}{r_1 r_2} \prod_{p \mid k}\Big(1+\frac{O(1)}{p}\Big) \prod_{\substack{p \leq x}} \Big(1+\frac{2|f(p)|-2}{p}\Big) \prod_{p \mid r_1 r_2} \Big(1 + \frac{1-|f(p)|}{p}\Big).
\end{split}
\]
Hence, by~\eqref{eq:Shenupp},
\[
\begin{split}
S &\leq \frac{y}{r_1 r_2} |f(r_1) f(r_2)| \sum_{\substack{0 \neq |k| \leq K}} |f((k, r_2)) f((k, r_1))|  \prod_{p \mid k}\Big(1+\frac{O(1)}{p}\Big) \\
&\qquad \cdot \prod_{\substack{p \leq x}} \Big(1+\frac{2|f(p)|-2}{p}\Big) \prod_{p \mid r_1 r_2} \Big(1 + \frac{1-|f(p)|}{p}\Big).
\end{split}
\]
By Lemma~\ref{le:Shiu},
\[
\begin{split}
&\sum_{0 < |k| \leq K} |f((k, r_1))f((k, r_2))| \prod_{p \mid k}\Big(1+\frac{O(1)}{p}\Big) \\
&\ll K \prod_{\substack{p \leq K \\ p \nmid r_1 r_2}} \Big(1+\frac{1+\frac{O(1)}{p}-1}{p}\Big) \prod_{\substack{p \leq K \\ p\mid r_1 r_2}} \Big(1+\frac{|f(p)|(1+\frac{O(1)}{p})-1}{p}\Big) \\
&\ll K \prod_{\substack{p \leq K \\ p \mid r_1 r_2}} \Big(1+\frac{|f(p)|-1}{p}\Big),
\end{split}
\]
so we obtain the claim.
\end{proof}


Combining Lemmas~\ref{le:contMVT2}--\ref{le:Henriot} we get the following mean-value theorem, which we will use repeatedly.
\begin{lemma}
\label{le:MVTwithHenriot}
Let $\theta \in (0,1)$ and $x \geq y \geq x^\theta$. Let $f \colon \mathbb{N} \to \mathbb{U}$ be multiplicative and let
$$
A(s) = \sum_{x < n \leq x+y} \frac{a_n}{n^{s}},
$$
where $|a_n| \leq |f(n)|$ for every positive integer $n$.
Then, for any $T \geq 1$,
\begin{align*}
\int_{-T}^T & |A(1+it)|^2 dt \ll \frac{T y}{x^2} \prod_{p \leq x} \Big(1+\frac{|f(p)|^2-1}{p}\Big) + \frac{y}{x} \prod_{p \leq x} \Big ( 1 + \frac{2|f(p)|-2}{p}\Big).
\end{align*}
\end{lemma}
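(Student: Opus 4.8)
The strategy is simply to feed the output of Lemma~\ref{le:contMVT2} into Lemma~\ref{le:Shiu} for the diagonal term and into Lemma~\ref{le:Henriot} for the off-diagonal term, after rescaling to clear the $n^{-1}$ factors. First I would write $A(1+it) = B(it)$ where $B(s) = \sum_{x < n \leq x+y} b_n n^{-s}$ with $b_n = a_n/n$, so that $|b_n| \leq |f(n)|/n \leq |f(n)|/x$ for $n$ in the range of summation. Applying Lemma~\ref{le:contMVT2} to $B$ gives
\[
\int_{-T}^T |A(1+it)|^2 dt \ll T \sum_{x < n \leq x+y} |b_n|^2 + T \sum_{0 < |k| \leq (x+y)/T} \sum_{x < n \leq x+y} |b_n| |b_{n+k}|,
\]
and since each $|b_n|, |b_{n+k}| \leq |f(\cdot)|/x$ (note $n+k$ still has size $\asymp x$ as $|k| \leq x+y \leq 2x$, so $1/(n+k) \ll 1/x$), the two sums are bounded by $x^{-2}$ times $\sum_{x < n \leq x+y} |f(n)|^2$ and $x^{-2}$ times $\sum_{0 < |k| \leq (x+y)/T} \sum_{x < n \leq x+y} |f(n) f(n+k)|$ respectively.

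\textbf{Main term.} For the diagonal sum I would apply Lemma~\ref{le:Shiu} with the multiplicative function $|f|^2$ (which also maps $\mathbb{N} \to \mathbb{U}$ since $|f| \leq 1$), using $y \geq x^\theta$, to get
\[
\sum_{x < n \leq x+y} |f(n)|^2 \ll_\theta y \prod_{p \leq x} \Big(1 + \frac{|f(p)|^2 - 1}{p}\Big),
\]
which after multiplication by $T x^{-2}$ yields the first term in the claimed bound.

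\textbf{Off-diagonal term.} For the shifted convolution I would apply Lemma~\ref{le:Henriot} with $r_1 = r_2 = 1$ (so the coprimality and multiplicativity hypotheses are vacuous, and the last product over $p \mid r_1 r_2$ is empty), $K = (x+y)/T$, and the same $\theta$; this is legitimate provided $(x+y)/T \leq x$, i.e. $T \geq (x+y)/x$, which holds once $T \geq 2$ — and for $1 \leq T < 2$ one can just enlarge $T$ to $2$ at the cost of an absolute constant since the left side is monotone in $T$ and the right side only grows. Lemma~\ref{le:Henriot} then gives
\[
\sum_{0 < |k| \leq K} \sum_{x < n \leq x+y} |f(n) f(n+k)| \ll_\theta K \cdot y \prod_{p \leq x}\Big(1 + \frac{2|f(p)| - 2}{p}\Big) = \frac{x+y}{T} \cdot y \prod_{p \leq x}\Big(1 + \frac{2|f(p)|-2}{p}\Big).
\]
Multiplying by $T x^{-2}$ and using $x+y \leq 2x$ produces $\ll (y/x) \prod_{p \leq x}(1 + (2|f(p)|-2)/p)$, the second term in the claim. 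Adding the two contributions finishes the proof.

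\textbf{Where the subtlety lies.} The argument is essentially bookkeeping; the only genuine points of care are (i) checking that the rescaling by $1/n$ really does produce the uniform bound $|b_n| \leq |f(n)|/x$ across the \emph{entire} summation range, including the shifted argument $n+k$, which is fine because $k$ is at most $x+y \leq 2x$ so $n+k \asymp x$; (ii) verifying that the hypothesis $K \leq x$ of Lemma~\ref{le:Henriot} is met, which forces the harmless reduction to $T \geq 2$ mentioned above; and (iii) confirming that Lemma~\ref{le:Shiu} and Lemma~\ref{le:Henriot} can be invoked with the relevant multiplicative functions ($|f|^2$ and $|f|$ respectively, both $\mathbb{U}$-valued). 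None of these is a real obstacle, so the proof is short.
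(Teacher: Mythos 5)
Your proof is correct and is exactly the argument the paper intends: the lemma is stated as a direct combination of Lemma~\ref{le:contMVT2} (after rescaling the coefficients by $1/n$), Lemma~\ref{le:Shiu} applied to $|f|^2$ for the diagonal, and Lemma~\ref{le:Henriot} with $r_1=r_2=1$ for the shifted sums. The small points you flag (uniformity of $|b_{n+k}|\leq |f(n+k)|/x$, the harmless reduction to $T\geq 2$ so that $K\leq x$, and the vacuity of the off-diagonal sum when $K<1$) are handled correctly.
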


\section{Hal\'asz-Montgomery type mean-value theorems}
In addition to the mean value theorem (Lemma~\ref{le:contMVT2}), we shall need some large value results for Dirichlet polynomials. We shall say that a set $\mathcal{T} \subset \mathbb{R}$ is one-spaced if $|t-u| \geq 1$ for all distinct $t, u \in \mathcal{T}$.
\begin{lemma}[Hal\'asz-Montgomery inequality for integers]
\label{le:Hallargevalint}
Let $A(s) = \sum_{n \leq N} a_n n^{-it}$, $T \geq 1$, and let $\mathcal{T} \subseteq [-T, T]$ be one-spaced. Then
$$
\sum_{t \in \mathcal{T}} |A(it)|^2 \ll (N + |\mathcal{T}| \sqrt{T}) \log 2T \sum_{n \leq N} |a_n|^2
$$
\end{lemma}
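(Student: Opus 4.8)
The plan is to prove this by the classical duality/large-sieve route to the Halász--Montgomery inequality. First I would write the left-hand side via duality: there exist complex numbers $b_t$ with $\sum_{t \in \mathcal{T}} |b_t|^2 = 1$ such that
\[
\Big( \sum_{t \in \mathcal{T}} |A(it)|^2 \Big)^{1/2} = \Big| \sum_{t \in \mathcal{T}} b_t \overline{A(it)} \Big| = \Big| \sum_{n \leq N} a_n \cdot \overline{\Big( \sum_{t \in \mathcal{T}} \overline{b_t} \, n^{-it} \Big)} \Big|.
\]
Applying Cauchy--Schwarz in $n$, this is at most $\big( \sum_{n \leq N} |a_n|^2 \big)^{1/2} \big( \sum_{n \leq N} |\sum_{t \in \mathcal{T}} b_t n^{it}|^2 \big)^{1/2}$, so it suffices to bound the dual sum $\sum_{n \leq N} |\sum_{t \in \mathcal{T}} b_t n^{it}|^2 \ll (N + |\mathcal{T}|\sqrt{T}) \log 2T$. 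Expanding the square and swapping the order of summation, the dual sum equals $\sum_{t, u \in \mathcal{T}} b_t \overline{b_u} \sum_{n \leq N} n^{i(t-u)}$, and the diagonal $t = u$ contributes exactly $N \sum |b_t|^2 = N$, which accounts for the first term.

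For the off-diagonal terms I would estimate $\sum_{n \leq N} n^{i(t-u)}$ by partial summation (or by comparison with $\int_1^N v^{i(t-u)}\,dv$), giving a bound $\ll \min(N, 1/|t-u|) + 1 \ll 1 + 1/|t-u|$ for $t \neq u$ with $|t-u| \geq 1$; actually since $\mathcal{T}$ is one-spaced the relevant bound is $\ll 1/|t - u|$ away from the diagonal together with the trivial $O(1)$. By the AM--GM / symmetry trick $|b_t \overline{b_u}| \leq \tfrac12(|b_t|^2 + |b_u|^2)$, the off-diagonal contribution is
\[
\ll \sum_{t \in \mathcal{T}} |b_t|^2 \sum_{\substack{u \in \mathcal{T} \\ u \neq t}} \Big( 1 + \frac{1}{|t-u|} \Big) \ll \sum_{t \in \mathcal{T}} |b_t|^2 \Big( |\mathcal{T}| + \sum_{\substack{u \in \mathcal{T} \\ u \neq t}} \frac{1}{|t-u|} \Big).
\]
Wait --- the bare $|\mathcal{T}|$ here is too lossy; the fix is to use instead the sharper estimate $\sum_{n \le N} n^{i(t-u)} \ll \min(N, 1/\|\cdot\|)$ only when $|t-u|$ is genuinely large, and to exploit the well-spacedness: since distinct points of $\mathcal{T} \cap [-T,T]$ differ by at least $1$, for each fixed $t$ there are $\ll \sqrt{T}$ points $u$ with $|t-u| \le \sqrt{T}$ (contributing $\ll \sqrt{T}$ each via the trivial bound... ) — rather, the cleanest path is to bound $|\sum_{n\le N} n^{i(t-u)}| \ll \min(N, N^{1/2}/|t-u|^{1/2} + \dots)$. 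The standard outcome, and what I would actually carry out, is the bound $\sum_{\substack{u \in \mathcal{T}, u \ne t}} |\sum_{n \le N} n^{i(t-u)}| \ll \sqrt{T}\log 2T + N$, uniformly in $t$, using that $\mathcal{T}$ is one-spaced inside $[-T,T]$ so the differences $t-u$ run over a one-spaced set of size $\ll T$. Feeding this in and summing $\sum_t |b_t|^2 = 1$ over one extra index $u$ produces the factor $|\mathcal{T}|$, yielding the dual bound $\ll (N + |\mathcal{T}|\sqrt{T})\log 2T$ as required, and hence the lemma.

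\textbf{Main obstacle.} The delicate point is getting the off-diagonal bound to come out as $|\mathcal{T}|\sqrt{T}$ rather than $|\mathcal{T}|^2$ or $N|\mathcal{T}|$; this is exactly where one-spacedness of $\mathcal{T}$ must be used, via the estimate that for a one-spaced set $\mathcal{T} \subseteq [-T,T]$ one has $\sum_{u \in \mathcal{T}} \min(N, 1/|t-u|) \ll N + \log 2T$ for the innermost sum and then a second $\ell^1$-versus-$\ell^2$ maneuver (Cauchy--Schwarz over $t$, or the symmetric bound on $|b_t b_u|$) to convert one of the two $\mathcal{T}$-sums into $|\mathcal{T}|$ and the other into $\sqrt{T}$ via the mean value theorem for Dirichlet polynomials evaluated on well-spaced points. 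In practice I expect the slickest writeup is to invoke the continuous mean value theorem~\eqref{eq:contMVT} together with a standard discretization lemma comparing $\sum_{t \in \mathcal{T}}|A(it)|^2$ to $\int_{-T}^T |A(it)|^2 \,dt$ plus its derivative (Gallagher's lemma), which immediately gives the $N + |\mathcal{T}|\sqrt{T}$ shape; the one-spacing hypothesis is what makes the Gallagher-type step lossless up to the $\log 2T$ factor.
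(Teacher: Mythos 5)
The paper does not actually prove this lemma; it quotes \cite[Theorem 9.6]{IwKo04}, and the standard proof of that result is indeed the duality argument you outline (dualize, expand, diagonal gives $N$, off-diagonal handled via one-spacing). However, your execution has a genuine gap at exactly the point that creates the term $|\mathcal{T}|\sqrt{T}$. The bound you propose for the off-diagonal exponential sums, $\sum_{n \leq N} n^{i(t-u)} \ll 1 + 1/|t-u|$, is false: comparison with $\int_1^N v^{i\tau}\,dv$ only controls the main term $\frac{N^{1+i\tau}}{1+i\tau} \ll N/(1+|\tau|)$, and the error from partial summation/Euler--Maclaurin is of size $|\tau|\log N$, not $O(1)$. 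The input you actually need is the classical van der Corput estimate $\sum_{n \leq N} n^{i\tau} = \frac{N^{1+i\tau}}{1+i\tau} + O\bigl((1+|\tau|)^{1/2}\bigr)$ (second derivative test applied to $e\bigl(\frac{\tau}{2\pi}\log n\bigr)$); this is not accessible by partial summation alone, and its error term $|\tau|^{1/2} \leq (2T)^{1/2}$ is precisely where $\sqrt{T}$ comes from. With it the bookkeeping is clean: after $|b_t\overline{b_u}| \leq \tfrac12(|b_t|^2+|b_u|^2)$, the inner sum over $u \neq t$ is $\ll \sum_{u\neq t}\bigl(N/|t-u| + |t-u|^{1/2}\bigr) \ll N\log 2T + |\mathcal{T}|\sqrt{T}$, using one-spacing for the first piece and $|t-u| \leq 2T$ for the second, and summing $|b_t|^2$ over $t$ finishes.

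Note that your claimed ``uniform in $t$'' inner bound $\sqrt{T}\log 2T + N$ is not correct, and it is also inconsistent with your own final step: if it held, then summing against $\sum_t |b_t|^2 = 1$ would produce no factor $|\mathcal{T}|$ at all, giving $\sum_{t\in\mathcal{T}}|A(it)|^2 \ll (N+\sqrt{T})\log 2T \sum_n |a_n|^2$, which is false (take $a_n \equiv 1$, $\mathcal{T}$ the integers in $[-T,T]$ and $T \gg N^2$; the left side is of size $\asymp TN$ on average). The fallback you suggest at the end --- the continuous mean value theorem~\eqref{eq:contMVT} plus a Gallagher-type discretization --- also cannot yield the lemma: that route produces bounds of the shape $(T+N)\sum_n|a_n|^2$ (times logarithms), with no dependence on $|\mathcal{T}|$, whereas the whole point of the Hal\'asz--Montgomery inequality, and of its use in this paper (e.g.\ in Proposition~\ref{prop:L1work}, where $\mathcal{T}$ is very small), is that $N + |\mathcal{T}|\sqrt{T}$ beats $T$ when $|\mathcal{T}|$ is small. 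So the plan is the right one, but the van der Corput input and the correct off-diagonal bookkeeping are the missing ingredients.
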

\begin{proof}
See \cite[Theorem 9.6]{IwKo04}.
\end{proof}

In the proofs of Theorems~\ref{thm:NormFormLowBound} and~\ref{th:LowerBound} we use Huxley's large value theorem which we state now.
\begin{lemma} \label{lem:Huxley}
Let $N, T \geq 3$. Let $A(s) = \sum_{N < n \leq 2N} a_n n^{-s}$ be a Dirichlet polynomial of length $N$, and write $G = \sum_{N < n \leq 2N} \frac{|a_n|^2}{n^2}$. Let $\mathcal{T} \subset [-T,T]$ be a one-spaced set such that $|A(1+it)| \geq V^{-1}$ for every $t \in \mathcal{T}$. Then
\[
|\mathcal{T}| \ll \Big(G N V^2 + G^3 N T V ^6 \Big) (\log T)^6.
\]
In particular if $|a_n| \leq 1$ for all $n$, then
\[
|\mathcal{T}| \ll \Big(V^2 + \frac{TV^6}{N^2}\Big) (\log T)^6
\]
\end{lemma}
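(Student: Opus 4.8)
\textbf{Proof plan for Lemma~\ref{lem:Huxley} (Huxley's large value theorem).}

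The plan is to reduce to the classical Huxley large values estimate for Dirichlet polynomials on the critical line and then re-scale to the line $\Re s = 1$. First I would apply a dyadic decomposition of $\mathcal{T}$ in the variable $t$: cover $[-T,T]$ by $O(\log T)$ subintervals, but actually since the bound is uniform in $T$ it suffices to work with the full range and carry $T$ through. The substitution $A(1+it) = \sum_{N < n \leq 2N} (a_n/n) n^{-it}$ shows that $A(1+it)$ is, up to relabelling, a Dirichlet polynomial of the same length $N$ with coefficients $b_n = a_n/n$, and $\sum_{N<n\leq 2N}|b_n|^2 = G$. Thus the hypothesis becomes $|B(it)| \geq V^{-1}$ on the one-spaced set $\mathcal{T}$, where $B(s) = \sum_{N<n\leq 2N} b_n n^{-s}$.

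Next I would invoke the standard large values theorem in the form found in~\cite[Theorem 9.7]{IwKo04} (Huxley's inequality): for a Dirichlet polynomial $B(s)=\sum_{N<n\leq 2N}b_n n^{-s}$, a one-spaced set $\mathcal{T}\subset[-T,T]$ on which $|B(it)|\geq V^{-1}$ satisfies
\[
|\mathcal{T}| \ll \Big( \mathcal{G} N V^2 + \mathcal{G}^3 N T V^6 \Big)(\log T)^{O(1)},
\]
where $\mathcal{G} = \sum_{N<n\leq 2N}|b_n|^2$. Here one has to be slightly careful about the exact power of $\log$; tracking through the standard proof (which combines the mean value theorem with a reflection/duality argument and the divisor bound to control the diagonal and off-diagonal terms) gives $(\log T)^6$, which is the constant we have stated. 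Substituting $\mathcal{G} = G$ yields exactly the first displayed bound. The second displayed bound then follows by noting that when $|a_n|\leq 1$ we have $|b_n|=|a_n|/n \leq 1/n \ll 1/N$ for $n\in(N,2N]$, hence $G = \sum_{N<n\leq 2N}|a_n|^2/n^2 \ll N \cdot N^{-2} = N^{-1}$; plugging $G \ll N^{-1}$ into $GNV^2 + G^3NTV^6$ gives $V^2 + N^{-2}TV^6$ up to the same logarithmic factor.

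The main obstacle, such as it is, is bookkeeping rather than mathematics: one must verify that the version of Huxley's inequality being cited is stated with the correct normalization (Dirichlet polynomial on $\Re s = 0$ versus $\Re s = 1$, coefficients versus coefficients divided by $n$, length $N$ versus dyadic range $(N,2N]$) and with an explicit logarithmic power, so that after the re-scaling the clean forms above emerge with the stated exponent $6$ on $\log T$. If the reference only gives $(\log T)^{O(1)}$ one would either cite a more precise source or remark that any fixed power suffices for our applications; in the latter case the ``$6$'' in the statement can be read as a convenient representative. No genuinely new estimate is needed here — the lemma is a packaging of a known result in the normalization convenient for Sections on Theorems~\ref{thm:NormFormLowBound} and~\ref{th:LowerBound}.
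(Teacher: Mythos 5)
Your proposal is correct and matches the paper's proof, which simply cites Huxley's large values theorem in the form of \cite[Corollary 9.9]{IwKo04}; the rescaling from $\Re s = 1$ to coefficients $a_n/n$ and the specialization $G \ll N^{-1}$ when $|a_n| \leq 1$ are exactly the intended (and routine) bookkeeping. The only nitpick is the reference: the statement with the $(\log T)^6$ factor is \cite[Corollary 9.9]{IwKo04} rather than Theorem 9.7.
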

\begin{proof}
See~\cite[Corollary 9.9]{IwKo04}.
\end{proof}

We will also need a Hal{\'a}sz-Montgomery type result on the primes. Before stating it we state a standard linear sieve upper bound which we shall also need in a few other occasions.
\begin{lemma} \label{le:linearsieve}
Let $D \geq z \geq 1$ and $\mathcal{P} \subset \mathbb{P} \cap [1,z]$. Write $P(z) = \prod_{p \in \mathcal{P}} p$. There exists a set $S^+$ satisfying the following three conditions.
\begin{enumerate}[(i)]
\item One has $1 \in S^+$ and if $d \in S^+$, then $d \leq D$ and $d \mid P(z)$.
\item One has
\[
\mathbf{1}_{(n, P(z)) = 1} \leq \sum_{\substack{d \mid n \\ d \in S^+}} \mu(d),
\]
where $\mathbf{1}_{A}$ is the indicator of the claim/set $A$.
\item For any multiplicative $g$ such that $g(p) \in [0, 1)$ for all $p \in \mathbb{P}$ and
\[
\prod_{\substack{w \leq p < z \\ p \in \mathcal{P}}} (1-g(p))^{-1} \leq \frac{\log z}{\log w}\Big(1+\frac{L}{\log w}\Big),
\]
for all $2 \leq w \leq z$ and some $L \geq 1$, one has
\[
\sum_{d \in S^+} \mu(d) g(d) \leq (F(s)+O_L((\log D)^{-1/6})) \prod_{p \mid P(z)} (1-g(p)),
\]
where $s = \frac{\log D}{\log z}$ and $F \colon [1, \infty) \to [1, \infty)$ is a decreasing function such that $F(s) = 2e^\gamma/s$ for $s \in [1, 3]$.
\end{enumerate}
\end{lemma}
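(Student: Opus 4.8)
The plan is to take $S^+$ to be the support of the classical Rosser--Iwaniec upper-bound weights for the linear (dimension one) sieve of level $D$. Concretely, declare that $d \in S^+$ precisely when $d \mid P(z)$ and, writing $d = p_1 p_2 \cdots p_r$ with $p_1 > p_2 > \cdots > p_r$ all lying in $\mathcal{P}$, one has $p_1 p_2 \cdots p_{m-1} p_m^3 \le D$ for every odd $m$ with $1 \le m \le r$ (the empty product being interpreted as $1$, so the condition for $m=1$ reads $p_1^3 \le D$). With this definition $1 \in S^+$ vacuously, and a standard feature of the Rosser construction is that every $d \in S^+$ is a divisor of $P(z)$ with $d \le D$; this gives (i). The weights implicit in the lemma are then $\lambda^+_d := \mu(d)\,\mathbf{1}_{d \in S^+}$.

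For (ii) it suffices to show that $\sum_{d \mid m,\, d \in S^+} \mu(d) \ge \mathbf{1}_{m=1}$ for every squarefree $m \mid P(z)$, since applying this with $m = (n, P(z))$ yields the stated inequality. This is the combinatorial core of Rosser's sieve: iterating Buchstab's identity and truncating the resulting inclusion--exclusion at levels of the prescribed parity, one checks that every block of terms that gets discarded contributes nonnegatively, so the truncation overcounts. I would carry this out exactly as in \cite[Ch.~11]{IwKo04} or \cite[Ch.~11]{Opera}; the only delicate point is to verify that the truncation rule ``$p_1 \cdots p_{m-1} p_m^3 \le D$ for odd $m$'' is compatible both with the level constraint $d \le D$ and with the sign bookkeeping at each Buchstab step.

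For (iii) I would substitute $S^+$ into $V^+ := \sum_{d \in S^+} \mu(d) g(d)$ and run the standard asymptotic analysis of the Rosser--Iwaniec sums under the one-sided dimension hypothesis $\prod_{w \le p < z,\, p \in \mathcal{P}}(1-g(p))^{-1} \le \frac{\log z}{\log w}\bigl(1 + \frac{L}{\log w}\bigr)$. Peeling off the largest prime factors via Buchstab's identity and comparing the resulting discrete sums with the continuous solution of the linear-sieve delay--differential system --- which on the range $1 \le s \le 3$ is the explicit function $F(s) = 2e^\gamma/s$, reflecting that the companion lower-sieve function vanishes there --- one arrives at $V^+ \le \bigl(F(s) + O_L((\log D)^{-1/6})\bigr)\prod_{p \mid P(z)}(1-g(p))$ with $s = \log D/\log z$. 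This is (iii); the exponent $1/6$ and the absorption of the $L$-dependence into the $O_L$-term come out exactly as in the cited references.

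The step I expect to be the main obstacle is (iii): obtaining the clean main term $F(s)$ with the explicit value $2e^\gamma/s$ on $1 \le s \le 3$, together with a uniform error $O_L((\log D)^{-1/6})$, requires the full Iwaniec iteration (Buchstab asymptotics, a fundamental-lemma estimate for the tails, and careful control of error propagation) rather than any short-cut. Since all of this is available in the literature in precisely the form needed, in practice I would simply invoke the Rosser--Iwaniec linear sieve as stated in \cite[Ch.~11]{IwKo04} (equivalently \cite[Ch.~11--12]{Opera}) and observe that the statement above is a direct reformulation of it: the existence of a divisor set $S^+$ with properties (i)--(iii) is just a repackaging of the classical upper-bound $\beta$-sieve with $\beta = 1$.
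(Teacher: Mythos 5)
Your proposal is correct and coincides with the paper's treatment: the paper proves this lemma simply by citing the standard Rosser--Iwaniec linear sieve (\cite[Section 12.1]{Opera}), which is exactly the repackaging you describe, with your truncation rule $p_1\cdots p_{m-1}p_m^3 \le D$ for odd $m$ being the usual upper-bound condition and the stated $(\log D)^{-1/6}$ error following from (indeed being weaker than) the classical $(\log D)^{-1/3}$-type bound in those references.
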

\begin{proof}
See for example~\cite[Section 12.1]{Opera}
\end{proof}

Like~\cite[Lemma 11]{MainPaper}, the following lemma is optimized for very short polynomials. For longer polynomials, better results could be obtained by using different bounds for the zeta function at the end of the proof.
\begin{lemma}[Hal\'asz-Montgomery inequality for primes] \label{lem:primeshalasz}
Let $T \geq 3$ and let $\mathcal{T} \subset [-T, T]$ be one-spaced. Let $P(s) = \sum_{N < p \leq 2N} a(p) p^{it}$ be a Dirichlet polynomial of length $N \leq T^2$ whose coefficients are supported on primes. Then, for any $\varepsilon', \eta \in (0, 1/2)$,
\[
\sum_{t \in \mathcal{T}} |P(it)|^2 \ll_{\varepsilon'} \Big ( \frac{N}{\log N} + |\mathcal{T}| \cdot T^{\frac{9}{2} \eta^{3/2}} (\log T)^{2} \cdot N^{1 - \eta(1-\varepsilon')} \Big ) \sum_{N < p \leq 2N} |a(p)|^2.
\]
\end{lemma}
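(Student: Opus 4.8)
\textbf{Proof plan for Lemma~\ref{lem:primeshalasz}.} The plan is to combine the classical Hal\'asz--Montgomery duality argument (as in Lemma~\ref{le:Hallargevalint}) with an explicit bound for the mean value of $|\zeta(\sigma+it)|$ on a one-spaced set, where the decisive input is Vinogradov's zero-free region / Vinogradov's estimate for exponential sums, which is what makes the method effective for extremely short polynomials ($N$ as small as $T^{\varepsilon}$). First I would set up the duality: writing $P(it)=\sum_{N<p\le 2N}a(p)p^{it}$ with $\|a\|_2^2=\sum|a(p)|^2$, we have $\sum_{t\in\mathcal T}|P(it)|^2 = \sum_{N<p\le 2N}a(p)\,\overline{b(p)}$ for a suitable dual vector $b(p)=\sum_{t\in\mathcal T}\overline{c_t}\,p^{-it}$ with $\sum_t|c_t|^2=\sum_t|P(it)|^2$; by Cauchy--Schwarz it suffices to show
\[
\sum_{N<p\le 2N}\Big|\sum_{t\in\mathcal T}c_t\,p^{it}\Big|^2 \ll_{\varepsilon'} \Big(\frac{N}{\log N} + |\mathcal T|\cdot T^{\frac92\eta^{3/2}}(\log T)^2 N^{1-\eta(1-\varepsilon')}\Big)\sum_{t\in\mathcal T}|c_t|^2.
\]
Expanding the square and pulling out the diagonal $t=u$ (which contributes $\pi(2N)-\pi(N)\ll N/\log N$ times $\sum_t|c_t|^2$), the off-diagonal terms are $\sum_{t\ne u}c_t\overline{c_u}\sum_{N<p\le 2N}p^{i(t-u)}$; bounding $|c_t\overline{c_u}|\le\frac12(|c_t|^2+|c_u|^2)$ reduces everything to controlling $\sum_{\substack{u\in\mathcal T\\ u\ne t}}\big|\sum_{N<p\le 2N}p^{i(t-u)}\big|$ uniformly in $t$.

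The core estimate is therefore a bound for $\sum_{N<p\le 2N}p^{iv}$ when $1\le|v|\le 2T$, i.e.\ for the Dirichlet polynomial over primes evaluated off the diagonal. Here I would pass from primes to prime powers via $\Lambda$ (negligible error from higher powers), use partial summation to replace $\sum p^{iv}$ by an integral of $\psi(u,v):=\sum_{n\le u}\Lambda(n)n^{iv}$ against $du/\log u$, and then invoke the standard bound coming from the Vinogradov--Korobov zero-free region: for $N\le u\le 2N$ and $|v|\le 2T$,
\[
\sum_{n\le u}\Lambda(n)n^{iv} \ll u\exp\!\big(-c(\log u)^{3/5}(\log\log u)^{-1/5}\big)
\]
is the soft version, but to get the clean shape $T^{\frac92\eta^{3/2}}N^{1-\eta(1-\varepsilon')}$ I would instead use Vinogradov's exponential sum estimate in the explicit form: for $N<n\le 2N$, $\sum_{N<n\le 2N} n^{iv}\ll N^{1-\eta}+ |v|^{C\eta^{3/2}}N^{1-\eta}\cdots$ --- more precisely the relevant statement is that $|\zeta(1+\tfrac{1}{\log N}+iv)|$ and the associated mean square of $\zeta$ on one-spaced sets admit bounds with a $T^{\eta^{3/2}}$-type factor, and truncating a Perron integral at height depending on $\eta$ one obtains $\sum_{N<p\le 2N}p^{iv}\ll T^{\frac92\eta^{3/2}}(\log T)N^{1-\eta(1-\varepsilon')}$ for $1\le|v|\le 2T$. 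Summing this over the $\ll|\mathcal T|$ values of $u$ (one-spacedness is not actually needed for this step, only $|\mathcal T|$ appears) gives the second term, and reinstating the Cauchy--Schwarz step yields the lemma.

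The main obstacle — and the only part that is not bookkeeping — is establishing the off-diagonal bound $\sum_{N<p\le 2N}p^{iv}\ll_{\varepsilon'} T^{\frac92\eta^{3/2}}(\log T)\,N^{1-\eta(1-\varepsilon')}$ uniformly for $1\le|v|\le 2T$ with $N\le T^2$, with the \emph{correct} exponents: one must track how Vinogradov's mean value / exponent pair input (typically of the form: $\zeta(\sigma+it)\ll |t|^{B(1-\sigma)^{3/2}}$ in the relevant range, or the Korobov--Vinogradov bound $\sum_{n\sim N}n^{iv}\ll N^{1-c\eta}+\dots$ with $v\asymp N^{1/\eta}$ scaling) converts into a power saving $N^{1-\eta(1-\varepsilon')}$ at the cost of the factor $T^{\frac92\eta^{3/2}}$. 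Since $N$ may be as small as $T^{\varepsilon}$, one genuinely needs the \emph{short-interval} form of these estimates — a bound on $\sum_{N<n\le 2N}n^{iv}$ rather than on a full $\sum_{n\le X}$ — which is why, as the remark before the lemma notes, the argument is optimized for very short polynomials; the constant $9/2$ and the shape of the exponent come from optimizing the standard Vinogradov estimate (e.g.\ via \cite[Ch.~8]{IwKo04} or Titchmarsh) in the regime $|v|\approx N^{1/\eta}$, and I would carry out that optimization explicitly, then feed the result through the duality skeleton above. Everything else — the passage primes $\leftrightarrow\Lambda$, partial summation, the Cauchy--Schwarz/duality reduction, and handling the diagonal — is routine.
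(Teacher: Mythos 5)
There is a genuine gap at the heart of your plan: the off-diagonal estimate you reduce everything to, namely a bound of the shape $\sum_{N<p\le 2N}p^{iv}\ll T^{\frac92\eta^{3/2}}(\log T)\,N^{1-\eta(1-\varepsilon')}$ uniformly for $1\le |v|\le 2T$, is neither available nor even true. For bounded $|v|$ the prime sum has no cancellation beyond the factor $1/(1+|v|)$ (it is $\asymp N/((1+|v|)\log N)$), so the uniform power saving fails outright for $1\le|v|\le N^{\eta}/\log N$; and for large $|v|$ the inputs you cite do not produce it. Pointwise bounds on $|\zeta(\sigma+it)|$ in the strip (Ford/Vinogradov) control \emph{integer} sums via Perron, not prime sums; cancellation in $\sum_{n\le u}\Lambda(n)n^{iv}$ requires zero-free region or zero-density information, and the Vinogradov--Korobov region only gives a saving of size $\exp\bigl(-c\log N/(\log T)^{2/3}(\log\log T)^{1/3}\bigr)$, which is not of the stated explicit form in $(\eta,T,N)$, has an inexplicit constant (the paper's remark stresses that explicit, $\varepsilon$-free exponents are essential for the application), and degenerates completely when $N$ is small compared to $T$ --- exactly the regime the lemma is designed for (in the applications $|v|$ can be a huge power of $N$). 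The quantitative form you quote, $\sum_{n\le u}\Lambda(n)n^{iv}\ll u\exp(-c(\log u)^{3/5}(\log\log u)^{-1/5})$ ``for $|v|\le 2T$'', is not uniform in that range. In short, your route demands unconditional cancellation in very short prime Dirichlet polynomials at enormous heights, which is far beyond what is known.

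The paper's proof avoids this entirely, and the difference is the key idea you are missing: after the same duality step, primality is never exploited for cancellation. Instead the prime indicator is \emph{majorized by a sieve} of level only $N^{\varepsilon'}$ (the linear-sieve weights $S^+$ of Lemma~\ref{le:linearsieve}), and the dual sum over primes is replaced by smoothed complete sums $\sum_n |{\textstyle\sum_t a(t)(dn)^{-it}}|^2\,\Phi(dn/N)$ over integers. Each inner sum $\sum_n n^{i(u-v)}\Phi(dn/N)$ is then evaluated by a contour shift to $\Re s=1-\eta$ using Ford's explicit bound $|\zeta(\sigma+it)|\ll 1+|t|^{\frac92(1-\sigma)^{3/2}}(\log(|t|+2))^{2/3}$, which is where the $T^{\frac92\eta^{3/2}}N^{1-\eta}$ term comes from (the $\varepsilon'$ loss arises because $d\le N^{\varepsilon'}$ shortens the effective length to $N/d$), while the residues at $s=1+iu-iv$ are summed using $\sum_{d\in S^+}\mu(d)/d\ll_{\varepsilon'}1/\log N$ to produce the $N/\log N$ term. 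Note also that the smoothing $\Phi$ is not cosmetic: with a sharp cutoff and absolute values, the off-diagonal ``main terms'' decay only like $1/|t-u|$ and summing over a one-spaced set costs an extra $\log T$, which the application cannot afford; the rapid decay of $\widetilde\Phi(1+iu-iv)$ (together with one-spacedness, which \emph{is} needed here) is what kills them. So while your duality/diagonal bookkeeping is fine, the analytic engine you propose would have to be replaced by the sieve-plus-zeta argument.
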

\begin{remark}
  A result with $T^{\frac{9}{2} \eta^{3/2}}$ replaced by $T^{\eta^{3/2} / 2 + \varepsilon}$ can be obtained by using a more recent result of Heath-Brown \cite{HBExp} instead of Ford's result \cite{FordZeta} that we will use in the proof. However, in that variant the implied constant would depend on $\varepsilon$ in the exponent of $T$ which is not acceptable in our application.
\end{remark}

\begin{proof}
Note that if $\eta < 1/\log N$, then the claim is trivial. Hence by duality (see e.g. \cite[Theorem 6 in Chapter 7]{MontgomeryTenLecture}) it is enough to show that,
$$
\sum_{N < p \leq 2N} \Big | \sum_{t \in \mathcal{T}} \frac{a(t)}{p^{i t}} \Big |^2 \ll_{\varepsilon'} \Big ( \frac{N}{\log N} + |\mathcal{T}| \cdot \eta^{-1} \cdot T^{\frac{9}{2} \eta^{3/2}} (\log T)^{2/3} \cdot N^{1 - \eta(1-\varepsilon')} \Big ) \sum_{t \in \mathcal{T}} |a(t)|^2
$$
for arbitrary coefficients $a(t) \in \mathbb{C}$. Let $\Phi \geq 0$ be a smooth function with $\Phi(x) = 1$ for $1 \leq x \leq 2$ and $\widetilde{\Phi}(1+it) \ll_A (1+ |t|)^{-A}$ for every $A > 0$ and $t \in \mathbb{R}$, where $\widetilde{\Phi}(s) := \int_{0}^{\infty} \Phi(x) x^{s - 1} dx$ denotes the Mellin transform of $\Phi(x)$. In addition, let $S^+$ be as in Lemma~\ref{le:linearsieve} with $D = z = N^{\varepsilon'}$ and $\mathcal{P} = [2, z] \cap \mathbb{P}$. We see that
\begin{equation}\label{eq:square}
\sum_{N < p \leq 2N} \Big | \sum_{t \in \mathcal{T}} \frac{a(t)}{p^{it}} \Big |^2 \leq \sum_{\substack{d \in S^+}} \mu(d)  \sum_{n}  \Big | \sum_{t \in \mathcal{T}} \frac{a(t)}{(d n)^{it}} \Big |^2 \Phi \Big ( \frac{d n}{N} \Big ).
\end{equation}
Now, for $u, v \in \mathbb{R}$,
$$
\sum_{n} n^{i u - i v} \Phi \Big ( \frac{d n}{N} \Big ) = \frac{1}{2\pi i} \int_{1 + \varepsilon - i \infty}^{1 + \varepsilon + i \infty} \zeta(s - i u + iv) \frac{N^s}{d^s} \widetilde{\Phi}(s) ds.
$$
Shifting the contour to $\Re s = 1 - \eta$ we collect a pole at $s = 1 + i u - iv$. We bound the remaining integral using a result of Ford~\cite[Theorem 1]{FordZeta} which gives
\[
|\zeta(\sigma + it)| \ll 1 + |t|^{\tfrac{9}{2} (1 - \sigma)^{3/2}} (\log (|t|+2))^{2/3} \quad \text{for $1/2 \leq \sigma \leq 1$}.
\]
This shows that
$$
\sum_{n} n^{i u - iv} \Phi \Big ( \frac{d n}{N} \Big ) = \frac{N^{1 + i u - iv}}{d^{1 + i u - i v}} \cdot \widetilde{\Phi}(1 + i u - iv) + O \Big ( (1+|u| + |v|)^{\tfrac{9}{2} \eta^{3/2}} \log((|u|+|v|+2)^{2/3} \Big ( \frac{N}{d} \Big )^{1 - \eta} \Big ).
$$
Therefore, using $|a(u) a(v)| \leq |a(u)|^2 + |a(v)|^2$, the right hand side of~\eqref{eq:square} is equal to
\[
\sum_{d \in S^+} \frac{\mu(d)}{d} \sum_{u, v \in \mathcal{T}} \overline{a(u)} a(v) N^{1 + iu - iv} \widetilde{\Phi}(1 + iu - iv) + O \Big ( T^{\tfrac{9}{2} \eta^{3/2}} (\log T)^{2/3} \cdot \eta^{-1} N^{\eta \varepsilon'} \cdot N^{1 - \eta} \cdot |\mathcal{T}| \cdot \sum_{t \in \mathcal{T}} |a(t)|^2 \Big )
\]
Using Lemma~\ref{le:linearsieve}(iii) with $g(p) = 1/p$ and again the inequality $|a(u) a(v)| \leq |a(u)|^2 + |a(v)|^2$ we bound this further by
$$
\ll_{\varepsilon'} \Big ( \frac{N}{\log N} + |\mathcal{T}| \cdot T^{\tfrac{9}{2} \eta^{3/2}} (\log T)^{2/3} \eta^{-1} N^{1 - \eta(1-\varepsilon')} \Big ) \sum_{t \in \mathcal{T}} |a(t)|^2
$$
as we claimed.
\end{proof}

\section{Hal\'asz type results}\label{se:halasz}
In this section we deduce Hal\'asz and Lipschitz type results for multiplicative functions taking values in $\mathbb{U}$. Since the average of the absolute value of our function over $(X, 2X]$ might be of order $(\log X)^{-\alpha}$ with $\alpha \in (0, 1)$, we cannot directly use the standard results in the literature which typically win a small power of logarithm, but we need to slightly modify the proofs of the existing results to take into account the average value of $f$. Variants in a similar spirit can also be found from papers of Matthiesen~\cite{Matthiesen} (see e.g. Lemma 4.6 there) and Tenenbaum~\cite{Tenenbaum17} (see in particular Corollaire 2.1 there).

As usual, we will relate averages of a multiplicative function $f$ to a Dirichlet series of the type
\[
F(s;X) := \prod_{p \leq X} \Big(1+\frac{f(p)}{p^s}+\frac{f(p^2)}{p^{2s}} + \dotsb\Big).
\]
To estimate $|F(1+it; X)|$, we notice that
\begin{equation}
\label{eq:FexpBound}
|F(1+it; X)| \ll \exp\Big(\Re \sum_{p \leq X} \frac{f(p)p^{-it}}{p}\Big),
\end{equation}
and use Lemma 
\ref{le:sparseDistEst} below to estimate the sum on the right hand side. Recall the definitions of $t_{f, X}$ and $\widehat{t}_{f, X}$ from Definition~\ref{def:complex} and the definition of $\rho_\alpha$ from~\eqref{eq:etaadef} --- these quantities will occur several times in this section. 

We also point the reader to the Appendix which contains a ``trivial'' inequality that will be used in the proof of the lemma below. 

\begin{lemma}
\label{le:sparseDistEst}
let $f: \mathbb{N} \rightarrow \mathbb{U}$ be a multiplicative function.
\begin{enumerate}[(i)]
\item Assume that $f$ is $(\alpha, X^\theta)$-non-vanishing for some $\alpha, \theta \in (0, 1]$. One has, for any $|t| \leq X$, and any $0 < \rho < \rho_{\alpha}$
\[
\sum_{p \leq X} \frac{|f(p)| - \Re f(p)p^{-it}}{p} \geq  \rho \min\{\log \log X, 3\log(|t-\widehat{t}_{f, X}|\log X+1)\} + O_{\rho, \theta}(1).
\]
\item One has, for any $|t| \leq X$, and any $0 < \rho < \rho_{1}$, 
\[
\sum_{p \leq X} \frac{1 - \Re f(p)p^{-it}}{p} \geq  \rho \min\{\log \log X, 3\log(|t-t_{f, X}|\log X+1)\} + O_{\rho}(1).
\]
\item If $f$ is almost real-valued, then (i) and (ii) hold also with $\widehat{t}_{f, X}$ and $t_{f, X}$ replaced by $0$.
\end{enumerate}
\end{lemma}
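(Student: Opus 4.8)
\textbf{Proof proposal for Lemma \ref{le:sparseDistEst}.}

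The plan is to reduce everything to an inequality about sums over primes of the form $\sum_{p\le X}(c|f(p)|-\Re f(p)p^{-it})/p$, where the coefficient $c$ is $1$ in part (ii) and something like $\alpha$ (after using non-vanishing) in part (i). The starting point is the elementary pointwise bound: for any $|f(p)|\le 1$ and any real $\psi_p$, $|f(p)|-\Re f(p)p^{-it}\ge c_0\,(1-\cos(\psi_p))$ for a suitable absolute constant, but more precisely we want to exploit the exact constant $\rho_\alpha=\alpha/3-\tfrac{2}{3\pi}\sin(\pi\alpha/2)$, which strongly suggests the ``trivial inequality'' referenced in the Appendix is something like: for $0\le r\le 1$ and $\theta\in\mathbb{R}$, $r-r\cos\theta \ge 3\rho_\alpha(1-\cos\theta)$ is false in general, so instead one writes $|f(p)|-\Re f(p)p^{-it}\ge g_\alpha(|f(p)|,\arg f(p)-t\log p)$ for an explicit function $g_\alpha$ whose average over the relevant distribution of $\arg f(p)$ is bounded below using the non-vanishing hypothesis. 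Concretely I expect the inequality to take the shape $|f(p)|-\Re(f(p)p^{-it})\ge \alpha\,\eta(1-\cos(t\log p - \vartheta)) - (\text{error involving }|f(p)|<\text{something})$, integrated against the prime-counting measure.

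First I would handle part (ii), which is the classical case (essentially Halász's inequality with explicit constant $\rho_1=1/3-2/(3\pi)$): this is a known statement whose proof splits the primes $p\le X$ according to whether $p\le X^{1/t}$ or not, or rather uses the standard three-step argument — (a) if $|t-t_{f,X}|\log X\le 1$ the right-hand side is $O(1)$ and there is nothing to prove; (b) otherwise, by minimality of $t_{f,X}$, $\sum_{p\le X}(1-\Re f(p)p^{-i t_{f,X}})/p\le \sum_{p\le X}(1-\Re f(p)p^{-it})/p$, and combining this with the triangle-type inequality $1-\Re f(p)p^{-it}+1-\Re f(p)p^{-i t_{f,X}}\ge c(1-\cos((t-t_{f,X})\log p))$ reduces matters to the lower bound $\sum_{p\le X}(1-\cos(u\log p))/p\ge 3\rho_1\min\{\log\log X,3\log(|u|\log X+1)\}+O(1)$ for $u=t-t_{f,X}$; (c) this last sum is estimated by Mertens: for $|u|\log X\ge 1$ the range $p\le\exp(1/|u|)$ contributes $\log\log X^{?}$ and one gets the stated logarithmic lower bound. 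The precise constant $3\rho_1$ comes from optimizing the elementary inequality relating $1-\Re(w)$, $1-\Re(w')$ and $1-\cos$ of the difference when $|w|,|w'|\le 1$; this is exactly the place the ``trivial inequality'' in the Appendix is used, and it is also why the factor $1/3$ and the $\sin$ term appear.

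For part (i) I would run the same three-step scheme but carry the non-vanishing hypothesis through step (b)–(c). The point is that after applying the minimality of $\widehat t_{f,X}$ for the functional $\widehat M$, one is led to bound below $\sum_{p\le X}(|f(p)|-\Re f(p)p^{-iu}|f(p)|\cdot(\dots))/p$; using $(\alpha,X^\theta)$-non-vanishing, $\sum_{w<p\le z}|f(p)|/p\ge\alpha\sum_{w<p\le z}1/p-O(1/\log w)$ for $2\le w\le z\le X^\theta$, so on the truncated range $p\le X^\theta$ the weight $|f(p)|$ can be replaced by $\alpha$ up to acceptable error, and then $\sum_{p\le X^\theta}(1-\cos(u\log p))/p$ furnishes the logarithm with constant $3\rho_\alpha$ (the extra factor $\alpha$ inside, combined with the $1/3$ and the $\sin(\pi\alpha/2)$ loss from the elementary inequality with bounded-modulus entries, produces exactly $\rho_\alpha=\alpha/3-\tfrac{2}{3\pi}\sin(\pi\alpha/2)$). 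Restricting to $p\le X^\theta$ rather than $p\le X$ only changes $\min\{\log\log X,\dots\}$ by an additive $O_\theta(1)$, which is absorbed. Part (iii) is then immediate: if $f$ is almost real-valued, $\sum_{f(p)\notin\mathbb R}|f(p)|/p<\infty$, so replacing $\widehat t_{f,X}$ (resp.\ $t_{f,X}$) by $0$ changes $\widehat M(f;X)$ (resp.\ $M(f;X)$) by $O(1)$ and hence changes the minimizer's contribution to the bound by $O(1)$, which is again absorbed into the error term; equivalently, one just re-runs the argument with $u=t$ directly.

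\textbf{Main obstacle.} The delicate point is getting the \emph{exact} constant $\rho_\alpha$ (and $\rho_1$) rather than some unspecified positive constant: this forces one to use the sharp form of the elementary inequality bounding $|f(p)|-\Re(f(p)e^{-i\phi})$ from below by a multiple of $1-\cos\phi$ \emph{uniformly over all admissible values $|f(p)|\le 1$}, and to combine it correctly with the non-vanishing average of $|f(p)|$; the optimization is where the $\sin(\pi\alpha/2)$ term is born, and bookkeeping the two error terms ($O(1/\log w)$ from non-vanishing, and $O_{\rho,\theta}(1)$ from the Mertens estimate, plus the loss from $\rho<\rho_\alpha$ rather than $\rho=\rho_\alpha$) so that they genuinely stay $O_{\rho,\theta}(1)$ requires care. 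Everything else is standard Mertens-type estimation.
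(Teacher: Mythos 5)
Your outer skeleton matches the paper's: dispose of the case $|t-\widehat t_{f,X}|$ very small, average the sum at $t$ with the sum at the minimizer $\widehat t_{f,X}$ (using minimality), and reduce to lower-bounding a weighted sum of $1-\bigl|\cos\bigl(\tfrac{(t-\widehat t_{f,X})\log p}{2}\bigr)\bigr|$ over primes in a truncated range. But there is a genuine gap at the heart of the argument: you misidentify where the sharp constants $3\rho_1=1-\tfrac{2}{\pi}$ and $3\rho_\alpha=\alpha-\tfrac{2}{\pi}\sin(\tfrac{\pi\alpha}{2})$ come from. They do not come from any pointwise ``triangle-type'' or ``elementary inequality with bounded-modulus entries'' (every such pointwise inequality loses a constant factor, e.g.\ $2-2|\cos\phi|\ge\tfrac12(1-\cos 2\phi)$, which would destroy the stated constant). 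They come from an \emph{averaged extremal} statement, which is exactly the Appendix lemma: after the averaging step one must show
\[
\sum_{Y< p\le X^\theta}\frac{|f(p)|}{p}\Bigl(1-\Bigl|\cos\Bigl(\pi\Bigl\Vert\tfrac{(t-\widehat t_{f,X})\log p}{2\pi}\Bigr\Vert\Bigr)\Bigr|\Bigr)\;\ge\;\Bigl(2\int_0^{\alpha/2}(1-\cos\pi x)\,dx+o(1)\Bigr)\log\tfrac{\log X^\theta}{\log Y},
\]
and the proof of this needs (a) equidistribution of $(t-\widehat t_{f,X})\log p$ modulo $2\pi$ over the primes in each block --- obtained from prime counts in short intervals for small shifts and from the Erd\H{o}s--Tur\'an inequality plus the zeta zero-free region for large ones, which is why the lower cutoff $Y\ge\exp((\log X)^{2/3+\varepsilon})$ appears --- and (b) a rearrangement inequality to handle the fact that the mass $|f(p)|$ (only constrained \emph{on average} by the non-vanishing hypothesis) can be placed adversarially on the primes where $1-|\cos|$ is smallest.

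This is precisely why your step ``the weight $|f(p)|$ can be replaced by $\alpha$ up to acceptable error'' fails: $(\alpha,X^\theta)$-non-vanishing controls only averages of $|f(p)|$ over ranges of $p$, and nothing prevents $|f(p)|$ from being supported exactly where $\|(t-\widehat t_{f,X})\log p/(2\pi)\|$ is small, i.e.\ where the integrand is tiny. Indeed, if the replacement were legitimate it would yield the constant $\alpha\bigl(1-\tfrac{2}{\pi}\bigr)=3\alpha\rho_1$, which is strictly larger than $3\rho_\alpha$ for $\alpha<1$ (since $\sin(\tfrac{\pi\alpha}{2})\ge\alpha$); the correct bound is the extremal value $2\int_0^{\alpha/2}(1-\cos\pi x)\,dx$, attained in the worst case where $|f(p)|=\mathbf 1_{\|(t-\widehat t_{f,X})\log p/(2\pi)\|\le\alpha/2}$. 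A smaller but real issue: your justification of (iii) via ``replacing $\widehat t_{f,X}$ by $0$ changes $\widehat M(f;X)$ by $O(1)$'' is not sound (that difference can be of size $\log\log X$ even for real-valued $f$, e.g.\ $f(p)=-1$); the paper instead notes that almost real-valuedness gives the pointwise bound $\Re f(p)p^{-it}\le |f(p)|\,|\cos(t\log p)|$ up to a summable error, so one can skip the averaging step and run the same extremal estimate with the phase $t\log p$ directly.
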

\begin{proof}
We prove (i) and point out the differences in proofs of (ii) and (iii) at the end of the proof. The claim is trivial for $|t- \widehat{t}_{f, X}| < 2/(\theta \log X)$. Therefore we can assume that $|t - \widehat{t}_{f, X}| \geq 2 / (\theta \log X)$. We start by using a similar argument as in works of Granville and Soundararajan (see e.g.~\cite[Proof of Lemma 2.3]{GS03}). By definition of $\widehat{t}_{f, X}$, we know that
\begin{equation}
\label{eq:Refplowbound}
\begin{split}
&\sum_{p \leq X} \frac{|f(p)| - \Re f(p)p^{-it}}{p} \geq \frac{1}{2} \sum_{p \leq X} \frac{|f(p)| - \Re f(p)p^{-it}}{p} + \frac{1}{2} \sum_{p \leq X} \frac{|f(p)| - \Re f(p)p^{-i\widehat{t}_{f, X}}}{p} \\
& = \sum_{p \leq X} \frac{|f(p)|}{p} - \Re \sum_{p \leq X} f(p) p^{-i \frac{t+\widehat{t}_{f, X}}{2}} \Big(\frac{p^{i \frac{|t-\widehat{t}_{f, X}|}{2}} + p^{-i \frac{|t-\widehat{t}_{f, X}|}{2}}}{2}\Big) \\
&\geq \sum_{p \leq X} \frac{|f(p)|}{p}\Big(1-\Big|\cos\Big(\frac{(t-\widehat{t}_{f, X}) \log p}{2} \Big)\Big|\Big) \\
&\geq \sum_{Y < p \leq X^\theta} \frac{|f(p)|}{p}\Big(1-\Big|\cos\Big(\pi \Big\Vert \frac{(t-\widehat{t}_{f, X}) \log p}{2\pi} \Big\Vert\Big)\Big|\Big),
\end{split}
\end{equation}
where $\Vert x \Vert$ denotes the distance to the nearest integer, and
\[
Y := \max\{\exp((\log X)^{2/3+\varepsilon}, \exp(1/|t-\widehat{t}_{f, X}|)\}
\]
for some small $\varepsilon > 0$.

One would expect that the right hand side of ~\eqref{eq:Refplowbound} is smallest among $(\alpha, X^\theta)$-non-vanishing $f$ when
\[
|f(p)| = 
\begin{cases}
1 & \text{if $\left\Vert \frac{(t-\widehat{t}_{f, X}) \log p}{2\pi} \right\Vert \leq \alpha/2$;} \\
0 & \text{otherwise.}
\end{cases}
\]
(depending on the parameters, this might not be $(\alpha, X^\theta)$-non-vanishing but let us ignore this) and that 
\begin{equation}
\label{eq:f(p)coslb}
\begin{split}
&\sum_{Y < p \leq X^\theta} \frac{|f(p)|}{p}\Big(1-\Big|\cos\Big(\pi \Big\Vert \frac{(t-\widehat{t}_{f, X}) \log p}{2\pi} \Big\Vert\Big)\Big|\Big) \\
&\geq  2 \int_{0}^{\alpha/2} (1-\cos(\pi x)) dx \cdot \log \frac{\log X^\theta}{\log Y} - O_{\rho, \theta}(1). \\
\end{split}
\end{equation}
While \eqref{eq:f(p)coslb} is a natural-looking inequality, the proof is a bit tedious, so we postpone the rigorous proof to an appendix (see Lemma~\ref{le:appendix}).

Once we have~\eqref{eq:f(p)coslb}, we immediately obtain the claim (i) since
\begin{equation}
\label{eq:rhoint}
 2 \int_{0}^{\alpha/2} (1-\cos(\pi x)) dx = \alpha - \tfrac{2}{\pi} \sin(\tfrac{\pi}{2} \alpha).
\end{equation}

In case (ii) one can run the same argument with $1$ in place of $|f(p)|$ and $t_{f, X}$ in place of $\widehat{t}_{f, X}$, getting that
\[
\begin{split}
\sum_{p \leq X} \frac{1 - \Re f(p)p^{-it}}{p} &\geq \sum_{Y < p \leq X} \frac{1}{p}\Big(1-\Big|\cos\Big(\pi \Big\Vert \frac{(t-t_{f, X}) \log p}{2\pi} \Big\Vert\Big)\Big|\Big).
\end{split}
\]
Now the claim follows from~\eqref{eq:f(p)coslb} and \eqref{eq:rhoint} with $\alpha = 1$ and $f$ identically $1$.

In case (iii), we argue similarly, except we notice that when $f$ is almost real-valued, one has e.g.
\[
\sum_{p \leq X} \frac{|f(p)| - \Re f(p)p^{-it}}{p} \geq \sum_{p \leq X} \frac{|f(p)|(1- |\cos(t \log p)|)}{p} + O(1).
\]

\end{proof}

In the following, we often need to apply results on slight variants of the original multiplicative function $f$ where we have changed $f$ to be zero on some primes. This might affect the values of $t_{f, X}$ and $\widehat{t}_{f, X}$ as well as the non-vanishing-condition. For those situations we have the following variant of Lemma~\ref{le:sparseDistEst}

\begin{lemma}
\label{le:sparseDistEst2}
Let $f: \mathbb{N} \rightarrow \mathbb{U}$ be a multiplicative function, and let $\mathcal{P} \subset \mathbb{P} \cap (1, X]$.
\begin{enumerate}[(i)]
\item Assume that $f$ is $(\alpha, X^\theta)$-non-vanishing for some $\alpha, \theta \in (0, 1]$. One has, for any $|t| \leq X$, and any $0 < \rho < \rho_{\alpha}$, 
\[
\sum_{\substack{p \leq X \\ p \not \in \mathcal{P}}} \frac{\Re f(p)p^{-it}}{p} \leq \sum_{p\leq X} \frac{|f(p)|}{p} - \frac{\rho}{2} \min\{\log \log X, 3\log(|t-\widehat{t}_{f, X}|\log X+1)\} + O_{\theta, \rho}(1).
\]
\item One has, for any $|t| \leq X$, and any $0 < \rho < \rho_{1}$, 
\[
\sum_{\substack{p \leq X \\ p \not \in \mathcal{P}}} \frac{\Re f(p)p^{-it}}{p} \leq \sum_{p\leq X} \frac{1}{p} - \frac{\rho}{2} \min\{\log \log X, 3\log(|t-t_{f, X}|\log X+1)\} + O_{\rho}(1). 
\]
\item If $f$ is almost real-valued, then (i) and (ii) hold also with $\widehat{t}_{f, X}$ and $t_{f, X}$ replaced by $0$
\item One has, for all $|t| \leq X$,
\[
\sum_{\substack{p \leq X \\ p \not \in \mathcal{P}}} \frac{\Re f(p)p^{-it}}{p} \leq \sum_{p\leq X} \frac{|f(p)|}{p} - \frac{1}{2} \widehat{M}(f; X)
\]
and
\[
\sum_{\substack{p \leq X \\ p \not \in \mathcal{P}}} \frac{\Re f(p)p^{-it}}{p} \leq \sum_{p\leq X} \frac{1}{p} - \frac{1}{2} M(f; X)
\]
Moreover, if $\sum_{p \in \mathcal{P}} \frac{1}{p} = O(1)$, then the factors $\frac{1}{2}$ can be removed if one adds an additional term $O(1)$ to the right hand side. 
\end{enumerate}
\end{lemma}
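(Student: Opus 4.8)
The idea is that dropping primes from $\mathcal{P}$ from the sum $\sum_{p \le X} \Re f(p) p^{-it}/p$ can only decrease a ``good'' part if we bound the dropped terms trivially by $|f(p)|/p$, and then use Lemma~\ref{le:sparseDistEst} to gain a logarithmic factor from the full sum. Concretely, for part (i), I would write
\[
\sum_{\substack{p \le X \\ p \not\in \mathcal{P}}} \frac{\Re f(p) p^{-it}}{p}
= \sum_{p \le X} \frac{\Re f(p) p^{-it}}{p} - \sum_{\substack{p \le X \\ p \in \mathcal{P}}} \frac{\Re f(p) p^{-it}}{p}
\le \sum_{p \le X} \frac{\Re f(p) p^{-it}}{p} + \sum_{\substack{p \le X \\ p \in \mathcal{P}}} \frac{|f(p)|}{p}.
\]
Now I would \emph{not} bound the first sum directly. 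Instead, using Lemma~\ref{le:sparseDistEst}(i) applied with parameter $\rho$ (legitimately, since $\rho < \rho_\alpha$), we have
\[
\sum_{p \le X} \frac{|f(p)| - \Re f(p) p^{-it}}{p} \ge \rho \min\{\log\log X,\, 3\log(|t - \widehat{t}_{f,X}|\log X + 1)\} + O_{\rho,\theta}(1),
\]
so $\sum_{p\le X} \Re f(p) p^{-it}/p \le \sum_{p \le X} |f(p)|/p - \rho\min\{\cdots\} + O_{\rho,\theta}(1)$. Adding $\sum_{p\in\mathcal{P}} |f(p)|/p \le \sum_{p\le X}|f(p)|/p$ would lose the whole saving, so the point of the factor $\tfrac12$ in the statement is that we split the saving: replace $\rho$ by $\rho$ in Lemma~\ref{le:sparseDistEst}(i) but keep only half of the resulting bound, i.e. bound $\sum_{p\in\mathcal{P}}|f(p)|/p \le \sum_{p\le X}|f(p)|/p$ is too lossy — rather, I combine $-\rho\min\{\cdots\}$ with the trivial bound so that after absorbing $\sum_{p\in\mathcal P}|f(p)|/p \le \sum_{p\le X}|f(p)|/p$ we are left with $\sum_{p\le X}|f(p)|/p + \sum_{p\le X}|f(p)|/p - \rho\min\{\cdots\}$; to get the clean form stated we instead apply Lemma~\ref{le:sparseDistEst}(i) with $2\rho$ in place of $\rho$ when $2\rho < \rho_\alpha$ (shrinking $\rho$ if necessary, which is harmless since the claim is an inequality valid for all $0<\rho<\rho_\alpha$), obtaining a saving of $2\rho\min\{\cdots\}$, half of which pays for the $\sum_{p\in\mathcal{P}}|f(p)|/p$ term and half of which survives as $\rho\min\{\cdots\}$ — wait, that still does not absorb an unbounded $\sum_{p\in\mathcal P}|f(p)|/p$. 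The correct mechanism, which I will use, is: $\sum_{p\in\mathcal P}|f(p)|/p \le \sum_{p\le X}|f(p)|/p$ is the \emph{only} bound available in general, and indeed the right-hand side of (i) contains $\sum_{p\le X}|f(p)|/p$ precisely once; so I bound
\[
\sum_{\substack{p\le X\\ p\not\in\mathcal P}}\frac{\Re f(p)p^{-it}}{p}
\le \tfrac12\Big(\sum_{p\le X}\frac{\Re f(p)p^{-it}}{p} + \sum_{p\le X}\frac{|f(p)|}{p}\Big),
\]
using $\Re f(p)p^{-it} \le |f(p)|$ for $p\in\mathcal P$ to drop back to the full sum with weight $\tfrac12$ on each copy, and then feed the first term into Lemma~\ref{le:sparseDistEst}(i). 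This yields exactly the stated bound $\sum_{p\le X}|f(p)|/p - \tfrac{\rho}{2}\min\{\cdots\} + O_{\theta,\rho}(1)$, which is why the factor $\tfrac12$ appears. Part (ii) is identical with $|f(p)|$ replaced by $1$ and Lemma~\ref{le:sparseDistEst}(ii) in place of (i); part (iii) is identical using Lemma~\ref{le:sparseDistEst}(iii).

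For part (iv), I proceed the same way but invoke the definitions directly instead of Lemma~\ref{le:sparseDistEst}. Since $\widehat{M}(f;X) = \min_{|t|\le X}\sum_{p\le X}(|f(p)| - \Re f(p)p^{-it})/p$, we have $\sum_{p\le X}\Re f(p)p^{-it}/p \le \sum_{p\le X}|f(p)|/p - \widehat{M}(f;X)$ for every $|t|\le X$; applying the same $\tfrac12$-splitting $\sum_{p\not\in\mathcal P}\Re f(p)p^{-it}/p \le \tfrac12(\sum_{p\le X}\Re f(p)p^{-it}/p + \sum_{p\le X}|f(p)|/p)$ gives the first inequality of (iv), and likewise for $M(f;X)$. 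For the final remark: if $\sum_{p\in\mathcal P}1/p = O(1)$, then $\sum_{p\in\mathcal P}|f(p)|/p = O(1)$, so in the very first display we may bound $\sum_{p\in\mathcal P}\Re f(p)p^{-it}/p$ by $\sum_{p\in\mathcal P}|f(p)|/p = O(1)$ outright rather than by passing to half the full sum; this keeps the full saving $\widehat{M}(f;X)$ (resp. $M(f;X)$, resp. $\rho\min\{\cdots\}$ in (i)--(iii)) at the cost of an extra $O(1)$, which is the stated strengthening.

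I do not anticipate a genuine obstacle here — this is a bookkeeping lemma. The one point requiring a little care is the logical structure of the $\tfrac12$-trick: one must be sure that applying $2x \le x + |x|$-type bounds in the form $\Re f(p)p^{-it} \le |f(p)|$ on the subsum $p\in\mathcal P$, and then recombining, genuinely produces the sum over \emph{all} $p\le X$ (so that Lemma~\ref{le:sparseDistEst} applies to it) rather than the sum over $p\notin\mathcal P$ with the primes of $\mathcal P$ still missing. This works because after writing $\sum_{p\notin\mathcal P} = \sum_{p\le X} - \sum_{p\in\mathcal P}$ and bounding $-\sum_{p\in\mathcal P}\Re f(p)p^{-it}/p \le \sum_{p\in\mathcal P}|f(p)|/p \le \sum_{p\le X}|f(p)|/p$, I have not recovered the clean constant; to get exactly $\tfrac{\rho}{2}$ I instead bound $-\sum_{p\in\mathcal P}\Re f(p)p^{-it}/p \le \sum_{p\in\mathcal P}(|f(p)| - \Re f(p)p^{-it})/p - \sum_{p\in\mathcal P}\Re f(p)p^{-it}/p + \sum_{p\in\mathcal P}\Re f(p)p^{-it}/p$, i.e. the honest identity is $\sum_{p\notin\mathcal P}\Re f(p)p^{-it}/p = \sum_{p\le X}|f(p)|/p - \sum_{p\le X}(|f(p)|-\Re f(p)p^{-it})/p - \sum_{p\in\mathcal P}\Re f(p)p^{-it}/p + \sum_{p\in\mathcal P}(|f(p)|/p - |f(p)|/p)$, and then $-\sum_{p\in\mathcal P}\Re f(p)p^{-it}/p \le \sum_{p\in\mathcal P}|f(p)|/p$ is absorbed into the ``lost'' half of the Lemma~\ref{le:sparseDistEst} saving by shrinking $\rho$ to $\rho/2$ at the outset. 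This is routine; the statement's factor $\tfrac12$ and the optional $O(1)$-strengthening under $\sum_{p\in\mathcal P}1/p = O(1)$ are exactly the footprints of this argument.
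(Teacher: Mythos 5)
Your central step --- the termwise bound $\Re f(p)p^{-it}\le \tfrac12(\Re f(p)p^{-it}+|f(p)|)$ for $p\notin\mathcal P$ together with nonnegativity of $\tfrac12(\Re f(p)p^{-it}+|f(p)|)$ for $p\in\mathcal P$, followed by feeding the full sum $\sum_{p\le X}\Re f(p)p^{-it}/p$ into Lemma~\ref{le:sparseDistEst} (and, for (iv), into Definition~\ref{def:complex}) --- is exactly the paper's argument, which phrases the same thing as taking the average $(B_1+B_2)/2$ of the trivial bound and the Hal\'asz-type bound, and your treatment of (ii)--(iv) and of the final $O(1)$ strengthening when $\sum_{p\in\mathcal P}1/p=O(1)$ likewise matches the paper. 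The only blemish is the closing aside about absorbing $\sum_{p\in\mathcal P}|f(p)|/p$ by ``shrinking $\rho$ to $\rho/2$,'' which on its own would fail (that sum can be as large as $\log\log X$ while the saving is only of size $\rho\log\log X$), but your half-averaging display already yields the stated bound, so the proof stands.
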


\begin{proof}
To prove (i), we obtain two upper bounds:
First by the trivial estimate
\[
\sum_{\substack{p \leq X \\ p \not \in \mathcal{P}}} \frac{\Re f(p)p^{-it}}{p} \leq \sum_{p \leq X} \frac{|f(p)|}{p} - \sum_{\substack{p \leq X \\ p \in \mathcal{P}}} \frac{|f(p)|}{p} = : B_1.
\]
Second, by Lemma~\ref{le:sparseDistEst}(i),
\[
\begin{split}
\sum_{\substack{p \leq X \\ p \not \in \mathcal{P}}} \frac{\Re f(p)p^{-it}}{p} &\leq \sum_{p \leq X} \frac{\Re f(p)p^{-it} - |f(p)|}{p} + \sum_{p \leq X} \frac{|f(p)|}{p} + \sum_{\substack{p \leq X \\ p \in \mathcal{P}}} \frac{|f(p)|}{p} \\
&\leq - \rho_\alpha \min\{\log \log X, 3\log(|t-\widehat{t}_{f, X}|\log X+1)\} + O_{\theta, \rho}(1) \\
& \qquad + \sum_{p \leq X} \frac{|f(p)|}{p} + \sum_{\substack{p \leq X \\ p \in \mathcal{P}}} \frac{|f(p)|}{p} \\ &=: B_2.
\end{split}
\]
The joint upper bound $(B_1 + B_2)/2$ gives the claim.

Cases (ii) and (iii) follow similarly from Lemma~\ref{le:sparseDistEst}(ii)--(iii), and the first two claims of case (iv) follow similarly using Definition~\ref{def:complex}. The last claim in case (iv) follows directly from Definition~\ref{def:complex}.
\end{proof}

Let us now state our variant of Hal\'asz's theorem.
\begin{lemma} \label{le:SparseHalaszComplex}
Let $f: \mathbb{N} \rightarrow \mathbb{U}$ be a multiplicative function, and let $\mathcal{P} \subset \mathbb{P} \cap (1, X]$.
\begin{enumerate}[(i)]
\item Assume that $f$ is $(\alpha, X^\theta)$-non-vanishing for some $\alpha, \theta \in (0, 1]$. One has, for all $|t| \leq X/2$ and $x \leq X$, and any $0 < \rho < \rho_{\alpha}$, 
  $$
  \Big | \sum_{\substack{x < n \leq 2x \\ p \mid n \implies p \not \in \mathcal{P}}} \frac{f(n)}{n^{1 + it}} \Big | \ll_{\theta, \rho} \Big ( \frac{\log\log X}{|t-\widehat{t}_{f, X}|^{1/2} + 1} + \frac{1}{(\log X)^{\rho/2}} \Big ) \frac{1}{\log x} \prod_{p \leq X} \Big ( 1 + \frac{|f(p)|}{p} \Big )
  $$
  and
\[
  \Big | \sum_{\substack{x < n \leq 2x \\ p \mid n \implies p \not \in \mathcal{P}}} \frac{f(n)}{n^{1 + it}} \Big | \ll_{\theta} \left(\frac{\widehat{M}(f; X)}{\exp(\frac{1}{2} \widehat{M}(f; X))} + \frac{1}{(\log X)^{\alpha}}\right) \frac{1}{\alpha \log x}  \prod_{p \leq X} \Big ( 1 + \frac{|f(p)|}{p} \Big ).
\]
\item One has, for all $|t| \leq X/2$ and $x \leq X$, and any $0 < \rho < \rho_1$, 
  \[
  \Big | \sum_{\substack{x < n \leq 2x \\ p \mid n \implies p \not \in \mathcal{P}}} \frac{f(n)}{n^{1 + it}} \Big | \ll_\rho  \frac{\log\log X}{|t-t_{f, X}|^{1/2} + 1} + \frac{(\log X)^{1-\rho/2}}{\log x}.
  \]
  and
  \[
  \Big | \sum_{\substack{x < n \leq 2x \\ p \mid n \implies p \not \in \mathcal{P}}} \frac{f(n)}{n^{1 + it}} \Big | \ll \left( \frac{M(f; X)}{\exp(\frac{1}{2} M(f; X))} + \frac{1}{\log X}\right) \cdot \frac{\log X}{\log x}.
\]
\item If $f$ is almost real-valued, then (i) and (ii) hold also with $\widehat{t}_{f, X}$ and $t_{f, X}$ replaced by $0$. 
\item If $\sum_{p \in \mathcal{P}} \frac{1}{p} = O(1)$, the claims hold with $\frac{1}{2}\widehat{M}(f; X)$ and $\frac{1}{2}M(f; X)$ replaced by $\widehat{M}(f; X)$ and $M(f; X)$.
\end{enumerate}
\end{lemma}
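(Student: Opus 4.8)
The approach is to reduce all of the stated inequalities to a single run of the Hal\'asz--Montgomery machinery, applied to the restriction of $f$ away from $\mathcal P$, feeding in at the key step the pointwise bounds on the associated Euler product furnished by Lemma~\ref{le:sparseDistEst2}. First I would put $g:=f\cdot\mathbf{1}_{\{n\,:\,p\mid n\implies p\not\in\mathcal P\}}$, a multiplicative function with values in $\mathbb{U}$ satisfying $|g|\le|f|$, with $g(p)=f(p)$ for $p\not\in\mathcal P$ and $g(p)=0$ for $p\in\mathcal P$; abbreviate $A(1+it):=\sum_{x<n\le2x}g(n)n^{-1-it}$ (the quantity to be bounded), $\mathcal L:=\prod_{p\le X}(1+|f(p)|/p)$, and $F_g(s;X):=\prod_{p\le X}\big(1+g(p)p^{-s}+g(p^2)p^{-2s}+\dotsb\big)$. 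Since $\exp\big(\sum_{p\le X}|f(p)|/p\big)\asymp\mathcal L$, combining \eqref{eq:FexpBound} (applied to $g$) with Lemma~\ref{le:sparseDistEst2}(i) gives, for $|v|\le X$ and $0<\rho<\rho_\alpha$,
\[
|F_g(1+iv;X)|\ll_{\theta,\rho}\mathcal L\Big(\frac{1}{(\log X)^{\rho/2}}+\frac{1}{\big(1+|v-\widehat{t}_{f, X}|\log X\big)^{3\rho/2}}\Big),
\]
while Lemma~\ref{le:sparseDistEst2}(iv) gives the uniform-in-$v$ bound $|F_g(1+iv;X)|\ll\mathcal L\exp\!\big(-\tfrac12\widehat{M}(f; X)\big)$; the variants with $t_{f,X}$, with $M(f;X)$, with $0$ in place of $\widehat{t}_{f, X}$ (for almost real-valued $f$), and with $\tfrac12\widehat{M}(f; X)$ upgraded to $\widehat{M}(f; X)$ (when $\sum_{p\in\mathcal P}1/p=O(1)$) come from parts (ii)--(iv) of that lemma.

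Next I would run the standard Hal\'asz--Montgomery argument (as in \cite{MainPaper}, \cite{GS03}, \cite{Tenenbaum17}). Starting from $(\log x)A(1+it)=\sum_{x<n\le2x}g(n)(\log n)n^{-1-it}+O(\mathcal L)$ (the error from $\log(2x/n)=O(1)$ and Lemma~\ref{le:Shiu}), I would use $\log n=\sum_{p^k\|n}\log p^k$ together with multiplicativity to write the main sum, up to an $O(\mathcal L)$ from prime squares and higher, as a sum over primes $p\le 2x$ of $(\log p)g(p)p^{-1-it}$ times a Dirichlet polynomial of length $\ll x/p$; then split the $p$-sum at a suitable threshold, apply Cauchy--Schwarz, bound the long factors by the mean value theorem (Lemma~\ref{le:MVTwithHenriot}) and the short prime factors by the Hal\'asz--Montgomery inequality (Lemma~\ref{le:Hallargevalint}) after discretising the relevant variable. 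After truncating the resulting $v$-integral to $|v|\le X/2$ at negligible cost (again by Lemma~\ref{le:MVTwithHenriot}; here $|t|\le X/2$ guarantees $|t+v|\le X$, so the Euler product bounds above apply), this yields an inequality of the shape
\[
|A(1+it)|\ \ll\ \frac{\log\log X}{\log x}\,\sup_{|v|\le X/2}\frac{|F_g(1+i(t+v);X)|}{1+|v|^{1/2}}\ +\ (\text{secondary error}),
\]
the secondary error matching the second term in the claim being proved. Inserting the Euler product bound and optimising over $v$ (the supremum being attained for $v$ near $\widehat{t}_{f, X}-t$, which produces the decay $(1+|t-\widehat{t}_{f, X}|^{1/2})^{-1}$, and replacing $\rho$ by a slightly smaller value to absorb a stray $\log\log X$) gives the first inequality of (i). The remaining inequalities follow by the same scheme, inserting the correspondingly numbered part of Lemma~\ref{le:sparseDistEst2}: for the bounds with $\widehat{M}(f; X)$ and $M(f;X)$ one notes that the Hal\'asz machinery itself produces the extra polynomial factor in front of the exponential, and that $(\alpha,X^\theta)$-non-vanishing forces $\mathcal L\gg_\theta(\log X)^\alpha$, which accounts for the $\alpha^{-1}(\log X)^{-\alpha}$ term; part (ii), which carries no non-vanishing hypothesis, runs identically but with the coarser normalisation $\prod_{p\le X}(1+|g(p)|/p)\le\prod_{p\le X}(1+1/p)\ll\log X$ in place of $\mathcal L$.

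The main obstacle here is conceptual rather than computational: one cannot simply quote a ``black-box'' Hal\'asz theorem for $g$, because switching $f$ off on $\mathcal P$ may destroy the $(\alpha,X^\theta)$-non-vanishing property, so the decay one needs is in the pretentious direction of $f$ (namely $\widehat{t}_{f, X}$), not of $g$ --- and it is precisely the ``minimum of two upper bounds'' device inside the proof of Lemma~\ref{le:sparseDistEst2} that converts the $f$-information into the required control on $F_g$. Beyond that, the work is careful bookkeeping: carrying the normalising product $\mathcal L$ (and the weaker $\log X$ in case (ii)) through the Cauchy--Schwarz step, arranging the powers of $\log X$ so the secondary errors land exactly as stated, and checking that the regimes $|v-\widehat{t}_{f, X}|$ small versus large control the whole $v$-integral while the Perron-truncation and large-value errors --- estimated through Lemmas~\ref{le:Shiu}, \ref{le:Hallargevalint} and \ref{le:MVTwithHenriot} --- stay safely below $\mathcal L/\log x$.
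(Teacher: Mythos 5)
Your overall strategy is the paper's: pass to $g=f\cdot\mathbf 1_{p\mid n\Rightarrow p\notin\mathcal P}$, control the Euler product via \eqref{eq:FexpBound} together with Lemma~\ref{le:sparseDistEst2} (whose ``minimum of two bounds'' device is indeed the point that lets the pretentious direction $\widehat t_{f,X}$ of $f$, rather than of $g$, drive the decay), and feed this into the Hal\'asz machinery; the only cosmetic difference is that the paper quotes Montgomery's intermediate result (the $H(\beta)$ quantity, \cite[Theorem 4.7, III.4.3]{Tenenbaum}) rather than re-deriving the skeleton by log-weights, Cauchy--Schwarz, the mean value theorem and Lemma~\ref{le:Hallargevalint}, and the split of the $k$-sum at $|k|\asymp|t-\widehat t_{f,X}|$ inside $H(\beta)^2$ is what produces the exponent $1/2$. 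For the first inequalities of (i)--(ii) and for (iii)--(iv) your plan goes through.

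There is, however, a genuine gap in your treatment of the second inequalities (the $\widehat M(f;X)$ and $M(f;X)$ bounds). Your intermediate inequality has already executed the $\beta$-integral into the prefactor $\log\log X$; inserting the uniform bound $|F_g|\ll\mathcal L\exp(-\tfrac12\widehat M(f;X))$ from Lemma~\ref{le:sparseDistEst2}(iv) then yields $\frac{\mathcal L\log\log X}{\log x}e^{-\widehat M/2}$, which is weaker than the claimed $\frac{\mathcal L}{\alpha\log x}\bigl(\widehat M e^{-\widehat M/2}+(\log X)^{-\alpha}\bigr)$ whenever $\widehat M=o(\alpha\log\log X)$ (e.g.\ $\widehat M$ bounded); and the absorption $\mathcal L\gg_\theta(\log X)^\alpha$ only swallows $O(1/\log x)$-type errors, not this $\log\log X$ loss on the main term. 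The missing ingredient is to keep the $\beta$-structure of Montgomery's bound and split the range at $\beta_0=\exp(\widehat M(f;X)/(2\alpha))/\log X$: for $\beta\le\beta_0$ use Lemma~\ref{le:sparseDistEst2}(iv), while for $\beta>\beta_0$ use the trivial estimate $2\Re\sum_{p\le\exp(1/\beta),\,p\notin\mathcal P}f(p)p^{-1-it-i\tau}\le 2\sum_{p\le X}|f(p)|/p-2\alpha\log(\beta\log X)+O_\theta(1)$, which exploits the $(\alpha,X^\theta)$-non-vanishing of $f$ (not of $g$) on $(\exp(1/\beta),X^\theta]$ and is not contained in Lemma~\ref{le:sparseDistEst2}. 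Executing $\int d\beta/\beta$ over the two ranges is precisely what produces $\frac1\alpha\widehat M e^{-\widehat M/2}$ and the floor $\frac1\alpha(\log X)^{-\alpha}$ (so ``the machinery itself produces the polynomial factor'' is true only after this extra input, and the $(\log X)^{-\alpha}/\alpha$ term comes from this integral, not from $\mathcal L\gg(\log X)^\alpha$); the same splitting with $\alpha=1$-type trivial decay gives the second inequality of (ii).
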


\begin{proof}
We prove the case (i) and point out the differences to cases (ii)--(iv) in the end of the proof.

By partial summation it is essentially enough to show the same bounds for 
\[
\frac{1}{x}\Big|\sum_{\substack{x < n \leq 2x \\ p \mid n \implies p \not \in \mathcal{P}}} f(n)n^{-it}\Big|.
\]

Let us first concentrate on the first claim of (i). Notice that we can assume that $|t-\widehat{t}_{f, X}| > \log \log X$ since otherwise the claim follows from Lemma~\ref{le:Shiu}.

We use intermediate results in Montgomery's refinement of the proof of Hal\'asz's theorem. Write, for $\Re s > 1$,
\[
F(s) = \sum_{\substack{n \in \mathbb{N} \\ p \mid n \implies p \not \in \mathcal{P}}} \frac{f(n)}{n^s}
\]
and
\[
H(\beta)^2 = \sum_{k \in \mathbb{Z}} \frac{1}{k^2+1} \max_{|\tau-k| \leq 1/2} |F(1+\beta+it+i\tau)|^2.
\]
Now Montgomery's work (see~\cite[Theorem 4.7 in Section III.4.3]{Tenenbaum}) gives
\begin{equation}
\label{eq:Montg}
\frac{1}{x}\Big|\sum_{\substack{x < n \leq 2x \\ p \mid n \implies p \not \in \mathcal{P}}} f(n)n^{-it}\Big| \ll \frac{1}{\log x} \int_{1/\log x}^1 \frac{H(\beta)}{\beta} d\beta.
\end{equation}
Similarly to~\cite[Formula (4.61) in Section III.4.3]{Tenenbaum} we have
\[
|F(1+\beta+it + i\tau)| \ll \exp\Big(\Re \sum_{\substack{p \leq \exp(1/\beta) \\ p \not \in \mathcal{P}}} \frac{f(p)}{p^{1+it+i\tau}}\Big).
\]
Hence
\begin{equation}
\label{eq:Hbetaup}
H(\beta)^2 \ll \sum_{k \in \mathbb{Z}} \frac{1}{k^2+1} \max_{|\tau-k| \leq 1/2} \exp\Big(2 \Re \sum_{\substack{p \leq \exp(1/\beta) \\ p \not \in \mathcal{P}}} \frac{f(p)}{p^{1+it+i\tau}}\Big).
\end{equation}
Let us first note that, for $\frac{1}{\log x} \leq \beta \leq 1$,
\[
\begin{split}
&\sum_{|k| > (|t-\widehat{t}_{f, X}|+1)/2} \frac{1}{k^2+1} \max_{|\tau-k| \leq 1/2} \exp\Big(2 \Re \sum_{\substack{p \leq \exp(1/\beta) \\ p \not \in \mathcal{P}}} \frac{f(p)}{p^{1+it+i\tau}}\Big) \\
&\leq \sum_{|k| > (|t-\widehat{t}_{f, X}|+1)/2} \frac{1}{k^2+1} \exp\Big(2\sum_{p \leq x} \frac{|f(p)|}{p}\Big) \ll \frac{1}{|t-\widehat{t}_{f, X}|+1} \exp\Big(2\sum_{p \leq X} \frac{|f(p)|}{p}\Big),
\end{split}
\]
so this part leads to an acceptable contribution to~\eqref{eq:Montg}. Similarly $|k| \geq (\log X)^4$ lead to an acceptable contribution.

Let us now consider the contribution of $k$ with $|k| \leq (|t-\widehat{t}_{f, X}|+1)/2$ into $H(\beta)^2$. By Lemma~\ref{le:sparseDistEst2}(i) (taking $\mathcal{P}$ there to be $\mathcal{P} \cup (\exp(1/\beta), X] \cap \mathbb{P}$), recalling that $|t-\widehat{t}_{f, X}| > \log \log X$,
\[
\begin{split}
2\Re \sum_{\substack{p \leq \exp(1/\beta) \\ p \not \in \mathcal{P}}} \frac{f(p)}{p^{1+it+i\tau}} &\leq  2\sum_{p \leq X} \frac{|f(p)|}{p} - \rho' \log \log X +O_{\rho, \theta}(1),
\end{split}
\]
where $\rho' = (\rho + \rho_\alpha)/2$.

Combining the previous estimates we see that
\[
H(\beta)^2  \ll_{\rho, \theta} \Big(\frac{1}{|t-\widehat{t}_{f, X}|+1} + \frac{1}{(\log X)^{\rho'}}\Big) \exp\Big(2\sum_{p \leq X} \frac{|f(p)|}{p}\Big)
\]
from which the claim follows by~\eqref{eq:Montg} since the integration over $\beta$ contributes $\log \log x$.

To prove the second claim of case (i) we again estimate $|k| \geq (\log X)^4$ in~\eqref{eq:Hbetaup} trivially. Then we use Lemma \ref{le:sparseDistEst2}(iv) for $\beta \leq \exp(\widehat{M}(f; X)/(2\alpha))/\log X$ whereas for larger $\beta$ we use the trivial bound
\[
\begin{split}
2\Re \sum_{\substack{p \leq \exp(1/\beta) \\ p \not \in \mathcal{P}}} \frac{f(p)}{p^{1+it+i\tau}} &\leq  2\sum_{p \leq X} \frac{|f(p)|}{p} - 2\sum_{\exp(1/\beta) < p \leq X^\theta} \frac{|f(p)|}{p} \\
&\leq 2\sum_{p \leq X} \frac{|f(p)|}{p} - 2 \alpha \log (\beta \log X) + O_\theta(1).
\end{split}
\]
Combining these with~\eqref{eq:Montg}, we obtain 
\[
\begin{split}
\frac{1}{x}\Big|\sum_{\substack{x < n \leq 2x \\ p \mid n \implies p \not \in \mathcal{P}}} f(n)n^{-it}\Big| &\ll_\theta \frac{1}{\log x}  \exp\left(\sum_{p\leq X} \frac{|f(p)|}{p}\right) \\
&\cdot \left(\int_{1/\log X}^{\frac{\exp(\widehat{M}(f; X)/(2\alpha))}{\log X}} \frac{\exp(- \frac{1}{2} \widehat{M}(f; X))}{\beta} d\beta + \int_{\frac{\exp(\widehat{M}(f; X)/(2\alpha))}{\log X}}^1 \frac{(\beta \log X)^{-\alpha}}{\beta} d\beta \right),
\end{split}
\]
and the claim follows by executing the integrals.

Cases (ii)--(iv) follow similarly using Lemma~\ref{le:sparseDistEst2}(ii)--(iv). 
\end{proof}

We will also need to evaluate the average of $f(n)$ on intervals slightly shorter than dyadic. For this we use the following Lipschitz type result which we deduce from work of Granville and Soundararajan~\cite{GS03} (compare in particular with~\cite[Theorem 4]{GS03}). See also~\cite[Lemma 4.6]{Matthiesen} for another sparse version.

\begin{lemma}
\label{le:Lipschitz}
Let $f: \mathbb{N} \rightarrow \mathbb{U}$ be a multiplicative function, and let $\mathcal{P} \subset \mathbb{P} \cap (1, X]$.
\begin{enumerate}[(i)]
\item Assume that $f$ is $(\alpha, X^\theta)$-non-vanishing for some $\alpha, \theta \in (0, 1]$ and that $|\widehat{t}_{f, X}| \leq X/2$. Let $0 < \rho < \rho_{\alpha}$. One has, for all $y \in [x/(\log X)^{\rho/2}, x]$ and $x \in [X/2, X]$,
\[
\Big | \frac{1}{y}\sum_{\substack{x < n \leq x + y \\ p \mid n \implies p \not \in \mathcal{P}}} f(n) n^{-i\widehat{t}_{f, X}} - \frac{1}{X} \sum_{\substack{X < n \leq 2X \\ p \mid n \implies p \not \in \mathcal{P}}} f(n) n^{-i\widehat{t}_{f, X}} \Big | \ll_{\rho, \theta} \frac{X/y}{(\log X)^{\rho/2}} \prod_{p \leq X} \Big(1+\frac{|f(p)|-1}{p}\Big) .
\]
\item Let $0 < \rho < \rho_1$. One has, for all $y \in [x/(\log X)^{\rho/2}, x]$ and $x \in [X/2, X]$,
\[
\Big | \frac{1}{y}\sum_{\substack{x < n \leq x + y \\ p \mid n \implies p \not \in \mathcal{P}}} f(n) n^{-it_{f, X}} - \frac{1}{X} \sum_{\substack{X < n \leq 2X \\ p \mid n \implies p \not \in \mathcal{P}}} f(n) n^{-it_{f, X}} \Big | \ll_{\rho} \frac{X/y}{(\log X)^{\rho/2}}.
\]
\item If $f$ is almost real-valued, then (i) and (ii) hold also with $\widehat{t}_{f, X}$ and $t_{f, X}$ replaced by $0$.
\end{enumerate}
\end{lemma}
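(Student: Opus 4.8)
The plan is to reduce the two‑interval comparison, by telescoping, to the assertion that the initial‑segment mean value $\frac1t\sum_{n\le t}h(n)$ (for a suitable sparse relative $h$ of $f$) is essentially constant as $t$ runs over the dyadic block $[X/2,2X]$, and then to deduce that assertion from the Lipschitz estimate of Granville and Soundararajan, re‑run in the sparse setting with the distance estimates of the previous section.

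\emph{Reductions.} First I would remove the twist: replacing $f$ by $g(n):=f(n)n^{-i\widehat{t}_{f,X}}$, which is multiplicative with $|g(p)|=|f(p)|$ and hence again $(\alpha,X^{\theta})$-non-vanishing, one checks — using $|\widehat{t}_{f,X}|\le X/2$ and Definition~\ref{def:complex} — that $\widehat{t}_{g,X}$ may be taken to be $0$; so it is enough to prove (i) with $\widehat{t}_{f,X}=0$, and likewise (ii) with $t_{f,X}=0$, while (iii) is already of this shape. Put $h(n):=f(n)$ if $n$ has no prime factor in $\mathcal{P}$ and $h(n):=0$ otherwise, a multiplicative function with $|h|\le|f|\le1$, and set $E_{0}:=(\log X)^{-\rho/2}\prod_{p\le X}(1+(|f(p)|-1)/p)$ (the product being replaced by $1$ in case (ii), where no non‑vanishing hypothesis is available). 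Writing each of the two interval sums in the statement as a difference of two segments $\sum_{n\le t}h(n)$, and using $x,x+y\in[X/2,2X]$ together with $x/y\le(\log X)^{\rho/2}$, an elementary manipulation bounds the quantity in question by
\[
\ll\frac{X}{y}\cdot\sup_{t_{1},t_{2}\in[X/2,2X]}\Big|\frac{1}{t_{1}}\sum_{n\le t_{1}}h(n)-\frac{1}{t_{2}}\sum_{n\le t_{2}}h(n)\Big|.
\]
Hence it suffices to show this supremum is $O(E_{0})$.

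\emph{The Lipschitz step.} For $t_{1},t_{2}\in[X/2,2X]$ we have $w:=t_{1}/t_{2}\in[1/4,4]$, so $\log 2w=O(1)$. The Granville--Soundararajan Lipschitz estimate (cf.~\cite[Theorem~4]{GS03}; see also~\cite[Lemma~4.6]{Matthiesen}), applied to $h$, expresses the difference of the two normalized segment‑means, up to a rotation factor $w^{i\widehat{t}_{h,X}}$ carried by the dominant frequency of $h$, as $\ll\bigl((\log 2w)/\log X+(\log X)^{-\rho/2}\bigr)\cdot\frac{1}{X}\sum_{n\le X}|h(n)|$: the ``$1$-pretentious'' part of the mean value is genuinely non‑oscillating on a dyadic scale, while the remaining part is controlled by the distance estimate of Lemma~\ref{le:sparseDistEst2}(i) (resp.~(ii),~(iii)) applied with the set $\mathcal{P}$ — which lower‑bounds $\sum_{p\le X}(|h(p)|-\Re h(p)p^{-it})/p$, for $t$ away from the dominant frequency, in terms of the full sum $\sum_{p\le X}|f(p)|/p$, and thereby yields the $(\log X)^{-\rho/2}$ saving (here the factor $\tfrac{\rho}{2}$, $\rho<\rho_{\alpha}$, is exactly what propagated through Lemmas~\ref{le:sparseDistEst}--\ref{le:SparseHalaszComplex}). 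By Lemma~\ref{le:Shiu}, $\frac{1}{X}\sum_{n\le X}|h(n)|\ll\prod_{p\le X}(1+(|f(p)|-1)/p)$. Finally, either $\frac{1}{X}\sum_{n\le X}|h(n)|\ll E_{0}$ already — in which case both segment‑means are trivially $\ll E_{0}$ (Lemma~\ref{le:Shiu}, or the $\widehat{M}$-version of Lemma~\ref{le:SparseHalaszComplex}) and we are done — or Lemma~\ref{le:sparseDistEst2} forces $|\widehat{t}_{h,X}|\ll(\log X)^{-2/3}$, so that $w^{i\widehat{t}_{h,X}}=1+O((\log 2w)(\log X)^{-2/3})=1+O((\log X)^{-\rho/2})$. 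Combining the bounds gives a variation of $O(E_{0})$, as required.

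\emph{Where the work lies.} The substantive difficulty is the sparse adaptation: since the primes of $\mathcal{P}$ have been deleted, $h$ need not be $(\alpha,X^{\theta})$-non-vanishing, so \cite[Theorem~4]{GS03} cannot be invoked verbatim but its Hal\'asz/Montgomery-type proof must be re‑run with Lemma~\ref{le:sparseDistEst2} in place of the standard distance lower bound — that lemma being tailored precisely to estimate the $\mathcal{P}$-free character sum by the \emph{full} sum $\sum_{p\le X}|f(p)|/p$ — and with Lemmas~\ref{le:Shiu} and~\ref{le:MVTwithHenriot} in place of the classical mean‑value estimates, so that every error term ends up proportional to $\prod_{p\le X}(1+(|f(p)|-1)/p)$ rather than to the larger $\mathcal{P}$-removed Euler product. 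An equivalent self‑contained route, avoiding~\cite{GS03}, is to expand $\sum_{n\le t}h(n)$ by a smoothed Perron formula, shift the contour to $\Re s=1-c/\log X$, isolate the $t$-independent contribution of $|\Im s|\le(\log X)^{-2/3}$, and bound the rest by the Hal\'asz-type Lemma~\ref{le:SparseHalaszComplex} together with Lemma~\ref{le:sparseDistEst}; this gives the same bound, with the factor $\prod_{p\le X}(1+|f(p)|/p)\asymp(\log X)\prod_{p\le X}(1+(|f(p)|-1)/p)$ coming directly out of Lemma~\ref{le:SparseHalaszComplex}.
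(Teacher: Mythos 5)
Your overall route coincides with the paper's: reduce, by writing both averages as differences of initial segments, to showing that $\frac1t\sum_{n\le t,\, p\mid n\Rightarrow p\notin\mathcal P} f(n)n^{-i\widehat{t}_{f,X}}$ varies by at most $O\bigl((\log X)^{-\rho/2}\prod_{p\le X}(1+\frac{|f(p)|-1}{p})\bigr)$ across the dyadic block, and then run the Granville--Soundararajan Lipschitz argument with the sparse distance estimate (Lemma~\ref{le:sparseDistEst2}) replacing the usual repulsion bound; the paper does exactly this via \cite[Proposition 3.3 and Lemma 2.2]{GS03}, \eqref{eq:FexpBound} and Lemma~\ref{le:sparseDistEst2}. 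Your preliminary twist by $n^{-i\widehat{t}_{f,X}}$ is harmless, though the literal claim that ``$\widehat{t}_{g,X}$ may be taken to be $0$'' is not quite right (the minimum for $g$ over $|t|\le X$ corresponds to $f$ over $t\in[\widehat{t}_{f,X}-X,\widehat{t}_{f,X}+X]$); what you actually need, the distance bound for $g$ relative to the frequency $0$ for $|t|\le 2\log X$, does follow from Lemma~\ref{le:sparseDistEst2} applied to $f$ at $t+\widehat{t}_{f,X}$.

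The genuine gap is the step handling the rotation factor. The dichotomy ``either $\frac1X\sum_{n\le X}|h(n)|\ll E_0$, or Lemma~\ref{le:sparseDistEst2} forces $|\widehat{t}_{h,X}|\ll(\log X)^{-2/3}$'' fails on both sides. In the first branch, smallness of the long average of $|h|$ does not control the short average: Shiu's bound only gives $\prod_{p\le X}(1+\frac{|h(p)|-1}{p})$, which can greatly exceed the true mean of $|h|$ (e.g.\ when $\mathcal P$ contains all primes in $(X^{\delta},X]$ and $h$ is supported on smooth numbers), so the dichotomy would at least have to be taken on the size of $\sum_{p\in\mathcal P}|f(p)|/p$. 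More seriously, in the second branch nothing pins down $h$'s own dominant frequency: Lemma~\ref{le:sparseDistEst2} lower-bounds the distance of $h$ from $n^{it}$ relative to the \emph{fixed} frequency $\widehat{t}_{f,X}$, but when $\widehat{M}(f;X)$ is large (it can be of size $\log\log X$) the minimum defining $\widehat{t}_{h,X}$ is nearly flat and may be attained far from $\widehat{t}_{f,X}$; and the ``trivial'' fallback via Hal\'asz only saves $\widehat{M}e^{-\widehat{M}}$, far less than $(\log X)^{-\rho/2}$. The paper avoids introducing $\widehat{t}_{h,X}$ altogether: after \cite[Proposition 3.3 and Lemma 2.2]{GS03} one must bound $\max_{|t|\le 2\log X}|(1-w^{-it})F(1+i\widehat{t}_{f,X}+it)|$, and combining $|1-w^{-it}|\ll\min\{1,|t|\}$ with the bound $\max\{(\log X)^{-\rho'/2},(|t|\log X+1)^{-3\rho'/2}\}$ coming from \eqref{eq:FexpBound} and Lemma~\ref{le:sparseDistEst2} already gives $(\log X)^{-\rho'/2}$ uniformly --- the exponent $3$ inside the logarithm is exactly what makes the regime $|t|\le(\log X)^{-2/3}$ absorbable, since $\rho'<4/3$. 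So your plan is salvageable precisely by doing what your last paragraph describes (re-running the GS03 proof with the sparse inputs and the frequency $\widehat{t}_{f,X}$ throughout), but the rotation-factor dichotomy as written would not survive.
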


\begin{proof}
We prove the case (i) and point out the differences to cases (ii)--(iii) in the end of the proof.

It suffices to show that, for any $1 \leq w \leq 4$ and any $X' \in [X/4, 4X]$, one has
\begin{equation}
\label{eq:Lipsch}
S := \Big|\frac{1}{X} \sum_{\substack{n \leq X \\ p \mid n \implies p \not \in \mathcal{P}}} f(n) n^{-i\widehat{t}_{f, X}} - \frac{1}{X/w} \sum_{\substack{n \leq X/w \\ p \mid n \implies p \not \in \mathcal{P}}} f(n) n^{-i\widehat{t}_{f, X}}\Big| \ll_{\rho, \theta} \frac{1}{(\log X)^{\rho/2}} \prod_{p \leq X} \Big(1+\frac{|f(p)|-1}{p}\Big).
\end{equation}
from which the claim follows easily.

Let
\[
F(s) = \prod_{\substack{p \leq X \\ p \not \in \mathcal{P}}} \Big(1+\frac{f(p)}{p^s} + \frac{f(p^2)}{p^{2s}} + \dotsb \Big).
\]
By~\cite[Proposition 3.3]{GS03} with $T = \log X$, we get
\begin{equation}
\label{eq:GSProp3.3appl}
\begin{split}
S &\ll \frac{1}{\log X} \int_0^1 \min\{\log X, 1/\beta\} \Big(\max_{|t| \leq T} |(1-w^{-\beta-it})F(1+\beta+i\widehat{t}_{f, X}+it)|\Big) d\beta \\
& \qquad +O\Big(\frac{\log \log X}{\log X}\Big).
\end{split}
\end{equation}
Next, as in~\cite[Proof of Theorem 4]{GS03}, we use~\cite[Lemma 2.2]{GS03} but with $a_n = f(n)1_{p \mid n \implies p \leq X, p \not \in \mathcal{P}}$, which gives
\[
\max_{|t| \leq T} |(1-w^{-\beta-it})F(1+\beta+i\widehat{t}_{f, X}+it)| \leq \max_{|t| \leq 2T} |(1-w^{-it})F(1+i\widehat{t}_{f, X}+it)| + O(\beta)
\]
The error term contributes to the right hand side of~\eqref{eq:GSProp3.3appl} only $O(\frac{1}{\log X})$. Hence we obtain that
\begin{equation}
\label{eq:LipIntEst}
S \ll \frac{\log \log X}{\log X}\Big(\max_{|t| \leq 2T} |(1-w^{-it})F(1+i\widehat{t}_{f, X}+it)| + 1\Big).
\end{equation}
Now $|1-w^{-it}| \ll \min\{1, |t| \log 2w\}$. Thus, writing $\rho' = (\rho + \rho_\alpha)/2$ and applying~\eqref{eq:FexpBound} and Lemma~\ref{le:sparseDistEst2}(i), we obtain
\[
\begin{split}
&\max_{|t| \leq 2T} |(1-w^{-it})F(1+i\widehat{t}_{f, X} + it)|\\
&\ll_{\rho, \theta} \max_{|t| \leq 2T} \Big\{\min\{1, |t|\} \exp\Big(\sum_{\substack{p \leq X}} \frac{|f(p)|}{p} - \frac{\rho'}{2} \min\{\log \log X, 3\log (|t|\log X+1)\}\Big)\Big\} \\
&\ll \max_{|t| \leq 2T}\Big\{ \min\{1, |t|\} \max\{(\log X)^{-\rho'/2}, (|t| \log X+1)^{-3\rho'/2}\} \prod_{p \leq X}\Big(1+\frac{|f(p)|}{p}\Big)\Big\}\\
&\leq (\log X)^{-\rho'/2} \prod_{p \leq X} \Big(1+\frac{|f(p)|}{p}\Big).
\end{split}
\]
The claim follows now from~\eqref{eq:LipIntEst}.

Parts (ii) and (iii) follow similarly using Lemma~\ref{le:sparseDistEst2}(ii)--(iii).
\end{proof}

We will actually need to apply Lemma~\ref{le:SparseHalaszComplex} in case $f$ has a non-multiplicative dependence on primes in a certain range $(P, Q]$. To this end, we have the following variant of~\cite[Lemma 3]{MainPaper} which is tailored for large $P$ whereas~\cite[Lemma 3]{MainPaper} was intended for somewhat smaller $P$ and $Q$.

\begin{lemma}
\label{le:Halappl}
Let $A> 0$, $X \geq P \geq 2$. Let $f: \mathbb{N} \rightarrow \mathbb{U}$ be a multiplicative function. Let $r \colon \mathbb{N} \to \mathbb{U}$ depend only on prime factors of $n$ that are $> P$ (i.e. for $m \mid n$ one has $r(n) = r(m)$ whenever $p \mid n/m \implies p \leq P$). Let $\mathcal{P} \subset \mathbb{P} \cap (1, X]$ and write
\[
F(s) = \sum_{\substack{x < n \leq 2x \\ p \mid n \implies p \not \in \mathcal{P}}} \frac{f(n) r(n)}{n^s}.
\]
\begin{enumerate}[(i)]
\item Assume that $f$ is $(\alpha, X^\theta)$-non-vanishing for some $\alpha, \theta \in (0, 1]$. For any $|t| \leq X/2$, $x \in (P, X]$, and $0 < \rho < \rho_{\alpha}$, 
\begin{align} \label{eq:Halappl}
|F(1+it)| &\ll_{\theta, \rho} \frac{1}{\log P} \Big(\frac{1}{(\log X)^{\rho/2}} + \frac{(\log \log X)^2}{(|t-\widehat{t}_{f, X}|+1)^{1/2}}\Big) \prod_{p \leq X} \Big(1+\frac{|f(p)|}{p}\Big).
\end{align}
\item For any $|t| \leq X/2$, $x \in (P, X]$, and $0 < \rho < \rho_1$, 
\[
|F(1+it)| \ll_{\rho} \frac{\log X}{\log P} \Big(\frac{1}{(\log X)^{\rho/2}} + \frac{(\log \log X)^2}{(|t-t_{f, X}|+1)^{1/2}}\Big).
\]
\item If $f$ is almost real-valued, then (i) and (ii) hold   also with $\widehat{t}_{f, X}$ and $t_{f, X}$ replaced by $0$.
\end{enumerate}
\end{lemma}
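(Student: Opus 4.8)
By partial summation it suffices to bound $\tfrac1x\big|\sum_{x<n\le 2x,\ p\mid n\implies p\notin\mathcal{P}} f(n)r(n)n^{-it}\big|$, and the plan is to run Montgomery's form of Hal\'asz's theorem exactly as in the proof of Lemma~\ref{le:SparseHalaszComplex}: the inequality $\tfrac1x|\sum_{x<n\le 2x}a_n n^{-it}|\ll \tfrac1{\log x}\int_{1/\log x}^{1}H(\beta)\beta^{-1}\,d\beta$ needs only $|a_n|\le 1$, with multiplicativity entering solely through the estimation of the Dirichlet series. Here $a_n=f(n)r(n)\mathbf{1}_{p\mid n\implies p\notin\mathcal{P}}$ and $H(\beta)^2=\sum_{k\in\mathbb{Z}}(k^2+1)^{-1}\max_{|\tau-k|\le 1/2}|\widetilde{F}(1+\beta+it+i\tau)|^2$, where $\widetilde{F}(s)=\sum_{p\mid n\implies p\notin\mathcal{P}}f(n)r(n)n^{-s}$. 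The key new point is how to estimate $\widetilde F$: split every $n$ into its $P$-smooth part $n_1$ and $P$-rough part $n_2$. Since $r(n)=r(n_2)$ and $f(n)=f(n_1)f(n_2)$, the series factors as $\widetilde F(s)=A(s)B(s)$ with $A(s)=\prod_{p\le P,\ p\notin\mathcal{P}}\big(1+f(p)p^{-s}+f(p^2)p^{-2s}+\cdots\big)$ a genuine Euler product and $B(s)=\sum_{n_2\ P\text{-rough},\ p\mid n_2\implies p\notin\mathcal{P}} f(n_2)r(n_2)n_2^{-s}$.

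\textbf{The two factors.} The non-multiplicative factor $B$ is harmless: bounding $|r|\le 1$ and using $\sum_{P<p\le Y}\tfrac1p=\log\tfrac{\log Y}{\log P}+O(1)$ together with Rankin for $p>\exp(1/\beta)$ gives $|B(1+\beta+it+i\tau)|\ll\max\{1,(\beta\log P)^{-1}\}$. For $A$, the standard Hal\'asz manipulations apply: $\log|A(1+\beta+it+i\tau)|=\Re\sum_{p\le\min(P,\exp(1/\beta)),\ p\notin\mathcal{P}}f(p)p^{-1-i(t+\tau)}+O(1)$, and applying Lemma~\ref{le:sparseDistEst2}(i) with $\mathcal{P}$ replaced by $\mathcal{P}\cup\big((\min(P,\exp(1/\beta)),X]\cap\mathbb{P}\big)$ bounds this $\Re$-sum by $\sum_{p\le X}|f(p)|/p-\tfrac{\rho'}{2}\min\{\log\log X,\,3\log(|t+\tau-\widehat{t}_{f,X}|\log X+1)\}+O(1)$ with $\rho':=(\rho+\rho_\alpha)/2$. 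Hence
\[
|\widetilde F(1+\beta+it+i\tau)|\ll\max\{1,(\beta\log P)^{-1}\}\exp\!\Big(-\tfrac{\rho'}{2}\min\{\log\log X,3\log(|t+\tau-\widehat{t}_{f,X}|\log X+1)\}\Big)\prod_{p\le X}\Big(1+\tfrac{|f(p)|}{p}\Big).
\]
(It is precisely the $\mathcal{P}$-parameter in Lemma~\ref{le:sparseDistEst2} that lets one work with the scale-$X$ quantity $\widehat{t}_{f,X}$ rather than a scale-$P$ minimizer.)

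\textbf{Assembling and the obstacle.} Summing over $k$, treating $|k|\le(|t-\widehat{t}_{f,X}|+1)/2$ via the exponential saving and larger $k$ via $\sum_{|k|>V}(k^2+1)^{-1}\ll V^{-1}$, yields $H(\beta)\ll\max\{1,(\beta\log P)^{-1}\}\big((|t-\widehat{t}_{f,X}|+1)^{-1/2}+(\log X)^{-\rho'/2}\big)\prod_{p\le X}(1+|f(p)|/p)$. It then remains to integrate: $\int_{1/\log x}^{1}\max\{1,(\beta\log P)^{-1}\}\beta^{-1}\,d\beta\ll \log x/\log P+\log\log X$ (using $P<x\le X$), and dividing by $\log x$ produces the factor $1/\log P+\log\log X/\log x$. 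The step I expect to need the most care is checking that the spurious term $\log\log X/\log x$ is genuinely dominated by the claimed bound across the whole range $P<x\le X$: one slightly shrinks the exponent (legitimate since $\rho<\rho'$) and inflates $\log\log X$ to $(\log\log X)^2$, using $\log P\le\log x$ for the $|t|$-decay term and $(\log X)^{(\rho'-\rho)/2}\gg\log\log X$ for the $(\log X)^{-\rho/2}$ term. This gives (i). Parts (ii) and (iii) follow by the identical argument, invoking Lemma~\ref{le:sparseDistEst2}(ii) (with $1$ in place of $|f(p)|$ and $t_{f,X}$ in place of $\widehat{t}_{f,X}$, whence $\log X$ in place of $\prod_{p\le X}(1+|f(p)|/p)$) and Lemma~\ref{le:sparseDistEst2}(iii) (with $\widehat{t}_{f,X}$ or $t_{f,X}$ replaced by $0$).
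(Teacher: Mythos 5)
The proposal breaks down at its very first step. The claim that Montgomery's intermediate inequality
\[
\frac{1}{x}\Big|\sum_{x<n\le 2x} a_n n^{-it}\Big| \;\ll\; \frac{1}{\log x}\int_{1/\log x}^{1}\frac{H(\beta)}{\beta}\,d\beta
\]
``needs only $|a_n|\le 1$, with multiplicativity entering solely through the estimation of the Dirichlet series'' is false. Counterexample: fix $x$, take $a_n=\mathbf{1}_{x<n\le 2x}(n)$ and $t=0$. The left-hand side is $1+o(1)$, while $|\widetilde F(1+\beta+i\tau)|\le\sum_{x<n\le 2x}n^{-1}\le\log 2$ for all $\beta>0,\tau\in\mathbb{R}$, so $H(\beta)=O(1)$ and the right-hand side is $O(\log\log x/\log x)$. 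Montgomery's theorem (Tenenbaum, Theorem III.4.7), as invoked in the proof of Lemma~\ref{le:SparseHalaszComplex}, is a statement about multiplicative functions: its proof uses the convolution identity $f(n)\log n=\sum_{dm=n}\Lambda_f(d)f(m)$ (with $\Lambda_f$ controlled thanks to the Euler product) to convert the single partial sum into an average over scales, and only then does Parseval-type control of the Dirichlet series enter. Your sequence $a_n=f(n)r(n)\mathbf{1}_{p\mid n\Rightarrow p\notin\mathcal{P}}$ is not multiplicative, since $r$ is an arbitrary bounded function of the $P$-rough part, and your factorization $\widetilde F=A\cdot B$ (with $B$ not an Euler product) does not substitute for this: it only bounds the series, which by the counterexample is not enough to bound the partial sum.

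The paper avoids the issue by decomposing the sum rather than the Dirichlet series: write each $n$ in $(x,2x]$ as $n=mn'$ with $m$ its $P$-rough part and $n'$ $P$-smooth, so that $r(n)=r(m)$, $f(n)=f(m)f(n')$ and
\[
F(1+it)=\sum_{\substack{m\le 2x\\ p\mid m\Rightarrow p>P,\, p\notin\mathcal{P}}}\frac{f(m)r(m)}{m^{1+it}}\sum_{\substack{x/m<n\le 2x/m\\ p\mid n\Rightarrow p\le P,\, p\notin\mathcal{P}}}\frac{f(n)}{n^{1+it}}.
\]
The inner sum now involves a genuinely multiplicative function (namely $f$ restricted to $P$-smooth integers outside $\mathcal{P}$), so Lemma~\ref{le:SparseHalaszComplex}(i) applies directly and yields, for each $m$, the factor $\bigl(\log\log X/(|t-\widehat{t}_{f,X}|+1)^{1/2}+(\log X)^{-\rho'/2}\bigr)\prod_{p\le X}(1+|f(p)|/p)\cdot\frac{1}{\log(x/m)}$; bounding $|f(m)r(m)|\le 1$ and using $\sum_{m\le 2x,\ p\mid m\Rightarrow p>P}\frac{1}{m\log(x/m)}\ll\frac{\log\log X}{\log P}$ gives \eqref{eq:Halappl}, the second $\log\log X$ coming from this sum over $m$ rather than from the $\beta$-integral. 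Your closing bookkeeping (using $\rho<\rho'$ and inflating to $(\log\log X)^2$) would be fine, but to make the proof work you must replace the direct appeal to Montgomery for the non-multiplicative sequence by this rough/smooth splitting of the summation variable; parts (ii) and (iii) then follow as you say from parts (ii)--(iii) of Lemma~\ref{le:SparseHalaszComplex}.
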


\begin{proof}
We write
\[
F(1+it) = \sum_{\substack{m \leq 2x \\ p \mid m \implies p > P, p \not \in \mathcal{P}}} \frac{f(m) r(m)}{m^{1+it}} \sum_{\substack{x/m < n \leq 2x/m \\ p \mid n \Rightarrow p \leq P, p \not \in \mathcal{P}}}  \frac{f(n)}{n^{1+it}}.
\]
In case (i) we get by Lemma~\ref{le:SparseHalaszComplex} that, with $\rho' = (\rho + \rho_\alpha)/2$,
\[
|F(1+it)| \ll_{\rho, \theta} \sum_{\substack{m \leq 2x \\ p \mid m \implies p > P}} \frac{1}{m \log x/m} \Big(\frac{\log \log X}{(|t-\widehat{t}_{f, X}|+1)^{1/2}} + \frac{1}{(\log X)^{\rho'/2}}\Big) \prod_{p \leq X} \Big(1+\frac{|f(p)|}{p}\Big).
\]
Now~\eqref{eq:Halappl} follows by noting that
\[
\sum_{\substack{m \leq 2x \\ p \mid m \implies p > P}} \frac{1}{m \log x/m} = \frac{1}{\log x} + \sum_{\substack{P < m \leq 2x \\ p \mid m \implies p > P}} \frac{1}{m \log x/m} \ll \frac{\log \log X}{\log P}.
\]
Cases (ii) and (iii) follow similarly from Lemma~\ref{le:SparseHalaszComplex}(ii)--(iii).
\end{proof}

\section{Decomposition of Dirichlet polynomials}
In this section we establish a variant of~\cite[Lemma 12]{MainPaper} which gives a Buchstab/Ramar\'e-type decomposition of a Dirichlet polynomial.

\begin{lemma} \label{lem:decomp}
Let $H \geq 1$,  $X^{1/5} \geq Q \geq P \geq 1$. Let $g \colon \mathbb{N} \to [0, 1]$ be multiplicative and let $a_m, b_m$ and $c_p$ be sequences in $\mathbb{U}$ such that
\[
\begin{cases}
|a_m|, |b_m|, |c_m| \leq g(m) & \text{for all $m$;} \\
a_{mp} = b_m c_p & \text{whenever $P < p \leq Q$ and $p \nmid m$} \\
a_m = 0 & \text{if $m$ has no prime factor in the interval $(P, Q]$.}
\end{cases}
\]
Let
\begin{align*}
Q_{\nu,H}(s) & = \sum_{\substack{P < p \leq Q \\ e^{\nu/H} < p \leq e^{(\nu + 1)/H}}}
\frac{c_p}{p^s} \quad \text{and} \\
  R_{\nu,H}(s) & = \sum_{\substack{X e^{-\nu/H} < m \leq  2X e^{-\nu/H}}} \frac{b_m}{m^s} \cdot \frac{1}{\#\{P < q \leq Q: q | m,  q \in \mathbb{P}\} + 1} \\
\end{align*}
and let $\mathcal{T} \subseteq [-T, T]$. Then
\begin{align*}
  \int_{\mathcal{T}} \Big| \sum_{X < m \leq 2X} \frac{a_m}{m^{1+it}} \Big|^2 dt &\ll \Big(H \log \Big ( \frac{Q}{P} \Big ) + 1 \Big) \sum_{\nu \in \mathcal{I}} \int_{\mathcal{T}} \Big | Q_{\nu,H}(1 + it) R_{\nu,H}(1 + it) |^2 dt \\
& \qquad + \frac{T}{X}\Big(\frac{1}{H} + \frac{1}{P}\Big) \prod_{p \leq X} \Big(1+\frac{g(p)^2-1}{p}\Big) \\
& \qquad + \Big(\frac{1}{H} + \frac{1}{P}\Big) \prod_{p \leq X} \Big ( 1 + \frac{2g(p)-2}{p}\Big)
\end{align*}
where $\mathcal{I}$ is the interval $\lfloor H \log P \rfloor \leq \nu \leq  H \log Q$.
\end{lemma}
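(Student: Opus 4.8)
The plan is to compare the Dirichlet polynomial $F(s) := \sum_{X<m\le 2X} a_m m^{-s}$ with $\widetilde F(s) := \sum_{\nu\in\mathcal I} Q_{\nu,H}(s)R_{\nu,H}(s)$ by means of a Ramar\'e-type identity, and then to control the difference through the mean value theorem. Set $\omega(m):=\#\{p\in\mathbb P : p\mid m,\ P<p\le Q\}$. Since $a_m=0$ unless $\omega(m)\ge 1$, for every $m$ in $(X,2X]$ one has the trivial identity $a_m=\omega(m)^{-1}\sum_{p\mid m,\,P<p\le Q}a_m$; separating the primes $p$ with $p\,\|\,m$ from those with $p^2\mid m$, and using $a_m=b_{m/p}c_p$ and $\omega(m)=\omega(m/p)+1$ in the former case, this becomes
\[
a_m \;=\; \sum_{\substack{p\,\|\,m\\ P<p\le Q}} \frac{b_{m/p}\,c_p}{\omega(m/p)+1}\;+\;\sum_{\substack{p^2\mid m\\ P<p\le Q}}\frac{a_m}{\omega(m)}.
\]
Writing $m=pn$ in the first sum and discarding the constraint $p\nmid n$ at the cost of a term supported on integers divisible by $p^2$, one arrives at $F(s)=\sum_{P<p\le Q}c_p p^{-s}G_p(s)+E_1(s)$ with $G_p(s):=\sum_{X/p<n\le 2X/p}\frac{b_n}{n^s(\omega(n)+1)}$ and $E_1(s)$ supported on integers divisible by $p^2$ for some prime $p\in(P,Q]$.

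Next I would note that every prime $p\in(P,Q]$ lies in a unique block $e^{\nu(p)/H}<p\le e^{(\nu(p)+1)/H}$, that $\nu(p)\in\mathcal I$ for such $p$, and hence that $\widetilde F(s)=\sum_{P<p\le Q}c_p p^{-s}R_{\nu(p),H}(s)$. The intervals $(X/p,2X/p]$ and $(Xe^{-\nu(p)/H},2Xe^{-\nu(p)/H}]$ occurring in $G_p$ and $R_{\nu(p),H}$ differ only by a multiplicative factor in $(1,e^{1/H}]$ at each endpoint, so $G_p-R_{\nu(p),H}$ is supported on two short windows, and one gets
\[
F(s) \;=\; \widetilde F(s)\;+\;E_1(s)\;+\;E_2(s),
\]
where $E_2$ is supported on integers $N$ in $(X,Xe^{1/H}]\cup(2X,2Xe^{1/H}]$ (any leftover $p^2$-divisible terms being absorbed into $E_1$). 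Using $|a_m|,|b_m|,|c_m|\le g(m)$ with $g$ multiplicative and $g\le 1$, together with $\sum_{p\,\|\,N,\,P<p\le Q}\omega(N)^{-1}\le 1$, the coefficients of $E_2$ are $O(g(N))$, while those of $E_1$ are $O\big((\log N)g(N)\big)$ and supported on a subset of $(X,2X]$ of relative density $O(1/P)$.

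With this decomposition the conclusion follows in three moves. Since $\mathcal T\subseteq[-T,T]$ and $|F|^2\ll|\widetilde F|^2+|E_1|^2+|E_2|^2$, it suffices to bound $\int_{\mathcal T}|\widetilde F(1+it)|^2\,dt$, $\int_{-T}^{T}|E_1(1+it)|^2\,dt$ and $\int_{-T}^{T}|E_2(1+it)|^2\,dt$ separately. For the first I would apply Cauchy--Schwarz over the finite index set $\mathcal I$, giving $\int_{\mathcal T}|\widetilde F|^2\le |\mathcal I|\sum_{\nu\in\mathcal I}\int_{\mathcal T}|Q_{\nu,H}R_{\nu,H}|^2$ with $|\mathcal I|=\#\{\nu\in\mathbb Z:\lfloor H\log P\rfloor\le\nu\le H\log Q\}\ll H\log(Q/P)+1$; this produces the main term. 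For the error integrals I would invoke Lemma~\ref{le:MVTwithHenriot}: for $E_2$, applied to each of the two windows of length $\ll X/H$ with $g$ as the dominating multiplicative function, it yields $\tfrac{T}{XH}\prod_{p\le X}(1+\tfrac{g(p)^2-1}{p})$ and $\tfrac1H\prod_{p\le X}(1+\tfrac{2g(p)-2}{p})$; for $E_1$, writing $N=p^2m'$ with the cofactor $m'$ ranging over an interval of length $\gg X^{3/5}$ — here $Q\le X^{1/5}$ is exactly what makes Lemmas~\ref{le:Shiu} and~\ref{le:Henriot} applicable to the cofactor sums — and then summing over the exceptional prime $p$, one obtains the remaining two error terms carrying the factor $1/P$.

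The main obstacle is this last step, the error estimation. To extract the clean $1/H$ and $1/P$ savings one must expand the relevant $\ell^2$- and shifted-convolution sums by hand — over the two boundary windows for $E_2$, and over the square-full prime $p$ for $E_1$ — and apply Lemmas~\ref{le:Shiu} and~\ref{le:Henriot} in ranges that are only available because $Q\le X^{1/5}$; for $E_1$ one also has to absorb the fact that $g$ need not be completely multiplicative at $p$, which is handled by a routine splitting (the contribution of $p^3\mid N$ being negligible). A little extra care is needed when $H$ is so large that the boundary windows are shorter than $X^{\theta}$ or contain no integers, but this regime requires only a minor modification of the argument.
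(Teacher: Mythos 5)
Your proposal is correct and follows essentially the same route as the paper: the same Ramar\'e identity $a_m=\omega_{(P,Q]}(m)^{-1}\sum_{p\mid m}a_m$ with the main term freed of the coprimality constraint, the same block decomposition whose boundary mismatch lives in two windows of length $\asymp X/H$ handled by Lemma~\ref{le:MVTwithHenriot}, Cauchy--Schwarz over $\nu\in\mathcal I$ producing the factor $H\log(Q/P)+1$, and the $p^2$-divisible error treated by expanding via Lemma~\ref{le:contMVT2} and applying Lemmas~\ref{le:Shiu} and~\ref{le:Henriot} with moduli $p_1^2,p_2^2\le Q^2\le X^{2/5}$ before summing over the repeated primes to extract the $1/P$. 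Your only loose point --- the coefficient bound ``$O((\log N)g(N))$'' for $E_1$, which is not literal since $g(N/p)g(p)$ can exceed $g(N)$ --- is exactly the issue you already flag and resolve by modifying $g$ at the repeated primes, which is the paper's device as well.
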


\begin{proof}
Here and later we write
\[
\omega_{(P,Q]}(m) := \sum_{\substack{P < p \leq Q \\ p | m}} 1.
\]
Let us also write $s=1+it$ and notice that since $a_m$ are supported on numbers having a prime factor in $(P, Q]$, we have
\[
\begin{split}
\sum_{\substack{X < m \leq 2X}} \frac{a_m}{m^s} &=
\sum_{P < p \leq Q} \sum_{\substack{X/p < m \leq 2X/p}} \frac{a_{mp}}{(mp)^s} \cdot \frac{1}{\omega_{(P,Q]}(m) + \mathbf{1}_{(p, m) = 1}} \\
& = \sum_{P < p \leq Q} \frac{c_p}{p^s} \sum_{\substack{X/p < m \leq 2X/p}} \frac{b_m}{m^s} \cdot \frac{1}{\omega_{(P, Q]}(m) + 1} + \sum_{\substack{X < mp \leq 2X \\ p \in (P, Q], \, (m, p) = p}} \frac{e_{m, p}}{(mp)^s},
\end{split}
\]
where
\[
|e_{m,p}| = \Big|\frac{a_{mp}}{\omega_{(P, Q]}(m)} - \frac{b_{m} c_p}{\omega_{(P, Q]}(m) + 1}\Big| \leq g(mp) + g(m)g(p).
\]

We split the first sum further into short intervals getting that it equals
$$
\sum_{\nu \in \mathcal{I} } \ \sum_{\substack{e^{\nu/H} < p \leq e^{(\nu+1)/H} \\ P < p \leq Q}}
\frac{c_p}{p^s} \ \sum_{\substack{X e^{-(\nu+1)/H} < m \leq 2X e^{-\nu/H} \\ X < mp \leq 2X}} \frac{b_m}{m^s} \cdot \frac{1}{\omega_{(P, Q]}(m) + 1}
$$
We remove the condition $X < mp \leq 2X$ overcounting at most by the integers $mp$ in the ranges $(X e^{-1/H}, X]$ and $(2X, 2X e^{1/H}]$. Similarly, removing numbers with $Xe^{-(\nu+1)/H} < m \leq Xe^{-\nu/H}$ we undercount at most by integers $mp$ in the range $(Xe^{-1/H}, Xe^{1/H}]$. Therefore, for some $d_m$ and $e_{m, p}'$ with $|d_m| \leq |g(m)|$ and $|e_{m,p}'| \ll |g(mp)| + |g(m) g(p)|$, we have
\[
\begin{split}
&\sum_{\substack{X < m \leq 2X}} \frac{a_m}{m^s} = \sum_{\nu \in \mathcal{I}} Q_{\nu, H}(s) R_{\nu, H}(s)  \\
& \qquad + \sum_{\substack{X e^{-1/H} < m \leq Xe^{1/H}}} \frac{d_m}{m^s} + \sum_{\substack{2X < m \leq 2X e^{1/H}}} \frac{d_m}{m^s} + \sum_{\substack{X e^{-1/H} < mp \leq 2X e^{1/H}\\ p \in (P, Q], \, (m, p) = p}} \frac{e'_{m, p}}{(mp)^s}.
\end{split}
\]
We square both sides of this equation, integrate over $t \in \mathcal{T}$ and then apply Cauchy-Schwarz on the first sum over $\nu$ and Lemma~\ref{le:MVTwithHenriot} on the second and third sums (with $y \asymp X/H$). These clearly lead to an acceptable contribution. For the fourth sum we get from Lemma~\ref{le:contMVT2}
\[
\begin{split}
&\int_{-T}^T \Big|\sum_{\substack{X e^{-1/H} < mp \leq 2X e^{1/H}\\ p \in (P, Q], \, (m, p) = p}} \frac{e'_{m, p}}{(mp)^s} \Big|^2 dt\\
&\ll T \sum_{X/3 < n \leq 6X} \frac{1}{n^2} \sum_{\substack{p_1, p_2 \in (P, Q] \\ m_1, m_2 \\ n = p_1^2 m_1 = p_2^2 m_2}} |e'_{m_1 p_1, p_1} e'_{m_2 p_2, p_2}| + T \sum_{0 < |k| \leq 6X/T} \sum_{X/3 < n \leq 6X} \sum_{\substack{p_1, p_2 \in (P, Q] \\ m_1, m_2 \\ p_1^2 m_1 = n \\ p_2^2 m_2 = n+k}} \frac{|e'_{m_1 p_1, p_1}||e'_{m_2p_2, p_2}|}{n(n+k)}.
\end{split}
\]
Defining now, for each pair $(p_1, p_2)$, the multiplicative function $g_{p_1, p_2}$ such that $g_{p_1, p_2}(p) = g(p)$ when $p \not\in\{p_1, p_2\}$ and $g_{p_1, p_2}(p^\alpha) = 1$ when $p \in \{p_1, p_2\}$ or $\alpha > 1$, we obtain that the previous expression is
\[
\begin{split}
&\ll \frac{T}{X^2} \sum_{p_1, p_2 \in (P, Q]} \sum_{\substack{\frac{X}{3[p_1, p_2]^2} < n \leq \frac{6X}{[p_1, p_2]^2}}} g_{p_1, p_2}(n)^2  \\
& \qquad + \frac{T}{X^2} \sum_{p_1, p_2 \in (P, Q]} \sum_{\substack{0 < |k| \leq 6X/T \\ (p_1^2, p_2^2) \mid k}} \sum_{\substack{X/3 < n \leq 6X \\ p_1^2 \mid n, p_2^2 \mid n+k}} g_{p_1, p_2}(n) g_{p_1, p_2}(n+k).
\end{split}
\]
Applying now, for each $p_1, p_2$, Lemmas~\ref{le:Shiu} and~\ref{le:Henriot}, we obtain that
\[
\begin{split}
&\int_{-T}^T \Big|\sum_{\substack{X e^{-1/H} < mp \leq 2X e^{1/H}\\ p \in (P, Q], \, (m, p) = p}} \frac{e'_{m, p}}{(mp)^s} \Big|^2 dt \\
&\ll \frac{T}{X} \sum_{p_1, p_2 \in (P, Q]} \frac{1}{[p_1, p_2]^2} \prod_{p \leq X} \Big(1+\frac{g(p)^2-1}{p}\Big)  + \sum_{p_1, p_2 \in (P, Q]} \frac{1}{p_1^2 p_2^2} \prod_{p \leq X} \Big(1+\frac{2g(p)-2}{p}\Big) \\
&\ll \frac{T}{XP} \prod_{p \leq X} \Big(1+\frac{g(p)^2-1}{p}\Big)  + \frac{1}{P} \prod_{p \leq X} \Big(1+\frac{2g(p)-2}{p}\Big).
\end{split}
\]
\end{proof}

\section{Moment computation}
In this section we prove an analogue of~\cite[Lemma 13]{MainPaper}. Let us first introduce some
relevant notation.
Let $X, Y_1, Y_2 \geq 2$, and consider,
$$
Q(s) = \sum_{Y_1 < p \leq 2Y_1} \frac{c_p}{p^s} \quad \text{and} \quad
A(s) = \sum_{\substack{X/Y_2 < m \leq 2X/Y_2}} \frac{a_m}{m^s}.
$$
\begin{lemma}
\label{le:moment}
Let $\ell = \lceil \frac{\log Y_2}{\log Y_1} \rceil$ and let $g \colon \mathbb{N} \to [0, 1]$ be multiplicative.
Assume that $|c_p| \leq 1$ for all $p$, $|a_m| \leq g(m)$ for all $m$, and $Y_2 \leq X^{1/5}$. Then
\[
\begin{split}
&\int_{-T}^{T} |Q(1 + it)^{\ell} \cdot A(1 + it) |^2 dt \\
&\ll \ell!^2 \Big(\frac{T}{X} \prod_{p \leq X} \Big(1+\frac{|g(p)|^2-1}{p}\Big) + \prod_{p \leq X} \Big ( 1 + \frac{2|g(p)|-2}{p}\Big)\Big)
\end{split}
\]
\end{lemma}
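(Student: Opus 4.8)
The plan is to expand the power $Q(1+it)^{\ell}$ into an ordinary Dirichlet polynomial, merge it with $A(1+it)$, and then apply the sparse mean value theorem Lemma~\ref{le:MVTwithHenriot}. Write
\[
Q(s)^{\ell} = \sum_{n} \frac{d_n}{n^s}, \qquad d_n = \sum_{\substack{p_1, \dotsc, p_\ell \in (Y_1, 2Y_1] \\ p_1 \dotsm p_\ell = n}} c_{p_1} \dotsm c_{p_\ell},
\]
so that $d_n$ is supported on integers $n$ that are products of exactly $\ell$ primes from $(Y_1, 2Y_1]$, hence on $Y_1^{\ell} < n \leq (2Y_1)^{\ell}$, and $|d_n| \leq r_\ell(n)$ where $r_\ell(n)$ counts the (ordered) representations of $n$ as such a product. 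The key elementary observation is that $r_\ell(n) \leq \ell!$ for every such $n$: since all prime factors lie in the dyadic block $(Y_1,2Y_1]$, an integer $n$ of this form has at most $\ell$ prime factors counted with multiplicity in that range actually it has exactly $\ell$, and the number of orderings of a fixed multiset of size $\ell$ is at most $\ell!$. Thus $|d_n| \leq \ell! \cdot \mathbf 1_{\Omega_{(Y_1,2Y_1]}(n) = \ell}$, and in particular $|d_n| \leq \ell! \cdot \mathbf 1_{n \in \mathcal E}$ for a set $\mathcal E$ of integers all of whose prime factors lie in $(Y_1, 2Y_1]$.

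Next form the product $B(s) := Q(s)^{\ell} A(s) = \sum_{N < b \leq 8N} \frac{e_b}{b^s}$ with $N \asymp X Y_1^{\ell}/Y_2$; note $Y_1^{\ell} \geq Y_2$ by the choice of $\ell$, so $N$ is comparable to $X$ up to a factor of size between $1$ and $2^{\ell}$, and crucially $N \leq (2Y_1)^{\ell}\cdot 2X/Y_2 \leq X^{1/5}\cdot 2^{\ell+1} X \cdot$(harmless) — more carefully, since $Y_2 \leq X^{1/5}$ and $Y_1^\ell \asymp Y_2^{1+o(1)}$, the length of $B$ is $\asymp X$ so the hypothesis $y \geq x^\theta$ of Lemma~\ref{le:MVTwithHenriot} is met with $\theta$ bounded below. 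The coefficients satisfy
\[
e_b = \sum_{b = nm} d_n a_m, \qquad |e_b| \leq \ell! \sum_{\substack{b = nm \\ n \in \mathcal E}} g(m) \leq \ell! \cdot (g \ast \mathbf 1_{\mathcal E})(b) =: \ell!\, G(b),
\]
where $G$ is multiplicative: indeed $G(p^k) = \sum_{j \le k} g(p^{k-j})\mathbf 1_{p^j \in \mathcal E}$, and $\mathbf 1_{\mathcal E}$ restricted to prime powers is supported on $(Y_1,2Y_1]$, so $G(p) = g(p)$ for $p \notin (Y_1,2Y_1]$ and $G(p) \le g(p) + 1 \le 2$ for $p \in (Y_1,2Y_1]$. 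Hence $G \colon \mathbb N \to [0,\infty)$ with $G \le 2$ everywhere. Applying Lemma~\ref{le:MVTwithHenriot} to $B(1+it)$ with multiplicative majorant $\min\{1,G/2\}\cdot 2$ — or rather applying it directly after absorbing the bounded constant $2$ — gives
\[
\int_{-T}^T |B(1+it)|^2\,dt \ll \ell!^2\Big(\frac{T}{X}\prod_{p\le X}\Big(1+\frac{G(p)^2-1}{p}\Big) + \prod_{p\le X}\Big(1+\frac{2G(p)-2}{p}\Big)\Big).
\]

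It then remains to compare the Euler products over $G$ with those over $g$. For $p \notin (Y_1,2Y_1]$ the local factors agree exactly. For $p \in (Y_1, 2Y_1]$ we have $G(p) \le 2$, so each such factor contributes at most $1 + \tfrac{3}{p} \le 1 + \tfrac{3}{Y_1}$, and $\sum_{Y_1 < p \le 2Y_1} \tfrac1p = O(1)$ by Mertens, so the product of these correction factors over the dyadic block $(Y_1,2Y_1]$ is $O(1)$; a cleaner route is to bound $\prod_{Y_1<p\le 2Y_1}(1+3/p) \ll \exp(3\sum_{Y_1<p\le 2Y_1}1/p) = O(1)$. This shows $\prod_{p\le X}(1+(G(p)^2-1)/p) \ll \prod_{p\le X}(1+(|g(p)|^2-1)/p)$ and likewise for the other product, completing the proof after renaming $|g|=g$ (recall $g$ is real-valued into $[0,1]$). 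The only genuinely delicate point — and the step I would be most careful about — is the combinatorial claim $|d_n| \le \ell!$, i.e. that confining all $\ell$ primes to a single dyadic interval forces the number of ordered factorizations to be at most $\ell!$ uniformly in $n$; this is what produces the clean $\ell!^2$ rather than a divisor-function factor, and it is exactly the feature that makes this lemma (following \cite[Lemma 13]{MainPaper}) usable when iterated over many scales.
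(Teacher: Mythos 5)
The step that fails is the application of Lemma~\ref{le:MVTwithHenriot} to $B=Q^{\ell}A$ with the majorant $G=g\ast\mathbf{1}_{\mathcal E}$. Your claim that ``$G\le 2$ everywhere'' is false: $G$ is multiplicative and bounded by $2$ only \emph{at primes} in $(Y_1,2Y_1]$, so on an integer $b$ having $r$ distinct prime factors in that interval one gets $G(b)$ as large as $2^{r}$ (e.g.\ $g\equiv 1$, $b=p_1\dotsm p_r$ squarefree with all $p_i\in(Y_1,2Y_1]$ gives $G(b)=2^{r}$), and at prime powers $G(p^{k})$ can be as large as $k+1$. Crucially, $r$ is \emph{not} bounded in terms of $\ell$: the $A$-part $m\in(X/Y_2,2X/Y_2]$ is unrestricted and may itself be built entirely from primes in $(Y_1,2Y_1]$, so there is no absolute constant to ``absorb'', and $\min\{1,G/2\}\cdot 2$ is not multiplicative, so it cannot serve as the majorant either. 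Indeed a pointwise majorization $|e_b|\le C\,\ell!\,F(b)$ with a multiplicative $F\le 1$ is impossible in general: with $c_p=1$, $a_m=g(m)=1$ and $b$ a product of $2\ell$ primes from the block, the coefficient of $Q^{\ell}A$ is of size $\ell!\binom{2\ell}{\ell}$. Since Lemma~\ref{le:MVTwithHenriot} (and the underlying Lemmas~\ref{le:Shiu} and~\ref{le:Henriot} as stated in this paper) require a majorant bounded by $1$, your key application is not licensed as written; this, not the bound $|d_n|\le\ell!$ (which is correct and easy), is the delicate point. The proposal could be repaired by invoking versions of Shiu's and Henriot's theorems for divisor-bounded multiplicative functions — then your Euler-product comparison at the end does go through, since $\sum_{Y_1<p\le 2Y_1}1/p=O(1/\log Y_1)$ — but that is a genuinely stronger input than anything you cited. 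A second, minor, inaccuracy: the support of $Q^{\ell}A$ spans the range $(Y_1^{\ell}X/Y_2,(2Y_1)^{\ell}\cdot 2X/Y_2]$, whose multiplicative width is $\asymp 2^{\ell}Y_1$, so one must first split dyadically before applying any mean value theorem; asserting the length is $\asymp X$ is not correct, though this is easily fixed.

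For contrast, the paper avoids the unbounded-majorant problem by never merging $Q^{\ell}A$ into a single multiplicatively majorized polynomial: it applies the raw mean value theorem (Lemma~\ref{le:contMVT2}) after a dyadic split, keeps the two prime tuples $(p_1,\dotsc,p_\ell)$, $(p_1',\dotsc,p_\ell')$ explicit, applies Lemmas~\ref{le:Shiu} and~\ref{le:Henriot} to the inner sums with the fixed moduli $p_1\dotsm p_\ell$ and $p_1'\dotsm p_\ell'$, and extracts the factor $\ell!^{2}$ from the combinatorial estimate $\sum 1/[p_1\dotsm p_\ell,\,p_1'\dotsm p_\ell']\ll \ell!^{2}$. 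If you want to keep your merged-polynomial strategy, you must either prove (or quote) the divisor-bounded extensions of Lemmas~\ref{le:Shiu} and~\ref{le:Henriot}, or revert to the tuple-by-tuple argument.
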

\begin{proof}
The coefficients of the Dirichlet polynomial $Q(s)^{\ell} A(s)$ are supported on the interval
$$
(Y_1^\ell \cdot X/Y_2, (2Y_1)^\ell\cdot 2 X/Y_2] \subseteq (X, 2^{\ell+1}Y_1 X].
$$
Let $g^\ast$ be a multiplicative function such that $g^\ast(p^\alpha) = g(p^\alpha)$ if $p \not \in (Y_1, 2Y_1]$ and $g^\ast(p^\alpha) = 1$ if $p \in (Y_1, 2Y_1]$.
Using the improved mean-value theorem for Dirichlet polynomials (first inequality of Lemma~\ref{le:contMVT2}) and then splitting the $n$ sums dyadically, we see that
\[
\begin{split}
&\int_{-T}^{T} |Q(1 + it)^{\ell} \cdot A(1 + it) |^2 dt \\
&\ll \sum_{\substack{X/2 < y \leq 2^{\ell+1}Y_1 X \\ y = 2^r}} \Bigl( T \sum_{y < n \leq 2y} \frac{1}{n^2} \Big ( \sum_{\substack{n = m p_1 \ldots p_{\ell} \\ Y_1 < p_1, \ldots, p_{\ell} \leq 2Y_1 \\ X/Y_2 < m \leq 2 X/Y_2 }} g^\ast(n) \Big )^2 \\
& \qquad + T\sum_{\substack{0 < |k| \leq 2y / T \\ y < n \leq 2y}} \sum_{\substack{n = m p_1 \ldots p_{\ell} \\ Y_1 < p_1, \ldots, p_{\ell} \leq 2Y_1 \\ X /Y_2 < m \leq 2 X /Y_2}} \frac{g^\ast(n)}{n} \sum_{\substack{n+k = m' p'_1 \ldots p'_{\ell} \\ Y_1 < p'_1, \ldots, p'_{\ell} \leq 2Y_1 \\ X /Y_2 < m' \leq 2 X /Y_2}} \frac{g^\ast(n+k)}{n+k} \Bigr)\\
&\ll \sum_{\substack{X/2 < y \leq 2^{\ell+1}Y_1 X \\ y = 2^r}} \frac{T}{y^2}  \Bigl( \sum_{\substack{Y_1 < p_1, \dotsc, p_{\ell} \leq 2Y_1 \\ Y_1 < p_1', \dotsc, p_{\ell}' \leq 2Y_1}} \sum_{\frac{y}{[p_1 \dotsm p_{\ell}, p_1' \dotsm p_{\ell}']} < n \leq \frac{2y}{[p_1 \dotsm p_{\ell}, p_1' \dotsm p_{\ell}']}} g^\ast(n)^2  \\
& \qquad + \sum_{\substack{Y_1 < p_1, \dotsc, p_{\ell} \leq 2Y_1 \\ Y_1 < p_1', \dotsc, p_{\ell}' \leq 2Y_1}} \sum_{\substack{0 < |k| \leq 2y / T \\ (p_1 \dotsm p_{\ell}, p_1' \dotsm p_{\ell}') \mid k}} \sum_{\substack{y < n \leq 2y \\ p_1 \dotsm p_\ell \mid n \\ p_1' \dotsm p_\ell' \mid n+k}} g^\ast(n) g^\ast(n+k) \Bigr).
\end{split}
\]

By Lemmas~\ref{le:Shiu} and~\ref{le:Henriot} we see that this is
\[
\begin{split}
&\ll \sum_{\substack{X/2 < y \leq 2^{\ell+1}Y_1 X \\ y = 2^r}} \Bigl(\frac{T}{y} \sum_{\substack{Y_1 < p_1, \dotsc, p_{\ell} \leq 2Y_1 \\ Y_1 < p_1', \dotsc, p_{\ell}' \leq 2Y_1}} \frac{1}{[p_1 \dotsm p_{\ell}, p_1' \dotsm p_{\ell}']} \prod_{p \leq X} \Big(1+\frac{g(p)^2-1}{p}\Big) \\
& \qquad + \sum_{\substack{Y_1 < p_1, \dotsc, p_{\ell} \leq 2Y_1 \\ Y_1 < p_1', \dotsc, p_{\ell}' \leq 2Y_1}} \frac{1}{p_1 \dotsm p_\ell \cdot p_1' \dotsm p_\ell'} \prod_{p \leq X} \Big(1+\frac{2g(p)-2}{p}\Big)\Bigr).
\end{split}
\]
The claim follows since
\[
\begin{split}
&\sum_{\substack{Y_1 < p_1, \dotsc, p_{\ell} \leq 2Y_1 \\ Y_1 < p_1', \dotsc, p_{\ell}' \leq 2Y_1}} \frac{1}{[p_1 \dotsm p_{\ell}, p_1' \dotsm p_{\ell}']} \\
&\leq \sum_{r=1}^\ell \sum_{Y_1 < p_1, \dotsc, p_r \leq 2Y_1} \frac{r!}{p_1 \dotsm p_r} {l \choose r}^2 \sum_{\substack{Y_1 < p_{r+1}, \dotsc, p_{\ell} \leq 2Y_1 \\ Y_1 < p'_{r+1}, \dotsc, p'_{\ell} \leq 2Y_1}} \frac{1}{ p_{r+1} \dotsm p_{\ell} \cdot p'_{r+1} \dotsm p'_{\ell}} \\
&\leq \sum_{r=1}^\ell r! {\ell \choose r}^2 \ll \ell!^2.
\end{split}
\]
\end{proof}

\section{Parseval bound}


Let us first state a standard Parseval type bound that we will use frequently.
\begin{lemma}
\label{le:Parseval}
Let $\mathcal{T} \subseteq \mathbb{R}$, let $X \geq y \geq 1$, and let $A \colon \mathbb{C} \to \mathbb{C}$ be such that $\sup_{t \in \mathcal{T}} |A(1+it)| \leq g(X)$ for some $g(X)$. Then
\[
\begin{split}
&\frac{1}{X y^2} \int_{X}^{2X} \Big| \int_{\mathcal{T}} A(1+it) \frac{(x+y)^{1+it}-x^{1+it}}{1+it} dt \Big|^2 dx \\
&\ll  \max_{T \geq X/y} \frac{X/y}{T} \int_{[-T, T] \cap \mathcal{T}} |A(1+it)|^2  dt.
\end{split}
\]
\end{lemma}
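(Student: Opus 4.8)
The plan is to reduce the claim to the continuous mean value theorem for Dirichlet polynomials by a standard Fourier-analytic argument. First I would expand the square on the left-hand side and integrate in $x$ over $[X,2X]$, which after enlarging the $x$-integral to all of $\mathbb{R}$ (at a harmless cost, since the integrand is nonnegative) produces
\[
\frac{1}{Xy^2}\int_{\mathcal{T}}\int_{\mathcal{T}} A(1+it)\overline{A(1+iu)}\,K(t,u)\,\frac{dt\,du}{(1+it)(1-iu)},
\]
where $K(t,u)=\int_{\mathbb{R}}\big((x+y)^{1+it}-x^{1+it}\big)\overline{\big((x+y)^{1+iu}-x^{1+iu}\big)}\,dx$ after an appropriate restriction. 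The cleanest way to package this is to write $\frac{(x+y)^{1+it}-x^{1+it}}{1+it}=\int_x^{x+y} w^{it}\,dw$, so the inner integral over $x$ becomes $\int_{\mathbb{R}} \big(\int_x^{x+y} w^{it}dw\big)\big(\int_x^{x+y} w^{-iu}dw\big)dx$. This is essentially the $L^2$-pairing of two translates of an indicator-type kernel, and one sees it is supported on $|t-u|\lesssim X/y$ with total mass $\ll y^2 \cdot (X/y)/(|t-u|+X/y)$ or similar; the upshot is that the double integral collapses to a diagonal contribution.

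More concretely, I would invoke the mean value theorem in the form already available: one writes $\frac{(x+y)^{1+it}-x^{1+it}}{1+it}$ as a smoothed-out version of $y\cdot x^{it}$ for $x\sim X$, or more carefully uses that $\sum_{x<n\le x+y}$-type kernels have Fourier transform concentrated on $|t|\lesssim X/y$. The technically cleanest route is: by Plancherel/Parseval in the $x$-variable (treating $x^{it}$ as a character), the left-hand side is comparable to $\frac{1}{Xy^2}\int_{-T}^{T}|A(1+it)|^2 \cdot (\text{weight}(t))\,dt$ where $\text{weight}(t)\asymp \min(y^2, (X/t)^2)$ for the kernel $\frac{(x+y)^{1+it}-x^{1+it}}{1+it}$ integrated against $x\in[X,2X]$, uniformly after summing over the dyadic ranges of $|t|$. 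Dividing by $y^2$ and bounding $\min(y^2,(X/t)^2)/y^2\le 1$ when $|t|\le X/y$ and by $(X/(ty))^2$ when $|t|>X/y$, one gets a bound of the shape $\sum_{T\ge X/y,\ T\ \mathrm{dyadic}} \frac{X/y}{T}\int_{[-T,T]\cap\mathcal{T}}|A(1+it)|^2\,dt$, and replacing the dyadic sum by its maximal term (the geometric series converges) yields exactly the stated right-hand side.

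The main obstacle — really the only substantive point — is handling the tails $|t|>X/y$, i.e. making precise that the kernel $\big|\int_x^{x+y}w^{it}\,dw\big|$, when squared and integrated against $x\in[X,2X]$, decays like $(X/(|t|y))^2 \cdot y^2$; this is where one must not lose logarithmic factors and where the sup over $T\ge X/y$ in the statement comes from (one cannot in general pull the integral back to a single range $[-X/y,X/y]$ because $\mathcal{T}$ is arbitrary and $|A|$ could be concentrated far out). I would handle this by writing $\int_x^{x+y}w^{it}dw = \frac{(x+y)^{1+it}-x^{1+it}}{1+it}$ and noting $|(x+y)^{1+it}-x^{1+it}|\ll \min(y\cdot X^{?}, X/|t|\cdot\dots)$; more robustly, integrate by parts once in $w$ to gain the factor $1/|t|$, and then apply the ordinary mean value theorem (Lemma~\ref{le:contMVT2} or its continuous analogue \eqref{eq:contMVT}) on each dyadic block $|t|\asymp T$. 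Summing the resulting geometric series in $T$ is what produces the clean $\max_{T\ge X/y}$ form, and the hypothesis $\sup_{t\in\mathcal{T}}|A(1+it)|\le g(X)$ is not even needed for this bound — it is presumably recorded for convenience in later applications, so I would simply not use it here (or use it only to truncate $\mathcal{T}$ to a bounded-measure set if one wanted an absolute-constant statement). The whole argument is a few lines once the kernel bound is in place; I expect no conceptual difficulty, only the bookkeeping of the dyadic decomposition and the verification that no stray $\log$ appears.
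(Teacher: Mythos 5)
This is essentially the paper's own route: the proof here is a one-line citation to Lemma 14 of the earlier paper, whose argument is exactly your Parseval/Plancherel-in-$x$ computation with the Perron kernel bound $\min(y, X/|t|)$, a dyadic decomposition over $T \geq X/y$, and geometric summation into the $\max_{T \geq X/y}$ form; you are also right that the hypothesis $\sup_{t \in \mathcal{T}}|A(1+it)| \leq g(X)$ plays no role in the bound. One small phrasing fix: the dyadic block $|t| \asymp T$ should be bounded by $(X/(Ty))^{2}\int_{[-2T,2T]\cap\mathcal{T}}|A(1+it)|^{2}\,dt \leq \frac{2X/y}{T}\max_{T' \geq X/y}\frac{X/y}{T'}\int_{[-T',T']\cap\mathcal{T}}|A(1+it)|^{2}\,dt$, so that the sum over dyadic $T$ converges geometrically to the maximum --- merely observing that each term of $\sum_{T}\frac{X/y}{T}\int_{[-T,T]\cap\mathcal{T}}|A|^{2}$ is at most the maximum would not by itself close the argument.
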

\begin{proof}
This follows e.g. from the arguments in~\cite[Proof of Lemma 14]{MainPaper}.
\end{proof}

\begin{assumption}
\label{as:ambmcp}
Let $X^{1/2-\eps} \geq Q \geq P \geq Q' \geq P' \geq 1$, and let $f \colon \mathbb{N} \to \mathbb{U}$ be a multiplicative function. Let $a_m, b_m, d_m$ and $c_p$ be sequences in $\mathbb{U}$ such that
\[
\begin{cases}
|a_m|, |b_m|, |c_m| \leq |f(m)| & \text{for all $m$;} \\
a_{mpp'} = b_m c_{p} c_{p'} & \text{whenever $p \in (P, Q], p' \in (P', Q']$ and $(pp', m) = 1$;} \\
a_m = 0 & \text{if $\omega_{(P, Q]}(m) = 0$ or $\omega_{(P', Q']}(m) = 0$;} \\
a_m = d_m & \text{if $X/4 < m \leq 4X$.}
\end{cases}
\]
\end{assumption}
We set
$$
F_{W}(s) := \sum_{n} \frac{a_n}{n^s} \cdot W \Big ( \frac{n}{X} \Big )
$$
with $W(\cdot)$ a smooth function such that $W(x) = 1$ for $1 < x \leq 5/2$ and $W$ is compactly supported on $(1/2, 4]$. This Dirichlet polynomial will be used for studying averages over short intervals. Let us first collect some properties of $F_W(s)$.

Similarly to the Buchstab/Ramar\'e-type identity in Lemma~\ref{lem:decomp} we get
\[
\begin{split}
F_W(s) &= \sum_{\substack{m \\ P < p \leq Q \\ P' < p' \leq Q'}} \frac{a_{m p p'} W(mp p'/X)}{(mp p')^{s} \omega_{(P, Q]}(mpp') \omega_{(P', Q']}(mpp') } \\
&= \sum_{\substack{m \\ P < p \leq Q \\ P' < p' \leq Q'}} \frac{b_m c_p c_{p'} W(mp p'/X)}{(mp p')^{s} (\omega_{(P, Q]}(m)+1) (\omega_{(P', Q']}(m) + 1) } + F_{\square}(s),
\end{split}
\]
where
\begin{equation}
\label{eq:Fsquaredef}
F_\square(s) = \sum_{\substack{X/2 < mpp' \leq 4X \\ P < p \leq Q \\ P' < p' \leq Q' \\ (pp', m) > 1}} \frac{e_{m, p, p'}}{(m p p')^s}
\end{equation}
with $|e_{m,p, p'}| \ll |f(mpp')| + |f(m)f(p)f(p')|$. Hence we obtain by Mellin inversion
\begin{equation}
\label{eq:FWsdecomp}
\begin{split}
F_W(s) = \sum_{\substack{A,B,C}} \frac{1}{2\pi} \int_{-\infty}^{\infty} Q_{1,A}(s + i u) Q_{2,B}(s + iu) R_{C}(s + iu) \widetilde{W}(i u) \cdot X^{i u} du + F_{\square}(s),
\end{split}
\end{equation}
where $A, B, C$ traverses powers of two such that $X / 16 < ABC \leq 4X, P/2 < A \leq Q$, $P'/2 < B \leq Q'$, and where
\begin{equation}
\label{eq:WQRdef}
\begin{split}
\widetilde{W}(s) &:= \int_{0}^{\infty} W(x) x^{s -1} dx \\
Q_{1,D}(s) &:= \sum_{\substack{D < p \leq 2D \\ P < p \leq Q}} \frac{c_p}{p^s} \quad \text{and} \quad Q_{2,D}(s) = \sum_{\substack{D < p' \leq 2D \\ P' < p' \leq Q'}} \frac{c_{p'}}{p'^s} \\
\text{and} \quad R_{D}(s) &:= \sum_{D < m \leq 2D} \frac{b_m}{m^s (\omega_{(P, Q]}(m)+1) (\omega_{(P', Q']}(m) + 1) }.
\end{split}
\end{equation}

The following proposition, which is a variant of~\cite[Lemma 14]{MainPaper}, shows that the behaviour of a sequence satisfying Assumption~\ref{as:ambmcp} in almost all very short intervals can be approximated by its behaviour in a long interval (when appropriately twisted) if the mean square of the corresponding Dirichlet polynomial is small. Compared to~\cite[Lemma 14]{MainPaper}, the following proposition allows us to discard some parts of the mean square integral which will allow us to get power-type savings in the exceptional set.
\begin{proposition}
\label{prop:L1work}
Let $\eps > 0$ be small, $X^{1/2-\eps} \geq Q \geq P \geq Q' \geq P' \geq 1$, $f$, $a_m, b_m, d_m$ and $c_p$ be as in Assumption~\ref{as:ambmcp}, and let $W(x), F_W(s), Q_{j,D}(s)$ and $R_D(s)$ be as above. Assume $\nu_1 := \log P'/\log X \geq (\log X)^{-1/4}$. Let $1 \leq y_1 \leq y_2 \leq X$ and $1 \leq T_0 \leq X/10$. Assume that $y_2 > X^\eps$. Let $\mathcal{U} \subseteq [-X, X]$. Then, for any $|t_0| \leq X$ and $X < x \leq 2X$,
\begin{equation}
\label{eq:Sxy1y2diff}
\begin{split}
&\frac{1}{y_1} \sum_{x < n \leq x+y_1} a_n - \frac{1}{y_1} \int_x^{x+y_1} u^{i t_0} du \cdot \frac{1}{y_2} \sum_{x < n \leq x+y_2} a_n n^{-it_0} \\
&= A(x, y_1, y_2, t_0, T_0, \mathcal{U}) + O_\varepsilon\Big( \Big(\frac{y_2 T_0^2}{X} + \frac{1}{y_1} + \frac{\Big ( 1 + \log \frac{\log Q}{\log P'} \Big )^2}{T_0^{10}}\Big) \cdot \prod_{p \leq X} \Big (1 + \frac{|f(p)| - 1}{p} \Big)\Big) \\
& \qquad  + O \Big( \Big ( 1 + \log \frac{\log Q}{\log P'} \Big )^2 \cdot \sup_{\substack{|t| \leq X/2, |t-t_0| \geq T_0 \\ X/(16QQ') < C \leq 16X/(PP')}} |R_C(1+it)| \Big),
\end{split}
\end{equation}
where $A(x, y_1, y_2, t_0, T_0, \mathcal{U})$ is such that
\begin{equation}
\label{eq:A2bound}
\begin{split}
&\frac{1}{X} \int_{X}^{2X} |A(x, y_1, y_2, t_0, T_0, \mathcal{U})|^2 dx \ll \mathcal{I}(X, y_1) +  \frac{1}{y_1} \prod_{p \leq X} \Big(1+\frac{|f(p)|^2-1}{p}\Big) \\
&\qquad + \frac{(\log X)^{5}}{X^{\nu_1^3/160}} \max_{X/y_1 \leq T \leq X} \Big(\frac{|(\mathcal{U} + [-X^\varepsilon, X^\varepsilon]) \cap [-T, T]| \cdot Q'}{y_1 \sqrt{T}} + 1 \Big),
\end{split}
\end{equation}
with
\[
\mathcal{I}(X, y_1) = \max_{X/y_1 \leq T \leq X} \frac{X/y_1}{T} \int_{[-T, T] \setminus \mathcal{U}} \Big | \sum_{\substack{X/100 < n \leq 100X}} \frac{d_n}{n^{1 + it}} \Big |^2 dt,
\]
and where, for two sets $A, B \subset \mathbb{R}$, we define $A + B := \{ a + b: a \in A, b \in B\}$. 
\end{proposition}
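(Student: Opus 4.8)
\emph{Proof plan.} The argument runs parallel to that of~\cite[Lemma~14]{MainPaper}, the one genuinely new ingredient being the removal of the set $\mathcal U$ from the Parseval integral, which is what lets us exploit the two prime ranges $(P,Q]$, $(P',Q']$ built into Assumption~\ref{as:ambmcp}. First I would apply a smoothed Perron/Mellin identity to both averages in~\eqref{eq:Sxy1y2diff}. Writing $K_y(t):=\tfrac1y\int_x^{x+y}u^{it}\,du=\tfrac{(x+y)^{1+it}-x^{1+it}}{(1+it)y}$ and noting $\tfrac1{y_1}\int_x^{x+y_1}u^{it_0}\,du=K_{y_1}(t_0)$, the left side of~\eqref{eq:Sxy1y2diff} equals $\tfrac1{2\pi}\int_{|t|\le X}F_W(1+it)\mathcal K(t)\,dt+\mathcal E_{\mathrm{Per}}$, where $\mathcal K(t):=K_{y_1}(t)-K_{y_1}(t_0)K_{y_2}(t-t_0)$ and, using the trivial bound $\sup_{|t|\le X}|F_W(1+it)|\ll\prod_{p\le X}(1+\tfrac{|f(p)|-1}{p})$ from Lemma~\ref{le:Shiu} together with the rapid decay of $\widetilde W$ in~\eqref{eq:FWsdecomp}, the truncation error $\mathcal E_{\mathrm{Per}}$ absorbs the $\tfrac1{y_1}\prod(\cdots)$ and $\tfrac{(1+\log(\log Q/\log P'))^2}{T_0^{10}}\prod(\cdots)$ terms. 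The role of the twist is that $\mathcal K(t_0)=0$ (since $K_{y_2}(0)=1$), and in fact $|\mathcal K(t)|\ll|t-t_0|y_2/X$ for all $t$ (from $|u^{i(t-t_0)}-x^{i(t-t_0)}|\le|t-t_0|y_2/x$ for $u\in(x,x+y_2]$); hence the near-diagonal range $|t-t_0|\le T_0$ contributes $\ll\tfrac{y_2T_0^2}{X}\prod_{p\le X}(1+\tfrac{|f(p)|-1}{p})$, the first displayed error, with $\mathcal U$ playing no role there.

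On the range $|t-t_0|\ge T_0$ I would insert the Buchstab/Ramar\'e decomposition~\eqref{eq:FWsdecomp} of $F_W$. The contribution of $F_\square$ (see~\eqref{eq:Fsquaredef}) is put into $A(\cdots)$ and estimated, after Lemma~\ref{le:Parseval}, via Lemmas~\ref{le:contMVT2}--\ref{le:Henriot} exactly as in Lemma~\ref{lem:decomp}, landing in the $\tfrac1{y_1}\prod(1+\tfrac{|f(p)|^2-1}{p})$ term of~\eqref{eq:A2bound}. For each triple $(A,B,C)$ occurring in~\eqref{eq:FWsdecomp} (so $P/2<A\le Q$, $P'/2<B\le Q'$, $ABC\asymp X$) I split the fattened set $\mathcal U':=\mathcal U+[-X^\varepsilon,X^\varepsilon]$ (the fattening absorbing the $X^{iu}$-convolution, and $\mathcal U'$ being covered by $\asymp|\mathcal U'|$ unit intervals) as $\mathcal U'=\mathcal U'_{\mathrm{sm}}\cup\mathcal U'_{\mathrm{lg}}$ with $\mathcal U'_{\mathrm{sm}}:=\{t:|Q_{2,B}(1+it)|\le B^{-\beta}\}$ and $\beta\asymp\nu_1^2$. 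On $\mathcal U'_{\mathrm{sm}}$ one uses $|Q_{2,B}|\le B^{-\beta}$ pointwise — producing the gain $B^{-2\beta}\le(P'/2)^{-2\beta}=X^{-\Theta(\nu_1^3)}$ since $B>P'/2=X^{\nu_1}/2$ — bounds $|R_C|$ trivially, and applies the Hal\'asz--Montgomery inequality (Lemma~\ref{le:Hallargevalint}) to the remaining long polynomial $Q_{1,A}R_C$ (length $\asymp AC\asymp X/B$, so $\gtrsim X/Q'$) on a one-spaced set covering $\mathcal U'$; after Lemma~\ref{le:Parseval} with its $\tfrac{X/y_1}{T}$ normalisation this produces precisely the last term of~\eqref{eq:A2bound} (the $Q'$ being $\gtrsim B$, and the $\sqrt T$ and $|(\mathcal U+[-X^\varepsilon,X^\varepsilon])\cap[-T,T]|$ coming from Hal\'asz--Montgomery), and these $\mathcal U'_{\mathrm{sm}}$-pieces are kept inside $A(\cdots)$.

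On $\mathcal U'_{\mathrm{lg}}$, where the short prime polynomial $Q_{2,B}$ is large, high moments of $Q_{2,B}$ (as in the proof of Lemma~\ref{lem:primeshalasz}) show that $\mathcal U'_{\mathrm{lg}}$ is covered by only $X^{O(\nu_1^2)}$ unit intervals; then applying Cauchy--Schwarz and the sieved Hal\'asz--Montgomery inequality (Lemma~\ref{lem:primeshalasz}, whose appeal to Vinogradov-type bounds is essential because the prime polynomials here are as short as $X^{\nu_1}$) one shows $\sum|Q_{1,A}(1+it)||Q_{2,B}(1+it)|$ over this set is dominated by its value at essentially a single point, $\ll(\log A\log B)^{-1}$; bounding $|R_C|$ there by its supremum over $\{|t|\le X/2,\ |t-t_0|\ge T_0\}$ and summing over $A,B$ (which costs a factor $(1+\log(\log Q/\log P'))^2$) yields the $O\big((1+\log(\log Q/\log P'))^2\sup|R_C(1+it)|\big)$ error of the proposition.

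Finally, $A(x,y_1,y_2,t_0,T_0,\mathcal U)$ is defined to consist of the integral of $F_W(1+it)\mathcal K(t)$ over $([-X,X]\setminus\mathcal U)\cap\{|t-t_0|\ge T_0\}$ together with the $\mathcal U'_{\mathrm{sm}}$- and $F_\square$-pieces isolated above; its mean square over $x\in[X,2X]$ is controlled by Lemma~\ref{le:Parseval}, and replacing $F_W(1+it)$ by $\sum_{X/100<n\le100X}\tfrac{d_n}{n^{1+it}}$ on the $([-X,X]\setminus\mathcal U)$-piece — legitimate on $X/4<n\le4X$ by Assumption~\ref{as:ambmcp}, the mismatch absorbed by Lemma~\ref{le:MVTwithHenriot} into the $\tfrac1{y_1}\prod(1+\tfrac{|f(p)|^2-1}{p})$ term — turns that piece into $\mathcal I(X,y_1)$, completing~\eqref{eq:A2bound}. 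I expect the main obstacle to be the calibration in the third step: choosing $\beta$ together with the moment order used for $\mathcal U'_{\mathrm{lg}}$ and the Vinogradov parameter $\eta$ in Lemma~\ref{lem:primeshalasz} so that the $\mathcal U'_{\mathrm{sm}}$-gain $X^{-\Theta(\nu_1^3)}$ and the smallness of $\mathcal U'_{\mathrm{lg}}$ hold simultaneously — this tension is exactly what pins down the exponent $\nu_1^3/160$ and the power $(\log X)^5$ — all while keeping every exponent of $X$ independent of the auxiliary $\varepsilon'$, which is why Ford's zeta bound rather than a sharper but less uniform input must be used inside Lemma~\ref{lem:primeshalasz}.
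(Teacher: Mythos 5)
Your overall architecture matches the paper's: Perron plus the cancellation of the $|t-t_0|\le T_0$ ranges of the two averages (giving the $y_2T_0^2/X$ error), Parseval and Lemma~\ref{le:MVTwithHenriot} for very large $|t|$, the part of $\{|t-t_0|\ge T_0\}$ away from the fattened $\mathcal U$ kept whole inside $A$ and converted into $\mathcal I(X,y_1)$, and on the fattened $\mathcal U$ the decomposition~\eqref{eq:FWsdecomp} with $F_\square$ handled as in Lemma~\ref{lem:decomp}, the small-$Q_{2,B}$ part fed through Lemma~\ref{le:Hallargevalint} applied to the long product $Q_{1,A}R_C$ of length $\gtrsim X/Q'$ (this is exactly how the third term of~\eqref{eq:A2bound} arises). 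The genuine gap is in your treatment of $\mathcal U'_{\mathrm{lg}}$. A moment computation with threshold $B^{-\beta}$, $\beta\asymp\nu_1^2$, $B\approx X^{\nu_1}$, indeed gives only $|\mathcal U'_{\mathrm{lg}}|\ll X^{\Theta(\nu_1^2)}$ (the threshold is so mild that $2k$ moments with $B^k\approx T$ save just $B^{2k\beta}=X^{\Theta(\nu_1^2)}$), and this cardinality is far too large to then feed into Lemma~\ref{lem:primeshalasz}: for a prime polynomial of length $N\approx X^{\nu_1}$ and a one-spaced set of size $R$, the off-diagonal term of that lemma is $R\,T^{\frac92\eta^{3/2}}N^{-\eta(1-\varepsilon')}$ times the diagonal, and to make it small one needs $\eta\nu_1\ge\frac92\eta^{3/2}+\tfrac{\log R}{\log X}$; optimizing over $\eta$ (at $\eta\asymp\nu_1^2$) the left side minus $\frac92\eta^{3/2}$ is at most about $0.007\,\nu_1^3$, so the lemma can only dominate sets of size $X^{O(\nu_1^3)}$, never $X^{\Theta(\nu_1^2)}$. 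Hence "Cauchy--Schwarz plus Lemma~\ref{lem:primeshalasz} shows domination by a single point" fails as stated, for every admissible $\eta$; and a weaker conclusion does not help, since in the application any extra factor $X^{c\nu_1^2}$ multiplying $\sup|R_C|$ destroys the $(\log X)^{-\rho'/12}$ saving.

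The paper closes this by \emph{not} pre-bounding the large-value set by moments: it bootstraps the cardinality out of Lemma~\ref{lem:primeshalasz} itself. Writing $\mathcal W^\ast$ for a one-spaced subset of the large-value set, one has $X^{-\nu_1^3/160}|\mathcal W^\ast|\le\sum_B\log B\sum_{t\in\mathcal W^\ast}|Q_{2,B}(1+it)|^2$, and applying the lemma with $\eta=\nu_1^2/45$ the off-diagonal coefficient of $|\mathcal W^\ast|$ is $\le X^{-0.007\nu_1^3+o(\nu_1^3)}$, strictly smaller than $X^{-\nu_1^3/160}$, so it can be absorbed, giving $|\mathcal W^\ast|\ll(1+\log\frac{\log Q}{\log P'})X^{\nu_1^3/160}$; only with this $X^{O(\nu_1^3)}$ bound do the two applications of the lemma (to the $Q_{2,B}$ and to the $Q_{1,A}$) yield $\sum_{A,B}\int|Q_{1,A}Q_{2,B}|\ll(1+\log\frac{\log Q}{\log P'})^2$, which is what the final error term requires. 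Two smaller bookkeeping points: the $T_0^{-10}$ term does not come from Perron truncation but from the region where the Mellin shift forces $|u|>T_0/2$ (so that $t+u$ is within $T_0/2$ of $t_0$ and the supremum of $|R_C|$ over $|t-t_0|\ge T_0$ cannot be invoked; one uses the decay of $\widetilde W$ there instead), and your bound of $|R_C|$ on $\mathcal U'_{\mathrm{lg}}$ must be taken at the shifted point $t+u$, which is exactly why that region has to be excised first.
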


\begin{proof} 
By Perron's formula (see e.g.~\cite[Lemma 1.1]{Harman06})
\[
\begin{split}
\frac{1}{y_1} \sum_{x < n \leq x+y_1} a_n &= \frac{1}{y_1} \cdot \frac{1}{2\pi i} \int_{-X^{10}}^{X^{10}} F_W(1+it) \cdot \frac{(x + y_1)^{1+it} - x^{1+it}}{1+it} dt \\
& \qquad \qquad+  O\left(\frac{1}{y_1}\min\left\{1, \frac{1}{X^{5} \min\{\Vert x \Vert, \Vert x+ y_1\Vert \}}\right\}\right).
\end{split}
\]
The square mean over $x \in [X, 2X]$ of the error term is very small, and thus the error term can be included to $A(x, y_1, y_2, t_0, T_0, \mathcal{U})$.

We split the integration range $[-X^{10}, X^{10}]$ into three parts:
\[
\begin{split}
\mathcal{V}_1 &=
\begin{cases}
\{t \colon |t-t_0| \leq T_0 \} &\text{if $|t_0| \leq X/3$} \\
\emptyset & \text{otherwise.}
\end{cases} \\
\mathcal{V}_2 &= [-X/8, X/8] \setminus \mathcal{V}_1 \\
\mathcal{V}_3 &= \{X/8 \leq |t| \leq X^{10}\} \setminus \mathcal{V}_1.
\end{split}
\]
Writing, for $\mathcal{T} \subset \mathbb{R}$,
\[
I_{\mathcal{T}}(x) = \frac{1}{y_1} \cdot \frac{1}{2\pi i} \int_{\mathcal{T}} F_W(1+it) \cdot \frac{(x + y_1)^{1+it} - x^{1+it}}{1+it} dt,
\]
we thus need to prove the claim with $\frac{1}{y_1} \sum_{x < n \leq x+y_1} a_n$ replaced by
\[
I_{\mathcal{V}_1}(x) + I_{\mathcal{V}_2}(x) + I_{\mathcal{V}_3}(x).
\]

By Lemmas~\ref{le:Parseval} and~\ref{le:MVTwithHenriot},
\[
\begin{split}
&\frac{1}{X} \int_X^{2X} \left|I_{\mathcal{V}_3}(x)\right|^2 dx \ll \max_{X/8 \leq T \leq X^{10}} \frac{X/y_1}{T} \int_{-T}^T |F_W(1+it)|^2 dt \\
&\ll \max_{X/8 \leq T \leq X^{10}} \frac{X/y_1}{T} \Big(\frac{T}{X} \prod_{p \leq X} \Big(1+\frac{|f(p)|^2-1}{p}\Big) + \prod_{p\leq X} \Big(1+\frac{2|f(p)|-2}{p}\Big)\Big) \\
& \ll \frac{1}{y_1} \prod_{p \leq X} \Big(1+\frac{|f(p)|^2-1}{p}\Big),
\end{split}
\]
so that $I_{\mathcal{V}_3}(x)$ can be included into $A(x, y_1, y_2, t_0, T_0, \mathcal{U})$.

In case $\mathcal{V}_1$ is empty, one has $|t_0| > X/3$ and so
\[
\frac{1}{y_1} \int_x^{x+y_1} u^{it_0}du = \frac{1}{y_1} \frac{(x+y_1)^{1+it_0} - x^{1+it_0}}{1+it_0} \ll \frac{1}{y_1}.
\]
Hence in this case the second term on the left hand side of~\eqref{eq:Sxy1y2diff} leads by Shiu's bound (Lemma~\ref{le:Shiu}) to an acceptable contribution
\[
\ll \frac{1}{y_1} \prod_{p \leq X} \Big(1+\frac{|f(p)|-1}{p}\Big).
\]
Therefore in case $\mathcal{V}_1 = \emptyset$, it suffices to show that $I_{\mathcal{V}_2}(x)$ can be written in the form on the right hand side of~\eqref{eq:Sxy1y2diff}.

When $\mathcal{V}_1$ is non-empty, we borrow arguments from the on-going work \cite{GHMR}. In this case
\[
\begin{split}
I_{\mathcal{V}_1}(x) &= \frac{1}{y_1} \cdot \frac{1}{2\pi i} \int_{-T_0}^{T_0} F_W(1+it_0+it) \cdot \frac{(x + y_1)^{1+it_0+it}
  - x^{1+it_0+it}}{1+it_0 + it} dt \\
  &=\frac{1}{y_1} \cdot \frac{1}{2\pi i} \int_{-T_0}^{T_0} F_W(1+it_0+it) \cdot \int_{x}^{x+y_1} u^{it_0 + it} du \, dt.
\end{split}
\]
For $|t| \leq T_0$ and $u \in [x, x+y_1]$, we have $u^{it} = x^{it} + O(y_1 T_0 /x)$, so that, using also Lemma~\ref{le:Shiu},
\[
I_{\mathcal{V}_1}(x) = \frac{1}{y_1} \int_x^{x+y_1} u^{it_0} du \cdot \frac{1}{2\pi i} \int_{- T_0}^{T_0} F_W(1+it_0+it) x^{it} dt + O\Big(\frac{y_1 T_0^2}{X} \prod_{p \leq X}\Big(1+\frac{|f(p)|-1}{p}\Big)\Big).
\]

On the other hand, again by Perron's formula, the sum $\frac{1}{y_2} \sum_{x < n \leq x+y_2} a_n n^{-it_0}$ in the second term on the left hand side of~\eqref{eq:Sxy1y2diff} equals
\[
\frac{1}{y_2} \cdot \frac{1}{2\pi i} \int_{-X^{10}}^{X^{10}} F_W(1+it_0 +it) \cdot \frac{(x + y_2)^{1+it} - x^{1+it}}{1+it} dt
\]
apart from an error term which has acceptable square mean over $x \in [X, 2X]$.
Arguing as above, the region $|t| \geq X/8$ leads to an acceptable contribution, whereas the region $|t| \leq T_0$ contributes a main term
\[
\frac{1}{2\pi i} \int_{- T_0}^{T_0} F_W(1+it_0+it) x^{it} dt + O\Big(\frac{y_2 T_0^2}{x} \prod_{p \leq x}\Big(1+\frac{|f(p)|-1}{p}\Big)\Big).
\]
Since this main term corresponds to the main term from $I_{\mathcal{V}_1}(x)$, it remains to study $I_{\mathcal{V}_2}(x)$ and
\[
\frac{1}{y_2} \cdot \frac{1}{2\pi i} \int_{T_0 \leq |t| \leq X/8} F_W(1+it_0 +it) \cdot \frac{(x + y_2)^{1+it} - x^{1+it}}{1+it} dt.
\]
These can be treated similarly and we concentrate on $I_{\mathcal{V}_2}(x)$.

Write
\[
\mathcal{U}_{2} = \mathcal{V}_2 \cap \Big(\mathcal{U} + [-X^{\varepsilon/2}, X^{\varepsilon/2}] \Big) \quad \text{and} \quad \mathcal{T}_2 = \mathcal{V}_2 \setminus \mathcal{U}_{2}.
\]
Let us first consider $I_{\mathcal{T}_2}(x)$ which will be included to $A(x, y_1, y_2, t_0, T_0, \mathcal{U})$ and leads to a term of the type $\mathcal{I}(X,y_1)$ there. By Lemma~\ref{le:Parseval}
\[
\begin{split}
&\frac{1}{X} \int_{X}^{2X} \Big|I_{\mathcal{T}_2}(x)\Big|^2 dx \ll \max_{X/y_1 \leq T \leq X/8} \frac{X/y_1}{T} \int_{\mathcal{T}_2 \cap [-T, T]} |F_W(1 + it)|^2 dt.
\end{split}
\]
Now notice that, by Mellin inversion,
$$
F_W(1 + it) = \frac{1}{2\pi} \int_{-\infty}^{\infty} \sum_{\substack{X/ 100 < n \leq 100 X}} \frac{d_n}{n^{1 + it + iu}} \cdot X^{iu} \widetilde{W}(i u) du .
$$
Due to the rapid decay of $\widetilde{W}(iu)$ we can truncate at $|u| \leq X^{\varepsilon/3}$. Then applying Cauchy-Schwarz, we get that
\begin{align*}
\int_{\mathcal{T}_2 \cap [-T, T]} |F_W(1 + it)|^2 dt &\ll \int_{-X^{\varepsilon / 3}}^{X^{\varepsilon / 3}} \int_{\mathcal{T}_2 \cap [-T, T] + u} \Big | \sum_{\substack{X /100 < n \leq 100 X}} \frac{d_n}{n^{1 + it}} \Big |^2 dt  \cdot |\widetilde{W}(i u)| du  + O\left(\frac{1}{X^{10}}\right)\\
&\ll \int_{[-2T, 2T] \setminus \mathcal{U}} \Big | \sum_{\substack{X /100 < n \leq 100 X}} \frac{d_n}{n^{1 + it}} \Big |^2 dt + O\left(\frac{1}{X^{10}}\right)
\end{align*}
since, by the definition of $\mathcal{T}_2$, we have $t + u \not \in \mathcal{U}$ whenever $t\in \mathcal{T}_2$ and $|u| \leq X^{\varepsilon / 3}$.

Now we are left with studying $I_{\mathcal{U}_2}$.
Recalling~\eqref{eq:FWsdecomp}, it equals, up to an error $O(X^{-10})$,
\begin{equation}
\label{eq:tuint}
\begin{split}
&\frac{1}{4\pi^2 i \cdot y_j} \sum_{A, B, C} \int_{-X^{\varepsilon/2}}^{X^{\varepsilon/2}} \widetilde{W}(i u) X^{iu} \int_{\mathcal{U}_2}  Q_{1,A}(1 + it + i u) Q_{2,B}(1 + it + iu) R_{C}(1 + it + iu)  du \\
&\qquad  \qquad \cdot \frac{(x + y_1)^{1+it} - x^{1+it}}{1+it} dt \  + \frac{1}{y_1} \cdot \frac{1}{2\pi i} \int_{\mathcal{U}_2} F_\square(1 + it) \cdot \frac{(x + y_1)^{1+it} - x^{1+it}}{1+it} dt,
\end{split}
\end{equation}
where $F_\square(s)$ is as in \eqref{eq:Fsquaredef} and $A, B, C$ traverse powers of two such that $X/16 < ABC \leq 4X$ and $P/2 < A \leq Q$ and $P'/2 < B \leq Q'$. Let us first deal with the second term. By Lemma~\ref{le:Parseval}
\[
\begin{split}
&\frac{1}{X}\int_X^{2X} \Big|\frac{1}{y_1} \cdot \frac{1}{2\pi i} \int_{\mathcal{U}_2} F_\square(1+it) \cdot \frac{(x + y_1)^{1+it} - x^{1+it}}{1+it} dt \Big|^2 dx \\
&\ll \max_{X/y_1 \leq T \leq X} \frac{X/y_1}{T} \int_{-T}^{T} |F_\square(1+it)|^2 dt.
\end{split}
\]
Arguing as in the end of proof of Lemma~\ref{lem:decomp}, this is
\[
\ll \frac{1}{y_1 P'} \prod_{p \leq X} \Big(1+\frac{|f(p)|^2-1}{p}\Big)  + \frac{1}{P'} \prod_{p \leq X} \Big(1+\frac{2|f(p)|-2}{p}\Big) \ll \frac{1}{X^{\nu_1}}.
\] 
Hence the term involving $F_\square(s)$ can be included to $A(x, y_1, y_2, t_0, T_0)$

Now we can concentrate on the first term in~\eqref{eq:tuint}. In the region where $|t+u-t_0| < T_0/2$, we necessarily have $|u| > T_0/2$, and in this region we can use fast decay of $\widetilde{W}$, trivial estimates for $Q_{1,A}(s)$ and $Q_{1,B}(s)$ and Lemma~\ref{le:Shiu} for $R_C(s)$. This way we obtain that the contribution of the region $|t+u-t_0| < T_0/2$ is
\[
\ll \frac{\Big ( 1 + \log \frac{\log Q}{\log P'}\Big )^2}{T_0^{10}} \prod_{p \leq X} \Big(1 + \frac{|f(p)|-1}{p}\Big).
\]

Hence we can restrict our attention to
\[
t+u \in \mathcal{U}^\ast := \{t \colon |t-t_0| > T_0/2\} \cap (\mathcal{U} + [-2 X^{\eps/2}, 2X^{\eps/2}])
\]
We write
\[
\mathcal{T}^\ast = \{t \in \mathcal{U}^\ast \colon \max_B |Q_{2,B}(1+it)| \leq X^{-\nu_1^3/320} \},
\]
where $B$ runs through powers of $2$ in $(P'/2, Q']$.
We split the integral in~\eqref{eq:tuint} according to whether $t + u \in \mathcal{T}^\ast$ or $t+u \in \mathcal{U}^\ast \setminus \mathcal{T}^\ast$. In the first case we can use the point-wise bound in the definition of $\mathcal{T}^\ast$ together with a Dirichlet polynomial large value result (Lemma~\ref{le:Hallargevalint}). The second case is very rare and in that case we use a point-wise bound for $R_C(s)$ and another large value result (Lemma~\ref{lem:primeshalasz}) which is applicable thanks to the rareness of large values of $Q_{2,B}(s)$.

For $t+u \in \mathcal{T}^\ast$, Cauchy-Schwarz and Lemma~\ref{le:Parseval} give
\[
\begin{split}
&\frac{1}{X}\int_{X}^{2X} \Big|\frac{1}{y_j} \sum_{A, B, C} \int_{-X^{\varepsilon/2}}^{X^{\varepsilon/2}} \widetilde{W}(i u) X^{iu} \int_{\substack{t \in \mathcal{U}_2 \\ t+u \in \mathcal{T}^\ast}} Q_{1, A}(1 + it + i u) Q_{2, B}(1 + it + iu) R_{C}(1 + it + iu)  du \\
&\qquad  \qquad \cdot \frac{(x + y_1)^{1+it} - x^{1+it}}{1+it} dt \Big|^2 dx \\
&\ll \Big(1 + \log \frac{Q}{P'}\Big)^4 \max_{\substack{A, B, C \\ X/y_1 \leq T \leq X}} \frac{X/y_1}{T} \int_{\mathcal{T}^\ast \cap [-T, T]} \Big|Q_{1,A}(1 + it) Q_{2,B}(1 + it) R_{C}(1 + it)\Big|^2 dt \\
&\ll (\log X)^4 X^{-\nu_1^3/160} \max_{\substack{A, C \\ X/y_1 \leq T \leq X}} \frac{X/y_1}{T} \int_{\mathcal{T}^\ast \cap [-T, T]} |Q_{1, A}(1+it) R_{C}(1+it)|^2 dt.
\end{split}
\]
Recalling that $\mathcal{T}^\ast$ is contained in $\mathcal{U}^\ast$, we get, discretising and applying Lemma~\ref{le:Hallargevalint}
\[
\int_{\mathcal{T}^\ast \cap [-T, T]} |Q_{1, A}(1+it) R_{C}(1+it)|^2 dt \ll \Big(AC + T^{1/2} \cdot |(\mathcal{U} + [-X^\varepsilon, X^\varepsilon]) \cap [-T, T]| \Big) \frac{\log X}{AC}.
\]
Since $AC \geq X/(16Q')$, those $t+u \in \mathcal{T}^\ast$ give an acceptable contribution.

Since
\begin{equation}
\label{eq:(x+y)it-xit}
\Big|\frac{(x+y)^{1+it}-x^{1+it}}{1+it}\Big| \ll \min \Big\{ \frac{x}{1+|t|}, y \Big\},
\end{equation}
the contribution of $t+u \in \mathcal{U}^\ast \setminus \mathcal{T}^\ast$ to the first term in~(\ref{eq:tuint}) is
\[
\begin{split}
&\ll \max_{X/y_1 \leq T \leq X/2} \frac{x/y_1}{T} \sum_{A, B, C} \int_{(\mathcal{U}^\ast \setminus \mathcal{T}^\ast) \cap [-T, T]} \Big|Q_{1,A}(1 + it) Q_{2,B}(1 + it) R_{C}(1 + it)\Big| dt \\
&\ll (\sup_{\substack{|t| \leq X/2, |t-t_0| > T_0/2 \\ X/(16Q Q') < C \leq 16X/(P P')}} |R_C(1+it)| ) \cdot  \sum_{A, B} \int_{\mathcal{U}^\ast \setminus \mathcal{T}^\ast} |Q_{1,A}(1 + it) Q_{2,B}(1 + it)| dt.
\end{split}
\]
Here, for certain one-spaced $\mathcal{W}^\ast \subset \mathcal{U}^\ast \setminus \mathcal{T}^\ast$,
\[
\begin{split}
&\sum_{A, B} \int_{\mathcal{U}^\ast \setminus \mathcal{T}^\ast} |Q_{1,A}(1 + it) Q_{2,B}(1 + it)| dt \ll \sum_{A, B} \Big(\sum_{t \in \mathcal{W}^\ast} |Q_{1,A}(1 + it)|^2 \sum_{t \in \mathcal{W}^\ast} |Q_{2,B}(1 + it)|^2 \Big)^{1/2} \\
&\ll \Big ( 1 + \log \frac{\log Q}{\log P'} \Big ) \cdot \Big(\sum_A \log A \sum_{t \in \mathcal{W}^\ast} |Q_{1,A}(1 + it)|^2 \sum_B \log B \sum_{t \in \mathcal{W}^\ast} |Q_{2,B}(1 + it)|^2\Big)^{1/2}.
\end{split}
\]
Applying Lemma~\ref{lem:primeshalasz}, we see that, for any $\eta \in (0, 1/2)$,
\[
\begin{split}
X^{-\nu_1^3/160} |\mathcal{W}^\ast| &\leq \sum_B \log B \sum_{t \in \mathcal{W}^\ast} |Q_{2,B}(1+it)|^2 \ll \sum_B \Big(\frac{1}{\log B} + |\mathcal{W}^\ast| X^{\frac{9}{2} \eta^{3/2}} (\log X)^2 B^{-\frac{99}{100}\eta}\Big) \\
&\ll 1 + \log \frac{\log Q}{\log P'}  + |\mathcal{W}^\ast| X^{\frac{9}{2} \eta^{3/2}} (\log X)^3 \cdot P'^{-\frac{99}{100} \eta} 
\end{split}
\]
Taking $\eta = \nu_1^2/45$, the first two terms must dominate. In particular 
\[
|\mathcal{W}^\ast| \ll \left(1 + \log \frac{\log Q}{\log P'}\right) X^{\nu_1^3/160}.
\]
As a result with this choice of $\eta$  
\[
  \sum_{B} \log B \sum_{t \in \mathcal{W}^{\ast}} |Q_{2,B}(1 + it)|^2 \ll 1 + \log \frac{\log Q}{\log P'} \]
and
\[
\begin{split}
\sum_A \log A \sum_{t \in \mathcal{W}^\ast} |Q_{1,A}(1+it)|^2 &\ll 1 + \log \frac{\log Q}{\log P'}  + |\mathcal{W}^\ast| X^{\frac{9}{2} \eta^{3/2}} (\log X)^3 \cdot P^{-\frac{99}{100} \eta} \\
& \ll 1 + \log \frac{\log Q}{\log P'}.
\end{split}
\]

Hence we obtain
\[
\sum_{A, B} \int_{\mathcal{U}^\ast \setminus \mathcal{T}^\ast} |Q_{A}(1 + it) Q_{B}(1 + it)| dt  \ll \Big ( 1 + \log \frac{\log Q}{\log P'} \Big )^2
\]
and so the total contribution of $t+u \in \mathcal{U}^\ast \setminus \mathcal{T}^\ast$ is
\[
\ll \Big ( 1 + \log \frac{\log Q}{\log P'} \Big )^2 \cdot \sup_{\substack{|t| \leq X/2, |t-t_0| > T_0/2 \\ X/(16(Q Q')) < C \leq 16X/(PP')}} |R_C(1+it)|.
\]
\end{proof}

\section{Theorem inside a set $\mathcal{S}$}
\label{sec:RedSM*}
We will deduce Theorem~\ref{th:MT} from a variant where $n$ is restricted to $\mathcal{S} \subset \mathbb{N}$ for which $\mathcal{S} \cap (X, 2X]$ is dense and which contains only those $n$ which have prime divisors from certain convenient ranges.

To define $\mathcal{S}$, we need to introduce some notation following~\cite[Section 2]{MainPaper}.
Let $1/6 > \nu_2 > \nu_1 > (\log X)^{-1/10}$. Let $\eta \in (0, 1/6-\nu_2/3)$. Consider a sequence of increasing intervals $(P_j, Q_j]$ with $j = 1, \dotsc, J + 2$ with $J \geq 1$, such that
\begin{itemize}
\item $P_{J + 1} = X^{\nu_1}$, $Q_{J+1} = P_{J+2} = X^{\sqrt{\nu_1 \nu_2}}$ and $Q_{J + 2} = X^{\nu_2}$, and $P_1 \geq 3/2$.
\item $P_{J}$ is large enough and $Q_{J}$ is small enough, precisely
\begin{equation}
\label{eq:PJQJsize}
P_J \geq (\log X)^{2/\eta} \quad \text{and either $J=1$ or $Q_J \leq \exp((\log X)^{1/2})$};
\end{equation}
\item The intervals are not too far from each other, precisely
\begin{equation}
\label{eq:PjQjnottoofar}
\frac{\log\log Q_{j}}{\log P_{j-1}-1} \leq \frac{\eta}{4j^2} \quad \text{for $j = 2, \dotsc, J$.}
\end{equation}
\item The intervals are not too close to each other, precisely
\begin{equation}
\label{eq:PjQjnottooclose}
\frac{\eta}{j^2} \log P_{j} \geq 16 \log Q_{j-1} + 16 \log j \quad \text{for $j = 2, \dotsc, J$.}
\end{equation}
\end{itemize}
Note that~\eqref{eq:PjQjnottooclose} implies that necessarily $J \ll \log \log X / \log \log \log X$.

For example, given $(\log X)^{-1/10} < \nu_1 < \nu_2 < 1 / 6$, $\eta \in (0, 1/6-\nu_2/3)$ and $\beta_0 \in (0, 1]$,
choose first any $Q_1$ and $P_1$ with $X^{1/6} \geq Q_1 \geq P_1 \geq (\log Q_1)^{40/\eta}$ large enough in terms of $\eta$ and $\beta_0$. Then if $Q_1 \geq \exp((\log X)^{1/2})$, take $J = 1$ and otherwise choose $P_j$ and $Q_j$ for $j= 2, \dotsc, J$ to be
\begin{equation}
\label{eq:PjQjchoice}
P_j = \exp(j^{8j/\beta_0} (\log Q_1)^{j-1} \log P_1) \quad \text{and} \quad
Q_j = \exp(j^{(8j + 6)/\beta_0} (\log Q_1)^j)
\end{equation}
with $J$ the largest index $j$ such that $Q_j \leq \exp((\log X)^{1/2}).$

Let $\mathcal{S} = \mathcal{S}_X$ be the set of integers $\leq 100X$ having at least one prime factor in each of the intervals $(P_j, Q_j]$ for $j \leq J+2$.

In order to apply results on multiplicative functions to sums with the additional restriction $n \in \mathcal{S}$, we use the following immediate consequence of the inclusion-exclusion principle (which is also~\cite[Lemma 5]{MainPaper}).
\begin{lemma}
\label{le:Sinclexcl}
For $\mathcal{J} \subseteq \{1, \dotsc, {J+2}\}$, let $g_{\mathcal{J}}$ be the completely multiplicative function
\[
g_{\mathcal{J}}(p^j) =
\begin{cases}
1 & \text{if $p \not \in \bigcup_{j \in \mathcal{J}} (P_j, Q_j]$} \\
0 & \text{otherwise}.
\end{cases}
\]
Then, for any complex numbers $a_n$,
\[
\sum_{\substack{X <  n \leq 2X \\n \in \mathcal{S}}} a_n = \sum_{\substack{X < n \leq 2 X}} a_n \prod_{j=1}^{J+2} (1-g_{\{j\}}(n)) = \sum_{\mathcal{J} \subseteq \{1, \dotsc, {J+2}\}} (-1)^{\# \mathcal{J}} \sum_{X  < n \leq 2X} g_{\mathcal{J}}(n) a_n.
\]
\end{lemma}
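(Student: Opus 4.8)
The plan is to reduce the statement to the pointwise identity
$\mathbf{1}_{n\in\mathcal{S}} = \prod_{j=1}^{J+2}\bigl(1 - g_{\{j\}}(n)\bigr)$ valid for all $n$, and then expand. First I would unwind the definition of $g_{\{j\}}$. Since $g_{\{j\}}$ is the completely multiplicative function with $g_{\{j\}}(p) = 1$ when $p \notin (P_j, Q_j]$ and $g_{\{j\}}(p) = 0$ otherwise, we have $g_{\{j\}}(n) = \prod_{p \mid n} g_{\{j\}}(p)^{v_p(n)}$, which equals $1$ precisely when $n$ has no prime factor in $(P_j, Q_j]$ and equals $0$ otherwise. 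Hence $1 - g_{\{j\}}(n) = \mathbf{1}_{n \text{ has a prime factor in } (P_j, Q_j]}$. By the definition of $\mathcal{S}$ as the set of integers $\leq 100X$ having at least one prime factor in each interval $(P_j, Q_j]$ with $j \leq J+2$, and since every $n \in (X, 2X]$ is automatically $\leq 100X$, it follows that $\mathbf{1}_{n\in\mathcal{S}} = \prod_{j=1}^{J+2}(1 - g_{\{j\}}(n))$ for all $n \in (X, 2X]$. Multiplying by $a_n$ and summing over $X < n \leq 2X$ gives the first claimed equality.

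For the second equality I would simply expand the product over $j$, obtaining
$\prod_{j=1}^{J+2}(1 - g_{\{j\}}(n)) = \sum_{\mathcal{J} \subseteq \{1,\dots,J+2\}} (-1)^{\#\mathcal{J}} \prod_{j \in \mathcal{J}} g_{\{j\}}(n)$,
where the $\mathcal{J} = \emptyset$ term is the empty product $1$. It then remains to identify $\prod_{j \in \mathcal{J}} g_{\{j\}} = g_{\mathcal{J}}$. Both sides are completely multiplicative and $\{0,1\}$-valued, so it suffices to compare them on primes: $\prod_{j\in\mathcal{J}} g_{\{j\}}(p) = 1$ if and only if $p \notin (P_j, Q_j]$ for every $j \in \mathcal{J}$, i.e.\ if and only if $p \notin \bigcup_{j\in\mathcal{J}}(P_j, Q_j]$, which is exactly the condition defining $g_{\mathcal{J}}(p) = 1$; otherwise both products vanish. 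Substituting this into the expansion, multiplying by $a_n$, and interchanging the two finite summations (over $n \in (X, 2X]$ and over subsets $\mathcal{J}$) yields $\sum_{X < n \leq 2X,\ n\in\mathcal{S}} a_n = \sum_{\mathcal{J}} (-1)^{\#\mathcal{J}} \sum_{X < n \leq 2X} g_{\mathcal{J}}(n) a_n$, as desired.

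There is no genuine obstacle here; the argument is elementary inclusion--exclusion. The one point worth spelling out in a single line is the identity $\prod_{j\in\mathcal{J}} g_{\{j\}} = g_{\mathcal{J}}$, which holds by complete multiplicativity since it holds on primes regardless of whether distinct intervals $(P_j, Q_j]$ happen to overlap --- only their union enters the definition of $g_{\mathcal{J}}$. I would also note in passing that the interchange of summations is justified trivially since both sums are finite.
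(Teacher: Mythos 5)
Your proof is correct and follows exactly the route the paper intends: the lemma is stated there as an immediate consequence of inclusion--exclusion (citing Lemma 5 of the earlier paper), and your pointwise identity $\mathbf{1}_{n\in\mathcal{S}}=\prod_{j=1}^{J+2}(1-g_{\{j\}}(n))$ for $X<n\leq 2X$, followed by expanding the product and using $\prod_{j\in\mathcal{J}}g_{\{j\}}=g_{\mathcal{J}}$ on primes, is precisely that argument spelled out. No gaps.
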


We shall prove the following variant of~\cite[Theorem 3]{MainPaper}.
\begin{theorem}
\label{th:ThminS}
Let $X \geq 2$ and let $\mathcal{S}$ be as above with $\eta \in (0, 1 / 6 - \nu_2/3)$. Let $2 \leq h_0 \leq X^{1/2}$ and assume that $(P_1, Q_1] \subset (3/2,h_0]$. Write $h_1 = H(f; X)$. Let $f: \mathbb{N} \to \mathbb{U}$ be a multiplicative function. 
\begin{enumerate}[(i)]
\item
Assume that $f$ is $(\alpha, X^\theta)$-non-vanishing for some $\alpha, \theta \in (0, 1]$, and let $0 < \rho < \rho_\alpha$. Then, for $\delta \in (0, 1/1000)$,
\[
\begin{split}
&\Big | \frac{1}{h_0 h_1} \sum_{\substack{x < n \leq x + h_0 h_1 \\ n \in \mathcal{S}}} f(n) - \frac{1}{h_0 h_1} \int_x^{x+h_0 h_1} u^{i \widehat{t}_{f, X}} du \cdot \frac{1}{X} \sum_{\substack{X < n \leq 2X \\ n \in \mathcal{S}}} f(n) n^{-i\widehat{t}_{f, X}} \Big |  \\
& \qquad \leq \left(\delta + \frac{1}{\nu_1 (\log X)^{\rho/12}}\right) \prod_{p \leq X} \Big ( 1 + \frac{|f(p)| - 1}{p} \Big )
\end{split}
\]
for all but at most
\[
\ll_{\eta, \rho, \theta} \frac{X}{\delta^2} \cdot \frac{(\log h_0)^2}{P_1^{1 / 2 - \nu_2 - 3\eta}} + X^{1-\nu_1^3/200}
\]
integers $x \in [X, 2X]$.
Moreover, if
\begin{equation} \label{eq:cancel}
\sum_{\substack{u < p \leq v}} \frac{f(p)}{p^{1/2 + it}} \ll_{\varepsilon} \sum_{p \leq v} \frac{|f(p)|}{p} \cdot \frac{1}{1 + |t|} + u^{-1/2 + \varepsilon}
\end{equation}
for all $\varepsilon > 0$ and 
for all $(u,v] \subset (P_J, Q_J]$, then the exceptional set is
$$
\ll_{\eta, \rho, \theta} \frac{X}{\delta^2} \cdot \frac{(\log h_0)^2}{P_1^{1 - 3 \eta}} + X^{1-\nu_1^{3}/200}. 
$$
\item Let $0 < \rho < \rho_1$. Then, for $\delta \in (0, 1/1000)$, 
\begin{align*}
&\Big | \frac{1}{h_0} \sum_{\substack{x < n \leq x + h_0 \\ n \in \mathcal{S}}} f(n) - \frac{1}{h_0} \int_x^{x+h_0} u^{i t_{f, X}} du \cdot \frac{1}{X} \sum_{\substack{X < n \leq 2X \\ n \in \mathcal{S}}} f(n) n^{-it_{f, X}} \Big | \leq \delta+ \frac{1}{\nu_1 (\log X)^{\rho/12}}
\end{align*}
for all but
\[
\ll_{\eta, \rho} \frac{X}{\delta^2} \cdot \frac{(\log h_0)^2}{P_1^{1 / 2 - \nu_2 - 3\eta}} + X^{1-\nu_1^3/200}
\]
integers $x \in [X, 2X]$.
\item If $f$ is almost real-valued, (i) and (ii) also hold with $\widehat{t}_{f, X}$ and $t_{f, X}$ replaced by $0$. In this case the main terms become simply
\[
\frac{1}{X} \sum_{\substack{X < n \leq 2X \\ n \in \mathcal{S}}} f(n)
\]
\end{enumerate}
\end{theorem}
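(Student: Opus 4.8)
\emph{Proof proposal.} The plan is to run the argument of~\cite[Theorem~3]{MainPaper}, inserting the two modifications described in Section~2: excising a large-value set $\mathcal{U}$ from the Perron integral before taking the mean square, and propagating sieve information about the sparse support of $f$ through every mean-value estimate. We treat the short-interval average $\frac{1}{h_0h_1}\sum_{x<n\leq x+h_0h_1,\ n\in\mathcal{S}}f(n)$ via the Dirichlet polynomial $F_W(s)=\sum_n a_n n^{-s}W(n/X)$ with $a_n=f(n)\mathbf{1}_{n\in\mathcal{S}}$, taking $(P,Q]=(P_{J+1},Q_{J+1}]$ and $(P',Q']=(P_{J+2},Q_{J+2}]$ as the two distinguished ``large prime'' ranges; by construction of $\mathcal{S}$ this data satisfies Assumption~\ref{as:ambmcp} and $\nu_1=\log P'/\log X\geq(\log X)^{-1/4}$. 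Applying Proposition~\ref{prop:L1work} with $y_1=h_0h_1$, $y_2$ just below dyadic, $t_0=\widehat{t}_{f,X}$, $T_0$ a small fixed power of $\log X$, and a set $\mathcal{U}$ to be chosen, reduces the problem to three tasks: bounding the terms in~\eqref{eq:A2bound} (in particular $\mathcal{I}(X,y_1)$, the mean square of the truncated Perron integral of $\sum_n d_n n^{-1-it}$ over $[-T,T]\setminus\mathcal{U}$); bounding $\sup|R_C(1+it)|$ for $|t-\widehat{t}_{f,X}|\geq T_0$; and passing from the long twisted average to $\frac1X\sum_{X<n\leq 2X,\ n\in\mathcal{S}}f(n)n^{-i\widehat{t}_{f,X}}$.

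For the first two tasks I would choose $\mathcal{U}$ to be the set of $|t|\leq X/y_1$ at which at least one of the short prime Dirichlet polynomials $\sum_{p\sim P}f(p)p^{-1-it}$, with $P$ a power of two in a range $[P_1^{c},P_1]$ built into $\mathcal{S}$, exceeds $P^{-1/4+\varepsilon}$; by Huxley's large value theorem (Lemma~\ref{lem:Huxley}) applied to a suitably large power $P^{k}$, the set $\mathcal{U}$ is covered by $\ll X^{1/2-\varepsilon}$ unit intervals, which renders the last term of~\eqref{eq:A2bound} harmless. Next I would iterate the Buchstab/Ramar\'e decomposition of Lemma~\ref{lem:decomp} across the intervals $(P_j,Q_j]$ with $j\leq J$, writing $\sum_{X<n\leq 2X}d_n n^{-1-it}$ as $\ll(\log X)^{o(1)}$ products $\prod_j Q_{j,\nu}(1+it)\cdot R(1+it)$; off $\mathcal{U}$ and off the neighbourhood $|t-\widehat{t}_{f,X}|\leq T_0$, each prime factor is $\leq P_j^{-1/4+\varepsilon}$ pointwise while $R(1+it)$ is controlled by the Hal\'asz-type bound of Lemma~\ref{le:Halappl}, which contributes the gain $(\log X)^{-\rho/2}+(\log\log X)^2(|t-\widehat{t}_{f,X}|+1)^{-1/2}$ times $\prod_{p\leq X}(1+|f(p)|/p)$. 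Feeding these bounds into Lemma~\ref{le:MVTwithHenriot} and Lemma~\ref{le:moment}---which is precisely where the sparse support enters, producing $\prod_{p\leq X}(1+(2|f(p)|-2)/p)$ and hence the factor $h_1=H(f;X)$---yields $\mathcal{I}(X,y_1)$ bounded by a $(\log X)^{-\rho/6}$-type term plus the arithmetic factor divided by $P_1^{1/2-\nu_2-3\eta}$, which is exactly what is needed after Chebyshev to reach the claimed exceptional set.

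For the third task I would invoke Lemma~\ref{le:Sinclexcl} to expand the restriction $n\in\mathcal{S}$ into a bounded linear combination of sums for the multiplicative functions $fg_{\mathcal{J}}$ (noting $g_{\mathcal{J}}$ alters $f$ only on primes with $\sum 1/p=O_{\eta,\nu_1,\nu_2}(1)$, so the relevant Euler products and non-vanishing constants are unchanged up to constants), and then apply the Lipschitz estimate Lemma~\ref{le:Lipschitz} to replace the $y_2$-average by the $[X,2X]$-average at cost $(\log X)^{-\rho/2}\prod_{p\leq X}(1+(|f(p)|-1)/p)$; with $t_0=\widehat{t}_{f,X}$ this produces exactly the main term in the statement, and the almost-real case follows from parts (iii) of Lemmas~\ref{le:SparseHalaszComplex} and~\ref{le:Lipschitz}. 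Part (ii) is the same argument with $\alpha=1$, $\rho<\rho_1$, $h_1\asymp 1$, and the dense versions of the Hal\'asz and Lipschitz bounds. Finally, for the conditional improvement: the only place the exponent $\tfrac12$ on $P_1$ arises rather than $1$ is the unconditional bound $|\mathcal{U}|\ll X^{1/2-\varepsilon}$; assuming~\eqref{eq:cancel}, the polynomial $R_C$ (built from primes in $(P_J,Q_J]$) already exhibits genuine cancellation off a bounded set of $t$, so one may instead take $\mathcal{U}$ to be that bounded set, making its large-value count $O(1)$, and re-running the same chain replaces $P_1^{1/2-\nu_2-3\eta}$ by $P_1^{1-3\eta}$.

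The main obstacle I anticipate is the second paragraph: iterating Lemma~\ref{lem:decomp} over the $J\ll\log\log X/\log\log\log X$ intervals while ensuring that the error terms $T/X\cdot\prod(1+(g(p)^2-1)/p)$ and $P^{-1}\prod(1+(2g(p)-2)/p)$ do not accumulate, and simultaneously tracking the sparse-support Euler products so that the final bound is genuinely proportional to $h_1=H(f;X)$ rather than to the naive inverse mean value of $|f|$. Getting the exponent of $P_1$ to be exactly $1/2-\nu_2-3\eta$, so that later choices such as $P_1=h_0^{1-\varepsilon}$ give a clean $h_0^{-1/2+\varepsilon}$, forces the large-value input defining $\mathcal{U}$ to be used at its sharpest, and this interaction between the $\mathcal{U}$-removal in Proposition~\ref{prop:L1work} and the decomposition is the delicate point of the proof.
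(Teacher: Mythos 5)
Your overall skeleton agrees with the paper: apply Proposition~\ref{prop:L1work} with the two large-prime ranges $(P_{J+1},Q_{J+1}]$, $(P_{J+2},Q_{J+2}]$, obtain the main term via Lemma~\ref{le:Sinclexcl} together with Lemma~\ref{le:Lipschitz}, and control $\sup|R_C(1+it)|$ off $|t-\widehat{t}_{f,X}|\le T_0$ by Lemma~\ref{le:Halappl}. The genuine gap is in the core step, namely the choice of $\mathcal{U}$ and the bound \eqref{eq:TL2intclaim}. You take $\mathcal{U}$ to be the set where some prime polynomial of length $P\asymp P_1$ exceeds $P^{-1/4+\varepsilon}$, bounded by Huxley's theorem applied to a high power. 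This fails in exactly the regime the theorem is built for: only $P_J$ is required to satisfy \eqref{eq:PJQJsize}, while $P_1\le Q_1\le h_0$ may be as small as a fixed power of $\log X$, and every large-value estimate for a prime polynomial of such short length --- Huxley applied to $P^{k}$ or the moment method --- loses a factor of size $\exp\bigl(O\bigl(\tfrac{\log X}{\log P_1}\log\log X\bigr)\bigr)$, an unbounded power of $X$. Hence your $\mathcal{U}$ cannot be shown to satisfy the size constraint \eqref{eq:UBound} that Proposition~\ref{prop:L1work} imposes. The paper instead defines $\mathcal{U}$ through the \emph{top} interval $(P_J,Q_J]$ only, where $P_J\ge(\log X)^{2/\eta}$ keeps the loss at $X^{\eta}$, and with thresholds $e^{-\alpha_j v/H_j}$, $\alpha_j\approx\tfrac14-\tfrac{\nu_2}{2}-\eta$ rather than $\tfrac14-\varepsilon$: the admissible threshold is dictated by \eqref{eq:UBound}, i.e.\ by $\nu_2$ and $\eta$, and this is where the exponent $\tfrac12-\nu_2-3\eta$ of $P_1$ ultimately comes from.

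Relatedly, your claim that off $\mathcal{U}$ ``each prime factor is $\le P_j^{-1/4+\varepsilon}$ pointwise'' is not something any affordable $\mathcal{U}$ can deliver, and if it were available Lemma~\ref{le:moment} would be superfluous. The complement of the paper's $\mathcal{U}$ only guarantees, for each $t$, a \emph{smallest} index $j$ at which all $Q_{v,j}(1+it)$ are small; for $j\ge 2$ the bottom-scale factor $Q_{v,1}$ may be large, and the actual mechanism on $\mathcal{T}_j$ is to decompose $R_{v,1}$ with respect to $(P_j,Q_j]$ via Lemma~\ref{lem:decomp}, use the pointwise bound at scale $j$, estimate the scale-$1$ factor trivially, and compensate by multiplying in $\bigl(|Q_{r,j-1}(1+it)|e^{\alpha_{j-1}r/H_{j-1}}\bigr)^{2\ell_{j,r}}\ge 1$ before applying Lemma~\ref{le:moment}; the strictly increasing $\alpha_j$ and the spacing conditions \eqref{eq:PjQjnottoofar}--\eqref{eq:PjQjnottooclose} are precisely what keeps the $\ell_{j,r}!^{2}$ losses summable over $j$. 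Your proposal never articulates this iteration, which is the heart of the proof. Finally, the conditional refinement is misattributed: \eqref{eq:cancel} is not about cancellation in $R_C$ but is applied to the prime polynomials $Q_{v,J}$ over $(P_J,Q_J]$, showing $\mathcal{U}\subset[-Q_J^{1/2},Q_J^{1/2}]$ and permitting the stronger thresholds $\alpha_j=\tfrac12-\eta\bigl(1+\tfrac{1}{2j}\bigr)$, whence the saving $P_1^{-2\alpha_1}=P_1^{-1+3\eta}$.
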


\begin{remark} 
Note that if $|\widehat{t}_{f, X}| > X/2$, then 
\[
\frac{1}{h_0 h_1} \int_x^{x+h_0 h_1} u^{i \widehat{t}_{f, X}} du \ll \frac{1}{h_0h_1},
\]
and using also Shiu's bound (Lemma~\ref{le:Shiu}), the main term in Theorem~\ref{th:ThminS}(i) becomes essentially negligible. On the other hand, for $|\widehat{t}_{f, X}| \leq X/2$, Hal\'asz's theorem (Lemma~\ref{le:SparseHalaszComplex} together with Lemma~\ref{le:Sinclexcl}) implies that the main term has size
\[
\ll 2^J \left(\frac{\widehat{M}(f; X)}{\exp(\frac{1}{2}\widehat{M}(f; X))} + \frac{1}{(\log X)^\alpha}\right) \prod_{p \leq X} \Big ( 1 + \frac{|f(p)| - 1}{p} \Big ).
\]
This is $o(\prod_{p \leq X} ( 1 + \frac{|f(p)| - 1}{p} ) )$ if $\widehat{M}(f; X)$ tends to infinity faster than $2J$. In case $\widehat{M}(f; X)$ tends to infinity but more slowly, it seems to be more complicated to show in general that the main term is small. However, even in this case Corollary~\ref{cor:convenient} provides a non-trivial bound without a main term. Similar remarks apply to part (ii).
\end{remark}

\begin{proof}[Proof of Theorem~\ref{th:ThminS}]
Let us first prove the case (i) and then discuss the differences in the cases (ii)--(iii) at the end. Let $\rho' = (\rho + \rho_\alpha)/2$. We shall apply Proposition~\ref{prop:L1work} with $\eps = \eta/2$, $t_0 = \widehat{t}_{f, X}$, $T_0 = (\log X)^{\rho'/6}, y_1 = h_0 h_1, y_2 = X/(\log X)^{5\rho'/12}$, $P' = P_{J+1}, Q' = Q_{J+1}$, $P = P_{J+2},$ $Q = Q_{J+2},$ $a_m = f(m) \mathbf{1}_{m \in \mathcal{S}}$, $b_m = f(m) \mathbf{1}_{m \in \mathcal{S}^\ast}$ and $c_p = f(p)$. Here $\mathcal{S}^\ast$ is the set of integers that have prime factor in each interval $(P_j, Q_j]$ for $j = 1, \dotsc, J$. We shall choose $d_n$ and $\mathcal{U}$ later. The set $\mathcal{U}$ will be chosen so that, for any $T \in [X/H, X]$, one has 
\begin{equation}
\label{eq:UBound}
|(\mathcal{U} + [-X^{\eta/2}, X^{\eta/2}]) \cap [-T, T]| \ll \frac{h_0 h_1 \sqrt{T}}{Q_{J+2}}
\end{equation}
which guarantees that the corresponding term in~\eqref{eq:A2bound} is of acceptable size.

Let us first show that the main term in Proposition~\ref{prop:L1work} corresponds to the main term in Theorem~\ref{th:ThminS}. With $g_{\mathcal{J}}$ as in Lemma~\ref{le:Sinclexcl}, we have by Lemma~\ref{le:Sinclexcl}, for $y \in \{X, y_2\}$,
\[
\frac{1}{y} \sum_{\substack{X < n \leq X + y \\ n \in \mathcal{S}}} f(n)n^{-i\widehat{t}_{f, X}} = \frac{1}{y} \sum_{\mathcal{J} \subseteq \{1, \dotsc, J+2\}} (-1)^{\# \mathcal{J}} \sum_{X < n \leq X + y} g_{\mathcal{J}}(n) f(n) n^{-i\widehat{t}_{f, X}}.
\]
Applying Lemma~\ref{le:Lipschitz}(i) to the sum over $n$ on the right hand side, we see that
\begin{align*}
\frac{1}{X} \sum_{\substack{X < n \leq 2X}} f(n) g_{\mathcal{J}}(n) & n^{-i\widehat{t}_{f, X}}
= \frac{1}{y_2} \sum_{\substack{x < n \leq x+y_2}} f(n) g_{\mathcal{J}}(n) n^{-i\widehat{t}_{f, X}} \\ & + O\Big(\frac{1}{(\log X)^{\rho'/12}} \prod_{\substack{p \leq X}} \Big ( 1 + \frac{|f(p)| - 1}{p} \Big )\Big)
\end{align*}
for all $x \in [X, 2X]$. Summing back over subsets $\mathcal{J} \subset \{1, \ldots, J + 2\}$ weighted by $(-1)^{\# \mathcal{J}}$ we conclude that,
$$
\frac{1}{X} \sum_{\substack{X < n \leq 2X \\ n \in \mathcal{S}}} f(n) n^{- i \widehat{t}_{f,X}} = \frac{1}{y_2} \sum_{\substack{x < n \leq x + y_2 \\ n \in \mathcal{S}}} f(n) n^{- i \widehat{t}_{f, X}} + O \Big ( \frac{2^J}{(\log X)^{\rho' / 12}} \prod_{p \leq X} \Big ( 1 + \frac{|f(p)| - 1}{p} \Big ) \Big ). 
$$

Letting $R_C(s)$ be as in~\eqref{eq:WQRdef}, we get, using Lemmas~\ref{le:Halappl}(i) and~\ref{le:Sinclexcl} in similar fashion, that
\[
\begin{split}
&\sup_{\substack{|t| \leq X/2, |t-\widehat{t}_{f, X}| > T_0/2 \\ X/(16Q_{J+1} Q_{J+2}) < C \leq 16 X/(P_{J+1} P_{J+2})}} |R_C(1+it)| \\
&\ll \frac{2^J}{\log P_{J+1}} \Big(\frac{1}{\log^{\rho'/2} X}+ \frac{(\log \log X)^2}{T_0^{1/2}}\Big) \prod_{p \leq X} \Big(1+\frac{|f(p)|}{p}\Big) \\
&\ll \frac{2^J (\log\log X)^2}{\nu_1 (\log X)^{\rho'/12}} \cdot \prod_{p \leq X} \Big(1+\frac{|f(p)|-1}{p}\Big).
\end{split}
\]
Recall that $J \ll \log \log X/\log \log \log X$, $\rho' > \rho$ and that we can assume that $X$ is large in terms of $\rho$. Combining the above, Proposition~\ref{prop:L1work} implies that, for $x \in (X, 2X]$,
\[
\begin{split}
&\Bigl| \frac{1}{h_0 h_1} \sum_{\substack{x < n \leq x + h_0 h_1 \\ n \in \mathcal{S}}} f(n)  - \frac{1}{h_0 h_1} \int_x^{x+h_0 h_1} u^{i \widehat{t}_{f, X}} du \cdot \frac{1}{X} \sum_{\substack{X < n \leq 2X \\ n \in \mathcal{S}}} f(n) n^{-i\widehat{t}_{f, X}}  \\
&\qquad \qquad \qquad \qquad \qquad \qquad \qquad - A(x, h_0h_1, X/(\log X)^{5\rho'/12}, \widehat{t}_{f, X}, T_0, \mathcal{U})\Bigr| \\
&\leq\left(\frac{1}{2\nu_1 (\log X)^{\rho/12}} + O\left(\frac{1}{h_0 h_1}\right) \right) \cdot \prod_{p \leq X} \Big(1+\frac{|f(p)|-1}{p}\Big),
\end{split}
\]
where $A(x, y_1 , y_2, \widehat{t}_{f, X}, T_0, \mathcal{U})$ satisfies~\eqref{eq:A2bound}.

Now our claim is trivial unless $\delta \geq h_0^{-1/2}$, and so it suffices to show that
\[
|A(x, h_0h_1, X/(\log X)^{5\rho'/12}, \widehat{t}_{f, X}, T_0, \mathcal{U})| \leq \left(\frac{1}{2\nu_1 (\log X)^{\rho/12}} + \frac{\delta}{2}\right) \cdot \prod_{p \leq X} \Big(1+\frac{|f(p)|-1}{p}\Big)
\]
for all but at most $\ll \frac{X}{\delta^2}\mathcal{R}$ integers $x \in [X, 2X]$, where $\mathcal{R} = (\log h_0)^2/P_1^{1-3\eta}$ or $\mathcal{R} = (\log h_0)^2/P_1^{1/2-\nu_2-3\eta}$ depending on whether we assume~\eqref{eq:cancel} or not.

Thus by~\eqref{eq:A2bound} it suffices to show that, for some choice of $d_n$ and $\mathcal{U}$ such that $\mathcal{U}$ satisfies~\eqref{eq:UBound} and $d_n = f(n)\mathbf{1}_{n \in \mathcal{S}}$ for all $X/4 < n \leq 4X$, we have
\[
\int_{[-T, T] \setminus \mathcal{U}} \Big | \sum_{\substack{X / 100 < n \leq 100X}} \frac{d_n}{n^{1 + it}} \Big |^2 dt \ll \frac{T}{X/(h_0 h_1)} \prod_{p \leq X} \Big ( 1+ \frac{2 |f(p)| - 2}{p} \Big ) \mathcal{R} 
\]
whenever $X/(h_0 h_1) \leq T \leq X$. Writing
\[
\mathfrak{S}(X, T, f) := \frac{T}{X} \prod_{p \leq X} \Big (1 + \frac{|f(p)|^2 - 1}{p} \Big ) + \prod_{p \leq X} \Big ( 1+ \frac{2 |f(p)| - 2}{p} \Big )
\]
and recalling the definition of $h_1$ and that $Q_1 \leq h_0$, it thus suffices to show that
\begin{equation}
\label{eq:TL2intclaim}
\int_{[-T, T] \setminus \mathcal{U}} \Big | \sum_{\substack{X / 100 < n \leq 100X}} \frac{d_n}{n^{1 + it}} \Big |^2 dt \ll \mathfrak{S}(X/Q_1, T, f) \mathcal{R} 
\end{equation}
for all $X/(h_0 h_1) \leq T \leq X$.

We write $\mathcal{I}_1 := \{v \colon \lfloor \log P_1 / \log 2 \rfloor \leq v \leq \log Q_1 / \log 2\}$ and choose
\[
d_n := \sum_{v \in \mathcal{I}_1} \sum_{\substack{n = pm \\ 2^v < p \leq 2^{v+1} \\ P_1 < p \leq Q_1 \\ m \in \mathcal{S}_1 \\ X/2^{v+3} < m \leq 4X/2^v}} \frac{f(mp)}{\omega_{(P_1, Q_1]}(m) + \mathbf{1}_{(p, m) = 1}},
\]
where $\mathcal{S}_1$ is the set of those $n$ that have at least one prime factor in each of intervals $(P_j, Q_j]$ for $j =2, \dotsc, J+2$. Now $d_n = f(n) \mathbf{1}_{n \in \mathcal{S}}$ for all $n \in (X/4, 4X]$ as requested.

We have
\begin{equation}
\label{eq:dnsumcn}
d_n = \sum_{v \in \mathcal{I}_1} \sum_{\substack{n = mp \\ 2^v < p \leq 2^{v+1} \\ P_1 < p \leq Q_1 \\ m \in \mathcal{S}_1 \\ X/2^{v+3} < m \leq 4X/2^v}} \frac{f(m)f(p)}{\omega_{(P_1, Q_1]}(m) + 1} + \sum_{\substack{n = mp \\ P_1 < p \leq Q_1 \\ m \in \mathcal{S}_1, \, (m, p) = p}} e_{m,p},
\end{equation}
where
\[
|e_{m,p}| \leq |f(mp)| + |f(m)||f(p)|.
\]
The contribution of $e_{m,p}$ to $\mathcal{R}$ in \eqref{eq:TL2intclaim} is acceptable (of the order of $1/P_1$) as in end of proof of Lemma~\ref{lem:decomp}.

To handle the rest of~(\ref{eq:TL2intclaim}) and define the set $\mathcal{U}$ we need to introduce some additional notation following~\cite[Proof of Theorem 3]{MainPaper}.
Pick a sequence $\alpha_j$ for $1 \leq j \leq J$ differently depending on whether \eqref{eq:cancel} holds or not. If \eqref{eq:cancel} does not hold, choose
\begin{equation}
\label{eq:alphajdef}
\alpha_j = \frac{1}{4} - \frac{\nu_2}{2} - \eta\Big(1+ \frac{1}{2j}\Big).
\end{equation}
Notice that
\[
\frac{1}{4} - \frac{\nu_2}{2} -\frac{3}{2}\eta = \alpha_1 < \alpha_2 < \dotsc < \alpha_J \leq \frac{1}{4} - \frac{\nu_2}{2}  -\eta.
\]
If \eqref{eq:cancel} holds then pick
$$
\alpha_j = \frac{1}{2} - \eta \Big ( 1 + \frac{1}{2j} \Big ) .
$$
Let
\begin{equation}
\label{eq:Hjdef}
H_1 = 1/\log 2, \quad \text{and for} \quad j = 2, \dotsc, J, \quad \text{let} \quad  H_j := j^2 P_1.
\end{equation}
For $j = 1, \dotsc, J$, let
\[
Q_{v, j}(s) := \sum_{\substack{e^{v/H_j} < p \leq e^{(v+1)/H_j} \\ P_j < p \leq Q_j}} \frac{f(p)}{p^s}.
 \]
Notice that this can be non-zero only when
\[
v \in \mathcal{I}_j := \{v: \lfloor H_j \log P_j \rfloor \leq v \leq H_j \log Q_j\}
\]
We write
\[
[-T, T] = \bigcup_{j = 1}^{J} \mathcal{T}_j \cup \mathcal{U}
\]
as a disjoint union where $t \in \mathcal{T}_j$ when $j$ is the smallest index such that
\[
\text{for all } v \in \mathcal{I}_j: |Q_{v,j}(1+it)| \leq e^{-\alpha_j v / H_j}
\]
and $t \in \mathcal{U}$ if this does not hold for any $j \leq J$.

Let us first show that $\mathcal{U}$ satisfies~\eqref{eq:UBound}. Note that if \eqref{eq:cancel} holds then $\mathcal{U} \subset [- Q_{J}^{1/2}, Q_{J}^{1/2}]$ and $Q_{J} \leq Q_{J + 2} \leq X^{1/6}$. As a result \eqref{eq:UBound} is automatically satisfied. On the other hand if \eqref{eq:cancel} does not hold then it suffices to show that for any $X/(h_0 h_1) \leq T \leq X$ and any one-spaced subset $\mathcal{W} \subset \mathcal{U} \cap [-T, T]$, we have $|\mathcal{W}| \ll T^{1/2} h_0 h_1/ X^{\nu_2+\eta/2}$. For each $t \in \mathcal{W}$ there exists $v \in \mathcal{I}_{J}$ such that $|Q_{v,J}(s)| > e^{-\alpha_J v / H_J}$ . By~ \eqref{eq:PJQJsize} we have $Q_J \leq X^{o(1)} h_0$, so applying~\cite[Lemma 8]{MainPaper} to $Q_{v, J}(s)$ for every $v \in \mathcal{I}_J$ we get
\[
\begin{split}
|\mathcal{W}| &\ll |\mathcal{I}_J| \cdot T^{2 \alpha_{J}} Q_J^{2\alpha_J} X^{\eta} \ll (T h_0 h_1)^{2 \alpha_{J}}  X^{\eta + o(1)} \\
&\ll (T h_0 h_1)^{1/2} (T h_0 h_1)^{-\nu_2 - 2\eta} X^{\eta+o(1)} \ll T^{1/2} (h_0 h_1)^{1/2} X^{-\nu_2 - \eta+o(1)}
\end{split}
\]
which is sufficient. Hence our choice for $\mathcal{U}$ is legitimate and it suffices to show~\eqref{eq:TL2intclaim}.

Write
$$
R_{v, 1}(s) = \sum_{\substack{X/(8e^{v/H_1}) < m \leq 4 X /e^{v/H_1} \\ m \in \mathcal{S}_1}} \frac{f(m)}{m^s} \cdot \frac{1}{\omega_{(P_1, Q_1]}(m) +1}.
$$
Recalling~\eqref{eq:dnsumcn} and that the contribution from $e_n$ is acceptable, it suffices to show that
\begin{equation}
\label{eq:QvRvmeanclaim}
\max_{v \in \mathcal{I}_1} \int_{[-T, T] \setminus \mathcal{U}} \Big |Q_{v, 1}(1+it) R_{v, 1}(1+it) \Big |^2 dt \ll \mathfrak{S}(X/Q_1, T, f) P_1^{-2\alpha_1}.
\end{equation} 

Our argument is similar to that in \cite[Proof of Theorem 3]{MainPaper} though we have taken out the smallest prime in $p \in (P_1, Q_1]$ earlier to get better bounds (because this way we avoid splitting $p \in (P_1, Q_1]$ into very short intervals) and use refined estimates for Dirichlet polynomias to take into account the average size of $f(n)$.

Let us first consider the contribution of $\mathcal{T}_1$. We have
\[
\begin{split}
\max_{v \in \mathcal{I}_1} \int_{\mathcal{T}_1} \Big | Q_{v, 1}(1+it) R_{v, 1}(1+it)\Big|^2 dt \ll P_1^{-2\alpha_1} \cdot \mathfrak{S}(X/Q_1, T, f)
\end{split}
\]
by the definition of $\mathcal{T}_1$ and mean value theorem (Lemma~\ref{le:MVTwithHenriot}) which is acceptable.

Let us now consider the contribution of $\mathcal{T}_j$ to~\eqref{eq:QvRvmeanclaim} for $j \geq 2$. We use Lemma~ \ref{lem:decomp} with $H = H_j$, $P = P_j, Q = Q_j$,
\[
a_m = \frac{f(m) \mathbf{1}_{\mathcal{S}_1}}{\omega_{(P_1, Q_1]}(m) + 1}, \qquad \quad b_m = \frac{f(m) \mathbf{1}_{\mathcal{S}_{1, j}}}{\omega_{(P_1, Q_1]}(m) + 1}, \qquad \text{and} \qquad c_p = f(p),
\]
where $\mathcal{S}_{1, j}$ is the set of those integers which have at least one prime factor in every
interval $(P_i,Q_i]$ with $i \in \{2, \dotsc, J+2\} \setminus \{j\}$. Estimating $Q_{v, 1}(1+it)$ trivially, we see that
\[
\begin{split}
&\max_{v \in \mathcal{I}_1} \int_{\mathcal{T}_j} \Big |Q_{v, 1}(1+it) R_{v, 1}(1+it) \Big |^2 dt \ll \Big(\frac{1}{H_j} + \frac{1}{P_j}\Big)\mathfrak{S}(X/Q_1, T, f) \\
& \qquad + (H_j \log Q_j)^2 \max_{v_1 \in \mathcal{I}_1, v \in \mathcal{I}_j} \int_{\mathcal{T}_j} |Q_{v,j}(1 + it)R_{v_1, v,j}(1 + it)|^2 dt,
\end{split}
\]
where
$$
R_{v_1, v, j}(s) = \sum_{\substack{X e^{-v_1/H_1-v/H_j}/8 < m \leq 4 X e^{-v_1/H_1-v/H_j} \\ m \in \mathcal{S}_{1,j}}} \frac{f(m)}{m^s} \cdot \frac{1}{(\omega_{(P_j, Q_j]}(m) +1)(\omega_{(P_1, Q_1]}(m) +1)}.
$$

Here, by~\eqref{eq:PjQjnottooclose} and~\eqref{eq:Hjdef},
\[
\sum_{j = 2}^J \Big(\frac{1}{H_j} + \frac{1}{P_j}\Big) \mathfrak{S}(X/Q_1, T, f) \ll \mathfrak{S}(X/Q_1, T, f) \frac{1}{P_1}.
\]
Thus the claim follows once we have shown
\begin{equation}
\label{eq:Ejsumclaim}
\sum_{2 \leq j \leq J} E_j \ll \frac{1}{P_1} \mathfrak{S}(X/Q_1, T, f),
\end{equation}
where, for $2 \leq j \leq J$,
\[
\begin{split}
E_j &:= (H_j \log Q_j)^2 \cdot \max_{\substack{v_1 \in \mathcal{I}_1 \\ v \in \mathcal{I}_j}} \int_{\mathcal{T}_j} |Q_{v, j}(1+it) R_{v_1, v, j}(1+it)|^2 dt \\
&\ll (H_j \log Q_j)^2 \max_{\substack{v_1 \in \mathcal{I}_1 \\ v \in \mathcal{I}_j}} e^{-2\alpha_j v / H_j} \int_{\mathcal{T}_j} |R_{v_1, v, j}(1+it)|^2 dt.
\end{split}
\]

Now we split further
\begin{equation}
\label{eq:Tjsplit}
\mathcal{T}_j = \bigcup_{r \in \mathcal{I}_{j-1}} \mathcal{T}_{j, r},
\end{equation}
where
$$
\mathcal{T}_{j, r} = \{t \in \mathcal{T}_j \colon |Q_{r,j-1}(1 + it)| > e^{-\alpha_{j-1} r/ H_{j-1}} \}
$$
Note that~\eqref{eq:Tjsplit} indeed holds since, by the definition of $\mathcal{T}_j$, for any $t \in \mathcal{T}_j$ there exists an index $r \in \mathcal{I}_{j-1}$ such that $|Q_{r, j-1}(1 + it)| > e^{-\alpha_{j-1} r / H_{j-1}}$. Therefore, for some $v_1 = v_1(j) \in \mathcal{I}_1$, $v = v(j) \in \mathcal{I}_j$ and $r = r(j) \in \mathcal{I}_{j-1}$,
\[
E_j \ll (H_j \log Q_j)^2 (H_{j-1} \log Q_{j-1}) \cdot e^{-2 \alpha_j v / H_j}
\cdot \int_{\mathcal{T}_{j,r}} |R_{v_1, v, j}(1 + it)|^2 dt
\]
On $\mathcal{T}_{j,r}$ we have $|Q_{r, j-1}(1 + it)| > e^{-\alpha_{j-1} r/H_{j-1}}$.
Therefore, for any $\ell_{j,r} \geq 1$, multiplying by the term $(|Q_{r, j-1}(1 + it)| e^{\alpha_{j-1} r/H_{j-1}})^{2\ell_{j,r}} \geq 1$, we get
\begin{align*}
E_{j} \ll &(H_j \log Q_j)^2 (H_{j-1} \log Q_{j-1}) \cdot e^{-2\alpha_j v/ H_j} \\ & \cdot  \exp \Big (2 \ell_{j,r} \cdot \alpha_{j-1} r  / H_{j-1} \Big )
\int_{\mathcal{T}_{j,r}} |Q_{r, j-1}(1 + it)^{\ell_{j,r}} R_{v_1,v,j}(1 + it)|^2 dt.
\end{align*}
Choosing
\[
\ell_{j,r} = \Big\lceil \frac{v/ H_{j}}{r/H_{j-1}}\Big \rceil \leq \frac{H_{j-1}}{r} \cdot \frac{v}{H_j} + 1,
\]
we get
\begin{equation}
\label{eq:Ejbound1}
\begin{split}
E_j &\ll H_j^3 (\log Q_j)^3 \cdot \exp \Big ( 2v (\alpha_{j-1} - \alpha_{j}) / H_{j} + 2 \alpha_{j-1} r / H_{j-1} \Big ) \\
& \quad \cdot\int_{-T}^{T} |Q_{r,j-1}(1 + it)^{\ell_{j,r}} R_{v_1,v,j}(1 + it)|^2 dt.
\end{split}
\end{equation}
Now we are in the position to use Lemma~\ref{le:moment} which gives
\begin{align*}
&\int_{-T}^{T}  |Q_{r,H_{j-1}}(1 + it)^{\ell_{j,r}}  R_{v_1,v,j}(1 + it)|^2  dt \\
&\ll \ell_{j,r}!^2 \Big(\frac{T}{X/e^{v_1/H_1}} \prod_{p \leq X} \Big(1+\frac{|f(p)|^2-1}{p}\Big) +  \prod_{p \leq X} \Big ( 1 + \frac{2|f(p)|-2}{p}\Big)\Big) \\
&\ll \exp(2\ell_{j,r} \log \ell_{j,r}) \mathfrak{S}(X/Q_1, T, f).
\end{align*}
Here by the mean value theorem and the definition of $\ell_{j,r}$
\[
\begin{split}
\ell_{j,r} \log \ell_{j,r} &\leq \frac{v/ H_{j}}{r/H_{j-1}} \log \frac{v/ H_{j}}{r/H_{j-1}} + \log \log Q_j + 1 \\
&\leq \frac{v}{H_j} \cdot \frac{\log \log Q_j}{\log P_{j-1}-1} + \log \log Q_j + 1,
\end{split}
\]
so that
\begin{equation}
\label{eq:EjintBound}
\begin{split}
&\int_{-T}^{T}  |Q_{r,j-1}(1 + it)^{\ell_{j,r}}  R_{v_1,v,j}(1 + it)|^2  dt \\
& \ll \exp \Big (2\frac{v}{H_j} \cdot \frac{\log \log Q_j}{\log P_{j-1}-1}\Big) (\log Q_j)^2  \mathfrak{S}(X/Q_1, T, f) \\
& \ll (\log Q_j)^2 \exp \Big (\frac{\eta}{2j^2} \cdot \frac{v}{H_j} \Big ) \mathfrak{S}(X/Q_1, T, f)
\end{split}
\end{equation}
by \eqref{eq:PjQjnottoofar}. Note that~\eqref{eq:PjQjnottoofar} also implies that, for $j \geq 2$,
\[
\log \log Q_j \leq \frac{1/6}{4 \cdot 2^2}\log P_{j-1} \leq \log Q_{j-1}^{1/96} \implies \log Q_j \leq Q_{j-1}^{1/96},
\]
so that we have the bound
\[
\begin{split}
H_j^3 (\log Q_j)^5 \exp(2\alpha_{j-1} r / H_{j-1})  &\ll H_j^3 (\log Q_j)^5 Q_{j-1} \\
&\ll H_j^3 Q_{j-1}^{3/2} \ll j^6 P_1^{3} Q_{j-1}^{3/2} \ll j^6 Q_{j-1}^{5}.
\end{split}
\]
Therefore, recalling also~\eqref{eq:Ejbound1} and~\eqref{eq:EjintBound} we see that, for $2 \leq j \leq J$,
\begin{align*}
E_j & \ll j^6 Q_{j-1}^5 \exp \Big ( \frac{2 v}{H_j} \Big (\alpha_{j-1} - \alpha_{j} + \frac{\eta}{4j^2} \Big ) \Big) \mathfrak{S}(X/Q_1, T, f).
\end{align*}
By~\eqref{eq:alphajdef} and~(\ref{eq:PjQjnottooclose}) we see that
\begin{align*} 
E_j & \ll j^6 Q_{j-1}^5 \exp \Big ( - \frac{\eta}{2j^2} \log P_j \Big ) \mathfrak{S}(X/Q_1, T, f) \\
&\ll  \frac{1}{j^2 Q_{j-1}} \mathfrak{S}(X/Q_1, T, f) \ll  \frac{1}{j^2 P_1} \mathfrak{S}(X/Q_1, T, f)
\end{align*}
This immediately implies~\eqref{eq:Ejsumclaim} and thus part (i) of the theorem.

Parts (ii) and (iii) follow similary, replacing applications of Lemmas~\ref{le:Lipschitz}(i) and~\ref{le:Halappl}(i) by parts (ii) and (iii) of those lemmas, and in case (ii) replacing applications of Lemma~\ref{le:MVTwithHenriot} by applications of~\eqref{eq:contMVT}. 
\end{proof}

\section{Sieve estimates}\label{se:sieve}
In order to deduce Theorems~\ref{th:MTDense} and~\ref{th:MT} from Theorem~\ref{th:ThminS} we need to show that, once the parameters defining the set $\mathcal{S}$ are appropriately selected, the numbers outside $\mathcal{S}$ make an acceptable contribution. In case of Theorem~\ref{th:MTDense} this is quite straight-forward and was essentially shown in~\cite{MainPaper}, but in case of Theorem~\ref{th:MT} we need to use some sophisticated sieve majorants in order to obtain power savings in the exceptional set. In this section we provide the needed sieve bounds.

When $g : \mathbb{N} \rightarrow [0, 1]$ is a multiplicative function, we define another multiplicative function $g^{\star}$ by requiring that $g^{\star}(p^\alpha) = 1 - g(p^\alpha)$. Note that, for any square-free $n$, $g^\ast(n) = \sum_{d \mid n} \mu(d) g(d)$.
Let us first state a result of Alladi that we use for constructing majorants from sieves. It follows immediately from~\cite[Corollary 1]{Alladi}.
\begin{lemma}
\label{le:Alladi}
Suppose $H\colon \mathbb{N} \to [0, 1]$ is multiplicative and suppose that $\mathcal{P} \subset \mathbb{P}$ and $\chi \colon \mathbb{N} \to [0, 1]$ are such that, for all $n \mid \prod_{p \in \mathcal{P}} p$, one has
\[
H^\ast(n) = \sum_{d \mid n} \mu(d) H(d) \leq \sum_{d \mid n} \mu(d) \chi(d) H(d).
\]
Then for any multiplicative $h \colon \mathbb{N} \to [0, 1]$ satisfying $h(n) \leq H(n)$ for all $n \mid \prod_{p \in \mathcal{P}} p$, one has, for all $n \mid \prod_{p \in \mathcal{P}} p$,
\[
h^\ast(n) = \sum_{d \mid n} \mu(d) h(d) \leq \sum_{d \mid n} \mu(d) \chi(d) h(d).
\]
\end{lemma}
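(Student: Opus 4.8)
The plan is to prove the lemma by exploiting the fact that, for a fixed squarefree $n$, the quantity $\sum_{d\mid n}\mu(d)\chi(d)h(d) - h^\ast(n)$ is \emph{multilinear} in the values of $h$ at the primes dividing $n$, together with the elementary principle that a function which is affine in each coordinate attains its minimum over a box at a vertex.

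First I would fix a squarefree $n \mid \prod_{p \in \mathcal{P}} p$ and write $n = p_1 \cdots p_k$. For $(x_1, \dots, x_k) \in \mathbb{R}^k$ set
\[
D_n(x_1, \dots, x_k) := \sum_{d \mid n} \mu(d)\,(\chi(d) - 1) \prod_{\substack{1 \le j \le k \\ p_j \mid d}} x_j .
\]
Since $n$ is squarefree and $h$ is multiplicative, substituting $x_j = h(p_j)$ recovers $\sum_{d \mid n} \mu(d)\chi(d) h(d) - h^\ast(n)$, whose non-negativity is exactly the conclusion; likewise substituting $x_j = H(p_j)$ recovers $\sum_{d \mid n}\mu(d)\chi(d) H(d) - H^\ast(n)$, which is $\ge 0$ by hypothesis. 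The point is that $D_n$ is multilinear: each monomial $\prod_{p_j \mid d} x_j$ has degree at most one in every variable, so $D_n$ is affine in each $x_j$ separately.

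Next I would use that an affine-in-each-coordinate function on the box $\prod_{j=1}^{k}[0, H(p_j)]$ attains its minimum at a vertex, i.e.\ at a point with each $x_j \in \{0, H(p_j)\}$. At such a vertex, letting $S = \{j : x_j = H(p_j)\}$ and $m = \prod_{j \in S} p_j \mid n$, every product $\prod_{p_j \mid d} x_j$ with $d \mid n$ vanishes unless $d \mid m$, in which case it equals $H(d)$; hence the vertex value of $D_n$ equals $\sum_{d \mid m}\mu(d)(\chi(d)-1) H(d) = \sum_{d\mid m}\mu(d)\chi(d) H(d) - H^\ast(m)$, which is $\ge 0$ by the hypothesis applied to the divisor $m$ of $\prod_{p \in \mathcal{P}} p$. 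Thus $D_n \ge 0$ on the whole box, and since $0 \le h(p_j) \le H(p_j)$ for every $j$ (from $h \colon \mathbb{N} \to [0,1]$ and $h(p_j) \le H(p_j)$), evaluating at $(h(p_1), \dots, h(p_k))$ yields $\sum_{d \mid n}\mu(d)\chi(d) h(d) - h^\ast(n) \ge 0$, as required.

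I do not anticipate a genuine obstacle here; the only care needed is the routine bookkeeping that on squarefree divisors the multiplicative values factor as $h(d) = \prod_{p_j \mid d} h(p_j)$ (so that $h^\ast(n) = \sum_{d\mid n}\mu(d) h(d)$ is the multilinear expression $\prod_j(1-h(p_j))$), and the degenerate case $k = 0$, where the box is a single point and the statement reduces directly to the hypothesis at $n = 1$. Alternatively the lemma follows at once from Alladi's duality principle as cited, but the multilinearity argument above is self-contained.
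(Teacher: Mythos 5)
Your argument is correct. For each fixed squarefree $n = p_1\cdots p_k$ dividing $\prod_{p\in\mathcal{P}}p$, the function $D_n(x_1,\dots,x_k)=\sum_{d\mid n}\mu(d)(\chi(d)-1)\prod_{p_j\mid d}x_j$ is indeed affine in each coordinate, its values at the vertices of the box $\prod_j[0,H(p_j)]$ are exactly the quantities $\sum_{d\mid m}\mu(d)\chi(d)H(d)-H^\ast(m)$ for divisors $m\mid n$ (which are nonnegative by hypothesis, since any such $m$ again divides $\prod_{p\in\mathcal{P}}p$), and the coordinate-by-coordinate minimization argument legitimately reduces the minimum over the box to the minimum over vertices; evaluating at $(h(p_1),\dots,h(p_k))\in\prod_j[0,H(p_j)]$ then gives the conclusion, with the squarefreeness of $n$ justifying the factorizations $h(d)=\prod_{p_j\mid d}h(p_j)$ and $H(d)=\prod_{p_j\mid d}H(p_j)$.

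Where you differ from the paper: the paper does not prove this lemma at all, but simply quotes it as an immediate consequence of Alladi's duality result (Corollary 1 of the cited paper). Your multilinearity-plus-vertex argument is a short, self-contained, and elementary replacement for that citation, and it in fact needs slightly less than the stated hypotheses (for instance, no constraint on the range of $\chi$ beyond it being real-valued is used). What the citation buys is brevity and a pointer to the general duality principle in the sieve literature; what your proof buys is transparency and independence from the external reference. Either route is acceptable here.
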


Let us now define the Brun-Hooley-type sieve that we are going to use. Let $K$ be the largest integer for which\footnote{$\log_k x$ denotes the $k$-fold logarithm} $\log_{K} X > 10^{10}$. Let $I_1 = (1, X^{1/(\log\log X)^2} ]$ and, for $2 \leq k < K$, take $I_{k} = (X^{1/(\log_{k} X)^2} , X^{1/(\log_{k + 1} X)^2}]$, and let $I_{K} = (X^{1/(\log_{K} X)^2} , X^{1/7}]$ and $I_{K+1} = (X^{1/7}, 4X]$. For $1 \leq k \leq K-1$, let
\[
\chi_k(n) =
\begin{cases}
0 & \text{if $\omega_{I_k}(n) > 30 \lfloor \log_{k+1} X\rfloor $;} \\
1 & \text{otherwise,}
\end{cases}
\]
where $\omega_{I_k}(n) := \sum_{\substack{p | n \\ p \in I_k}} 1$. Furthermore, let $\chi_K(n) = 1_{S^+}(n)$, where $S^+$ is as in Lemma~\ref{le:linearsieve} with $D = X^{2/5-1/1000}$, $\mathcal{P} = \mathbb{P} \cap I_K$ and $z = X^{1/7}$, and let
\[
\chi_{K+1}(n) =
\begin{cases}
0 & \text{if $\omega_{I_{K+1}}(n) \geq 1$} \\
1 & \text{otherwise.}
\end{cases}
\]
Write $P(I_k) = \prod_{p \in I_k} p$, and define
\begin{equation}
\label{eq:chi(n)def}
\chi(n) = \prod_{k=1}^{K+1} \chi_k(n).
\end{equation}
Note that we use the linear sieve for primes in $I_K$ just to get a numerically better result.

Now according to Lemma~\ref{le:linearsieve}(ii) and the Brun-Hooley sieve (see e.g. \cite{FordHalberstam}), we have, for all $n \in \mathbb{N}$,
\[
\sum_{d | n} \mu(d)  \leq \prod_{k=1}^{K+1} \mathbf{1}_{(n, P(I_k)) = 1} \leq \prod_{k = 1}^{K+1} \sum_{d \mid (n, P(I_k))} \mu(d) \chi_k(d) = \sum_{d | n} \mu(d) \chi(d).
\]
Furthermore, by definition, $\chi(d)$ is supported on $d \leq X^{2/5}$.

By Lemma~\ref{le:Alladi} (with $H^\ast(p) = 0$ and $h^\ast(p) = g(p)$ for all $p$), we have, for all $n$,
\begin{equation}
\label{eq:AlladiBH}
|\mu(n)| g(n) \leq \sum_{d | n} \mu(d) g^{\star}(d) \chi(d).
\end{equation}
This is the sieve majorant we shall use for multiplicative functions (different sieve majorants for multiplicative functions were constructed by Matthiesen in~\cite{Matthiesen}).

We will need the following result of Friedlander (we follow \cite{Opera}).
\begin{lemma}
\label{le:Fried}
Let $\lambda_d$ be arbitrary complex numbers with $|\lambda_d | \leq 1$ and $\lambda_1 = 1$, and supported on $[1, D]$. Then, for any $h \geq 1$,
\[
\begin{split}
&\int_X^{2X} \Big | \sum_{\substack{x < n \leq x + h}} \Big ( \sum_{d | n} \lambda_d \Big ) - h \sum_{d} \frac{\lambda_d}{d} \Big |^2 dx \\
&\ll_{A} X h \cdot \sum_{d} \frac{1}{d} \Big ( \sum_{e} \frac{\lambda_{d e}}{e} \Big )^2  + h^2 D^2 (\log X)^{A+8}+ hX(\log X)^{-A}
\end{split}
\]
for any $A \geq 1$.
\end{lemma}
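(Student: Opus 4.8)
The plan is to follow Friedlander's argument as presented in \cite[Ch.~6.10]{Opera}: reduce the estimate to the mean square of a sum of sawtooth functions and then apply the large sieve. First I would open up the divisor restriction. Since $\lambda_d$ is supported on $d\le D$ (and $\lambda_1=1$),
\[
\sum_{x<n\le x+h}\Big(\sum_{d\mid n}\lambda_d\Big)=\sum_{d\le D}\lambda_d\Big(\Big\lfloor\tfrac{x+h}{d}\Big\rfloor-\Big\lfloor\tfrac{x}{d}\Big\rfloor\Big)=h\sum_{d}\frac{\lambda_d}{d}+\sum_{d\le D}\lambda_d\,\phi_d(x),
\]
where $\phi_d(x):=\psi(x/d)-\psi((x+h)/d)$ and $\psi(y)=\{y\}-\tfrac12$, with $\{\cdot\}$ the fractional part. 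So the quantity to be bounded is $\int_X^{2X}\big|\sum_{d\le D}\lambda_d\phi_d(x)\big|^2\,dx$.

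Next I would replace $\psi$ by a trigonometric polynomial of degree $M$ (Vaaler's approximation), with $M$ a suitable power of $DX/h$. The tail is controlled through $\int_0^1\min(1,(M\|y\|)^{-1})^2\,dy\ll M^{-1}$, and after expanding the square and summing over $d_1,d_2\le D$ its contribution to $\int_X^{2X}|\cdots|^2\,dx$ is $\ll D^2X/M$; taking $M\gg D^2(\log X)^A/h$ makes this $\ll hX(\log X)^{-A}$, which is the last error term. After the truncation the surviving sum, namely $\sum_{d\le D}\sum_{1\le|m|\le M}\lambda_d\big(-\tfrac{1}{2\pi im}\big)\big(e(mh/d)-1\big)e(mx/d)$ up to harmless modifications, I would regroup according to the reduced fraction $a/q=m/d$ with $q\le D$, writing it as
\[
\Delta^{*}(x)=\sum_{q\le D}\ \sum_{\substack{0\le r<q\\(r,q)=1}}\gamma_{r,q}\,e(rx/q),\qquad \gamma_{r,q}=-\frac{L_q}{q}\sum_{b\bmod q}e(-br/q)\big(\psi(\tfrac{h+b}{q})-\psi(\tfrac{b}{q})\big)+(\text{tail}),
\]
where $L_q:=\sum_{e:\,qe\le D}\lambda_{qe}/e$. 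Here I would use two elementary inputs: the Fourier expansion of the Bernoulli polynomial $B_2$, giving $\sum_{k\ge1}|e(k\alpha)-1|^2/k^2=2\pi^2\{\alpha\}(1-\{\alpha\})$, and an explicit geometric-series estimate for the character sum $\sum_{b\bmod q}e(-br/q)(\psi(\tfrac{h+b}{q})-\psi(\tfrac{b}{q}))$, which for $0<r<q$ reduces to a sum of $e(-br/q)$ over $\asymp q\{h/q\}$ consecutive residues. Together these give $|\gamma_{r,q}|\ll |L_q|\,q^{-1}\min(h,\|r/q\|^{-1})$ and hence $\sum_{r}|\gamma_{r,q}|^2\ll h\,|L_q|^2/q$.

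Finally I would apply the large sieve inequality: the frequencies $r/q$ are Farey fractions of order $D$, hence $\gg D^{-2}$-separated modulo $1$, and integrating over the length-$X$ interval $[X,2X]$ yields
\[
\int_X^{2X}|\Delta^{*}(x)|^2\,dx\ll (X+D^2)\sum_{q\le D}\sum_{r}|\gamma_{r,q}|^2\ll (X+D^2)\,h\sum_{q\le D}\frac1q\Big(\sum_{e}\frac{\lambda_{qe}}{e}\Big)^2.
\]
The contribution of $X$ is exactly the stated main term $Xh\sum_d d^{-1}(\sum_e\lambda_{de}/e)^2$; in the contribution of $D^2$ one bounds $\sum_{q\le D}q^{-1}(\sum_{e\le D/q}1/e)^2\ll(\log X)^{O(1)}$, so (using also $h\le h^2$) it is absorbed into $h^2D^2(\log X)^{A+8}$, while the Vaaler tail supplies $hX(\log X)^{-A}$. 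I expect the main difficulty to be the bookkeeping in the regrouping step, and in particular extracting the $h/q$ saving in $\sum_r|\gamma_{r,q}|^2$ uniformly in the truncation height $M$ (the cancellation producing this saving is genuine but must survive the tail terms); alternatively one can simply invoke Friedlander's theorem in the form given in \cite[Ch.~6.10]{Opera}.
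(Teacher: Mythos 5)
Your proposal is correct and is essentially the paper's argument: the paper simply invokes Friedlander's result (Proposition 6.25 of \emph{Opera de Cribro}, Ch.~6.10), whose main term $2X\sum_d \gamma_d\bigl(\sum_{m\equiv 0\,(d)}\lambda_m/m\bigr)^2$ with $\gamma_d=\sum_{(k,d)=1}\bigl(\tfrac{d}{\pi k}\sin\tfrac{\pi hk}{d}\bigr)^2\ll dh$ packages exactly the sawtooth/Farey/large-sieve computation and the $\min\bigl(h,\|r/q\|^{-1}\bigr)$ coefficient bound you sketch, and then performs the same rearrangement to $hX\sum_d d^{-1}\bigl(\sum_e \lambda_{de}/e\bigr)^2$ plus absorption of the $D^2$-term. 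Thus your closing alternative of just citing the result from Ch.~6.10 is precisely what the paper does, the only remaining work being the bound $\gamma_d\ll dh$ and the final bookkeeping, both of which appear (in equivalent form) in your write-up.
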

\begin{proof}
We get from~\cite[Proposition 6.25]{Opera} that the left hand side of the claim equals
\[
2X\sum_{d} \gamma_d \Big(\sum_{\substack{m \leq D \\ m \equiv 0 \pmod{d}}} \frac{\lambda_m}{m}\Big)^2 + O_{A}\Big(h^2 D^2(\log X)^{A+8}+ hX(\log X)^{-A}\Big),
\]
where
\[
\gamma_d = \sum_{\substack{k \geq 1 \\ (k, d) = 1}} \Big(\frac{d}{\pi k} \sin \frac{\pi h k }{d}\Big)^2 \ll \sum_{\substack{1 \leq k \leq \frac{d}{h}}} \Big(\frac{d}{k} \cdot \frac{h k}{d}\Big)^2
+ \sum_{\substack{k > \frac{d}{h}}} \Big(\frac{d}{k}\Big)^2 \ll dh.
\]
Now
\[
\begin{split}
2X \sum_{d} \gamma_d \Big(\sum_{\substack{m \leq D \\ m \equiv 0 \pmod{d}}} \frac{\lambda_m}{m}\Big)^2 &\ll hX \sum_{d \leq D} d \Big(\sum_{m \leq D/d} \frac{\lambda_{dm}}{dm}\Big)^2 \ll hX \sum_{d} \frac{1}{d} \Big(\sum_m \frac{\lambda_{dm}}{m}\Big)^2
\end{split}
\]
and the claim follows.
\end{proof}

\begin{lemma}
\label{le:SieveCoeffs}
Let $X$ be large. 
Let $g \colon \mathbb{N} \to [0, 1]$ be a multiplicative function and let $\chi$ be as in~\eqref{eq:chi(n)def}. Set $\lambda_{d} = \mu(d) g^{\star}(d) \chi(d)$. Then
\[
\sum_{d} \frac{\lambda_{d}}{d} \leq 9 \prod_{p \leq X} \Big (1 + \frac{g(p) - 1}{p} \Big )
\]
and
\[
\sum_{d} \frac{1}{d} \Big ( \sum_{e} \frac{\lambda_{d e}}{e} \Big )^2 \ll \prod_{p \leq X} \Big ( 1 + \frac{g(p)^2 - 1}{p} \Big ).
\]
\end{lemma}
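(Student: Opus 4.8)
The plan is to factor the sums over $d$ and over $e$ into local (prime-by-prime) contributions and to exploit the structure of the Brun--Hooley majorant $\chi = \prod_{k=1}^{K+1}\chi_k$. First I would treat the simpler sum $\sum_d \lambda_d/d$. Since $\lambda_d = \mu(d)g^{\star}(d)\chi(d)$ is supported on squarefree $d \leq X^{2/5}$ and $\chi(d) = \prod_k \chi_k(d)$ with each $\chi_k$ depending only on the part of $d$ supported on primes in $I_k$, the sum essentially factors over the blocks $I_1,\dots,I_{K+1}$; the only interaction is the support constraint $d \le X^{2/5}$ coming from $\chi_K$ being a linear-sieve weight, but this is automatically respected. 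For blocks $I_k$ with $1 \le k \le K-1$, write $\sum_{d \mid P(I_k)} \mu(d)g^\star(d)\chi_k(d)/d = \prod_{p \in I_k}(1 - g^\star(p)/p) + (\text{correction from }\omega_{I_k}(d) > 30\lfloor \log_{k+1}X\rfloor)$. The main term is $\prod_{p \in I_k}(1 + (g(p)-1)/p)$ since $1 - g^\star(p)/p = 1 - (1-g(p))/p = 1 + (g(p)-1)/p$; the correction is negligible because the truncation level $30\lfloor \log_{k+1}X\rfloor$ is far above the typical value $\sum_{p \in I_k} 1/p \asymp \log_{k+1}X - \log_{k+2}X \le 2\log_{k+1}X$ of $\omega_{I_k}$, so a standard Rankin-type bound (multiply by $2^{\omega_{I_k}(d)}$) shows the tail contributes a factor $1 + O(\text{exponentially small})$ to each block. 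For the block $I_K$, apply Lemma~\ref{le:linearsieve}(iii) with $g(p) = 1/p$... more precisely with the multiplicative function whose value at $p$ is $g^\star(p)/p$'s appropriate analogue; this gives $\sum_{d \in S^+}\mu(d)(g^\star(d)/d) \le (F(s) + O((\log D)^{-1/6}))\prod_{p \in I_K}(1 + (g(p)-1)/p)$ with $s = \frac{2/5 - 1/1000}{1/7} > 2$, so $F(s) \le F(2) \le 2e^\gamma/2 = e^\gamma < 2$, actually one needs $s \in [1,3]$ to use $F(s) = 2e^\gamma/s$, and here $s \approx 2.79$, giving $F(s) \le 2e^\gamma/2.79 < 1.3$. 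For $I_{K+1}$ the weight forces $d$ to have no prime factor above $X^{1/7}$, contributing a factor $\prod_{X^{1/7} < p \le X}(1 + (g(p)-1)/p)$ trivially (this block contributes $1$ to the sieve sum but accounts for the corresponding Euler factors on the right). Multiplying the block contributions and bounding the finitely many multiplicative constants (at most $9 \ge 2 \cdot 1.3 \cdot \prod(1+O(\text{small}))$), one gets the constant $9$; I would be somewhat careful to verify $9$ is actually large enough but expect considerable slack.

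For the second estimate, the key observation is that $\sum_e \lambda_{de}/e$, for fixed squarefree $d$ coprime to the "used" primes, is itself a sieve-type sum but over the primes not dividing $d$; by the same block-factorization it is bounded (up to the constant $9$) by $\prod_{p \le X, p \nmid d}(1 + (g(p)-1)/p) \ll \prod_{p\le X}(1+(g(p)-1)/p) \cdot \prod_{p \mid d}(1 + (g(p)-1)/p)^{-1}$. Since $0 \le g(p) \le 1$ we have $(1+(g(p)-1)/p)^{-1} \le (1 - 1/p)^{-1} \le 1 + 2/p$ for $p \ge 2$, so the inner sum is $\ll \prod_{p\le X}(1+(g(p)-1)/p) \cdot \prod_{p \mid d}(1 + 2/p)$. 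Squaring and summing over $d$,
\[
\sum_d \frac{1}{d}\Big(\sum_e \frac{\lambda_{de}}{e}\Big)^2 \ll \Big(\prod_{p \le X}\big(1+\tfrac{g(p)-1}{p}\big)\Big)^2 \sum_{\substack{d \le X^{2/5} \\ d \mid P, \, \chi(d) \neq 0}} \frac{\chi(d) g^\star(d)^2}{d}\prod_{p \mid d}\big(1+\tfrac{2}{p}\big)^2.
\]
The remaining sum over $d$ again factors over the blocks, and in each block $\sum_{p \in I_k}\frac{(g^\star(p)(1+2/p)^2)}{p} \cdot (\text{Euler product with the truncation})$ is $\prod_{p \in I_k}(1 + \frac{g^\star(p)(1+2/p)^2}{p}) (1 + o(1))$. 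I would then compare $\prod_p (1 + \frac{g^\star(p)(1+2/p)^2}{p})$ against $\prod_p(1+\frac{g(p)^2-1}{p})^{1/2} \cdot \prod_p(1+\frac{g(p)-1}{p})^{-1}$: since $g^\star(p) = 1 - g(p)$ and $g(p)^2 - 1 = (g(p)-1)(g(p)+1)$, one has $1 + \frac{g^\star(p)(1+2/p)^2}{p} = 1 + \frac{(1-g(p))(1+O(1/p))}{p}$, while $\big(1+\frac{g(p)^2-1}{p}\big)\big(1+\frac{g(p)-1}{p}\big)^{-2} = 1 + \frac{(1-g(p))(3 - g(p)) + O(1/p)}{p} \cdot (\text{positive})$; a termwise logarithmic comparison $\log(1+u) \le u$ then shows the left product is bounded by a constant times the target, which after taking square roots and combining with the $\big(\prod(1+(g(p)-1)/p)\big)^2$ prefactor yields exactly $\ll \prod_{p \le X}(1 + \frac{g(p)^2-1}{p})$.

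The main obstacle I anticipate is the bookkeeping in the last paragraph: establishing the pointwise (prime-by-prime) inequality that converts $\prod(1 + \frac{(1-g(p))(1+O(1/p))^2}{p})$ together with the prefactor $\prod(1+\frac{g(p)-1}{p})^2$ into $\prod(1+\frac{g(p)^2-1}{p})$ requires checking that for each $p$, $\frac{(1-g(p))(1+2/p)^2}{p} + 2\log(1 + \frac{g(p)-1}{p}) \le \log(1 + \frac{g(p)^2-1}{p}) + \frac{C}{p^2}$, which is not entirely obvious when $g(p)$ is close to $1$ (both sides are small and one must track the second-order terms), though it does hold with room to spare since $g(p)^2 - 1 = -(1-g(p))(1+g(p)) \geq -2(1-g(p))$ and the derivative comparison at $g(p) = 1$ is favorable. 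The truncation corrections from the $\chi_k$ ($k < K$) and the linear-sieve error $O((\log X)^{-1/6})$ from $\chi_K$ are routine: they contribute $1 + o(1)$ multiplicatively and are dwarfed by the implied constants, so I would dispatch them quickly with Rankin's trick and Lemma~\ref{le:linearsieve}(iii) respectively.
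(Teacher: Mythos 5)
Your treatment of the first inequality follows the paper's own route (factor over the blocks $I_k$, Rankin's trick for the truncation tails, Lemma~\ref{le:linearsieve}(iii) with density $g^{\star}(p)/p$ for the block $I_K$), and it is sound in structure; only the constant bookkeeping is off. The right-hand side contains Euler factors over $(X^{1/7}, X]$ that have no counterpart on the left, so you must pay their reciprocal, a factor $\prod_{X^{1/7}<p\le 4X}(1-1/p)^{-1} = 7+o(1)$. Combined with $F(s)=2e^{\gamma}/s$ at $s=7(2/5-1/1000)\approx 2.79$, i.e. about $1.276$, the constant actually required is about $8.93$, not your ``$2\cdot 1.3$''; so $9$ does suffice, but with almost no slack, contrary to your expectation of considerable room.

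For the second inequality you take a genuinely different route from the paper, and it has a real gap: the asserted uniform-in-$d$ bound $\bigl|\sum_e \lambda_{de}/e\bigr| \ll g^{\star}(d)\prod_{p\le X}\bigl(1+\frac{g(p)-1}{p}\bigr)\prod_{p\mid d}\bigl(1+\frac{2}{p}\bigr)$ is false. The reason is that $\chi(de)$ does not factor as $\chi(d)\chi(e)$: in each block $I_k$ with $k\le K-1$ the constraint is $\omega_{I_k}(de)\le 30\lfloor \log_{k+1}X\rfloor$, so for fixed $d$ the sum over the $I_k$-part of $e$ is truncated at level $30\lfloor \log_{k+1}X\rfloor-\omega_{I_k}(d)$, and when $\omega_{I_k}(d)$ is near the threshold this truncated alternating sum no longer approximates $\prod_{p\in I_k,\,p\nmid d}(1-g^{\star}(p)/p)$. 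Concretely, take $g\equiv 0$ (so $g^{\star}\equiv 1$) and $d$ a product of exactly $30\lfloor\log\log X\rfloor$ primes of $I_1$: then only $e$ with no prime factor in $I_1$ contribute, the $I_1$-factor of the inner sum equals $1$, while the $I_1$-factor of your claimed bound is $\asymp\prod_{p\in I_1}(1-1/p)\asymp (\log\log X)^2/\log X$ --- off by an unbounded factor. (A smaller issue: for fixed $d$ the set $\{e: (de,P(I_K))\in S^+\}$ is not itself a linear-sieve support, so Lemma~\ref{le:linearsieve}(iii) does not apply to the conditioned $I_K$-sum; there, however, crude $O(1)$ bounds suffice because $\prod_{p\in I_K}(1-1/p)$ is bounded below by an absolute constant.) Your plan is repairable, since $d$ with $\omega_{I_k}(d)=j$ near the threshold carry weight at most $L^j/j!$ with $L=\sum_{p\in I_k}1/p$, which is far smaller than the loss in the conditional truncation; but this stratification over $\omega_{I_k}(d)$ is exactly the argument you have not supplied. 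The paper sidesteps the whole issue with the exact identity $\sum_d \frac{1}{d}\bigl(\sum_e \frac{\lambda_{de}}{e}\bigr)^2=\prod_{p\mid P}\bigl(1-\frac{1}{p}+\frac{1}{p^2}\bigr)\sum_{b\mid P}\frac{\theta_b^2}{b}$, $\theta=\lambda\ast 1$: within the truncation $\theta=1\ast(\mu g^{\star})$ has value $g(p)$ at primes, the tails are handled by Rankin with $\tau(b)^2$, and no conditioning on $d$ ever appears. By contrast, the step you flagged as the main obstacle, the final Euler-product comparison, is actually fine: $\bigl(1+\frac{g-1}{p}\bigr)^2\bigl(1+\frac{(1-g)^2}{p}\bigr)=1+\frac{g^2-1}{p}+O(p^{-2})$ identically, and since $1+\frac{g^2-1}{p}\ge 1-\frac{1}{p}\ge \frac12$ the correction factors multiply out to $O(1)$.
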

\begin{proof}
Let us first notice that, for $d \leq 4X$, $\lambda_d = \prod_{k=1}^{K+1} \lambda_{(d, P(I_k))}$. For the proof of the first claim, we notice that by this multiplicativity property and definition of $\chi(n)$, we have
\[
\sum_{d} \frac{\lambda_{d}}{d} = \prod_{k \leq K-1} \Big ( \sum_{\substack{n \\ \omega_{I_k}(n) \leq 30 \lfloor \log_{k + 1} X \rfloor \\ p | n \implies p \in I_k}} \frac{\mu(n) g^{\star}(n)}{n} \Big ) \cdot \Big ( \sum_{\substack{n \\ p | n \implies p \in I_K}} \frac{\mu(n) \mathbf{1}_{S^+}(n) g^\ast(n)}{n} \Big )
\]
Now, for $k \leq K-1$
\[
\begin{split}
\sum_{\substack{n \\ \omega_{I_k}(n) > 30 \lfloor \log_{k + 1} X \rfloor \\ p | n \implies p \in I_k}} \frac{|\mu(n)|}{n} &\leq \exp(- 30 \log_{k + 1} X) \sum_{\substack{n \\ p | n \implies p \in I_k}} \frac{e^{\omega(n)}|\mu(n)|}{n} \\
&\leq \exp(- 30 \log_{k + 1} X) \prod_{p \in I_k} \Big ( 1 + \frac{e}{p} \Big ) \\
&\leq 2 \cdot \exp(- 30 \log_{k + 1} X + 2 e \log_{k + 1} X) \leq \exp( -  15 \log_{k + 1} X).
\end{split}
\]
and by Lemma~\ref{le:linearsieve}
\[
\begin{split}
\sum_{\substack{n \\ p \mid n \implies p \in I_K}} \frac{\mu(n) 1_{S^+(n)} g^\ast(n) }{n}
&\leq \frac{2e^\gamma + o(1)}{(\frac{2}{5}-\frac{1}{1000})/(1/7)} \cdot \prod_{p \in I_K} \Big(1-\frac{g^\ast(p)}{p}\Big) \leq 8.93  \prod_{p \in I_K \cup I_{K+1}} \Big(1-\frac{g^\ast(p)}{p}\Big).
\end{split}
\]
Hence, for some $|\theta_k| \leq 1$, recalling $g^\ast(p) \in [0, 1]$ for all $p$, we get
\[
\begin{split}
\sum_{d} \frac{\lambda_{d}}{d} &=  \prod_{k \leq K-1} \Big ( \prod_{p \in I_k} \Big ( 1 - \frac{g^\ast(p)}{p} \Big )  + \frac{\theta_k}{(\log_{k} x)^{15}} \Big ) \cdot 8.93  \prod_{p \in I_K \cup I_{K+1}} \Big(1-\frac{g^\ast(p)}{p}\Big) \\
&\leq 8.93 \prod_{p \leq X} \Big ( 1 - \frac{g^\ast(p)}{p} \Big ) \cdot \prod_{k \leq K-1} \Big(1+ \frac{2}{(\log_{k} x)^{10}} \Big) \\
&\leq 9 \prod_{p \leq X} \Big ( 1 - \frac{g^\ast(p)}{p} \Big )
\end{split}
\]
as claimed.

For the proof of the second claim, write $\theta := \lambda \ast 1$ (i.e. $\theta_b = \sum_{d \mid b} \lambda_d$), so that $\lambda = \mu \ast \theta$, and write $P = \prod_{p \leq 4X} p$. The argument in \cite[Proof of Lemma 6.18]{Opera} shows that
\begin{equation}
\label{eq:edlambdasum1ststep}
\sum_{d} \frac{1}{d} \Big ( \sum_{e} \frac{\lambda_{d e}}{e} \Big )^2  = \prod_{p \mid P} \Big(1-\frac{1}{p} + \frac{1}{p^2}\Big) \sum_{b \mid P} \frac{\theta_b^2}{b}.
\end{equation} 
Recalling the multiplicativity property of $\lambda_d$ from the beginning of the proof and noting that, as a convolution, $\theta_b$ inherits this property, we obtain
\begin{equation}
\label{eq:theta2sum}
\sum_{b \mid P} \frac{\theta_b^2}{b} = \prod_{k =1}^{K+1} \Big(\sum_{\substack{n \\ p | n \implies p \in I_k}} \frac{\theta_n^2}{n} \Big).
\end{equation}
Now $|\theta_n^2| \leq \tau(n)^2$ and, for $k \leq K-1$, we have
\[
\begin{split}
\sum_{\substack{\omega_{I_k}(n) > 30 \lfloor \log_{k + 1} X \rfloor \\ p | n \implies p \in I_k}} \frac{\tau(n)^2}{n} &\ll 2^{- \frac{3}{2} \cdot 30  \log_{k + 1} X} \sum_{\substack{n \\ p | n \implies p \in I_k}} \frac{2^{\frac{3}{2} \omega(n) + 2\Omega(n)}}{n} \\
&\ll 2^{- 45 \log_{k + 1} X} \prod_{p \in I_k} \Big ( 1 + \frac{2^{7/2}}{p} \Big ) \\
&\ll \exp(- 45 (\log 2) \log_{k + 1} X + 2^{9/2} \log_{k + 1} X) \ll \exp( -  8 \log_{k + 1} X).
\end{split}
\]
and furthermore, for $k = K, K+1$,
\[
\sum_{\substack{n \\ p | n \implies p \in I_k}} \frac{\tau(n)^2}{n} = O(1).
\]

When $n$ is such that $p \mid n \implies p \in I_k$ and $\omega_{I_k}(n) \leq 30 \lfloor \log_{k+1} X \rfloor$, we have $\theta_n = (1 \ast (\mu g^\ast))(n)$. Hence~\eqref{eq:theta2sum} and the bounds after it imply that
\[
\begin{split}
\sum_{b \mid P} \frac{\theta_b^2}{b}  &\ll \prod_{k \leq K-1} \Big(\sum_{\substack{n \\ p \mid n \implies p \in I_k \\ \omega_{I_k}(n) \leq 30 \lfloor \log_{k + 1} X \rfloor}} \frac{(1 \ast (\mu g^\ast))(n)^2}{n} + \sum_{\substack{\omega_{I_k}(n) > 30 \lfloor \log_{k + 1} X \rfloor \\ p | n \implies p \in I_k}} \frac{\tau(n)^2}{n}\Big) \\
&\ll \prod_{k \leq K-1} \Big(\prod_{p \in I_k} \Big(1+\frac{(1 \ast (\mu g^\ast))(p)^2}{p}\Big) + O((\log_k X)^{-2})\Big) \\
&\ll \prod_{k \leq K-1} \Big(\prod_{p \in I_k} \Big(1+\frac{(1-g^\ast(p)  )^2}{p}\Big) + O((\log_k X)^{-2})\Big) \\
&\ll \prod_{p \leq X} \Big(1+\frac{g(p)^2}{p}\Big).
\end{split}
\]
Combining this with~\eqref{eq:edlambdasum1ststep}, the claim follows.
\end{proof}

Now we are ready to prove the result that we will use to estimate the contribution of $n \not \in \mathcal{S}$ in Theorem~\ref{th:MT}. 

\begin{proposition}
\label{prop:sieveaaintervals}
There exists an absolute constant $C' > 0$ such that the following holds. Let $1 \leq P \leq Q \leq X^{3/4}$, and let $f \colon \mathbb{N} \to \mathbb{U}$ be a multiplicative function. Write $h_1 = H(f; X)$, and let $2 \leq h \leq X^{1/6}$ and $\Delta > 0$.

Let $\mathcal{E}(h)$ denote the set of integers $x \in [X, 2X]$ for which
\begin{equation}
\label{eq:E(h)def}
\frac{1}{h} \sum_{\substack{\substack{x < n \leq x + h \\ p \mid n \implies p \not \in (P, Q]}}} |f(n)|  \geq \prod_{p \leq X} \Big ( 1 + \frac{|f(p)| - 1}{p} \Big ) \left(\Delta + 20 \prod_{P < p \leq Q} \Big ( 1 - \frac{|f(p)|}{p} \Big ) \right) .
\end{equation}
Then
\[
\begin{split}
|\mathcal{E}(h)| &\leq C' \frac{X}{\Delta^2} \frac{\log^2(2+h/h_1)}{1+ h/h_1} \cdot \prod_{p \in (P, Q]} \Big ( 1 - \frac{|f(p)|^2}{p} \Big).
\end{split}
\]
\end{proposition}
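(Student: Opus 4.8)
The strategy is to bound $|\mathcal{E}(h)|$ by a second-moment argument using the sieve majorant constructed in this section. First I would apply the majorant~\eqref{eq:AlladiBH} with $g = |f|$, but restricted to primes outside $(P,Q]$: precisely, taking $\mathcal{P} = \mathbb{P} \setminus (P, Q]$ and $h(n) = |f(n)|\mathbf{1}_{p \mid n \implies p \not\in (P,Q]}$ in Lemma~\ref{le:Alladi}, so that
\[
\mathbf{1}_{p \mid n \implies p \not\in (P,Q]}\,|f(n)| \leq \sum_{d \mid n} \lambda_d, \qquad \lambda_d := \mu(d) g^\star(d)\chi(d),
\]
with $\chi$ built in~\eqref{eq:chi(n)def} but using $I_k$'s that avoid $(P,Q]$ (one can always arrange the partition so that $(P,Q]$ is a union of sieve intervals, or simply pass to a coarsening; since $Q \leq X^{3/4}$ the top interval needs only minor bookkeeping and $D \leq X^{2/5 + o(1)} \leq X^{1/2-\varepsilon}$ still holds). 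The point of routing through Lemma~\ref{le:Alladi} is that the $\lambda_d$ are supported on $d \leq X^{1/2-\varepsilon}$, have $|\lambda_d| \leq 1$, and $\lambda_1 = 1$, so Lemma~\ref{le:Fried} applies.

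Next I would estimate the two quantities on the right-hand side of Lemma~\ref{le:Fried}. By Lemma~\ref{le:SieveCoeffs} (with $g = |f|$, adapted to the restricted prime set, which only removes the factor $\prod_{P<p\le Q}(1 - g^\star(p)/p)$ from the product and hence multiplies the bound by $\prod_{P<p\le Q}(1 - |f(p)|/p)^{-1}$ — but we don't even want that, we want the main term to capture the linear-sieve main term directly) we get
\[
\sum_d \frac{\lambda_d}{d} \leq 9\prod_{\substack{p \leq X \\ p \not\in (P,Q]}}\Big(1 + \frac{|f(p)|-1}{p}\Big) \ll \prod_{p \leq X}\Big(1+\frac{|f(p)|-1}{p}\Big)\prod_{P<p\le Q}\Big(1 - \frac{|f(p)|}{p}\Big)^{-1},
\]
but the cleaner route is: the fundamental lemma/linear sieve error in $\sum_d \lambda_d/d$ versus the true density $\prod_{p\le X, p\not\in(P,Q]}(1 + (|f(p)|-1)/p)$ is negligible, so up to a factor $1 + o(1)$ the expected main term $h\sum_d \lambda_d/d$ matches $h\prod_{p\le X}(1+\frac{|f(p)|-1}{p})\cdot\prod_{P<p\le Q}(1-\frac{|f(p)|}{p})^{-1}$; comparing with the threshold in~\eqref{eq:E(h)def} shows the constant $20$ (rather than, say, $10$) and the additive $\prod_{P<p\le Q}(1-|f(p)|/p)$ term give enough room that on $\mathcal{E}(h)$ the short sum exceeds its prediction by at least $\tfrac{\Delta}{2}h\prod_{p\le X}(1+\frac{|f(p)|-1}{p})$. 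For the quadratic term, Lemma~\ref{le:SieveCoeffs} gives
\[
\sum_d \frac1d\Big(\sum_e \frac{\lambda_{de}}{e}\Big)^2 \ll \prod_{\substack{p \leq X \\ p\not\in(P,Q]}}\Big(1 + \frac{|f(p)|^2-1}{p}\Big) \ll \prod_{p\le X}\Big(1+\frac{|f(p)|^2-1}{p}\Big)\prod_{P<p\le Q}\Big(1-\frac{|f(p)|^2}{p}\Big)^{-1}.
\]

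Then by Chebyshev: on $\mathcal{E}(h)$ the difference $\sum_{x<n\le x+h}(\sum_{d\mid n}\lambda_d) - h\sum_d \lambda_d/d$ is $\gg \Delta h\prod_{p\le X}(1+\frac{|f(p)|-1}{p})$ in absolute value, so Lemma~\ref{le:Fried} gives
\[
|\mathcal{E}(h)| \cdot \Delta^2 h^2 \prod_{p\le X}\Big(1+\frac{|f(p)|-1}{p}\Big)^2 \ll Xh\,\prod_{p\le X}\Big(1+\frac{|f(p)|^2-1}{p}\Big)\prod_{P<p\le Q}\Big(1-\frac{|f(p)|^2}{p}\Big)^{-1} + \text{(tiny)},
\]
where the terms $h^2 D^2(\log X)^{A+8}$ and $hX(\log X)^{-A}$ are absorbed since $h \le X^{1/6}$, $D \le X^{1/2-\varepsilon}$ and $A$ is chosen large. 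The final step is to recognize $\prod_{p\le X}(1+\frac{|f(p)|^2-1}{p}) \le \prod_{p\le X}(1+\frac{|f(p)|-1}{p})^2 \cdot \prod_{p\le X}(1 + O(\frac{(1-|f(p)|)^2}{p}))$; the last factor is exactly $H(|f|;X) \asymp h_1$ up to the Mertens-type constant, and dividing through by $\prod_{p\le X}(1+\frac{|f(p)|-1}{p})^2$ and by $h^2$ yields
\[
|\mathcal{E}(h)| \ll \frac{X}{\Delta^2}\cdot\frac{h_1}{h}\cdot\prod_{P<p\le Q}\Big(1-\frac{|f(p)|^2}{p}\Big)^{-1},
\]
which is not quite the claimed shape; to get the $\frac{\log^2(2+h/h_1)}{1+h/h_1}$ factor and the product $\prod_{P<p\le Q}(1-|f(p)|^2/p)$ with the \emph{correct} sign, one must be more careful. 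The honest way is to run the argument not with $h$ but with the "normalized" length $h/h_1$ — i.e. apply Lemma~\ref{le:Fried} after the standard reduction (as in~\cite{MainPaper}) that trades a short interval of length $h$ against $\asymp h/h_1$ genuinely independent residue classes, producing the $\log^2$-saving — and to keep the factor $\prod_{P<p\le Q}(1-|f(p)|^2/p)$ on the numerator side by observing that in $\sum_d \lambda_d^2/d$-type sums the primes in $(P,Q]$ are simply \emph{absent} from $\chi$, so they contribute $\prod_{P<p\le Q}(1 - |f(p)|^2/p)$ as a genuine upper factor rather than its reciprocal. I will set up the majorant so that primes in $(P,Q]$ never enter $\lambda_d$, making both the first- and second-moment products over $p \not\in (P,Q]$, which after comparison with the full products over $p \le X$ leaves precisely $\prod_{P<p\le Q}(1 - |f(p)|^2/p)$ multiplying the bound.

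**Main obstacle.** The delicate point is obtaining the $\frac{\log^2(2+h/h_1)}{1+h/h_1}$ factor rather than a bare $h_1/h$: this requires that the second-moment estimate be sensitive to the fact that $|f|$ is genuinely spread out over residue classes at scale $h_1$, which is what the $\sum_d \frac1d(\sum_e \frac{\lambda_{de}}{e})^2$ term in Lemma~\ref{le:Fried} is designed to detect — but translating the product $\prod_{p\le X}(1+\frac{|f(p)|^2-1}{p})$ into $h_1 \cdot \prod_{p\le X}(1+\frac{|f(p)|-1}{p})^2$ and then producing the extra $\log^2$ requires either a dyadic decomposition of the interval $[X,2X]$ into sub-ranges according to $\omega_{(P,Q]}(n)$ (à la the $\mathcal{S}$-decomposition) or a direct appeal to a sharper form of the Friedlander bound. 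The bookkeeping to make the top sieve interval compatible with an arbitrary $Q \le X^{3/4}$ (possibly $Q$ inside $I_K$ or $I_{K+1}$) is also fiddly but routine. I expect the $\log^2$-refinement to be where the real work lies; everything else is a direct assembly of Lemmas~\ref{le:Alladi}, \ref{le:Fried}, and~\ref{le:SieveCoeffs}.
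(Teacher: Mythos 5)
Your core engine --- the majorant $\lambda_d = \mu(d) g^{\star}(d)\chi(d)$ with $g$ vanishing on $(P,Q]$, fed into Lemma~\ref{le:Fried} and Lemma~\ref{le:SieveCoeffs} and finished by Chebyshev --- is indeed the engine of the paper's proof, and your final fix of the sign of the $(P,Q]$-product (make $g(p)=|f(p)|\mathbf{1}_{p \not\in (P,Q]}$ so that both moments in Lemma~\ref{le:SieveCoeffs} carry $\prod_{P<p\leq Q}(1-|f(p)|^2/p)$ as a factor, not its reciprocal) is exactly what the paper does; there is no need to tamper with the intervals $I_k$ in~\eqref{eq:chi(n)def} at all. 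The genuine gap is your very first display: \eqref{eq:AlladiBH} majorizes $|\mu(n)|g(n)$ only, so the inequality $\mathbf{1}_{p\mid n \Rightarrow p\not\in(P,Q]}|f(n)| \leq \sum_{d\mid n}\lambda_d$ is false for non-squarefree $n$ (take $f(p)=0$, $f(p^2)=1$: at $n=p^2$ the left side is $1$ while the right side is $1-(1-g(p))\chi(p)=0$). The paper copes by writing $n=km$ with $k$ squarefull, $m$ squarefree, $(k,m)=1$, and splitting $k$ at $K_0=\max\{h/h_1,1\}$: the ranges $k>K_0$ are handled by a direct second moment via Lemmas~\ref{le:Shiu} and~\ref{le:Henriot} (using $\sum_{k \in \mathcal{K},\, k>K_0} 1/k \ll K_0^{-1/2}$), and only the range $k\leq K_0$ goes through Cauchy--Schwarz in $k$ followed by Lemma~\ref{le:Fried} applied to intervals of length $h/k$. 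The two logarithms in the stated bound are precisely two occurrences of $\sum_{k\in\mathcal{K},\,k\leq K_0}k^{-1/2}\ll\log(2K_0)$. Without this decomposition you have no treatment of the squarefull contribution and no source for the shape $\log^2(2+h/h_1)/(1+h/h_1)$.

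This also means you have misdiagnosed where the work lies. In the regime $h\geq h_1$ (the only one arising in the applications, where the proposition is invoked with $h=h_0h_1$), your direct bound $\frac{X}{\Delta^2}\cdot\frac{h_1}{h}\cdot\prod_{P<p\leq Q}(1-|f(p)|^2/p)$ is \emph{stronger} than the claim, since $h_1/h \ll \log^2(2+h/h_1)/(1+h/h_1)$ there; the logarithms in the statement are a loss the paper accepts as the price of the squarefull split, not a refinement to be manufactured. Consequently none of the devices you propose for the ``main obstacle'' --- trading the interval for $h/h_1$ residue classes, a decomposition by $\omega_{(P,Q]}(n)$, or a sharpened Friedlander bound --- corresponds to anything in the actual argument. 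The one regime where $h_1/h$ genuinely falls short of the claim is $h<h_1$, and the paper dispatches that case within the squarefull-part analysis before the sieve step is ever needed. So what is missing from your proposal is the $n=km$ decomposition and the case split at $h/h_1$; the step you flag as ``where the real work lies'' requires no work at all.
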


\begin{proof}
Write $\mathcal{K} := \{n \in \mathbb{N} \colon p \mid n \implies p^2 \mid n\}$. Note that, for any $K \geq 1$, we have $\#(\mathcal{K} \cap (K, 2K]) \ll K^{1/2}$. Furthermore
\begin{equation}
\label{eq:Ksum}
\sum_{k \in \mathcal{K}} \frac{1}{k} = \prod_{p \in \mathbb{P}} \left(1+\frac{1}{p^2} + \frac{1}{p^3} + \dotsb \right) = \prod_{p \in \mathbb{P}} \left( 1+ \frac{1}{p^2-p}\right) \leq 1.95.
\end{equation}
Any $n$ can be uniquely written as $n = km$ where $(k, m) = 1$, $k \in \mathcal{K}$, and $|\mu(m)| = 1$, so that
\[
\sum_{\substack{x < n \leq x + h \\ p \mid n \implies p \not \in (P, Q]}} |f(n)| = \sum_{\substack{x < km \leq x + h \\ p \mid km \implies p \not \in (P, Q] \\ k \in \mathcal{K}, \, (k, m) = 1}} |\mu(m) f(km)|
\]
Write $K_0 := \max\{h/h_1, 1\}$. We split the sum over $k \in \mathcal{K}$ into three parts $\mathcal{K}_0, \mathcal{K}_1$ and $\mathcal{K}_2$, where 
\[
\mathcal{K}_0 := \mathcal{K} \cap [1, K_0], \quad \mathcal{K}_1 := \mathcal{K} \cap (K_0, X^{2/5}] \quad \text{and} \quad \mathcal{K}_2 := \mathcal{K} \cap (X^{2/5}, 2X+h].
\]
First note that
\[
\begin{split}
&\frac{1}{X} \int_{X}^{2X} \sum_{\substack{x < km \leq x + h \\ p \mid km \implies p \not \in (P, Q] \\ k \in \mathcal{K}_2, \, (k, m) = 1}} |\mu(m) f(km)| \, dx \leq  \frac{h}{X} \sum_{\substack{k \in \mathcal{K}_2}}  \sum_{\substack{X/k < m \leq (2X + h)/k}} 1 \ll \frac{h}{X^{1/5}}.
\end{split}
\]
Since we can clearly assume that $\Delta \leq \log X$ (otherwise~\eqref{eq:E(h)def} can never hold for large $X$), this implies that
\[
\sum_{\substack{x < km \leq x + h \\ p \mid km \implies p \not \in (P, Q] \\ k \in \mathcal{K}_2, \, (k, m) = 1}} |\mu(m) f(km)| > \frac{\Delta}{100} h \prod_{p \leq X} \Big(1+\frac{|f(p)|-1}{p}\Big)
\]
for at most $\ll X^{0.81}$ values $x \in [X, 2X]$. This exceptional set is acceptable since $h \leq X^{1/6}$.

Furthermore
\begin{equation}
\label{eq:SieveL2}
\begin{split}
&\int_{X}^{2X} \Bigl(\sum_{\substack{x < km \leq x + h \\ p \mid km \implies p \not \in (P, Q] \\ k \in \mathcal{K}_1, \, (k, m) = 1}} |\mu(m) f(km)| \Bigr)^2 dx \\
&\ll h \sum_{\substack{X < km \leq 2X+h \\ p \mid m \implies p \not \in (P, Q] \\ k \in \mathcal{K}_1}} |\mu(m) f(m)|^2  + h \sum_{\substack{k_1, k_2 \in \mathcal{K}_1}} \sum_{\substack{|r| \leq h \\ (k_1, k_2) \mid r}} \sum_{\substack{X < n \leq 2X + h \\ k_1 \mid n, k_2 \mid n+r \\ p \mid n(n+r) \implies p \not \in (P, Q]}} |f(n)f(n+r)|.
\end{split}
\end{equation}
Applying Lemma~\ref{le:Shiu} to the first term, we see that it is
\[
\begin{split}
&\ll h \sum_{\substack{k \in \mathcal{K}_1}} \frac{X}{k} \prod_{p \leq X} \Big(1+\frac{|f(p)|^2-1}{p}\Big) \cdot  \prod_{p \in (P, Q]} \Big(1-\frac{|f(p)|^2}{p}\Big) \\
&\ll \frac{h X}{K_0^{1/2}} \cdot h_1 \prod_{p \leq X} \Big(1+\frac{2|f(p)|-2}{p}\Big) \prod_{p \in (P, Q]} \Big(1-\frac{|f(p)|^2}{p}\Big).
\end{split}
\]
Applying Lemma~\ref{le:Henriot} to the second term on the right hand side of~\eqref{eq:SieveL2}, we see that it is
\[
\begin{split}
&\ll h^2 X \sum_{\substack{k_1, k_2 \in \mathcal{K}_1 }} \frac{1}{k_1 k_2} \prod_{p \leq X} \Big(1+\frac{2|f(p)|-2}{p}\Big) \cdot  \prod_{p \in (P, Q]} \Big(1-\frac{2 |f(p)|}{p}\Big) \prod_{p \mid k_1 k_2} \left(1+\frac{1}{p}\right) \\
&\ll \frac{h^2 X}{K_0} \prod_{p \leq X} \Big(1+\frac{2|f(p)|-2}{p}\Big) \cdot  \prod_{p \in (P, Q]} \Big(1-\frac{2 |f(p)|}{p}\Big)
\end{split} 
\]
Plugging these bounds into~\eqref{eq:SieveL2}, we see that
\[
\sum_{\substack{x < km \leq x + h \\ p \mid km \implies p \not \in (P, Q] \\ k \in \mathcal{K}_1, \, (k, m) = 1}} |\mu(m) f(km)| > \frac{\Delta}{100} h \prod_{p \leq X} \Big(1+\frac{|f(p)|-1}{p}\Big).
\]
for at most 
\[
\ll \frac{X}{\Delta^2 (h/h_1 + 1)} \prod_{p \in (P, Q]} \Big(1-\frac{|f(p)|^2}{p}\Big)
\]
integers $x \in [X, 2X]$.

Now we can concentrate to $\mathcal{K}_0$ (note that if $h < h_1$, we are done now). Let $g \colon \mathbb{N} \to [0, 1]$ be the multiplicative function which is defined by $g(p^\alpha) = |f(p^\alpha)| \mathbf{1}_{p \not \in (P, Q]}$. We shall apply Lemma~\ref{le:Fried}. Let us take $\lambda_d = \mu(d) g^\ast(d) \chi(d)$ with $\chi$ as in~\eqref{eq:chi(n)def}. Note that, by~\eqref{eq:AlladiBH}, 
\[
\sum_{\substack{x < km \leq x + h \\ p \mid km \implies p \not \in (P, Q] \\ k \in \mathcal{K}_0, \, (k, m) = 1}} |\mu(m) f(km)| \leq \sum_{\substack{x < km \leq x + h \\ k \in \mathcal{K}_0}} |\mu(m) g(m)| \leq \sum_{\substack{x < dkn \leq x + h \\ k \in \mathcal{K}_0}} \lambda_d
\]
and, by Lemma~\ref{le:SieveCoeffs} and~\eqref{eq:Ksum},
\[
\begin{split}
\sum_{k \in \mathcal{K}_0} \frac{h}{k} \sum_{d} \frac{\lambda_d}{d} &\leq 18 h \prod_{p \leq X} \Big(1+\frac{|f(p)|-1}{p}\Big) \cdot \prod_{p \in (P, Q]} \Big(1-\frac{|f(p)|}{p}\Big).
\end{split}
\]
Hence it suffices to show that
\[
\begin{split}
&\int_{X}^{2X} \Bigl|\sum_{\substack{k \in \mathcal{K}_0}} \Bigl(\sum_{\substack{x/k < dn \leq (x + h)/k}} \lambda_d - \frac{h}{k} \sum_{d} \frac{\lambda_d}{d}\Bigr) \Bigr|^2 dx \\
& \ll \log^2(2K_0) X h h_1\prod_{p \leq X} \left(1+\frac{2|f(p)|-2}{p}\right) \prod_{p \in (P, Q]} \left(1-\frac{|f(p)|^2}{p}\right).
\end{split}
\]
Now, by Cauchy-Schwarz, the left hand side is
\[
\begin{split}
&\ll \sum_{\substack{k \in \mathcal{K}_0}} \frac{1}{k^{1/2}} \sum_{\substack{k \in \mathcal{K}_0}} k^{1/2} \int_{X}^{2X} \Bigl|\sum_{\substack{x/k < dn \leq (x + h)/k}} \lambda_d - \frac{h}{k} \sum_{d} \frac{\lambda_d}{d} \Bigr|^2 dx \\
&\ll \log (2K_0) \sum_{\substack{k \in \mathcal{K}_0}} k^{3/2} \int_{X/k}^{2X/k} \Bigl|\sum_{\substack{x < dn \leq x + h/k}} \lambda_d - \frac{h}{k} \sum_{d} \frac{\lambda_d}{d} \Bigr|^2 dx.
\end{split}
\]
By Lemmas~\ref{le:Fried} and~\ref{le:SieveCoeffs} this is
\[
\begin{split}
&\ll \log(2K_0) X h \sum_{\substack{k \in \mathcal{K}_0}} \frac{1}{k^{1/2}} \prod_{p \leq X} \left(1+\frac{|f(p)|^2-1}{p}\right) \prod_{p \in (P, Q]} \left(1-\frac{|f(p)|^2}{p}\right) \\
& \ll \log^2(2K_0) X h h_1\prod_{p \leq X} \left(1+\frac{2|f(p)|-2}{p}\right) \prod_{p \in (P, Q]} \left(1-\frac{|f(p)|^2}{p}\right)
\end{split}
\]
as desired.
\end{proof}

We shall estimate long sums of multiplicative functions using the following lemma. Results in a similar spirit can be found in a recent paper of Elliott~\cite{Elliott17} but part (iii) is not implied by those results.
\begin{lemma}
\label{le:GrKoMa}
\begin{enumerate}[(i)]
Let $X$ be large and let $f: \mathbb{N} \rightarrow [0,1]$ be a multiplicative function.
\item One has
\[
\frac{1}{X} \sum_{X < n \leq 2X} f(n) \leq 10 \prod_{\substack{p \leq X}} \Big(1+\frac{f(p)-1}{p}\Big).
\]
\item One has
\[
\frac{1}{3} \cdot \prod_{p \leq X} \Big(1+\frac{f(p)}{p}\Big)  \leq \sum_{n \leq X} \frac{f(n)}{n} \leq  \prod_{p \leq X} \Big(1+\frac{f(p)}{p}+\frac{f(p^2)}{p^2} + \dotsb \Big).
\]
\item
There exists a positive constant $\lambda$ such that the following holds for any $\delta > 0$. Assume that
\[
\sum_{X^{\delta} < p \leq X^{1/4}} \frac{f(p)}{p} \geq \lambda.
\]
Then
\[
\frac{1}{X} \sum_{X < n \leq 2X} f(n) \geq c \prod_{\substack{p \leq X}} \Big(1+\frac{f(p)-1}{p}\Big)
\]
for a positive constant $c$ depending only on $\delta$.
\end{enumerate}
\end{lemma}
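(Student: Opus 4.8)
The three parts are of quite different character: (i) and (ii) are soft, while (iii) carries the weight.

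\textbf{Parts (i) and (ii).} The plan for (i) is to reduce to squarefree $n$ exactly as in the proof of Proposition~\ref{prop:sieveaaintervals}: write each $n$ uniquely as $n=km$ with $k$ squarefull, $m$ squarefree and $(k,m)=1$, use $f(km)=f(k)f(m)$, and sum over $k$ against $\sum_{k\text{ squarefull}}1/k\le 1.95$ as in~\eqref{eq:Ksum}. The squarefree sum $\sum_{Y<m\le 2Y}|\mu(m)|f(m)$ over the relevant interval is handled by the Brun--Hooley majorant~\eqref{eq:AlladiBH} with $g=f$ together with Lemma~\ref{le:SieveCoeffs}, which produces $9Y\prod_{p\le X}(1+(f(p)-1)/p)$ up to an $O(X^{2/5})$ error from the support of $\chi$, negligible against $X\prod_{p\le X}(1-1/p)\gg X/\log X$; a slightly more careful treatment of the dyadic localisation of $m$ then yields the constant $10$ (alternatively one may track the constant in Shiu's theorem, Lemma~\ref{le:Shiu}). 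In (ii), the upper bound is the trivial Euler-product inequality $\sum_{n\le X}f(n)/n\le\prod_{p\le X}(1+f(p)/p+f(p^2)/p^2+\cdots)$, while the lower bound is a standard estimate for sums of nonnegative multiplicative functions (in the same circle of ideas as~\cite{Elliott17}), proved for instance by restricting to squarefree $n$ supported on primes $\le X$ and bounding the tail $\sum_{n>X}$ by Rankin's trick.

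\textbf{Part (iii).} After the same reduction to squarefree $n$, the heart of the matter is a Ramar\'e/Buchstab-type weighted peeling identity. Put $I:=(X^\delta,X^{1/4}]$ and $\omega_I^\ast(n):=\#\{q\in I:q\,\|\,n\}$. Then
\[
\sum_{X<n\le 2X}f(n)\,\omega_I^\ast(n)=\sum_{q\in I}f(q)\sum_{\substack{X/q<m\le 2X/q\\ q\nmid m}}f(m),
\]
and since $\omega_I^\ast(n)\le 1/\delta+1$ for $n\le 2X$ this gives
\[
\frac{1}{X}\sum_{X<n\le 2X}f(n)\;\gg_\delta\;\sum_{q\in I}\frac{f(q)}{q}\cdot\frac{q}{X}\sum_{X/q<m\le 2X/q}f(m)\;-\;O_\delta\bigl(X^{-\delta}\bigr),
\]
after removing the coprimality and boundary contributions with part (i) (their total is $\ll_\delta X^{1-\delta}$). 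The right-hand side relates the dyadic average of $f$ at scale $X$ to its averages at the smaller scales $X/q\in[X^{3/4},X^{1-\delta})$. Iterating this $K=O_\delta(1)$ times — peeling a product of at most $K$ primes from $I$, the residual scale dropping by at most $X^{-\delta}$ at each step and staying $\ge X^{2\delta}$ — and then combining with part (ii) in the form $\sum_{n\le X}f(n)/n\ge\tfrac13\prod_{p\le X}(1+f(p)/p)\gg(\log X)\prod_{p\le X}(1+(f(p)-1)/p)$, where we used $\prod_{p\le X}\bigl(1+\tfrac{1}{p+f(p)-1}\bigr)\asymp\log X$, produces the desired lower bound $\tfrac1X\sum_{X<n\le 2X}f(n)\gg_\delta\prod_{p\le X}(1+(f(p)-1)/p)$.

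\textbf{The main obstacle} is purely one of bookkeeping constants and error terms. First, every error term must be shown to be of strictly smaller order than $X\prod_{p\le X}(1+(f(p)-1)/p)$, which is delicate precisely because that product may be as small as $\asymp 1/\log X$. Secondly, and more seriously, one must arrange that the cumulative multiplicative loss over the $O_\delta(1)$ peeling steps depends on $\delta$ alone: the gain at each step is $\sum_{q\in I}f(q)/q\ge\lambda$, and it is exactly the requirement that $\lambda$ be absolute — while $c$ is allowed to depend on $\delta$ — that dictates how tightly the overcounting weight $\omega_I^\ast$ must be controlled and how the iteration has to be organised. Making that uniformity rigorous is the only real work.
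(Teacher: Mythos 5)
Your plan for (iii) has a structural gap, not a bookkeeping one. The peeling identity only bounds the dyadic average at scale $X$ from below by dyadic averages of $f$ at the smaller scales $X/q$ (and, after iterating, $X/(q_1\cdots q_K)$); after $O_\delta(1)$ steps you are left needing a lower bound for $\frac1Y\sum_{Y<m\le 2Y}f(m)$ at some residual scale $Y$, which is precisely the statement being proved. Part (ii) cannot close this loop: it controls the logarithmic sum $\sum_{n\le X}f(n)/n$, and a large logarithmic average gives no lower bound on any single dyadic window. Indeed, in your double sum each residual $m$ only sees the primes $q\in[X/m,2X/m)\cap I$, a window of bounded multiplicative length, and nothing in the hypothesis $\sum_{q\in I}f(q)/q\ge\lambda$ prevents $f$ from vanishing on the primes of that particular window; to rule out such conspiracies one must show that products of boundedly many primes from an essentially arbitrary set $\mathcal{P}\subset(X^\delta,X^{1/4}]$ of logarithmic density $\ge\lambda$ hit $(X,2X]$ with density $\gg_\delta 1/\log X$. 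That is exactly the ``when the sieve works'' problem, and it is the real content of (iii): the paper's proof does not iterate at all, but (after reducing to primes $p$ with $f(p)\ge\eta$, $\eta=\lambda/(2\log(1/\delta))$) invokes \cite{GrKoMa12} (Theorem 1 with Remark 1.4) to produce, uniformly in a modulus $m\le X^{1/4}$, the bound $\#\{n\sim y:\ p\mid n\Rightarrow p\in(X^\delta,X^{1/4}],\ f(p)\ge\eta,\ (n,m)=1\}\gg_\delta y/\log X$, and then sums over the complementary factor $m$ using part (ii). It is this imported theorem, not constant-chasing, that makes $\lambda$ absolute while $c$ depends on $\delta$; your ``main obstacle'' paragraph misidentifies the difficulty and the proposed iteration is circular without it.

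Two further points on (i)--(ii). The lower bound in (ii) cannot be obtained by Rankin's trick: already for $f\equiv1$ the tail $\sum_{n>X,\,p\mid n\Rightarrow p\le X}|\mu(n)|/n$ is a positive proportion (about $0.44$) of $\prod_{p\le X}(1+1/p)$, and Rankin's bound $X^{-\sigma}\prod_{p\le X}(1+p^{\sigma-1})$ is, for every $\sigma>0$, at least $(1+o(1))\prod_{p\le X}(1+1/p)$ (its logarithm is $\approx\int_0^{c}\frac{e^v-1}{v}dv-c\ge0$ with $c=\sigma\log X$), so it yields no positive constant, let alone $1/3$. The paper instead convolves with the multiplicative $f^\ast$ defined by $f^\ast(p)=1-f(p)$, reducing the problem to $\sum_{n\le X}|\mu(n)|/n\ge(6/\pi^2+o(1))\log X$ and obtaining the constant $6e^{-\gamma}/\pi^2>1/3$. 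For (i), routing through \eqref{eq:AlladiBH} and Lemma~\ref{le:SieveCoeffs} loses a factor $9$ from the Brun--Hooley construction, and $9\cdot 1.95>10$, so the stated constant does not come out (nor does Lemma~\ref{le:Shiu}, whose implied constant is unspecified); the paper instead applies the plain linear sieve (Lemma~\ref{le:linearsieve}) with $D=z=X^{9/10}$, for which the sieve constant is $2e^\gamma$ and $\frac{10}{9}\cdot 2e^\gamma\cdot\sum_{k\ \mathrm{squarefull}}1/k<10$. Since the lemma asserts the explicit constants $10$ and $1/3$ (and they are used with explicit numerical margins, e.g.\ in Corollary~\ref{cor:main1} and inside the proof of (iii)), a proof should actually deliver them.
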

\begin{remark}
  An alternative proof of (i) can be obtained by appealing to \cite[Theorem 3.5, Chapter III.3]{Tenenbaum}. 
\end{remark}
\begin{proof}
Part (i): We take $\mathcal{K} = \{n \in \mathbb{N} \colon p \mid n \implies p^2 \mid n\}$, and write
\[
\sum_{X < n \leq 2X} f(n) \leq \sum_{k \in \mathcal{K}} f(k) \sum_{\substack{X/k < n \leq 2X/k}} |\mu(n)| f(n).
\]
Those $k$ with $k > X^{1/1000}$ make an acceptable contribution $O(X^{1-1/2000})$. Hence we concentrate on $k \leq X^{1/1000}$. Write $f^{\star}$ for the completely multiplicative function such that $f^{\star}(p) = 1 - f(p)$. Similarly to~\eqref{eq:AlladiBH}, we have, according to Lemma~\ref{le:Alladi}, with $S^+$ as in Lemma~\ref{le:linearsieve} with $D = z = X^{9/10}$ and $\mathcal{P} = \mathbb{P} \cap [1, z]$,
\[
\sum_{\substack{X/k < n \leq 2X/k}} |\mu(n)| f(n) \leq \sum_{\substack{X/k < n \leq 2X/k}} \sum_{\substack{d \mid n \\ d \in S^+}} \mu(d) f^\ast(d) \leq \frac{X}{k}\sum_{d \in S^+} \frac{\mu(d) f^{\ast}(d)}{d} + O(X^{9/10}).
\]
By Lemma~\ref{le:linearsieve}(iii) this equals
\[
\frac{X}{k}(2e^\gamma+o(1)) \prod_{p \leq X^{9/10}} \Big(1+\frac{f(p)-1}{p}\Big),
\]
and the claim follows since $\frac{10}{9} \cdot 2 e^\gamma \cdot \sum_{k \in \mathcal{K}} \frac{1}{k} < 10$.

Part (ii): We generalise the argument in~\cite[Proof of Lemma 2.1]{GrKoMa12}. The upper bound is immediate by multiplicativity. For the lower bound, let $f^\ast$ be multiplicative function such that $f^\ast(p) = 1- f(p)$ and $f^\ast(p^k) = 0$ for $k \geq 2$. Note that
\[
\sum_{m \leq X} \frac{f^\ast(m)}{m} \leq \prod_{p \leq X} \Big(1+\frac{f^\ast(p)}{p}\Big )
\]
and so
\[
\begin{split}
\sum_{n \leq X} \frac{f(n)}{n} &\geq \sum_{n \leq X} \frac{f(n)}{n} \cdot \sum_{m \leq X} \frac{f^\ast(m)}{m} \prod_{p \leq X} \Big(1+\frac{f^\ast(p)}{p}\Big )^{-1} \\
& \geq \sum_{n \leq X} \frac{f \ast f^\ast(n)}{n} \prod_{p \leq X} \Big(1+\frac{f^\ast(p)}{p}\Big )^{-1}\\
& = \sum_{n \leq X} \frac{|\mu(n)|}{n} \prod_{p \leq X} \Big(1+\frac{f(p)}{p} \Big ) \cdot \prod_{p \leq X} \Big(1-\frac{1}{p}\Big)\\
& \geq \Bigl(\frac{6}{\pi^2} + o(1)\Bigr) \log X \prod_{p \leq X} \Big(1+\frac{f(p)}{p} \Big ) \cdot \prod_{p \leq X} \Big(1-\frac{1}{p}\Big) \\
& = \Bigl(\frac{6e^{-\gamma}}{\pi^2} + o(1)\Bigr) \prod_{p \leq X} \Big(1+\frac{f(p)}{p} \Big ),
\end{split}
\]
where we used Mertens' theorem.

Part (iii): This is a variant of~\cite[Theorem 1]{GrKoMa12} and we quickly deduce it from that result. By choosing $\lambda$ large enough, we can assume that
\begin{equation}
\label{eq:pxdelta1/4sum}
\prod_{X^\delta < p \leq X^{1/4}} \Big(1+\frac{f(p)}{p}\Big) \geq 20.
\end{equation}
Furthermore we also have, with $\eta = \lambda/(2 \log(1/\delta))$, and for any integer $m \in [X^{\delta}, X^{1/4}]$, 
\[
\sum_{\substack{X^{\delta} < p \leq X^{1/4} \\ f(p) \geq \eta \\ p \nmid m}} \frac{1}{p} \geq \sum_{\substack{X^{\delta} < p \leq X^{1/4}}} \frac{f(p)}{p} - \eta \sum_{\substack{X^{\delta} < p \leq X^{1/4}}} \frac{1}{p} - \sum_{\substack{X^{\delta} \leq p \leq X^{1/4} \\ p | m}} \frac{1}{p} \geq \lambda/3
\]
since $\sum_{\substack{X^{\delta} \leq p \leq X^{1/4} \\ p | m}} \frac{1}{p} = o(1)$. 
Hence, once $\lambda$ is large enough, \cite[Theorem 1 together with Remark 1.4]{GrKoMa12} implies that, for any $y \in [X^{1/2}, X]$, and any integer $m \in [X^{\delta}, X^{1/4}]$, 
\[
\frac{1}{y} \sum_{\substack{y < n \leq 2y \\ p \mid n \implies p \in (X^\delta, X^{1/4}] \\ (n,m) = 1}} f(n) \geq \frac{\eta^{1/\delta}}{y} \sum_{\substack{y < n \leq 2y \\ p \mid n \implies f(p) \geq \eta, p \in (X^\delta, X^{1/4}] \\ (n,m) = 1}} 1 + O(X^{-\delta/2}) \gg_\delta \frac{1}{\log X}.
\]
Now, for some constants $c_1 = c_1(\delta)$ and $c_2 = c_2(\delta)$, we get
\[
\begin{split}
\frac{1}{X} \sum_{X < n \leq 2X} f(n) &\geq c_1 \frac{1}{X} \sum_{\substack{X^{\delta} < m \leq X^{1/4}}} f(m) \sum_{\substack{X/m < n \leq 2X/m \\ p \mid n \implies p \in (X^{\delta}, X^{1/4}] \\ (n,m) = 1}} f(n) \\
&\geq c_2 \frac{1}{\log X} \Big(\sum_{m \leq X^{1/4}} \frac{f(m)}{m} - \sum_{m \leq X^\delta} \frac{f(m)}{m}\Big).
\end{split}
\]
By part (ii) the right hand side is at least
\[
c_2 \frac{1}{\log X} \prod_{p \leq X^{\delta}} \Big(1+\frac{f(p)}{p}\Big) \Big(\frac{1}{3} \cdot \prod_{X^{\delta} < p \leq X^{1/4}} \Big(1+\frac{f(p)}{p}\Big)  - 1 \Big) \gg_\delta \prod_{p \leq X} \Big(1+\frac{f(p)-1}{p}\Big)
\]
by~\eqref{eq:pxdelta1/4sum}.
\end{proof}

\section{Proof of Theorems~\ref{th:MT} and \ref{th:MTDense} and Corollaries \ref{cor:convenient} and \ref{cor:main1}}
\begin{proof}[Proof of Theorem~\ref{th:MT}]
By adjusting $\rho$, we can clearly assume that
\begin{equation}
\label{eq:deltaasThMT}
\delta \geq C_1 \Big(\frac{\log \log h_0}{\log h_0}\Big)^\alpha + \frac{C_1}{(\log X)^{\alpha \rho/36}}
\end{equation}
for a large constant $C_1$. Furthermore we can assume without loss of generality that $h_0 \leq X^{1/10}$ as  Theorem \ref{th:MT} for $h_0 > X^{1/10}$ is implied by Theorem \ref{th:MT} with $h_0 = X^{1/10}$. 

Let us begin by choosing $P_j$ and $Q_j$ defining the set $\mathcal{S}$. Let $\delta' = \delta/200$. We take $\nu_1 = \theta \delta'^{2/\alpha}, \nu_2 = \theta$, $\eta = 1/12$, $Q_1 = \min\{h_0, X^{\nu_1}\}$, and $P_1 = Q_1^{\delta'^{1/\alpha}}$. Note that by our assumption on $\delta$, we have $P_1 \geq (\log Q_1)^{40/\eta}$. If $Q_1 \geq \exp((\log X)^{1/2})$, we take $J = 1$. Otherwise we take, for $j=2, \dotsc, J$, $P_j$ and $Q_j$ as in~\eqref{eq:PjQjchoice} with $\beta_0 = \alpha^2$. Let $\mathcal{S}$ be as in Section~\ref{sec:RedSM*}.

We first reduce to averages over the set $\mathcal{S}$. We have
\[
\frac{1}{h_0 H(f; X)}\sum_{\substack{x < n \leq x+h_0 H(f; X) \\ n \not \in \mathcal{S}}} |f(n)| \leq \sum_{j = 1}^{J+2} \frac{1}{h_0 H(f; X)} \sum_{\substack{x < n \leq x+ h_0 H(f; X)  \\ p \mid n \implies p \not \in (P_j, Q_j]}} |f(n)|
\]
Write $\delta_j = \delta'/(4j^2)$ for $j = 1, \dotsc, J$ and $\delta_{J+1} = \delta_{J+2} = \delta'/4$. If, for each $j = 1, \dotsc, J+2$ one has
\begin{equation}
\label{eq:NoDivPjQjShortBound}
\frac{1}{h_0 H(f; X)} \sum_{\substack{x < n \leq x+ h_0 H(f; X)  \\ p \mid n \implies p \not \in (P_j, Q_j]}} |f(n)| \leq \Big(\delta_j + 20 \prod_{p \in (P_j, Q_j]} \Big(1-\frac{|f(p)|}{p}\Big) \Big) \prod_{p \leq X} \Big(1+\frac{|f(p)|-1}{p} \Big),
\end{equation}
then, recalling definitions of $P_j$ and $Q_j$ and that $f$ is $(\alpha, X^\theta)$-non-vanishing,
\[
\begin{split}
&\frac{1}{h_0 H(f; X)} \sum_{\substack{x < n \leq x+h_0 H(f; X) \\ n \not \in \mathcal{S}}} |f(n)| \\
&\leq \Big(\frac{\delta'}{2} + \sum_{j = 1}^J \frac{\delta'}{4j^2} + 20 \sum_{j = 1}^{J+2} \prod_{P_j < p \leq Q_j} \Big(1-\frac{|f(p)|}{p}\Big)\Big) \prod_{p \leq X} \Big(1 + \frac{|f(p)|-1}{p}\Big) \\
&\leq \frac{\delta}{2} \prod_{p \leq X} \Big(1 + \frac{|f(p)|-1}{p}\Big).
\end{split}
\]
On the other hand, by Proposition~\ref{prop:sieveaaintervals} the measure of $x \in [X, 2X]$ for which~\eqref{eq:NoDivPjQjShortBound} fails for some $j$ is 
\[
\ll \sum_{j = 1}^{J+2} \frac{X (\log h_0)^2}{\delta_j^2 h_0} \cdot \prod_{p \in (P_j, Q_j]} \Big ( 1 - \frac{|f(p)|^2}{p} \Big) \ll \frac{X}{\delta^2 h_0^{9/10}} \ll \frac{X}{h_0^{4/5}}.
\]
by \eqref{eq:deltaasThMT}.

Furthermore Lemma~\ref{le:GrKoMa}(i) implies that
\[
\begin{split}
\sum_{\substack{X < n \leq 2X \\ n \not \in \mathcal{S}}} |f(n)| &\leq 10 X \prod_{p \leq X}\Big(1+\frac{|f(p)|-1}{p}\Big)\cdot \sum_{j = 1}^{J+2} \prod_{P_j < p \leq Q_j} \Big(1-\frac{|f(p)|}{p}\Big) \\
&\leq \frac{\delta}{4} X \prod_{p \leq X}\Big(1+\frac{|f(p)|-1}{p}\Big).
\end{split}
\]
Hence the contribution of $n \not \in \mathcal{S}$ to the left hand side of~\eqref{eq:ress} is acceptable and it suffices to bound the cardinality of the set of $x \in [X, 2X]$ for which
\begin{align*}
\Big | \frac{1}{h_0 H(f; X)} \sum_{\substack{x < n \leq x + h_0 H(f; X) \\ n \in \mathcal{S}}} f(n) & - \frac{1}{h_0 H(f; X)} \int_{x}^{x+h_0 H(f; X)} u^{i \widehat{t}_{f, X}} du \cdot \frac{1}{X} \sum_{\substack{X < n \leq 2X \\ n \in \mathcal{S}}} f(n) n^{-i\widehat{t}_{f, X}} \Big | \\
&> \frac{\delta}{4} \prod_{p \leq x} \Big ( 1 + \frac{|f(p)| - 1}{p} \Big ).
\end{align*}
Recalling~\eqref{eq:deltaasThMT} and the definition of $\nu_1$, Theorem~\ref{th:ThminS} gives the bound
\begin{equation}
\label{eq:excSetFin}
\ll_{\rho, \theta} \frac{X}{\delta^2} \cdot \frac{(\log h_0)^2}{P_1^{1 / 2 - \nu_2 - 3\eta}} + X^{1-\nu_1^3/200} \ll X \cdot \frac{(\log h_0)^4}{Q_1^{3\delta'^{1/\alpha}/16}}   +  X^{1-\theta^3 (\delta/2000)^{6/\alpha}}.
\end{equation}
Recall~\eqref{eq:deltaasThMT} and that $Q_1 = \min\{h_0, X^{\nu_1}\}$. If $h_0 \leq X^{\nu_1}$, then the first term on the right hand side of~\eqref{eq:excSetFin} is $\ll X/h_0^{\delta'^{1/\alpha}/8} \ll X/h_0^{(\delta/2000)^{1/\alpha}}$. On the other hand if $h_0 > X^{\nu_1}$, then the first term on the right hand side of~\eqref{eq:excSetFin} is $\ll X (\log X)^4/X^{3\theta \delta'^{3/\alpha}/16} \ll  X^{1-\theta^3 (\delta/2000)^{6/\alpha}}$. Hence the claim follows.

The last claim concerning almost real-valued $f$ follows similarly using part (iii) of Theorem \ref{th:ThminS}. 
\end{proof}

\begin{proof}[Proof of Theorem \ref{th:MTDense}]
By adjusting $\rho$, we can clearly assume that
\begin{equation}
\label{eq:deltaasThMTDense}
\delta \geq C_1 \frac{\log \log h}{\log h} + \frac{C_1}{(\log X)^{\rho/36}}
\end{equation}
for a large constant $C_1$.

This time we handle $n \in \mathcal{S}$ following~\cite[Proof of Theorem 1 in Section 9]{MainPaper} which is more efficient but only works in the dense setting. Let $\mathcal{S}$ be any set as in Section~\ref{sec:RedSM*} satisfying the assumptions of Theorem~\ref{th:ThminS}(ii). Arguing as in~\cite[Proof of Theorem 1 in Section 9]{MainPaper} we see that
\begin{equation}
\label{eq:denseSnotSsplit}
\begin{split}
&\Big | \frac{1}{h} \sum_{x < n \leq x + h} f(n) - \frac{1}{h} \int_{x}^{x+h} u^{i t_{f, X}} du \cdot \frac{1}{X} \sum_{\substack{X < n \leq 2X }} f(n) n^{-it_{f, X}} \Big | \\
&\leq \Big| \frac{1}{h} \sum_{\substack{x < n \leq x + h \\ n \in \mathcal{S}}} f(n) - \frac{1}{h} \int_{x}^{x+h} u^{i t_{f, X}} du \cdot \frac{1}{X} \sum_{\substack{X < n \leq 2X \\ n \in \mathcal{S}}} f(n) n^{-it_{f, X}} \Big| \\
&\quad + \Big| \frac{1}{h} \sum_{\substack{x < n \leq x + h \\ n \in \mathcal{S}}} 1 - \frac{1}{X} \sum_{\substack{X < n \leq 2X \\ n \in \mathcal{S}}} 1 \Big|
 + \frac{2}{X} \sum_{\substack{X < n \leq 2X \\ n \not \in \mathcal{S}}} 1 + O(1/h).
\end{split}
\end{equation}
Theorem \ref{th:ThminS}(ii)-(iii) applied to $f(n)$ and to $1$ implies that the first and second terms on the right hand side are both at most $\delta/100$ with at most
\begin{equation}
\label{eq:excthminS}
\ll_{\eta, \rho} \frac{X (\log h)^2}{\delta^2 P_1^{1/2 - \nu_2 - 3 \eta}}+X^{1-\nu_1^3/200}
\end{equation}
exceptions.

By Lemma~\ref{le:linearsieve} (since $\nu_2 < 1/6$, we can take e.g. $D = X^{\nu_2 (3-1/1000)}$ and $z = X^{\nu_2}$ so that $s = 3-1/1000$ and $F(s) \leq 1.19$), for all large enough $X$,
\begin{equation}
\label{eq:DenseNotInSContr}
\begin{split}
\sum_{\substack{X < n \leq 2X \\ n \not \in \mathcal{S}}} 1 &\leq \sum_{j = 1}^{J+2} \sum_{\substack{X < n \leq 2X \\ p \mid n \implies p \not \in (P_j, Q_j]}} 1 \leq 1.195 X \cdot \sum_{j =1}^{J+2} \prod_{P_j < p \leq Q_j} \Big ( 1 - \frac{1}{p} \Big ) \\
\end{split}
\end{equation}

Let us now choose $P_j$ and $Q_j$ defining the set $\mathcal{S}$. We take $\nu_1 = \delta^{2}/4000, \nu_2 = 1/10$, and $\eta = 1/150$. We choose $Q_1 = \min\{h_0, X^{\nu_1}\}$ and $P_1 = Q_1^{\delta/4}$. For $j=2, \dotsc, J$, we choose $P_j$ and $Q_j$ as in~\eqref{eq:PjQjchoice} with $\beta_0 = 1$. Then~\eqref{eq:denseSnotSsplit} and~\eqref{eq:DenseNotInSContr} imply that, with at most~\eqref{eq:excthminS} exceptions, we have
\[
\begin{split}
&\Big | \frac{1}{h} \sum_{x < n \leq x + h} f(n) - \frac{1}{X} \sum_{X < n \leq 2X} f(n) \Big | \\
&\leq \delta/50 + \frac{12}{5} \cdot \frac{\log P_1}{\log Q_1} \cdot \sum_{j=1}^J \frac{1}{j^6} + \frac{12}{5} \cdot 2 \cdot \sqrt{\frac{\nu_1}{\nu_2}} \\
&\leq \delta/50 + \frac{12}{5} \cdot 1.02 \cdot \frac{\delta}{4} + \frac{6}{25}\delta \leq \delta.
\end{split}.
\]
The exceptional set~\eqref{eq:excthminS} is acceptable thanks to the assumption~\eqref{eq:deltaasThMTDense}.

The last claim concerning almost real-valued $f$ follows similarly using part (iii) of Theorem \ref{th:ThminS}. 
\end{proof}

\begin{proof}[Proof of Corollary \ref{cor:convenient}]
Let us concentrate on part (i). By Theorem~\ref{th:MT} it clearly suffices to show that
\[
\begin{split}
&\Big| \frac{1}{h} \int_{x}^{x+ h} u^{i \widehat{t}_{f, X}} du \cdot \frac{1}{X} \sum_{X < n \leq 2X} f(n) n^{-i\widehat{t}_{f, X}} \Big | \\
&\ll \left(\frac{1}{\alpha (\log X)^{\alpha}} + \frac{\widehat{M}(f; X)}{\alpha \exp(\widehat{M}(f; X))}\right) \prod_{p \leq X} \Big ( 1 + \frac{|f(p)| - 1}{p} \Big ).
\end{split}
\]
In case $|\widehat{t}_{f, X}| \leq X/2$ this follows from Lemma~\ref{le:SparseHalaszComplex}(iv). On the other hand in case $|\widehat{t}_{f, X}| > X/2$ we have
\[
\frac{1}{h} \int_{x}^{x+h} u^{i\widehat{t}_{f, X}} du \ll \frac{1}{h},
\]
and the claim follows from Shiu's bound (Lemma~\ref{le:Shiu}).

Part (ii) follows similarly from Theorem~\ref{th:MTDense}.
\end{proof}

\begin{proof}[Proof of Corollary \ref{cor:main1}]

  Recall that $\delta$ is assumed to lie in $(0, 1/1000)$. We can assume that 
  that 
  \begin{equation}
  \label{eq:CorMain1deltas}
  \delta \geq C' \left(\frac{\log \log h_0}{\log h_0}\right)^\alpha + C' \frac{1}{2(\log X)^{\alpha \rho_\alpha/40}}
  \end{equation}
  for any large constant $C'$ since otherwise the claim is trivial taking $\kappa \geq 50/(\alpha \rho_\alpha)$. We can also assume that $h_0 \leq X^{1/100}$.
  
In case 
\begin{equation} \label{eq:holds}
    \sum_{\substack{p \leq X \\ p \in \mathcal{N}}} \frac{1 - |f(p)|}{p} > 5 + \log \frac{1}{\delta},
\end{equation}
we shall apply Proposition~\ref{prop:sieveaaintervals} with $P = Q$. Note that in this case
\[
\begin{split}
\frac{\delta}{2} \delta(\mathcal{N}; X) & \geq \frac{\delta}{2} \prod_{\substack{p \leq X \\ p \not \in \mathcal{N}}} \left(1-\frac{1}{p}\right) \cdot \prod_{p \in \mathcal{N}} \left(1-\left(\frac{|f(p)|-1}{p}\right)^2\right) \\
&= \prod_{p \leq X} \left(1+\frac{|f(p)|\mathbf{1}_{p \in \mathcal{N}} - 1}{p}\right) \cdot \frac{\delta}{2} \prod_{\substack{p \leq X \\ p \in \mathcal{N}}} \left(1+\frac{1-|f(p)|}{p}\right) \\
&\geq \cdot \prod_{p \leq X} \left(1+\frac{|f(p)|\mathbf{1}_{p \in \mathcal{N}} - 1}{p}\right) \cdot \left(\frac{\delta}{10} \prod_{\substack{p \leq X \\ p \in \mathcal{N}}} \left(1+\frac{1-|f(p)|}{p}\right) + 20\right)
\end{split}
\]
and
\[
\frac{h_0 \delta(\mathcal{N}; X)^{-1}}{H(f\mathbf{1}_{n \in \mathcal{N}}; X)} \geq \frac{h_0}{\prod_{p \leq X} \left(1+\frac{1-|f(p)|\mathbf{1}_{p \in \mathcal{N}}}{p}\right) \prod_{\substack{p \leq X \\ p \not \in \mathcal{N}}} \left(1-\frac{1}{p}\right)} \gg \frac{h_0} {\prod_{\substack{p \leq X \\ p \in \mathcal{N}}} \left(1+\frac{1-|f(p)|}{p}\right)}.
\]
Hence Proposition~\ref{prop:sieveaaintervals} shows that the cardinality of the set of $x \in [X, 2X]$ for which,
    $$
   \frac{1}{h_0 \delta(\mathcal{N}; X)^{-1}} \sum_{\substack{x < n \leq x + h_0 \delta(\mathcal{N}; X)^{-1} \\ n \in \mathcal{N}}} |f(n)| \geq \frac{\delta}{2} \delta(\mathcal{N}; X) 
   $$
   is bounded by
   \[
   \ll \frac{X}{\delta^2 h_0^{1/2} \prod_{\substack{p \leq X \\ p \in \mathcal{N}}} \left(1+\frac{1-|f(p)|}{p}\right)^{3/2}} \ll \frac{X}{h_0^{1/4}}.
   \]
  Furthermore, when \eqref{eq:holds} holds, one has by Lemma \ref{le:GrKoMa}(i),  
   $$
   \frac{1}{X} \sum_{\substack{X < n \leq 2X \\ n \in \mathcal{N}}} |f(n)| \leq 10 \prod_{p \leq X} \Big ( 1 + \frac{\mathbf{1}_{p \in \mathcal{N}} |f(p)| - 1}{p} \Big ) < \frac{\delta}{2} \delta(\mathcal{N}; X).
   $$
   
   Therefore we can assume that \eqref{eq:holds} does not hold. Then
   $$\sum_{p \leq X}\frac{(\mathbf{1}_{p \in \mathcal{N}} |f(p)| - 1)^2}{p} \leq \sum_{p \leq X} \frac{1 - \mathbf{1}_{p \in \mathcal{N}} |f(p)|}{p} \leq 5 + \log \frac{1}{\delta} + \sum_{\substack{p \leq X \\ p \not \in \mathcal{N}}} \frac{1}{p} $$
   As a result,
   $$
H(f \mathbf{1}_{n \in \mathcal{N}}; X) = \prod_{p \leq X} \Big ( 1 + \frac{(|f(p)| \mathbf{1}_{p \in \mathcal{N}} - 1)^2}{p} \Big ) \leq \frac{500}{\delta} \delta(\mathcal{N}; X)^{-1}
$$
Now the result follows from Theorem~\ref{th:MT} with $h_0 \delta(\mathcal{N}; X)^{-1} / H(f \mathbf{1}_{n \in \mathcal{N}}; X) \geq h_0^{1/2}$ in place of $h_0$.  
\end{proof}

\section{Proof of Theorem~\ref{th:LowerBound} and Corollary~\ref{cor:HooleyGen}}

We will split the proof of Theorem \ref{th:LowerBound} into two parts, depending on the size of $h_0$.
When $h_0 \leq X^{\varepsilon^3 / 20000}$ we can simply appeal to Theorem \ref{th:MT}. However when $h_0 > X^{\varepsilon^3 / 20000}$ we need the following technical result.

\begin{proposition}\label{pr:largevalues}
Let $\varepsilon > 0$ be small. Let $f : \mathbb{N} \rightarrow \mathbb{U}$ be a multiplicative function that is $(\alpha, X^{\theta})$ non-vanishing for some $\alpha, \theta \in (0, 1]$. Let $X$ be large enough.

Let $k =\lfloor \frac{1-2\varepsilon^{10}}{\varepsilon^{10}}\rfloor$, and let 
\[
\mathcal{P} = \mathbb{P} \cap (X^{\varepsilon^{10}(1-\varepsilon^{20})}, X^{\varepsilon^{10}(1+\varepsilon^{20})}].
\] 
Let $0 < \rho < \rho_\alpha$ and let $h \in ( X^{\varepsilon^3 / 20000}, X/2]$. 
\begin{enumerate}[(i)]
\item If $f$ is almost real-valued, then for $x \in (X, 2X]$, we have
\[
\begin{split}
&\Big | \frac{1}{h} \sum_{\substack{x < p_1 \ldots p_k m \leq x + h \\ p_j \in \mathcal{P}, m \in \mathbb{N}}} f(p_1 \ldots p_k m) - \frac{1}{X} \sum_{\substack{X < p_1 \ldots p_k m \leq 2X \\ p_j \in \mathcal{P}, m \in \mathbb{N}}} f(p_1 \ldots p_k m) \Big | \\
& \qquad \ll_{\varepsilon, \rho, \theta} \frac{1}{(\log x)^{\rho/12}} \prod_{p \leq X} \Big ( 1 + \frac{|f(p)| - 1}{p} \Big )
\end{split}
\]
with at most
$$
\leq C \frac{X}{h^{1/2 - \varepsilon}}
$$
exceptions, with $C$ a constant depending only on $\alpha, \varepsilon, \theta$.
\item For $x \in (X, 2X]$, we have
\[
\begin{split}
&\Big | \frac{1}{h} \sum_{\substack{x < p_1 \ldots p_k m \leq x + h \\ p_j \in \mathcal{P}, m \in \mathbb{N}}} f(p_1 \ldots p_k m) \Big | \ll_{\varepsilon, \theta} \left(\frac{\widehat{M}(f; X)}{\exp(\widehat{M}(f, X))} + \frac{1}{(\log X)^\alpha} \right) \frac{1}{\alpha} \prod_{p \leq X} \Big ( 1 + \frac{|f(p)| - 1}{p} \Big )
\end{split}
\]
with at most
$$
\leq C \frac{X}{h^{1/2 - \varepsilon}}
$$
exceptions, with $C$ a constant depending only on $\alpha, \varepsilon, \theta$. 
\end{enumerate}
\end{proposition}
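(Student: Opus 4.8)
The plan is to transport, to the setting of a general $(\alpha,X^{\theta})$-non-vanishing $f$, the strategy sketched for Theorem~\ref{th:LowerBound} in the regime $h_0>X^{\varepsilon^{3}/20000}$; the present proposition is precisely the engine behind that case. First I would open up the left-hand side of (i) by Perron's formula, as in the proof of Proposition~\ref{prop:L1work}. Grouping the primes dyadically, write
$$F(s)=\prod_{j=1}^{k}P_j(s)\cdot M(s),\qquad P_j(s)=\sum_{\substack{p\sim P_j\\ p\in\mathcal P}}\frac{f(p)}{p^{s}},\qquad M(s)=\sum_{m\asymp X/(P_1\cdots P_k)}\frac{f(m)}{m^{s}},$$
where $P_1,\dots,P_k$ run over powers of two inside $\mathcal P$ and all of $P_1,\dots,P_k,M$ have length $X^{\varepsilon^{10}+O(\varepsilon^{20})}$. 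Up to admissible error terms the difference of the short and long averages becomes $\frac{1}{2\pi i}\int_{|t|\le X/h}F(1+it)\frac{(x+h)^{1+it}-x^{1+it}}{1+it}\,dt$ minus its long-interval analogue, centred (as in Proposition~\ref{prop:L1work}) at $t_0=\widehat{t}_{f,X}$. The range $|t-t_0|\le T_0:=(\log X)^{\rho'/6}$, $\rho'=(\rho+\rho_\alpha)/2$, produces — after a Lipschitz estimate (Lemma~\ref{le:Lipschitz}; in case (i) we may take $\widehat{t}_{f,X}=0$ since $f$ is almost real-valued) — exactly the long-interval main term of (i) with error $\ll(\log X)^{-\rho/12}\prod_{p\le X}(1+(|f(p)|-1)/p)$, while in case (ii) this range is bounded directly by Halász's theorem in the form of Lemmas~\ref{le:SparseHalaszComplex}(iv) and~\ref{le:Halappl}, yielding the right-hand side of (ii). So everything reduces to showing $\int_{T_0\le|t-t_0|\le X/h}F(1+it)x^{it}\,dt=o\big(\prod_{p\le X}(1+(|f(p)|-1)/p)\big)$ outside $\ll_{\alpha,\varepsilon,\theta}Xh^{-1/2+\varepsilon}$ values of $x$.

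Next I would excise, for every $x$, two sparse $t$-sets. Put $\mathcal U_1=\{|F(1+it)|>X^{-\varepsilon^{100}}\}$: if $|F|$ is large some prime factor $|P_j(1+it)|$ is large, so taking a high moment of a short prime polynomial shows $\mathcal U_1$ is coverable by $\ll X^{O(\varepsilon^{20})}$ unit intervals; bounding $\int_{\mathcal U_1}|F|\le(\sup_{\mathcal U_1}|M(1+it)|)\int_{\mathcal U_1}|P_1\cdots P_k|$, one gains a power of $\log X$ in $M$ via Lemma~\ref{le:Halappl} (using $|t-\widehat{t}_{f,X}|\ge T_0$), while Cauchy--Schwarz together with the sieved Halász--Montgomery inequality (Lemma~\ref{lem:primeshalasz}) — whose error term is controlled by the sparseness of $\mathcal U_1$ — forces $\int_{\mathcal U_1}|P_1\cdots P_k|$ down to essentially a single no-cancellation term of size $\ll\prod_j(\log P_j)^{-1}$, so $\int_{\mathcal U_1}|F|$ is negligible. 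Put $\mathcal U_2=\{X^{-\varepsilon^{100}}\ge|F(1+it)|>h^{-1/2+\varepsilon/2}\}$: since the product of the lengths of $P_1,\dots,P_k,M$ is $\asymp X$, the bound $|F(1+it)|>X^{-\beta}$ forces one of these polynomials to exceed (its length)$^{-\beta}$, and applying Huxley's large value theorem (Lemma~\ref{lem:Huxley}) to a suitable power $\ell$ of that short polynomial — with $\ell$ as large as is admissible for Huxley to be useful — bounds $|\mathcal U_2|$, after which the pointwise bound $|F|\le X^{-\varepsilon^{100}}$ makes $\int_{\mathcal U_2}|F|$ negligible; here the hypothesis $h>X^{\varepsilon^{3}/20000}$ is exactly what lets the unavoidable losses (the length of the longest polynomial, the $(\log T)^{6}$ factor, the $\varepsilon$-dependent constants) be absorbed.

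Having excised $\mathcal U_1\cup\mathcal U_2$ pointwise in $x$, the Parseval bound (Lemma~\ref{le:Parseval}) reduces the rest to a mean-square estimate of the shape
$$\max_{X/h\le T\le X}\frac{X/h}{T}\int_{[-T,T]\setminus(\mathcal U_1\cup\mathcal U_2)}|F(1+it)|^{2}\,dt\ \ll_{\varepsilon}\ \frac{1}{h^{1/2-\varepsilon/2}}\Big(\prod_{p\le X}\big(1+\tfrac{|f(p)|-1}{p}\big)\Big)^{2},$$
whence Chebyshev's inequality with threshold $h^{-\varepsilon/4}\prod_{p\le X}(1+(|f(p)|-1)/p)$ produces the exceptional set of size $\ll Xh^{-1/2+\varepsilon}$ (adjusting $\varepsilon$), off which the contribution is $\ll h^{-\varepsilon/4}\prod_{p\le X}(1+(|f(p)|-1)/p)$, which is negligible in both (i) and (ii) since $h>X^{\varepsilon^{3}/20000}$. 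To prove the displayed bound I would split the surviving $t$ into $\mathcal T_1=\{X^{-1/4+\varepsilon/8}\le|F(1+it)|\le h^{-1/2+\varepsilon/2}\}$ and $\mathcal T_2=\{|F(1+it)|\le X^{-1/4+\varepsilon/8}\}$: on $\mathcal T_2$ one peels primes off $F$ to write $F=RN$ with $R$ of length $\le h$ and $|R(1+it)|\le h^{-1/4+\varepsilon/4}$, only boundedly many $R$ occurring, so $\int_{\mathcal T_2}|F|^{2}\le\sum_R\sup|R|^{2}\int_{-T}^{T}|N|^{2}$ is handled by the mean value theorem (Lemma~\ref{le:MVTwithHenriot}); on $\mathcal T_1$ one applies Huxley's theorem again — now to $|F|^{2}$, hence with a slightly different choice of power than for $\mathcal U_2$ — to bound $|\mathcal T_1|$, whence $\int_{\mathcal T_1}|F|^{2}\le|\mathcal T_1|\cdot h^{-1+\varepsilon}$ is of the required size. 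Cases (i) and (ii) differ only in the treatment of the range $|t-t_0|\le T_0$; the argument above is identical in both.

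The main obstacle I anticipate is the book-keeping in the repeated applications of Huxley's inequality: because \emph{every} Dirichlet polynomial in sight has length only $X^{\varepsilon^{10}+O(\varepsilon^{20})}$, one must raise them to rather high powers and verify that the accumulated losses (the length of the longest polynomial, the logarithmic factors in Lemmas~\ref{lem:Huxley} and~\ref{lem:primeshalasz}, and the dependence of the implied constants on $\varepsilon,\alpha,\theta$) are all dominated by the single clean power saving $h^{1/2-\varepsilon}$ and the constraint $h>X^{\varepsilon^{3}/20000}$. Obtaining the exponent of $h$ in the exceptional set to be exactly $1/2-\varepsilon$ \emph{uniformly} in $\alpha$ and $\theta$ — rather than shrinking with $\alpha$ as in Theorem~\ref{th:MT} — is the delicate point; it rests on the fact that, in contrast to the multi-scale set $\mathcal S$ of Section~\ref{sec:RedSM*}, the single narrow range $\mathcal P$ carrying $k\asymp\varepsilon^{-10}$ prime factors leaves enough room to exploit the full strength of Huxley's estimate.
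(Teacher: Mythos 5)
Your overall architecture coincides with the paper's: Perron plus the factorization $F=P_1\cdots P_kM$, the main term extracted from the small-$|t|$ range via the Lipschitz and Hal\'asz lemmas, pointwise-in-$x$ removal of the very-large-value set using Lemma~\ref{lem:primeshalasz} together with Hal\'asz applied to $M$, and then Lemma~\ref{le:Parseval} plus Chebyshev, with the surviving ranges treated by Huxley's estimate and by the ``factor out a short subproduct $R$ and apply the mean value theorem'' argument. The genuine gap is in how you conclude from Huxley's bound, in both your $\mathcal U_2$ (the $L^1$ range $h^{-1/2+\varepsilon/2}<|F|\le X^{-\varepsilon^{100}}$) and your $\mathcal T_1$ (the $L^2$ range $X^{-1/4+\varepsilon/8}\le|F|\le h^{-1/2+\varepsilon/2}$): you bound the measure of the whole set and then multiply by the uniform sup of $|F|$, resp.\ $|F|^2$, on it. This fails quantitatively. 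The Huxley count for the level set $\{|A(1+it)|\ge A^{-\beta}\}$, after raising $A$ to the balancing power $\ell$, is about $T^{\beta/(1-2\beta)}A^{2\beta}(\log T)^{O(1)}$ and \emph{increases} with $\beta$, so the measure of your whole set is governed by the largest admissible $\beta$, while your sup bound corresponds to the smallest; pairing the two loses a power of $X$. Concretely, for $\mathcal T_1$ the measure can be of order $T^{1/2-O(\varepsilon)}X^{O(\varepsilon^{10})}$ (level $\beta=1/4-\varepsilon/8$), and requiring $\frac{X/h}{T}\cdot T^{1/2-O(\varepsilon)}X^{O(\varepsilon^{10})}\cdot h^{-1+\varepsilon}\ll h^{-1/2+2\varepsilon/3}$ at $T=X/h$ would force $h\gg X^{1/2}$, whereas $h$ may be as small as $X^{\varepsilon^3/20000}$; an analogous mismatch ruins the $L^1$ estimate for $\mathcal U_2$.

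What is needed (and what the paper does) is a dyadic decomposition into level sets $|F(1+it)|\asymp X^{-\beta}$, pairing the Huxley measure bound at level $\beta$ with the value $X^{-\beta}$ (resp.\ $X^{-2\beta}$) at that same level, and then verifying the resulting $\beta$- and $T$-dependent inequality over the whole admissible range of $\beta$. That verification is the technical heart you have skipped: it is precisely where $h>X^{\varepsilon^3/20000}$ enters (to beat the $A^{2\beta}=X^{O(\beta\varepsilon^{10})}$ and logarithmic losses), and in the $L^2$ case the inequality is quadratic in $\beta$ with negative leading coefficient, so it must be checked at the two endpoints $\beta=(1/2-\varepsilon/2)\log h/\log X$ and $\beta=1/4-\varepsilon/8$. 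Two smaller points: the power $\ell$ in Huxley should be chosen to balance the two terms (the least $\ell$ with $A^{2\ell\beta}\gtrsim T^{\beta/(1-2\beta)}$), not ``as large as admissible'', since the first term grows with $\ell$; and your mean-square target carrying the factor $\prod_{p\le X}\bigl(1+\frac{|f(p)|-1}{p}\bigr)^2$ is unnecessarily strong --- the absolute bound $\ll h^{-1/2+2\varepsilon/3}$ suffices, because after Chebyshev any logarithmic loss is absorbed by a small power of $h\ge X^{\varepsilon^3/20000}$.
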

\begin{proof}
We restrict to the case $h \in (X^{\varepsilon^3/20000}, X^{1-\varepsilon^2}]$, showing that in this range the claims hold with $\ll X/h^{-1/2+3\varepsilon/4}$ exceptions. The claims for $h \in (X^{1-\varepsilon^2}, X]$ follow directly from this.

(i) Write $\rho' := (\rho + \rho_\alpha)/2$. We shall compare
\[
S(x, y_j) :=  \frac{1}{y_j}\sum_{\substack{x < p_1 \dotsc p_k m \leq x+y_j \\ p_j \in \mathcal{P}, m \in \mathbb{N}}} f(p_1) \dotsm f(p_k) f(m) =
\frac{1}{y_j} \sum_{\substack{x < p_1 \dotsc p_k m \leq x+y_j \\ p_j \in \mathcal{P}, m \in \mathbb{N}}} f(p_1 \dotsm p_k m ) + O(1/X^{\varepsilon^{10}/2})
\]
for $y_1 := h$ and $y_2 := x/(\log X)^{5\rho'/12}$. The convolution of many short factors will give us lot of flexibility in dealing with Dirihclet polynomials. 

Let us first show that $S(x, y_2)$ corresponds to the main term of the claim with an acceptable error: Moving the $m$ sum inside and applying Lemma~\ref{le:Lipschitz}(iii) (with $X$ replaced by $X/(p_1 \dotsm p_k)$) to it, we see that $S(x, y_2)$ indeed equals
\begin{equation}
\label{eq:S2lowbound}
\begin{split}
& \sum_{p_j \in \mathcal{P}}  f(p_1) \dotsm f(p_k) \Big(\frac{1}{X} \sum_{\substack{m \in \mathbb{N} \\ X < p_1 \dotsc p_k m \leq 2X \\}} f(m) \Big) + O_{\varepsilon, \rho, \theta}\Big(\frac{1}{(\log X)^{\rho'/12}} \prod_{\substack{p \leq X}} \Big(1+\frac{|f(p)|-1}{p}\Big) \Big) \\
&= \frac{1}{X} \sum_{\substack{X < p_1 \ldots p_k m \leq 2X \\ p_j \in \mathcal{P}, m \in \mathbb{N}}} f(p_1 \ldots p_k m) + O_{\varepsilon, \rho, \theta}\Big(\frac{1}{(\log X)^{\rho'/12}} \prod_{\substack{p \leq X}} \Big(1+\frac{|f(p)|-1}{p}\Big) \Big)\Big)
\end{split}
\end{equation}

Let $T_0 = (\log X)^{\rho'/6}$. Arguing as in the beginning of the proof of Proposition~\ref{prop:L1work}, we see that, apart from an acceptable error, $S(x, y_1) - S(x, y_2)$ equals the difference of $j= 1$ and $j=2$ cases of
\begin{equation}
\label{eq:Sy1y2diffFW}
\begin{split}
&\frac{1}{y_j} \cdot \frac{1}{2\pi i}  \sum_{A_1, \dotsc, A_{k}} \int_{\substack{T_0 \leq |t| \leq X/2}} P_{A_1}(1+it) \dotsm P_{A_k}(1+it) M_{X/(A_1 \dotsm A_k)}(1+it) \\
& \qquad \qquad \qquad \cdot \frac{(x + y_j)^{1+it} - x^{1+it}}{1+it} dt
\end{split}
\end{equation}
where $A_i$ traverse through powers of two such that
\[
\frac{X^{\eps^{10}(1-\eps^{20})}}{2} < A_i \leq X^{\eps^{10}(1+\eps^{20})},
\]
\[
P_{A}(s) = \sum_{\substack{A < p \leq 2A \\ p \in \mathcal{P}}} \frac{f(p)}{p^s}, \quad \text{and} \quad M_A(s) = \sum_{\substack{A/2^{k+1} < m \leq 4 A}} \frac{f(m)}{m^s}.
\]

We concentrate on the more difficult case $j = 1$, and consider now $A_1, \dotsc, A_k$ fixed and write
\[
F(s) = P_{A_1}(s) \dotsc P_{A_k}(s) M_{X/(A_1 \dotsm A_k)}(s).
\]
Let
\[
\begin{split}
\mathcal{T}_1 &= \{t \in [-X/2, X/2] \colon |F(1+it)| \leq X^{-1/4+\varepsilon/8} \} \\
\mathcal{T}_2 &= \{t \in [-X/2, X/2] \colon X^{-1/4+\varepsilon/8} < |F(1+it)| \leq h^{-1/2+\varepsilon/2} \} \\
\mathcal{U}_1 &= \{t \in [-X/2, X/2] \colon h^{-1/2+\varepsilon/2} < |F(1+it)| \leq X^{-\eps^{100}}\} \setminus \mathcal{T}_1 \\
\text{and} \quad \mathcal{U}_2 &= \{t \in [-X/2, X/2] \colon |t| \geq T_0, |F(1+it)| \geq X^{-\eps^{100}}\}.
\end{split}
\]
We split the integration range over $t$ in~\eqref{eq:Sy1y2diffFW} into these four sets. By Lemma~\ref{le:Parseval} and~\eqref{eq:(x+y)it-xit}, we see that it suffices to show, for $j = 1, 2$, the $L_1$-bound
\begin{equation}
\label{eq:Thm2L1}
\int_{\mathcal{U}_j} \min\Big\{1, \frac{X/h}{|t|}\Big\} |F(1+it)| dt = O\Big(\frac{1}{(\log X)^{k+\rho/12}} \prod_{p \leq X} \Big(1+\frac{|f(p)|-1}{p}\Big)\Big),
\end{equation}
and the $L_2$ bound
\begin{equation}
\label{eq:Thm2L2}
\max_{X/h < T \leq X/2} \frac{X/h}{T} \int_{\mathcal{T}_j \cap [-T, T]} |F(1+it)|^2 dt = O\Big(h^{-1/2+2\varepsilon/3}\Big).
\end{equation}

Let us first consider the integral over $\mathcal{U}_2$. Estimating $P_{A_3}(1+it), \dotsc, P_{A_k}(1+it)$ trivially and $M_{X/(A_1 \dotsm A_k)}(1+it)$ using Lemma~\ref{le:SparseHalaszComplex}, we see that
\[
\begin{split}
&\int_{\mathcal{U}_2} |F(1+it)| dt = \int_{\mathcal{U}_2} |P_{1,A_1}(1+it) \dotsm P_{k,A_k}(1+it) M_{X/(A_1 \dotsm A_k)}(1+it)| dt \\
&\ll \frac{1}{(\log X)^{k-2+\rho/12}} \prod_{p \leq X} \Big(1+\frac{|f(p)|-1}{p}\Big) \cdot \sum_{t \in \mathcal{W}_2} \Big(|P_{A_1}(1+it)|^2+|P_{A_2}(1+it)|^2\Big),
\end{split}
\]
where $\mathcal{W}_2 \subset \mathcal{U}_2$ is one-spaced. By Lemma~\ref{lem:primeshalasz},
\[
|\mathcal{W}_2| X^{-2\eps^{100}} \ll \sum_{t \in \mathcal{W}_2} |P_{A_1}(1+it)|^2 \leq \Big( \frac{A_1}{\log A_1} + |\mathcal{W}_2| X^{\frac{9}{2} \eta^{3/2}} (\log X)^{2} A_1^{1-\eta/2}\Big) \frac{1}{A_1 \log A_1},
\]
and similarly for $P_{A_2}$. Taking $\eta = \eps^{30}$ we obtain $|\mathcal{W}_2| \ll X^{2 \varepsilon^{100}}$ and re-inserting this into the bound on the right-hand side we conclude that the right-hand side is $\ll (\log X)^{-2}$. 
Hence the contribution from the set $\mathcal{U}_2$ to~\eqref{eq:Thm2L1} is acceptable.

We now turn to $\mathcal{U}_1.$ For 
\begin{equation}
\label{eq:U1betaTrange}
\beta \in [\eps^{100}, \min\{(1/2-\varepsilon/2) \log h / \log X, 1/4-\varepsilon/8\}] \quad \text{and} \quad T \in [X/h, X],
\end{equation}
we write
\begin{equation}
\label{eq:UbTdef}
\mathcal{U}_{\beta, T} = \{ t \in \mathcal{U} \cap [-T, T] \colon |F(1+it)| \in (X^{-\beta}, 2X^{-\beta}]\}.
\end{equation}
Then, splitting~\eqref{eq:Thm2L1} dyadically into $O((\log X)^2)$ integrals, it suffices to show that, for any such $\beta$ and $T$
\[
|\mathcal{W}_{\beta, T}| \ll \frac{T}{X/h} X^{\beta(1-\eps^{6})}
\]
whenever $\mathcal{W}_{\beta, T} \subseteq \mathcal{U}_{\beta, T}$ is one-spaced. Now, for any $t \in \mathcal{W}_{\beta, T}$, among $P_{A_j}$ and $M_{X/(A_1 \dotsm A_k)}$ there exists a Dirichlet polynomial $A(s)$ of length $A \in [X^{\varepsilon^{10}(1-\varepsilon^{20})}/3, 4X^{3\eps^{10}}]$ such that $|A(1+it)| \gg A^{- \beta}$. Since there is a bounded number of options for $A(s)$, by Lemma~\ref{lem:Huxley} applied for $A(s)^\ell$ with $\ell \in \mathbb{N}$, we see that
\[
\begin{split}
|\mathcal{W}_{\beta, T}| &\ll \Big(A^{2 \beta \ell} +\frac{T A^{6 \beta \ell}}{A^{2\ell}}\Big) (\log T)^{O_\eps(1)}
\end{split}
\]
for any positive integer $\ell = O_\eps(1)$. To balance, we choose $\ell \geq 1$ to be the smallest integer such that 
\[
A^{2 \beta \ell} \geq \frac{T A^{6 \beta \ell}}{A^{2\ell}} \iff A^{2\ell\beta(1-2\beta)} \geq T^\beta  \iff   A^{2\ell\beta} \geq T^{\beta/(1-2\beta)},
\]
so that
\begin{equation}
\label{WbTbound}
|\mathcal{W}_{\beta, T}| \ll T^{\beta/(1-2\beta)} A^{2\beta} (\log T)^{O(1)}.
\end{equation}
Hence it suffices to show that
\begin{equation}
\label{eq:U1req}
T^{\beta/(1-2\beta)} A^{2\beta} (\log T)^{O(1)} \ll \frac{T}{X/h} X^{\beta(1-\eps^{6})}
\end{equation}
whenever $\beta$ and $T$ satisfy \eqref{eq:U1betaTrange}. First notice that in this range of $\beta$, we have $\beta/(1-2\beta) \leq 1$, so that it suffices to show the claim for $T = X/h$. In this case
\[
X^{-\beta} \cdot T^{\beta/(1-2\beta)} A^{2\beta} (\log T)^{O(1)} \leq \Big(\frac{X^{2\beta}}{h}\Big)^{\beta/(1-2\beta)} X^{10\beta \varepsilon^{10}}\leq h^{-\varepsilon \beta/(1-2\beta)} X^{10 \beta \varepsilon^{10}} \leq X^{-\beta \eps^6},
\]
since $h \geq X^{\eps^3/20000}$ and $\eps$ is assumed to be small, so~\eqref{eq:U1req} holds. 

Next we consider
\[
\int_{\mathcal{T}_1 \cap [-T, T]} |F(1+it)|^2 dt
\]
for $T \in [X/h, X]$. By definition of $\mathcal{T}_1$, for each $t \in \mathcal{T}_1$, one can find a subproduct $R(1+it)$ of $P_{A_1}(1+it) \dotsm P_{A_k}(1+it) M_{X/(A_1 \dotsm A_k)}(1+it)$ with length $\leq h$ such that $|R(1+it)| \leq h^{-1/4+\varepsilon/4}$. Write
\[
F(s) = R(s) N(s).
\]
Divide the set $\mathcal{T}_1 \cap [-T, T]$ into $O_\eps(1)$ subsets $\mathcal{T}_R$ according to this polynomial $R(s)$. For each $R(s)$ one has
\[
\int_{\mathcal{T}_R \cap [-T, T]} |F(1+it)|^2 dt \leq h^{-1/2+\varepsilon/2} \int_{[-T, T]} |N(1+it)|^2 dt \ll_\eps h^{-1/2+\varepsilon/2} \cdot \frac{T}{X/h}
\]
by the mean value theorem (see~\eqref{eq:contMVT}), so~\eqref{eq:Thm2L2} holds for $j = 1$.

Let us finally consider~\eqref{eq:Thm2L2} for $j = 2$. Similarly to the case of $\mathcal{U}_1$, it suffices to show that 
\[
|\mathcal{W}_{\beta, T}| \ll \frac{T}{X/h} X^{2\beta} h^{-1/2+\eps/2}
\]
whenever 
\[
\beta \in [(1/2-\varepsilon/2) \log h / \log X, 1/4-\varepsilon/8] \quad \text{and} \quad T \in [X/h, X],
\]
$\mathcal{W}_{\beta, T} \subseteq \mathcal{U}_{\beta, T}$ is one-spaced and $\mathcal{U}_{\beta, T}$ is as in~\eqref{eq:UbTdef}.

Using~\eqref{WbTbound}, it suffices to show
\begin{equation}
\label{eq:T2reqment}
T^{\beta/(1-2\beta)} A^{2\beta} (\log T)^{O(1)} \ll \frac{T}{X/h} X^{2\beta} h^{-1/2+\eps/2}
\end{equation}
for these values of $T$ and $\beta$. Again we can 
reduce to the case $T = X/h$. Taking $(1-2\beta)$th power, the claim follows if
\[
X^{10\eps^{10}} \left(\frac{X}{h}\right)^{\beta} \ll X^{2\beta - 4\beta^2} h^{-1/2+\beta + (1-2\beta)\eps/2}.
\]
This follows if
\begin{equation}
\label{eq:betathetareq}
\beta - 4\beta^2 - 10\eps^{10} + (2\beta +(1-2\beta)\eps/2 - 1/2) \frac{\log h}{\log X} \geq 0.
\end{equation}
As a function of $\beta$, this is a quadratic polynomial with negative leading coefficient. Hence it suffices to show the inequality at the endpoints.

In case $\beta = (1/2-\varepsilon/2) \log h / \log X$, we have $X^{2\beta} h^{-1/2 + \varepsilon / 2} = X^\beta$ and the claim~\eqref{eq:T2reqment} follows from \eqref{eq:U1req} which we already showed for this value of $\beta$. On the other hand, it is easy to see that~\eqref{eq:betathetareq} holds for $\beta = 1/4-\eps/8$.

Case (ii) follows similarly except now we do not need to separate the integral with $|t| \leq T_0$ as there is no main term and Lemma~\ref{le:SparseHalaszComplex} wins $\frac{\widehat{M}(f; X)}{\alpha \exp(\widehat{M}(f, X))} + \frac{1}{\alpha (\log X)^\alpha}$ for all $t$.
\end{proof}

We are now ready to prove Theorem \ref{th:LowerBound}.

\begin{proof}[Proof of Theorem~\ref{th:LowerBound}]
We can clearly assume that $h_0$ is large and $\varepsilon$ is small, in particular that $\varepsilon \in (0, \theta)$. 

Let us first consider the case $h_0 \leq X^{\varepsilon^3/20000}$. In this case we shall apply Theorem~\ref{th:ThminS}, so let us fix the parameters defining the set $\mathcal{S}$. We take $Q_1 = h_0$,  $P_1 = h_0^{1-\varepsilon/1000}$, $\nu_2 = \varepsilon/3, \nu_1 = \nu_2(1-\varepsilon/1000)$, and $\eta = \varepsilon/1000$. If $Q_1 \geq \exp((\log X)^{1/2})$ we take $J = 1$. Otherwise we let $\delta'$ to be a small parameter to be fixed later and, for $j = 2, \dotsc, J$, define
\[
P_j = \exp\Big(\Big(\frac{j}{\delta'}\Big)^{8j/\alpha} (\log h_0)^j\Big) \quad \text{and} \quad Q_j = \exp\Big(\Big(\frac{j}{\delta'}\Big)^{(8j+6)/\alpha} (\log h_0)^j\Big),
\]
where $J$ is the largest index such that $Q_J \leq\exp((\log X)^{1/2})$. These choices satisfy the conditions~\eqref{eq:PJQJsize}--\eqref{eq:PjQjnottooclose} once $h_0$ is large enough in terms of $\alpha$, $\delta', \varepsilon$ as we can assume. Now
\[
\sum_{\substack{X < n \leq 2X \\ n \in \mathcal{S}}} f(n) \geq \sum_{P_1 < p_1 \leq Q_1} f(p_1) \sum_{\substack{P_{J+1} < p_{J+1} \leq Q_{J+1} \\ P_{J+2} < p_{J+2} \leq Q_{J+2}}} f(p_{J+1})f(p_{J+2}) \sum_{\substack{\frac{X}{p_1 p_{J+1} p_{J+2}} < n \leq \frac{2 X}{p_1 p_{J+1} p_{J+2}} \\ n \in \mathcal{S}_{1, J+1, J+2} \\ p \mid n \implies p \not \in (P_1, Q_1] \cup (x^{\nu_1}, x^{\nu_2}]}} f(n),
\]
where $\mathcal{S}_{1, J+1, J+2}$ is the set of those $n$ that have at least one prime factor in each of intervals $(P_j, Q_j]$ with $2 \leq j \leq J$. Now we can lower bound the innermost sum over $n$ by
\[
\sum_{\substack{\frac{X}{p_1 p_{J+1} p_{J+2}} < n \leq \frac{2 X}{p_1 p_{J+1}p_{J+2}} \\ p \mid n \implies p \not \in (P_1, Q_1] \cup (X^{\nu_1}, X^{\nu_2}]}} f(n) - \sum_{j=2}^J \sum_{\substack{\frac{X}{p_1 p_{J+1} p_{J+2}} < n \leq \frac{2 X}{p_1 p_{J+1} p_{J+2}} \\ p \mid n \implies p \not \in (P_j, Q_j]}} f(n).
\]
Recalling that $f$ is $(\alpha, X^\theta)$-non-vanishing we can apply Lemma~\ref{le:GrKoMa}, and see that this is at least
\[
\begin{split}
&c_1 \cdot \frac{X}{p_1 p_{J+1} p_{J+2}} \cdot  \prod_{p \leq X} \Big(1+\frac{f(p)-1}{p}\Big) - 20 \frac{X}{p_1 p_{J+1} p_{J+2}} \prod_{p \leq X} \Big(1+\frac{f(p)-1}{p}\Big) \sum_{j=2}^J \Big(\frac{\delta'}{j}\Big)^6 \\
&\gg_{\alpha, \theta, \varepsilon} \frac{X}{p_1 p_{J+1} p_{J+2}} \cdot  \prod_{p \leq X} \Big(1+\frac{f(p)-1}{p}\Big)
\end{split}
\]
once $\delta'$ is small enough in terms of the constant $c_1$ which depended only on $\alpha$ and $\theta$. Hence, using again that $f$ is $(\alpha, X^\theta)$-non-vanishing and also Lemma~\ref{le:GrKoMa}(i), we get
\[
\sum_{\substack{X < n \leq 2X \\ n \in \mathcal{S}}} f(n) \geq c_{\alpha, \theta, \varepsilon} X \prod_{\substack{p \leq X}} \Big(1+\frac{f(p)-1}{p}\Big) \geq \frac{c_{\alpha, \theta, \varepsilon}}{10} \sum_{\substack{X < n \leq 2X}} f(n) 
\]
for some $c_{\alpha, \theta, \varepsilon} > 0 $ depending only on $\alpha$, $\theta$ and $\varepsilon$. Hence in case $h_0 \leq X^{\varepsilon^3/20000}$ the claim follows from Theorem~\ref{th:ThminS} with the above choices and $\delta = c_{\alpha, \theta, \varepsilon}/20$. 

Now in order to deal with $h_0 > X^{\varepsilon^3 / 20000}$ we choose $\mathcal{P}$ as in Proposition \ref{pr:largevalues}. Since $f$ is $(\alpha, X^{\theta})$ non-vanishing,
\[
\begin{split}
\frac{1}{X} \sum_{\substack{p_j \in \mathcal{P}, m \in \mathbb{N} \\ X < p_1 \ldots p_k m \leq 2X}} f(p_1 \ldots p_k m)  &\gg \prod_{p \leq X} \Big ( 1 + \frac{f(p) - 1}{p} \Big ) \sum_{p_j \in \mathcal{P}} \frac{f(p_1) \ldots f(p_k)}{p_1 \ldots p_k} + O(1/X^{\eps^{10}/2}) \\
&\gg_{\alpha, \theta, \varepsilon} \prod_{p \leq X} \Big (1 + \frac{f(p) - 1}{p} \Big ) \gg \frac{1}{X}\sum_{X < n\leq 2X} f(n)
\end{split}
\]
Therefore, for $\delta_0$ small enough in terms of $\alpha, \theta, \eps$, the set of $x \in (X, 2X]$ for which
\[
\frac{1}{h_0 H(f; X)} \sum_{\substack{p_j \in \mathcal{P}, m \in \mathcal{M} \\ x < p_1 \ldots p_k m \leq x + h_0 H(f; X)}} f(p_1 \ldots p_k m) \leq \delta_0 \cdot \frac{1}{X} \sum_{X < n n\leq 2X} f(n)
\]
is of cardinality $\ll X h_0^{-1/2 + \varepsilon}$. The claim follows since
\[
\frac{1}{h_0 H(f; X)} \sum_{x < n \leq x + h_0 H(f; X)} f(n) \gg_\eps \frac{1}{h_0 H(f; X)} \sum_{\substack{p_j \in \mathcal{P}, m \in \mathbb{N} \\ x < p_1 \ldots p_k m \leq x + h_0 H(f; X)}} f(p_1 \ldots p_k m)
\]
\end{proof}

\begin{proof}[Proof of Corollary~\ref{cor:HooleyGen}]
  The first part of Corollary \ref{cor:HooleyGen} follows immediately from Theorem \ref{th:LowerBound} so it is enough to prove the second part. 
Let $f: \mathbb{N} \rightarrow \{0, 1\}$ to be the multiplicative function taking value $1$ if $n \in \mathcal{N}$ and $0$ otherwise. Note that $f$ is $(\alpha, X^\alpha)$-non-vanishing. Now $1 = n_1 < n_2 < \dotsb$ is the sequence of integers such that $f(n_i) = 1$. Then it suffices to show that
    $$
    \sum_{n_i \leq X} (n_{i + 1} - n_i)^{\gamma} \asymp_{\alpha, \gamma} x \Big ( \prod_{\substack{p \leq X \\ f(p) = 0}} \Big ( 1 + \frac{1}{p} \Big ) \Big )^{\gamma - 1}.
    $$

Let us consider first the lower bound which is straight-forward to show. By Lemma~\ref{le:GrKoMa}(iii) we know that $\mathcal{N} \cap (X/2, X] \neq \emptyset$. Hence, by H\"older's inequality and the Shiu bound (Lemma~\ref{le:Shiu}),
\[
\begin{split}
    X &\ll \sum_{n_i \leq X} (n_{i+1} - n_i) \leq \Big ( \sum_{n_i \leq X} (n_{i + 1} - n_i)^{\gamma} \Big )^{1/\gamma} \cdot \Big( \sum_{n \leq X} f(n) \Big)^{(\gamma-1)/\gamma} \\
  &\ll \Big( \sum_{n_i \leq X} (n_{i + 1} - n_i)^{\gamma} \Big )^{1/\gamma} \Big ( X \prod_{\substack{p \leq X \\ f(p) = 0}} \Big (1 - \frac{1}{p} \Big ) \Big )^{(\gamma - 1) / \gamma}.
\end{split}
\]

    Therefore it is enough to prove the upper bound. 
We follow~\cite[Proof of Corollary 6.30]{Opera} with some modifications. Let $\gamma \in [1, 3/2)$. Writing $h_1 = \prod_{\substack{p \leq X \\ f(p) = 0}} \Big(1+\frac{1}{p}\Big)$, our claim is
\[
\sum_{n_i \leq X} (n_{i+1}-n_i)^\gamma \ll_{\alpha, \gamma} X h_1^{\gamma-1}.
\]
By dyadic splitting it suffices to show this with the summation condition replaced by $X < n_i \leq 2X$. We first estimate trivially the contribution of those $n_i$ for which $n_{i+1} - n_i < h_1$. This contribution is
\[
\sum_{\substack{X < n_i \leq 2X \\ n_{i+1}-n_i < h_1}} (n_{i+1}-n_i)^\gamma \ll h_1^{\gamma-1}\sum_{n_i \leq 2X} (n_{i+1}-n_i) \ll Xh_1^{\gamma-1}.
\]
Next we note that by Theorem~\ref{th:LowerBound}, we know that, for any $w \geq h_1$,
\begin{equation}
\label{eq:Thm2Cos}
M(w) := \sum_{\substack{X < n_i \leq 2X \\ n_{i+1}-n_i \geq w}} (n_{i+1}-n_i) \ll \frac{X}{(w/h_1)^{1/2-\eps}}.
\end{equation}
Now
\[
\begin{split}
\sum_{\substack{X < n_i \leq 2X \\ n_{i+1}-n_i \geq h_1}} (n_{i+1}-n_i)^\gamma &= -\int_{h_1}^X w^{\gamma-1} \, d M(w) \\
&= h_1^{\gamma-1} M(h_1) - X^{\gamma-1} M(X) + \int_{h_1}^X M(w) d w^{\gamma-1}
\end{split}
\]
by partial integration and the claimed bound follows from~\eqref{eq:Thm2Cos}.

\end{proof}

\section{Proofs of results for norm-forms}
\label{se:normforms}
Norm-forms pose an additional challenge in that they are not multiplicative. However we are able to reduce ourselves back to a multiplicative setting through Lemma~\ref{lem:normforms} below.

In this section we let, for a number field $K$, $g_K(n)$ be the indicator function of non-negative norm-forms of $K$ and $\Delta_K(n)$ be the multiplicative function such that 
\[
\Delta_{K}(p^{v}) = 
\begin{cases}
1 & \text{if $p^{v} = N_{K / \mathbb{Q}}(\mathfrak{a})$ for $\mathfrak{a}$ an integral ideal of $K$;} \\
0 & \text{otherwise.} 
\end{cases}
\]

\begin{lemma}\label{lem:normforms}
  Let $K$ be a number field over $\mathbb{Q}$. There exist positive constants $\rho = \rho(K)$ and $\alpha = \alpha(K)$, non-negative integers $M = M(K)$, $R = R(K)$ and complex coefficients $c_i = c_i(K) \in \mathbb{C}$ for $i = 0, 1, \ldots, R$, such that
  \begin{equation} \label{eq:normforms}
  g_K(n) = \sum_{0 \leq \ell \leq M} c_{\ell} f_{\ell}(n) + \sum_{M < \ell \leq R} c_{\ell} f_{\ell}(n),
  \end{equation}
  where each $f_i : \mathbb{N} \rightarrow \mathbb{U}$ is multiplicative and
  \begin{enumerate}[(i)]
  \item Each $f_{\ell}$ is $(\alpha, X)$ non-vanishing
  \item For all $\ell = 0,\ldots, R$ we have $|f_\ell(p)| = \Delta_K(p)$.  
  \item For all $\ell = 0, \ldots, M$ and $(p, \text{disc}(K / \mathbb{Q})) = 1$, we have
    $f_{\ell}(p) = \Delta_K(p)$ while
    for $p | \text{disc}(K / \mathbb{Q})$ the value of $f_{\ell}(p)$ is a root of unity.
    In particular each $f_\ell$ with $\ell = 0, \ldots, M$ is almost real-valued.
   \item For each $f_\ell$ with $\ell = M + 1, \ldots, R$ we have
     $$
     \widehat{M}(f_\ell; X) \geq \rho \log\log X
     $$
        \item We have $\sum_{0 \leq \ell \leq M} c_\ell > 0$. 
   \item Let $\varepsilon > 0$. Assume the Riemann Hypothesis for all Hecke $L$-functions and let $W$ be any smooth function  compactly supported in $[1/2, 3]$. Then, for every $\ell = 0, \dotsc, R$,
     $$
     \sum_{n} \frac{f_\ell(n)}{n^{1 + it}} W \Big ( \frac{n}{N} \Big ) \ll_{A, W} \frac{1}{(1 + |t|)^{A}} + O_\varepsilon(N^{-1/2} \cdot (N (1 + |t|))^{\varepsilon}).
     $$
     Consequently also
          $$
     \sum_{n} \frac{g(n)}{n^{1 + it}} W \Big ( \frac{n}{N} \Big ) \ll_{A, W} \frac{1}{(1 + |t|)^{A}} + O_\varepsilon(N^{-1/2} \cdot (N (1 + |t|))^{\varepsilon}).
     $$
\end{enumerate}
\end{lemma}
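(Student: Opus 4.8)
The plan is to follow Odoni~\cite{Odoni75}. Write $g = g_K$, let $\mathrm{Cl} = \mathrm{Cl}(K)$ be the ideal class group, $h = |\mathrm{Cl}|$, and $\widehat{\mathrm{Cl}}$ its character group; for $\chi \in \widehat{\mathrm{Cl}}$ put $r_\chi(n) := \sum_{N\mathfrak{a} = n}\chi(\mathfrak{a})$, which is multiplicative (being the $n$-th coefficient of the Hecke $L$-function $L(s,\chi)$), with $r_\chi(p^v)$ depending only on $v$ and on the decomposition type of $p$ in $K$. The starting point is that $n$ is a non-negative norm-form of $K$ exactly when some integral ideal of norm $n$ is principal, i.e.\ when the trivial class lies in $\mathcal{C}(n) := \prod_{p\mid n}\mathcal{C}_p(v_p(n))\subseteq\mathrm{Cl}$, where $\mathcal{C}_p(v)$ is the finite, explicitly describable set of classes realised by integral ideals of norm $p^v$. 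First I would peel off the finitely many primes dividing $\mathrm{disc}(K/\mathbb{Q})$: the local factor of $g$ at such a prime is a function of the valuation only, supported where $\Delta_K(p^v)=1$, and after passing to the decomposition group its relevant values are roots of unity, which both absorbs this contribution into the eventual multiplicative pieces and yields the ramified-prime clause of (iii).

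The core step is the decomposition \eqref{eq:normforms} itself. Following Odoni, one notes that $\mathcal{C}_p(v)$ is eventually constant in $v$, equal to a fixed set determined by the Frobenius class of $p$, so that the obstruction ``$e\in\mathcal{C}(n)$'' is, up to the contribution of the finitely-influential small-exponent primes and the ramified primes, detected by orthogonality of the characters of $\mathrm{Cl}$ evaluated on the classes of the degree-one prime divisors of $n$. Carrying out this Fourier expansion over $\widehat{\mathrm{Cl}}$, and dispatching the remaining small cases by a further finite inclusion--exclusion, produces $g = \sum_{0\le\ell\le R}c_\ell f_\ell$ with each $f_\ell$ multiplicative and $|f_\ell(p)| = \Delta_K(p)$: the characters $\chi_\ell$ trivial on all classes coming from split primes give $f_\ell(p) = \Delta_K(p)$ at unramified $p$ (these are the indices $\ell\le M$; there is at least one, the principal character, so $\sum_{\ell\le M}c_\ell>0$, which is (v)), while the nontrivial $\chi_\ell$ give the pieces with $\ell>M$. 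I expect \emph{this} to be the main obstacle: because $\mathcal{C}_p(1)$ need not be a coset of a subgroup, the identity is genuinely Odoni's class-field-theoretic analysis rather than a formal orthogonality computation, and one must work to keep the number of pieces finite and to reconcile the behaviour at ramified primes.

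It then remains to verify (i)--(vi). For (i), each $f_\ell$ is supported (away from ramification) on primes possessing a degree-one prime divisor in $K$, a set of positive density $\alpha = \alpha(K)>0$ by the Chebotarev density theorem, whence $\sum_{w<p\le z}\tfrac{|f_\ell(p)|}{p}\ge\alpha\sum_{w<p\le z}\tfrac1p - O(\tfrac1{\log w})$. Properties (ii) and (iii) are immediate from the construction together with the ramified-prime reduction. For (iv), when $\ell>M$ the values $f_\ell(p)$ at split primes run, by Chebotarev, uniformly over the values of a nontrivial character of $\mathrm{Cl}$ on the associated conjugacy classes, so for every real $t$ a positive proportion of split $p\le X$ have $\Re f_\ell(p)p^{-it}$ bounded away from $1$; hence $\widehat M(f_\ell;X) = \min_{|t|\le X}\sum_{p\le X}\tfrac{|f_\ell(p)|-\Re f_\ell(p)p^{-it}}{p}\ge\rho\log\log X$ for suitable $\rho = \rho(K)>0$, by the standard estimate that an oscillating function cannot pretend to be $n^{it}$ (compare the proof of Lemma~\ref{le:sparseDistEst}).

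Finally, for (vi): each $f_\ell$ is, up to the bounded Euler factors at ramified primes and a bounded-index adjustment, the coefficient sequence of $L(s,\chi_\ell)$. Writing $\sum_n f_\ell(n)n^{-1-it}W(n/N) = \tfrac{1}{2\pi i}\int_{(2)} L(1+it+s,\chi_\ell)\,G_\ell(1+it+s)\,\widetilde W(s)\,N^s\,ds$ with $G_\ell$ the correcting Euler product (holomorphic and bounded for $\Re s>1/2+\varepsilon$), one shifts the contour to $\Re s = -1/2+\varepsilon$. The possible pole at $s=-it$ (present only when $\chi_\ell$ is principal, i.e.\ $L(s,\chi_\ell)$ has a $\zeta_K$-type pole at $s=1$) contributes $\ll_A \widetilde W(-it)\ll_A (1+|t|)^{-A}$ by the rapid decay of $\widetilde W$, while under the Riemann Hypothesis for Hecke $L$-functions, and the resulting Lindelöf bound on the line $\Re s = 1/2+\varepsilon$, the shifted integral is $\ll_\varepsilon N^{-1/2+\varepsilon}(N(1+|t|))^\varepsilon$. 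This proves the bound for each $f_\ell$, and summing the identity $g = \sum_{\ell}c_\ell f_\ell$ (a finite sum) gives the same bound for $g = g_K$, completing the proof.
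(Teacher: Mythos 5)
The heart of the lemma is the identity \eqref{eq:normforms}, and your proposal does not actually prove it. Orthogonality over the character group detects the \emph{number} of principal (indeed narrow-principal) ideals of norm $n$, not the indicator that at least one exists; and, as you concede yourself, the set of classes realised by ideals of norm $p^v$ is a subset of the class group rather than a coset, so the condition ``$R(n)\cap\mathcal{N}\neq\emptyset$'' is a combinatorial condition on a product of \emph{subsets} which no finite character expansion plus inclusion--exclusion captures. The paper supplies exactly the missing mechanism, following Odoni: one introduces a complex variable $z(A,B)$ for every pair consisting of a non-empty subset $A$ of the \emph{narrow} class group and a decomposition pattern $B$, encodes the admissible exponent tuples by the rational generating function $R(\mathbf{z})=P(\mathbf{z})\prod\bigl(1-z(A,B)^{t(A,B)}\bigr)^{-1}$, writes $g_K(n)=\oint f_{1/\mathbf{z}}(n)\,R(\mathbf{z})\,d\mu(\mathbf{z})$, and obtains the finite decomposition by shifting each contour and collecting residues at roots of unity; this is what forces the $f_\ell$ to be multiplicative with $f_\ell(p)$ equal to a root of unity times $\Delta_K(p)$, giving (ii) and (iii). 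Nothing in your sketch replaces this step (and note that the narrow class group, not $\mathrm{Cl}$, is the relevant group for non-negative norm forms; if the $f_\ell$ were literally the Hecke coefficients $r_\chi$, then (ii) would already fail, since $|r_\chi(p)|$ can exceed $1$ at split primes).

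Because your verifications of (iv)--(vi) are made against this incorrect model of the $f_\ell$, they too have gaps. For (iv), ``a positive proportion of split $p$ have $\Re f_\ell(p)p^{-it}$ bounded away from $1$'' is not automatic uniformly for $|t|\leq X$: one needs $\sum_{p\leq X,\ p\leftrightarrow B}p^{it}/p=c(B)\sum_{p\leq X}p^{it}/p+O(1)$, which the paper obtains in Lemma~\ref{lem:viko} by factoring the relevant Artin $L$-functions through Hecke $L$-functions (Heilbronn/Brauer) and invoking Coleman's Vinogradov--Korobov zero-free region. For (v), the coefficients $c_\ell$ come out of a residue computation and could a priori cancel, so ``the principal character contributes'' is not an argument; the paper proves $\sum_{\ell\leq M}c_\ell>0$ by comparing the asymptotic this sum governs for square-free norm forms coprime to the discriminant against a lower bound produced by multiplying by fixed auxiliary primes $p_i$ with $\mathcal{C}_i^{-1}\in R(p_i)$. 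For (vi), the Dirichlet series of $f_\ell$ on square-free $n$ coprime to the discriminant is $H(s)\zeta(s)^{\alpha}\prod_\chi L(s,\chi)^{c_\chi}$ with rational, generally non-integral exponents, so at $s=1$ one faces a branch point of a fractional power rather than a pole of a single $L(s,\chi_\ell)$; the contour shift must therefore be carried out in the slit region where RH gives analyticity and a Lindel\"of-type bound, not past ``the possible pole at $s=-it$'' as in your sketch.
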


Before proving Lemma \ref{lem:normforms} we introduce some notation and prove a more technical result (Lemma ~\ref{lem:viko} below).
\begin{definition}
\label{def:p<->B}
Let $K$ be a number field over $\mathbb{Q}$ of degree $k$, and let $\mathcal{C}_1, \ldots, \mathcal{C}_h$ be all the narrow ideal classes in the narrow ideal class group of $K$. For a $k \times h$-matrix $B = (b_{i j})$ consisting of non-negative integers $b_{i j}$, we write $p \leftrightarrow B$ if, for every $i,j$, the number of prime ideals $\mathfrak{p}$ of $K$ such that $\mathfrak{p} | p$, $N_{K / \mathbb{Q}} \mathfrak{p} = p^i$, and $\mathfrak{p} \in \mathcal{C}_j$ is $b_{i j}$. Furthermore, for $p \in \mathbb{P}$, we write $B(p)$ for the (unique) $k \times h$-matrix $B$ for which $p \leftrightarrow B$.
\end{definition}

\begin{lemma} \label{lem:viko}
Let the notation be as in Definition~\ref{def:p<->B}. Suppose that $B$ is such that there exists an unramified prime $p$ with $p \leftrightarrow B$. Then there exist constants $c(B) \in (0, 1]$ depending only on $B$ and $K$ and $\alpha(K) \in (0, 1]$ depending only on $K$ such that the following hold.
  \begin{enumerate}[(i)]
  \item
    For $\Re s > 1$,
    $$
    \sum_{\substack{p \leftrightarrow B}} \frac{1}{p^s} = c(B) \log \zeta(s) + \sum_{\chi} c_{\chi} L(s, \chi) + H(s),
    $$
    where the sum over $\chi$ runs over a finite number of non-trivial primitive Hecke Gr\"o{\ss}encharacters $\chi$, $L(s, \chi)$ is the Hecke L-functions associated to $\chi$, the coefficients $c_{\chi} \in \mathbb{Q}$, and, for any given $\varepsilon > 0$, $H(s)$ is analytic and uniformly bounded in $\Re s > \tfrac 12 + \varepsilon$. 
  \item 
    Let $\varepsilon > 0$ and $N_{X, \varepsilon} := \exp(\log^{2/3 + \varepsilon} X)$. Uniformly in $|t| \leq 4X$
  $$
    \sum_{\substack{N_{X, \varepsilon} < p \leq x \\ p \leftrightarrow B}} \frac{p^{it}}{p} = c(B) \sum_{N_{X, \varepsilon} < p \leq X} \frac{p^{it}}{p} + O_K(1) 
  $$
  \item One has, for all $2 \leq w \leq z$, 
    $$
  \sum_{w < p \leq z} \frac{g_K(p)}{p} = \alpha(K) \sum_{w < p \leq z} \frac{1}{p} + O_{K}\left(\frac{1}{\log w}\right).
  $$
  \item Assuming the Riemann Hypothesis for Hecke $L$-functions, we have, uniformly in $t \in \mathbb{R}$, and $P \geq 1$, and $I \subset [P, 2P]$ any interval, 
    $$
    \sum_{p \in I} \frac{g_K(p)}{p^{1 + it}} \ll \frac{1}{\log P} \cdot \frac{1}{1 + |t|} + P^{-1/2} \log^2 (1 + |t|).  
    $$
  \end{enumerate}
\end{lemma}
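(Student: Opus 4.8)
The four parts are proved in turn. Parts (i)--(iii) rest on class field theory together with the classical prime ideal theorem, while part (iv) uses the Riemann Hypothesis for Hecke $L$-functions; the only genuinely delicate point is the uniformity in $t$ in part (ii), which is what pins down the threshold $N_{X,\varepsilon}$.

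\emph{Part (i).} The plan is to exhibit $\{p : p \leftrightarrow B\}$ as a frobenian set. Let $\widetilde{K}$ be the Galois closure of $K/\mathbb{Q}$ and let $L$ be the Galois closure over $\mathbb{Q}$ of the narrow Hilbert class field of $\widetilde{K}$; for $p$ unramified in $L$ both the decomposition type of $p$ in $\mathcal{O}_K$ and the narrow classes of the prime ideals above $p$ are functions of the conjugacy class of $\mathrm{Frob}_p$ in $\mathrm{Gal}(L/\mathbb{Q})$, so $p\leftrightarrow B$ is equivalent to $\mathrm{Frob}_p$ lying in a fixed union $S_B$ of conjugacy classes. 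Encoding this condition through the Galois closure of $K$ (for the residue degrees) and through ray class characters of $\mathrm{Cl}^+(K)$ (for the narrow classes), as in Odoni~\cite{Odoni75}, and summing $p^{-s}$ over the degree-one prime ideals --- primes of higher residue degree, prime powers, and ramified primes contributing only functions holomorphic and bounded on $\Re s > 1/2 + \varepsilon$, which we absorb into $H(s)$ --- one is left with a finite combination of the functions $\sum_{\mathfrak{p}}\chi(\mathfrak{p})(N\mathfrak{p})^{-s} = \log L(s,\chi) + (\text{such a tail})$. The trivial character produces $c(B)\log\zeta(s)$, where $c(B)$ is the Dirichlet density of $\{p\leftrightarrow B\}$, and the non-trivial ones produce $\sum_\chi c_\chi \log L(s,\chi)$ over Hecke characters $\chi$; following Odoni's bookkeeping (pairing Galois-conjugate characters) the coefficients $c_\chi$ may be taken rational. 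Since by hypothesis $\{p\leftrightarrow B\}$ contains an unramified prime, $S_B$ contains a whole conjugacy class, hence $\{p\leftrightarrow B\}$ has positive density by Chebotarev, so $c(B) > 0$; and $c(B) \le 1$ is trivial.

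\emph{Part (ii).} By (i) it suffices to show that, for each of the finitely many non-trivial Hecke characters $\chi$ occurring there (all of conductor $\ll_K 1$), $\sum_{N_{X,\varepsilon} < p \le x}\chi(p)p^{-1+it} \ll_K 1$ uniformly for $|t| \le 4X$ and $x \le X$, the trivial-character piece being precisely the unweighted prime sum kept on the right. For a fixed such $\chi$ the Vinogradov--Korobov zero-free region for $L(s,\chi)$ --- uniform over the finite family --- gives, by a contour shift to the edge of that region, $\sum_{u < N\mathfrak{p}\le 2u}\chi(\mathfrak{p})(N\mathfrak{p})^{it} \ll_K u^{1-c(\log(|t|+3))^{-2/3}(\log\log(|t|+3))^{-1/3}}(\log u)^{O(1)}$ whenever $\log u \ge (\log(|t|+3))^{2/3}$. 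For $u \ge N_{X,\varepsilon} = \exp(\log^{2/3+\varepsilon}X)$ and $|t|\le 4X$ the exponent $-c(\log u)(\log(|t|+3))^{-2/3}(\log\log(|t|+3))^{-1/3}$ is $\le -c(\log X)^{\varepsilon}(\log\log X)^{-1/3}$, so this is $\ll_K u/\log X$; passing to sums over primes and then, by partial summation, to the $1/p$-weighted sum contributes a further factor $\ll \log X$, leaving $O(1)$. Summing the finitely many $\chi$ proves (ii). The reason the cutoff cannot be lowered below $\exp(\log^{2/3}X)$ is exactly that the Vinogradov--Korobov saving is of relative size $(\log u)(\log(|t|+3))^{-2/3}$ while $|t|$ runs up to $4X$.

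\emph{Parts (iii) and (iv).} A rational prime $p$ is a non-negative norm-form of $K$ precisely when some degree-one prime ideal above $p$ lies in the trivial class of $\mathrm{Cl}^+(K)$; this is the condition $p\leftrightarrow B$ for $B$ in a finite set $\mathcal{B}$ of admissible matrices, each realised by some unramified prime, so that (i) applies to each of them. Thus $g_K(p) = \sum_{B\in\mathcal{B}}\mathbf{1}_{p\leftrightarrow B}$. For (iii), Chebotarev's theorem with the classical zero-free region gives $\pi_B(z) = c(B)\pi(z) + O_K(z\exp(-c\sqrt{\log z}))$, hence by partial summation $\sum_{w<p\le z}\mathbf{1}_{p\leftrightarrow B}/p = c(B)\sum_{w<p\le z}1/p + O_K(1/\log w)$; summing over $\mathcal{B}$ gives the claim with $\alpha(K) = \sum_{B\in\mathcal{B}}c(B)$, which lies in $(0,1]$ since $\mathcal{B}\ne\emptyset$, each $c(B) > 0$ by (i), and the sets $\{p\leftrightarrow B\}$ ($B\in\mathcal{B}$) are disjoint. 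For (iv), under RH for Hecke $L$-functions (which includes $\zeta$) truncated Perron together with the explicit formula gives, for each non-trivial $\chi$ from (i), $\sum_{p\in I}\chi(p)p^{-1+it} \ll P^{-1/2}(\log(P(2+|t|)))^{2}$ with no main term, while the trivial character contributes the oscillatory sum $\alpha(K)\sum_{p\in I}p^{-1+it}$, which integration by parts and the RH bound $\psi(u)-u \ll \sqrt{u}\,(\log(u(2+|t|)))^2$ control by $\ll (1+|t|)^{-1}(\log P)^{-1} + P^{-1/2}(\log(P(2+|t|)))^{2}$; after routine bookkeeping (handling large $|t|$ by the trivial bound $\sum_{p\in I}1/p \ll 1/\log P$) this matches the asserted estimate.

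\emph{Main obstacle.} The heart of the matter is part (ii): one needs zero-free-region and prime-sum estimates for the relevant Hecke $L$-functions that are uniform in $t$ all the way up to $4X$, and it is precisely this uniformity --- played off against the $2/3$-exponent of the Vinogradov--Korobov region --- that forces $N_{X,\varepsilon} = \exp(\log^{2/3+\varepsilon}X)$. Parts (i) and (iii) are essentially a reprise of Odoni's class-field-theoretic argument, and part (iv) is routine once RH is granted.
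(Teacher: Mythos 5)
Your parts (i), (ii) and (iv) follow essentially the paper's route: $p\leftrightarrow B$ is a frobenian condition in the narrow Hilbert class field of the Galois closure, the resulting character pieces are converted into rational combinations of logarithms of Hecke $L$-functions as in Odoni (the paper does this step explicitly via Heilbronn's theorem, i.e.\ Brauer--Artin induction through cyclic subextensions, which is what your ``Odoni's bookkeeping'' has to mean), part (ii) is the Vinogradov--Korobov-type zero-free region for the finitely many Hecke $L$-functions involved (the paper cites Coleman and Bartz) followed by a standard contour/partial-summation argument, and (iv) is RH plus a contour integration that the paper also leaves to the reader; your extra $\log P$ factors there are harmless because the first term dominates unless $|t|$ is a power of $P$.

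The genuine gap is in (iii). Your characterization ``$p$ is a norm form precisely when some degree-one prime above $p$ lies in the trivial class of $\mathrm{Cl}^+(K)$'' is not justified and is false in general: $p=N_{K/\mathbb{Q}}(x)$ forces the degree-one prime $(x)$ to have a generator of \emph{positive norm}, but that is weaker than being narrowly principal. For instance, in a totally real field whose units realize only the all-positive and all-negative sign patterns, an element of sign pattern $(+,+,-,-)$ has positive norm while no unit multiple of it is totally positive, so $(x)$ lies in a non-trivial narrow class. The correct statement, used by Odoni and by the paper, is that there is a set $\mathcal{N}$ of narrow ideal classes --- those represented by ideals possessing a generator of positive norm --- such that $p$ is a norm form iff $R(p)\cap\mathcal{N}\neq\emptyset$; with your smaller set the identity $g_K(p)=\sum_{B\in\mathcal{B}}\mathbf{1}_{p\leftrightarrow B}$ can undercount, so as written you would prove (iii) for a minorant of $g_K$. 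Moreover, the one non-formal ingredient of (iii), namely $\alpha(K)>0$, is asserted (``each realised by some unramified prime'') rather than proved. The paper proves it by applying Chebotarev to the narrow Hilbert class field $T$ of $K$: a positive density of rational primes $p$ lie under a degree-one prime $\mathfrak{B}$ of $T$; then $\mathfrak{B}\cap K$ is narrowly principal, hence has a totally positive generator whose norm is $p$, so $p$ is a norm form. Both points are repairable along these lines, but as written part (iii) is incomplete.
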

\begin{proof}
  Let $\overline{K}$ be the normal hull of $K / \mathbb{Q}$.
  Let $H(\overline{K})$ denote the narrow Hilbert class-field of $\overline{K}$. 
  By \cite[Theorem 4.1]{Odoni75} given $B$ there exists a subset $\mathcal{S}$ of conjugacy classes of $G:=\text{Gal}(H(\overline{K}) / \mathbb{Q})$ such that the condition $p \leftrightarrow B$ is equivalent to requiring that the Frobenius conjugacy class $\sigma_{p}$ of $p$, relative to the Galois extension $H(\overline{K}) / \mathbb{Q}$ belongs to a conjugacy class $\mathcal{C}$ with $\mathcal{C} \in \mathcal{S}$. Note that $\mathcal{S}$ is non-empty because we assume that there is an unramified prime $p$ such that $p \leftrightarrow B$. 

  Let $\widehat{G}$ be the group of characters of $G$. 
  Since $\chi \in \widehat{G}$ are class functions we have $\chi(g) = \chi(u)$ for all $g, u \in \mathcal{C}$.
  In particular,
  \begin{equation} \label{eq:repr}
  - \sum_{\substack{\mathcal{C} \in \mathcal{S}}} \frac{1}{|G|} \sum_{g \in \mathcal{C}} \sum_{\psi \in \widehat{G}} \overline{\psi}(g) \log L(s, \psi, H(\overline{K}) / \mathbb{Q}) = - \sum_{\mathcal{C} \in \mathcal{S}} \frac{|\mathcal{C}|}{|G|} \sum_{\psi \in \widehat{G}} \overline{\psi}(g_{\mathcal{C}}) \log L(s, \psi, H(\overline{K}) / \mathbb{Q}). 
  \end{equation}
  where $g_{\mathcal{C}} \in \mathcal{C}$ is an arbitrary element of $\mathcal{C}$, $\psi$ runs over irreducible characters of $\widehat{G}$ and
  $L(s, \psi, H(\overline{K}) / \mathbb{Q})$ denotes the Artin $L$-function attached to the character $\psi$ (for a reference on Artin $L$-functions, see e.g. \cite[Section 3]{Heilbronn}).
  Recall that
  $$
  \log L(s, \psi, H(\overline{K}) / \mathbb{Q}) := - \sum_{\substack{p^{\ell} \\ \text{unramified}}} \frac{\psi(\sigma_p^{\ell})}{\ell p^{\ell s}}.
  $$
  Therefore by orthogonality of characters the left-hand side of \eqref{eq:repr} is equal to
  \begin{equation} \label{eq:mainobj}
\sum_{\substack{\mathcal{C} \in \mathcal{S}}}  \sum_{\substack{p^{k} : \sigma_p^{k} \in \mathcal{C}}} \frac{1}{k p^{k s}} 
  \end{equation}
  If $\psi$ is the trivial character of $G = \text{Gal}(H(\overline{K}) / \mathbb{Q})$
  then by \cite[§VII.10 Proposition 10.4 (i)]{Neukirch} we have $L(s, \psi, H(\overline{K}) / \mathbb{Q}) = \zeta(s)$.
  On the other hand for non-trivial $\psi$ we have by \cite[Theorem 7]{Heilbronn} 
  that
  $$
  \log L(s, \psi, H(\overline{K}) / \mathbb{Q}) = \sum_{\Omega} \sum_{\substack{\chi \neq \chi_0 \\ \chi \in \widehat{G}_{\Omega}}} c(\psi, \chi, \Omega) \log L(s, \chi, H(\overline{K}) / \Omega) 
  $$
  where the sum over $\Omega$ ranges over sub-fields $\Omega$ of $H(\overline{K})$ such that $H(\overline{K}) / \Omega$ is a cyclic extension, the sum over $\chi$ ranges over non-trivial characters of $\text{Gal}(H(\overline{K}) / \Omega)$ and the coefficients $c(\psi, \chi, \Omega) \in \mathbb{Q}$. By \cite{Brauer} the coefficients $c(\psi, \chi, \Omega)$ can be assumed to be integers, but we will not make use of this fact. 
  
Since the extension $H(\overline{K}) / \Omega$ is cyclic, every non-trivial character $\chi$ of $\text{Gal}(H(\overline{K}) / \Omega)$ is irreducible and injective. Thus by \cite[§VII.10 Theorem 10.6 and the remark following it]{Neukirch} each Artin $L$-function $L(s, \chi, H(\overline{K}) / \Omega)$
 with non-trivial $\chi$ corresponds to $L(s, \widetilde{\chi})$ with $\widetilde{\chi}$ a
  primitive Gr\"o{\ss}encharacter $\pmod{\mathfrak{f}}$ with $\mathfrak{f}$ the conductor of $H(\overline{K}) / \Omega$. This proves the first claim and shows that the coefficient of $\log \zeta(s)$ in (i) is given by
  $$
  c(B) = \sum_{\mathcal{C} \in \mathcal{S}} \frac{|\mathcal{C}|}{|G|} > 0
  $$

  To prove the second claim it follows from \cite{Coleman} (see also \cite{Bartz}) that each $L(s, \widetilde{\chi})$ admits a zero-free region of Vinogradov-Korobov type.
  Using the Hadamard product of each of the completed $L$-functions of $L(s, \widetilde{\chi})$ we see that $\log L(\sigma + it, \widetilde{\chi}) \ll_\varepsilon \log^2 (2 + |t|)$ as long as we stay within $\varepsilon/\log (2 + |t|)$ of the boundary of the zero free-region for any given $\varepsilon > 0$. In particular \eqref{eq:mainobj} admits an analytic continuation to a region of the form $\sigma > 1 - c(\varepsilon) (\log t)^{-2/3 - \varepsilon}$ for all $\varepsilon > 0$ and $c(\varepsilon) > 0$ and is bounded by $\ll_{K} \log^2 (2 + |t|)$ within this region.
Using this, it follows by a standard contour integration argument that for $|t| \leq 4X$ we have 
  $$
  \sum_{\substack{N_{X, \varepsilon} \leq p \leq X \\ p \leftrightarrow B}} \frac{p^{it}}{p} = c(B) \sum_{N_{X, \varepsilon} \leq p \leq X} \frac{p^{it}}{p} + O_K(1)  
  $$
  where $c(B) = |G|^{-1} \sum_{\mathcal{C} \in \mathcal{S}} |\mathcal{C}|$.
  Moreover we find that, for any $2 \leq w \leq z$,
  \begin{equation} \label{eq:above}
  \sum_{\substack{w < p \leq z \\ p \leftrightarrow B}} \frac{1}{p} = c(B) \sum_{w < p \leq z} \frac{1}{p} + O_K\left(\frac{1}{\log w}\right) 
  \end{equation}
  Given an unramified prime $p$ we denote by $R(p)$ the set of ideal classes occupied by integral ideals of norm $p$.
  We note that if $p \leftrightarrow B$ then $R(p)$ is determined.
  Moreover there exists a set $\mathcal{N}$ of narrow ideal classes such that $p$ is a norm form if and only if $R(p) \cap \mathcal{N} \neq \emptyset$. Summing \eqref{eq:above} over all $B$ for
  which $p \leftrightarrow B$ entails $R(p) \cap \mathcal{N} \neq \emptyset$ shows that
  $$
  \sum_{w < p \leq z} \frac{g_K(p)}{p} = \alpha \sum_{w < p \leq z} \frac{1}{p} + O_K\left(\frac{1}{\log w}\right)
  $$
  for some $\alpha \in [0, 1]$. It remains to show that $\alpha \neq 0$.
  Let $T$ denote the narrow Hilbert class field of $K$. By Chebotarev's density theorem for a positive density of primes $p$ there is a
  degree $1$ prime $\mathfrak{B}$ of $T$ above $p \mathbb{Z}$. Then $\mathfrak{B} \cap K$ is
  necessarily principal and $\sigma(\mathfrak{B} \cap K) > 0$ for all embeddings $\sigma$ of $K$ and therefore $p$ is a norm-form of $K$.  

  Finally to establish the last claim it remains to show that conditionally
  $$
  \sum_{\substack{p \in I \\ p \leftrightarrow B}} \frac{1}{p^{1 + it}} \ll \frac{1}{\log P} \cdot \frac{1}{1 + |t|} + P^{-1/2} \log^2 (1 + |t|)
  $$
  for every unramified pattern $B$ and interval $I \subset [P, 2P]$. This follows from the fact that on the Riemann Hypothesis we have the bound $\log L(\sigma + it, \chi) \ll \log X \cdot \log (2 + |t|)$ for all Hecke $L$-functions in the region $\sigma > \tfrac 12 + (\log X)^{-1}$ and $|t| > 10$, say, and a standard contour integration argument which we omit.
\end{proof}

We are now ready to prove Lemma \ref{lem:normforms}.

\begin{proof}[Proof of Lemma \ref{lem:normforms}]
The proof closely follows the original argument of Odoni~\cite{Odoni75}, which we more or less reproduce here. Recall Definition~\ref{def:p<->B}.
The set of possible matrices $B(p)$ is finite and we denote it by $\mathcal{B}$. 

For each $n$ such that $n = N_{K / \mathbb{Q}} \mathfrak{a}$ with $\mathfrak{a}$ an integral ideal of $K$ we define $R(n)$ to be the set of all ideal classes occupied by integral ideals $\mathfrak{a}$ of $K$ with $N_{K / \mathbb{Q}} \mathfrak{a} = n$. We define,
$$
R(n) R(n') := \{ a b : a \in R(n), b \in R(n')\}.
$$
Then as shown by Odoni for $(n,n') = 1$ we have $R(n n') = R(n) R(n')$ and in general $R(n) R(n') \subseteq R(n n')$. Moreover, there exists a set of narrow ideal classes $\mathcal{N}$ such that $n$ is a norm-form if and only if $R(n) \cap \mathcal{N} \neq \emptyset$. 

We now write
$$
n = \prod_{B \in \mathcal{B}} n_{\text{unram}, B}  \prod_{B' \in \mathcal{B}} n_{\text{ram}, B'},
$$
where if $p | n_{\text{unram}, B}$ then $B(p) = B$ and $p$ is unramified in $K$, while if $p | n_{\text{ram}, B'}$ then $B(p) = B'$ and $p$ is ramified in $K$. Write $n_{\text{unram}} := \prod_{B \in \mathcal{B}} n_{\text{unram}, B}$ and $n_{\text{ram}} := \prod_{B' \in \mathcal{B}} n_{\text{ram}, B'}$. Note that all prime divisors of $n_{\text{ram}}$ must also divide $\text{disc}(K / \mathbb{Q})$. Since $n_{\text{unram}}$ and $n_{\text{ram}}$ are co-prime, the set of $n$ that are norm-forms of $K$ correspond to those $n$ for which,
\begin{equation} \label{eq:intersect}
\mathcal{N} \cap \Big ( \prod_{B \in \mathcal{B}} R(n_{\text{unram}, B}) \prod_{B' \in \mathcal{B}} R(n_{\text{ram}, B}) \Big )  \neq \emptyset
\end{equation}

Let $\mathcal{A} = \mathcal{A}(K)$ be the set of all non-empty subsets of $\mathcal{C}$. Following Odoni \cite{Odoni75} we make $\mathcal{A}$ into a semigroup by defining $A B := \{a b : a \in A, b \in B\}$ for $A,B \in \mathcal{A}$. For each $A \in \mathcal{A}$ and for each $B \in \mathcal{B}$ we assign a complex variable $z_{\text{unram}}(A,B)$ and a complex variable $z_{\text{ram}}(A, B)$. We then define a multiplicative function $f_{\mathbf{z}}(n)$ in these complex variables by setting, for unramified primes $p$,
\begin{equation} \label{eq:fdef1}
  f_{\mathbf{z}}(p^{v}) = \begin{cases}
    z_{\text{unram}}(R(p^v), B(p)) & \text{ if } p^{v} = N_{K / \mathbb{Q}} \mathfrak{a} \text{ for an integral ideal } \mathfrak{a} \text{ of } K \\
    0 & \text{ otherwise}.
  \end{cases}
\end{equation}
and by setting for ramified primes $p$,
\begin{equation} \label{eq:fdef2}
  f_{\mathbf{z}}(p^v) = \begin{cases}
    z_{\text{ram}}(R(p^v), B(p)) & \text{ if } p^{v} = N_{K / \mathbb{Q}} \mathfrak{a} \text{ for an integral ideal } \mathfrak{a} \text{ of } K \\
    0 & \text{ otherwise}.
    \end{cases}
\end{equation}

Following Odoni we let $\mathcal{R}$ be the set of tuples $(r_{\text{ram}}(A,B), r_{\text{unram}}(A,B))_{A \in \mathcal{A}, B \in \mathcal{B}}$ of non-negative integers such that,
$$
\mathcal{N} \cap \Big ( \prod_{B \in \mathcal{B}} \prod_{A \in \mathcal{A}} A^{r_{\text{unram}}(A,B)} \prod_{B' \in \mathcal{B}} \prod_{A' \in \mathcal{A'}} (A')^{r_{\text{ram}}(A',B')} \Big )  \neq \emptyset
$$
Then given a tuple $\mathbf{r} = (r_{\text{ram}}(A,B), r_{\text{unram}}(A,B))_{A \in \mathcal{A}, B \in \mathcal{B}}$ and a tuple of complex numbers $\mathbf{z} = (z_{\text{ram}}(A,B), z_{\text{unram}}(A,B))_{A \in \mathcal{A}, B \in \mathcal{B}}$ we define
$$
\mathbf{z}^{\mathbf{r}} = \prod_{B \in \mathcal{B}} \prod_{A \in \mathcal{A}} z_{\text{unram}}(A,B)^{r_{\text{unram}}(A,B)} \prod_{B' \in \mathcal{B}} \prod_{A' \in \mathcal{A'}} z_{\text{ram}}(A',B')^{r_{\text{ram}}(A',B')}
$$
We set $r_{\text{ram}}(A, B) = 0$ (resp. $r_{\text{unram}}(A, B) = 0$) if there exists no ramified prime $p$ (resp. unramified prime $p$) and no $v \geq 1$ such that $B(p) = p$ and $R(p^{v}) = A$.  
Odoni then shows that, for some integers $t_{\text{unram}}(A, B) \geq 1$ and $t_{\text{ram}}(A,B) \geq 1$, 
$$
R(\mathbf{z}) := \sum_{\mathbf{r} \in \mathcal{R}} \mathbf{z}^{\mathbf{r}} = P(\mathbf{z}) \prod_{(A,B) \in S_{\text{unram}}} \frac{1}{1 - z_{\text{unram}}(A,B)^{t_{\text{unram}}(A,B)}} \prod_{(A',B') \in S_{\text{ram}}} \frac{1}{1 - z_{\text{ram}}(A',B')^{t_{\text{ram}}(A',B')}}
$$
where $S_{\text{unram}}$ is the set of those $(A,B)$ for which there exists an unramified prime $p$ and an exponent $v$ such that $B(p) = B$ and $R(p^v) = A$. Similarly $S_{\text{ram}}$ is the set of those $(A,B)$ for which there exists a ramified prime $p$ and an exponent $v$ such that $B(p) = B$ and $R(p^v) = A$. Finally $P(\mathbf{z})$ is a polynomial in the variables $(z_{\text{unram}}(A,B), z_{\text{ram}}(A,B))$ and such that the degree of each $z_{\text{unram}}(A, B)$ (resp. $z_{\text{ram}}(A,B)$) is strictly less than $t_{\text{unram}}(A,B)$ (resp. $t_{\text{ram}}(A,B)$). 

As we noticed before $n$ is a norm form if and only if \eqref{eq:intersect} holds.
Thus,
$$
g_{K}(n) = \oint_{\mathbf{z}} f_{1 / \mathbf{z}}(n) R(\mathbf{z}) d \mu(\mathbf{z})
$$
where $f_{1/\mathbf{z}}$ corresponds to the multiplicative function $f$ as defined in \eqref{eq:fdef1} and \eqref{eq:fdef2} but with each $z_{\text{unram}}(A,B)$ replaced by $z_{\text{unram}}(A,B)^{-1}$ and each $z_{\text{ram}}(A,B)$ replaced by $z_{\text{ram}}(A,B)^{-1}$, and where the measure $d \mu(\mathbf{z})$ corresponds to the product measure of $d z_{\text{ram}}(A,B) / (2\pi i z_{\text{ram}}(A,B))$ and $d z_{\text{unram}}(A,B) / (2\pi i z_{\text{unram}}(A,B))$. Each variable is integrated over the circle $|z| = 1 - \varepsilon$ with $0 < \varepsilon < 1$. We now shift the contour to $|z| \rightarrow \infty$ for each variable, and we collect a contribution of the poles at roots of unity. The remaining integral vanishes identically because the degree of the polynomial $P(\mathbf{z})$ in each variable is less than the degree of the denominator. 
Thus we see that $g_K(n)$ is a linear combination of multiplicative functions $f_{\mathbf{\zeta}}(n)$ evaluated at roots of unity $\mathbf{\zeta}$, say,
\begin{equation} \label{eq:normformspf}
g_K(n) = \sum_{\ell = 0}^{M} c_{\ell} f_{\ell}(n) + \sum_{\ell = M + 1}^{R} c_{\ell} f_{\ell}(n)
\end{equation}
where $f_{i}(n)$ with $i = 0, \ldots, M$ are the multiplicative functions that correspond to setting $z_{\text{unram}}(R(p),B(p)) = 1$ for every unramified prime $p$, while the multiplicative functions $f_{i}$ with $i = M + 1, \ldots, R$ are the multiplicative functions for which for at least one unramified prime $p$ we have $f_{i}(p) \neq 1$. Notice that if $f_i$ is such that $f_i(p) \neq 1$ for at least one unramified prime $p$ then $f_{i}(p) = \zeta$ for some root of unity and we have $f_i(q) = \zeta$ for all unramified primes $q$ such that $B(q) = B(p)$ and $R(q) = R(p)$. Note that $R(p)$ is determined by the condition $p \leftrightarrow B$ so we simply have $f_{i}(p) = \zeta \neq 1$ for all $p \leftrightarrow B$ (in the notation of Lemma \ref{lem:viko}).
By Lemma \ref{lem:viko} and the inequality $\Re f(p) p^{it} - |f(p)| \leq 0$, we obtain for $M < i \leq R$ and $|t| \leq 4X$, 
  $$
\Re \sum_{p \leq X} \frac{f_i(p) p^{it} - |f_i(p)|}{p} \leq  \Re \sum_{\substack{N_{X, \varepsilon} \leq p \leq X \\ p \leftrightarrow B}} \frac{\zeta p^{it} - 1}{p}
\leq c(B) \cdot \Re \sum_{N_{X, \varepsilon} \leq p \leq X} \frac{\zeta p^{it} - 1}{p} + O(1)
  $$
  for some root of unity $\zeta \neq 1$ and where $N_{X, \varepsilon} := \exp(\log^{2/3 + \varepsilon} X)$.
  In particular the above is always $\leq - \rho \log\log X$ for some $\rho > 0$ giving (iv).

  To prove (v) we notice that on square-free $n$ with $(n, \text{disc}(K / \mathbb{Q})) = 1$ we have $f_i(n) = \Delta_K(n)$ for $0 \leq i \leq M$ . Therefore, using Lemma \ref{lem:viko}(i), Lemma~\ref{le:GrKoMa}(iii) and part (iv) together with Lemma~\ref{le:SparseHalaszComplex}(i) it follows that, for $N \geq 1$, 
  \begin{equation} \label{eq:Odo}
  \sum_{\substack{n \leq X \\ (n, N \text{disc}(K / \mathbb{Q}) = 1}} g_K(n)\mu^2(n) = C_{N} \Big ( \sum_{0 \leq i \leq M} c_i \Big ) \cdot X (\log X)^{E(K) - 1} + o ( X (\log X)^{E(K) - 1} )
  \end{equation}
  for some constant $C_N > 0$, and 
  where
  $
  E(K)
  $
  is the Dirichlet density of those primes $p$ for which $\Delta_K(p) = 1$. It remains therefore to show that the left-hand side of \eqref{eq:Odo} is $\gg X (\log X)^{E(K) - 1}$. 
  We adapt the argument of Odoni. By the union bound,
  \begin{equation} \label{eq:ind}
  \Delta_{K}(n) \leq \sum_{i = 1}^{h} \mathbf{1}_{\mathcal{C}_i \in R(n)}
  \end{equation}
  where $\mathcal{C}_h$ denotes the principal class. 
  Pick primes $p_i$ such that $\mathcal{C}_i^{-1} \in R(p_i)$ for all $i=1, \dotsc, h$ \footnote{The infinitude of prime ideals $\mathfrak{p}$ of $K$ with $\mathfrak{p} \in \mathcal{C}_i^{-1}$ follows from applying Chebotarev's density theorem to $H / K$ with $H$ the narrow Hilbert class field of $K$. Note then that $\mathfrak{p} \cap \mathbb{Q}$ is a prime $p$ such that $\mathcal{C}_{i}^{-1} \in R(p)$}.
  Then if $p_i \nmid n$
  and $\mathcal{C}_{i} \in R(n)$ then $\mathcal{C}_h \in R(n p_i)$. As a result summing
  \eqref{eq:ind} over square-free integers co-prime to $\text{disc}(K / \mathbb{Q}) p_1 \dotsm p_h$,
  we find that
  $$
  X (\log X)^{E(K) - 1} \ll \sum_{i = 1}^{h} \Big ( \sum_{\substack{n \leq X \\ (n, p_i) = 1 \\ (n, \text{disc}(K / \mathbb{Q})) = 1}} \mathbf{1}_{\mathcal{C}_i \in R(n)} \Big ) \leq \sum_{i = 1}^{h} 
  \Big ( \sum_{\substack{n \leq X p_i \\ (n, \text{disc}(K / \mathbb{Q})) = 1}} \mathbf{1}_{\mathcal{C}_h \in R(n)} \Big )
  $$
  If $\mathcal{C}_h \in R(n)$ then $n$ is a norm-form. Therefore using \eqref{eq:Odo} we conclude that,
  $$
  X (\log X)^{E(K) - 1} \ll \Big ( \sum_{i = 0}^{h} p_i \Big ) \Big ( \sum_{i = 0}^{h} c_i \Big ) \cdot X (\log X)^{E(K) - 1}
  $$
  and the claim follows.

  In order to prove (vi) we notice that, for each $0 \leq \ell \leq R$, 
  $$
  \sum_{\substack{(n, \text{disc}(K / \mathbb{Q})) = 1}} \frac{f_{\ell}(n) \mu^2(n)}{n^s} = H(s) \prod_{B \in \mathcal{B}_{\ell}} \prod_{\substack{p \leftrightarrow B}} \Big ( 1 + \frac{\zeta_{B}}{p^s} \Big )
  $$
  where 
  $\mathcal{B}_{\ell}$ is some set of admissible $B$'s (depending on $\ell$), $\zeta_B$ is a root of unity depending on $B$ and for any given $\varepsilon > 0$ the function $H(s)$ is analytic and uniformly bounded in $\Re s > \tfrac 12 + \varepsilon$. By Lemma \ref{lem:viko}(i) the above can be factorized as
  \begin{equation} \label{eq:dirsee}
  H(s) \zeta(s)^{\alpha} \prod_{\chi} L(s, \chi)^{c_{\chi}}
  \end{equation}
  for some exponent $\alpha > 0$ and $\chi$ a product over Gr\"o{\ss}encharacters. In particular if we assume the Riemann Hypothesis for Hecke L-functions \eqref{eq:dirsee} is analytic in the region
  $$
  \{ s : \tfrac 12 \leq \Re s \leq 1 \ , \ \Im s \neq 0 \} \cup \{ s : \Re s > 1 \}
  $$
  and for any given $\varepsilon > 0$ and $|t| > 10$ and $\sigma > \tfrac 12 + \varepsilon$ the Dirichlet series \eqref{eq:dirsee} with $s = \sigma + it$ is bounded by $\ll_{\varepsilon} (1 + |t|)^{\varepsilon}$, see \cite[Theorem 5.19]{IwKo04}. Thus (vi) follows by a standard contour integration argument. 

\end{proof}

Now we are ready to show how Theorem~\ref{thm:main2} follows from Theorem~\ref{th:MT}.
\begin{proof}[Proof of Theorem \ref{thm:main2}]
Write $g_K = c_0 f_0 + \dotsc + c_R f_R$ as in Lemma \ref{lem:normforms}. Each $f_\ell$ is an $(\alpha, X)$-non-vanishing multiplicative function for some $\alpha > 0$ depending only on $K$. Let $c' = \sum_{\ell = 1}^R |c_\ell|$.

We can assume that 
\[
\delta > 2c'(R+1) C' \left(\frac{\log \log h_0}{\log h_0}\right)^\alpha + \frac{2c'(R+1)}{(\log X)^{\alpha \rho_\alpha/40}}
\]
for any large constant $C'$ since otherwise the claim is trivial by taking e.g. $\kappa = 80/(\alpha \rho_\alpha)$.

Now, for every $\ell = 0, \ldots, M$, $f_\ell$ is almost real-valued and hence Theorem \ref{th:MT} implies
    \begin{equation} \label{eq:event1}
    \Big | \frac{1}{h_0} \sum_{x < n \leq x + h_0 \delta_{K}(X)^{-1}} f_\ell(n) - \frac{\delta_{K}(X)^{-1}}{X} \sum_{X < n \leq 2X} f_\ell(n) \Big | < \frac{\delta}{2c'(R+1)} 
    \end{equation}
   for all $x \in [X, 2X]$ outside of a set of cardinality 
    \begin{equation}
    \label{eq:NFExcset}
    \ll \frac{X}{h^{c \delta^{\kappa}}}
    \end{equation} 
    for some $c, \kappa$ depending only on $K$. 
    
    Moreover, by Lemmas~\ref{lem:normforms}(iv) and~\ref{le:SparseHalaszComplex}(i) we have, for every $\ell = M + 1, \ldots, R$,
\[
\Big| \frac{1}{X} \sum_{X < n \leq 2X} f_\ell(n) \Big| + \Big| \frac{1}{X} \sum_{X < n \leq 2X} f_\ell(n) n^{-i\widehat{t}_{f_\ell, X}} \Big| \ll \frac{\delta_K(X)}{(\log X)^{\rho/3}}
\]
for some $\rho = \rho(K)$. Hence Theorem \ref{th:MT} implies that~\eqref{eq:event1} holds also for each $\ell = M+1, \dotsc, R$ for every $x \in [X, 2X]$ outside of a set of cardinality~\eqref{eq:NFExcset}. 

It follows that for every $x \in [X, 2X]$ outside of an exceptional set of cardinality~\eqref{eq:NFExcset}, the claim \eqref{eq:event1} holds for every $\ell = 1, \dotsc, R$ simultaneously, and thus, summing over $\ell$ we obtain,
    $$
    \Big | \frac{1}{h} \sum_{x < n \leq x + h \delta_{K}(X)^{-1}} g_K(n) - \frac{\delta_{K}(X)^{-1}}{X} \sum_{X < n \leq 2X} g_K(n) \Big | \leq \delta
    $$
    outside of an exceptional set of cardinality~\eqref{eq:NFExcset} as claimed. 
\end{proof}

  The proof of Theorem~\ref{thm:NormFormLowBound}(i) is rather similar to the proof of Theorem \ref{th:LowerBound} with only minor differences, which we indicate in detail in the proof below.

  \begin{proof}[Proof of (i) of Theorem \ref{thm:NormFormLowBound}]

Write $h := h_0\delta_K(X)^{-1}$. Let us first consider the case $h \leq X^{\varepsilon^3/20000}$. In this case we shall apply Theorem~\ref{th:ThminS}. We let $\mathcal{S}$ be as in proof of Theorem \ref{th:LowerBound}. 

Let us first show that
\begin{equation}
\label{eq:gKclaim}
\frac{1}{X}\sum_{\substack{X < n \leq 2X \\ n \in \mathcal{S} \\ (n, \text{disc}(K / \mathbb{Q})) = 1}} g_K(n) \mu^2(n) \geq \delta_{K, \varepsilon} \cdot \delta_K(X)
\end{equation}
for some $\delta_{K, \varepsilon} > 0$ depending only on $K$ and $\varepsilon$. Since $g_K \geq 0$ and $g_K(mn) \geq g_K(m) g_K(n)$ for all $m, n$, we have
\begin{align*}
& \sum_{\substack{X < n \leq 2X \\ n \in \mathcal{S} \\ (n, \text{disc}(K / \mathbb{Q})) = 1}} g_K(n)  \mu^2(n) \\ & \geq \sum_{P_1 < p_1 \leq Q_1} g_K(p_1) \sum_{\substack{P_{J+1} < p_{J+1} \leq Q_{J+1} \\ P_{J+2} < p_{J+2} \leq Q_{J+2}}}  g_K(p_{J + 1}) g_K(p_{J + 2}) \sum_{\substack{\frac{X}{p_1 p_{J+1} p_{J+2}} < n \leq \frac{2 X}{p_1 p_{J+1} p_{J+2}} \\ n \in \mathcal{S}_{1, J+1, J+2} \\ p \mid n \implies p \not \in (P_1, Q_1] \cup (X^{\nu_1}, X^{\nu_2}] \\ (n, \text{disc}(K / \mathbb{Q})) = 1}}  g_K(n) \mu^2(n),
    \end{align*}
where $\mathcal{S}_{1, J+1, J+2}$ is the set of those $n$ that have at least one prime factor in each of intervals $(P_j, Q_j]$ with $2 \leq j \leq J$. We can lower bound the innermost sum over $n$ by
\[
\sum_{\substack{\frac{X}{p_1 p_{J+1} p_{J+2}} < n \leq \frac{2 X}{p_1 p_{J+1} p_{J+2}} \\ p \mid n \implies p \not \in (P_1, Q_1] \cup (X^{\nu_1}, X^{\nu_2}] \\ (n, \text{disc}(K / \mathbb{Q})) = 1}} g_K(n) \mu^2(n) - \sum_{j=2}^J \sum_{\substack{\frac{X}{p_1 p_{J+1} p_{J+2}} < n \leq \frac{2 X}{p_1 p_{J+1} p_{J+2}} \\ p \mid n \implies p \not \in (P_j, Q_j]}} g_K(n).
\]
Using the upper bound $g_K(n) \leq \Delta_K(n)$ and recalling that $\Delta_K$ is $(\alpha, X)$-non-vanishing we can apply Lemma~\ref{le:GrKoMa}, and see that this is at least
\[
\begin{split}
  & \sum_{\substack{\frac{X}{p_1 p_{J+1} p_{J+2}} < n \leq \frac{2 X}{p_1 p_{J+1} p_{J+2}} \\ p \mid n \implies p \not \in (P_1, Q_1] \cup (X^{\nu_1}, X^{\nu_2}] \\ (n, \text{disc}(K / \mathbb{Q})) = 1}} g_K(n) \mu^2(n)
  - 20 \frac{X}{p_1 p_{J+1} p_{J+2}} \prod_{p \leq X} \Big(1+\frac{\Delta_K(p)-1}{p}\Big) \sum_{j=2}^J \Big(\frac{\delta'}{j}\Big)^6
\end{split}
\]

Recall that by Lemma \ref{lem:normforms}, $g_K = c_0 f_0 + c_1 f_1 + \ldots + c_{R} f_{R}$ and on square-free $n$ co-prime to the discriminant of $K / \mathbb{Q}$ we have $f_\ell(n) = \Delta_K(n)$ for all $0 \leq \ell \leq M$. Using again that $\Delta_K$ is $(\alpha, X)$ non-vanishing we have that 
\begin{align*}
 & \Big ( \sum_{0 \leq \ell \leq M} c_\ell \Big ) \sum_{\substack{\frac{X}{p_1 p_{J+1} p_{J+2}} < n \leq \frac{2 X}{p_1 p_{J+1} p_{J+2}} \\ p \mid n \implies p \not \in (P_1, Q_1] \cup (X^{\nu_1}, X^{\nu_2}] \\ (n, \text{disc}(K / \mathbb{Q}) = 1}} f_\ell(n) \mu^2(n)
  -  \frac{20 \cdot X}{p_1 p_{J+1} p_{J+2}} \prod_{p \leq X} \Big(1+\frac{\Delta_K(p)-1}{p}\Big) \sum_{j=2}^J \Big(\frac{\delta'}{j}\Big)^6
 \\ &\gg_{K} \frac{X}{p_1 p_{J+1} q_{J+1}} \cdot  \prod_{p \leq X} \Big(1+\frac{\Delta_K(p)-1}{p}\Big)
\end{align*}
by Lemma~\ref{le:GrKoMa}(iii) once $\delta'$ is small enough in terms of $\sum_{0 \leq \ell \leq M} c_\ell > 0$ which depends only on $K$. On the other hand Lemmas~\ref{lem:normforms}(iv) and \ref{le:SparseHalaszComplex}(i) imply that, for each $\ell = M + 1 ,M + 2, \ldots, R$,
$$
c_\ell \sum_{\substack{\frac{X}{p_1 p_{J+1} p_{J+2}} < n \leq \frac{2 X}{p_1 p_{J+1} p_{J+2}} \\ p \mid n \implies p \not \in (P_1, Q_1] \cup (X^{\nu_1}, X^{\nu_2}] \\ (n, \text{disc}(K / \mathbb{Q})) = 1}} f_\ell(n) \mu^2(n) \ll \frac{X}{p_1 p_{J + 1} p_{J + 2}} \cdot \frac{1}{(\log X)^{\rho/3}} \prod_{p \leq X} \Big ( 1 + \frac{\Delta_K(p) - 1}{p} \Big )
$$
with $\rho = \rho(K) > 0$. Collecting everything, we obtain
\[
\begin{split}
&\frac{1}{X}\sum_{\substack{X < n \leq 2X \\ n \in \mathcal{S} \\ (n, \text{disc}(K / \mathbb{Q})) = 1}} g_K(n) \mu^2(n) \\
&\gg_K \sum_{P_1 < p_1 \leq Q_1} g_K(p_1) \sum_{\substack{P_{J+1} < p_{J+1} \leq Q_{J+1} \\ P_{J+2} < p_{J+2} \leq Q_{J+2}}}  g_K(p_{J + 1}) g_K(p_{J + 2}) \frac{X}{p_1 p_{J+1} q_{J+1}} \cdot  \prod_{p \leq X} \Big(1+\frac{\Delta_K(p)-1}{p}\Big).
\end{split}
\]
Summing over $p_1, p_{J+1}$ and $p_{J+2}$ using Lemma~\ref{lem:viko}(iii) we conclude that~\eqref{eq:gKclaim} indeed holds for some $\delta_{K, \varepsilon} > 0 $ depending only on $K$ and $\varepsilon$. 

Write $c = \sum_{0 \leq \ell \leq R} |c_\ell|$. By Theorem~\ref{th:ThminS}(iii) we have, for $\ell = 0, \dotsc, M$ and for all $x \in [X, 2X]$, apart from an exceptional set of size $\ll X h_0^{-1/2+\varepsilon}$,
\begin{equation}
\label{eq:ficomp}
\Biggl|\frac{1}{h}\sum_{\substack{x < n \leq x+h \\ n \in \mathcal{S} \\ (n, \text{disc}(K / \mathbb{Q})) = 1}} f_\ell(n)  \mu^2(n) - \frac{1}{X}\sum_{\substack{X < n \leq 2X \\ n \in \mathcal{S} \\ (n, \text{disc}(K / \mathbb{Q})) = 1}} f_\ell(n)  \mu^2(n)\Biggr| < \frac{\delta_{K, \varepsilon}}{2c(R+1)} \cdot \delta_K(X).
\end{equation}

By Lemmas~\ref{lem:normforms}(iv) and~\ref{le:SparseHalaszComplex}(i) together with Lemma~\ref{le:Sinclexcl} we have, for $i = M+1, \dotsc, R$ and for all $x \in [X, 2X]$, apart from an exceptional set of size $\ll X h_0^{-1/2+\varepsilon}$,
\[
\Biggl|\frac{1}{X}\sum_{\substack{X < n \leq 2X \\ n \in \mathcal{S} \\ (n, \text{disc}(K / \mathbb{Q})) = 1}} f_\ell(n)  \mu^2(n)\Biggr| + \Biggl| \frac{1}{X}\sum_{\substack{X < n \leq 2X \\ n \in \mathcal{S} \\ (n, \text{disc}(K / \mathbb{Q})) = 1}} f_\ell(n)\mu^2(n) n^{-i\widehat{t}_{f_\ell, X}} \Biggr| < \frac{\delta_{K, \varepsilon}}{4c(R+1)} \cdot \delta_K(X)
\]
Hence, by Theorem~\ref{th:ThminS}(i), ~\eqref{eq:ficomp} holds also for $i = M+1, \dotsc, R$ for all $x \in [X, 2X]$, apart from an exceptional size of $\ll X h_0^{-1/2+\varepsilon}$. Hence, summing over $\ell$, we obtain that, for all $x \in [X, 2X]$ apart from this acceptable exceptional set, 
\[
\Biggl|\frac{1}{h}\sum_{\substack{x < n \leq x+h \\ n \in \mathcal{S} \\ (n, \text{disc}(K / \mathbb{Q})) = 1}} g_K(n)  \mu^2(n) - \frac{1}{X}\sum_{\substack{X < n \leq 2X \\ n \in \mathcal{S} \\ (n, \text{disc}(K / \mathbb{Q})) = 1}} g_K(n) \mu^2(n) \Biggr| < \frac{\delta_{K, \varepsilon}}{2} \cdot \delta_K(X)
\]
and the claim follows from~\eqref{eq:gKclaim}.

Thanks to Lemma~\ref{lem:normforms}(vi), the conditional claim in case $h \leq X^{\varepsilon^3 / 20000}$ follows similarly from Theorem~\ref{th:ThminS}(ii).

It remains to deal with $h > X^{\varepsilon^3 / 20000}$. To prove the unconditional part of the claim 
we choose $\mathcal{P}$ as in Proposition \ref{pr:largevalues}. Let us first show that
\begin{equation}
\label{eq:gkpjmclaim}
\frac{1}{X} \sum_{\substack{p_j \in \mathcal{P}, m \in \mathbb{N} \\ X < p_1 \ldots p_k m \leq 2X}} g_K(p_1 \ldots p_k m) \geq \delta_{K, \varepsilon} \prod_{p \leq X} \Big ( 1 + \frac{\Delta_K(p) - 1}{p} \Big )
\end{equation}
for some $\delta_{K, \varepsilon} > 0$ depending only on $K$ and $\varepsilon$. 

Note that $g_K(p_1 \ldots p_k m) \geq g_K(p_1) \ldots g_K(p_k) g_K(m)$. Writing $g_K = c_0 f_0 + \dotsb c_R f_R$ as in Lemma \ref{lem:normforms} and applying Lemma \ref{le:GrKoMa}(iii) for $\ell = 0, \dotsc, M$ and Lemma~\ref{le:SparseHalaszComplex} for $\ell = M+1, \dotsc, R$ we find that
\[
\begin{split}
\frac{1}{X} \sum_{\substack{p_j \in \mathcal{P}_j, m \in \mathcal{M} \\ X < p_1 \ldots p_k m \leq 2X}} g_K(p_1) \ldots g_K(p_k) g_K(m)   &\gg \prod_{p \leq X} \Big ( 1 + \frac{\Delta_K(p) - 1}{p} \Big ) \sum_{p_j \in \mathcal{P}_j} \frac{g_K(p_1) \ldots g_K(p_k)}{p_1 \ldots p_k} \\
&\gg_{K, \varepsilon} \prod_{p \leq X} \Big (1 + \frac{\Delta_K(p) - 1}{p} \Big ).
\end{split}
\]
since by Lemma~\ref{lem:viko}(iii) we can control the sums over primes. In particular~\eqref{eq:gkpjmclaim} holds for some $\delta_{K, \varepsilon} > 0$ depending only on $K$ and $\varepsilon$.
 
Writing once again $g_K = c_0 f_0 + \ldots + c_{R} f_{R}$ and applying Proposition \ref{pr:largevalues}(i) for each $f_1, \dotsc, f_{M}$ and Proposition \ref{pr:largevalues}(ii) together with Lemmas~\ref{lem:normforms}(iv) and~\ref{le:SparseHalaszComplex}(i) for each $f_{M+1}, \dotsc, f_R$, we get that
\begin{align*}
\Big | & \frac{1}{h} \sum_{\substack{p_j \in \mathcal{P}_j , m \in \mathcal{M} \\ x < p_1 \ldots p_k m \leq x + h}} g_K(p_1 \ldots p_k m) - \frac{1}{X} \sum_{\substack{p_j \in \mathcal{P}_j, m \in \mathcal{M} \\ X < p_1 \ldots p_k m \leq 2X }} g_K(p_1 \ldots p_k m) \Big | \\ & \leq \frac{\delta_{K, \varepsilon}}{2} \cdot \delta_K(X)
\end{align*}
with at most $\ll_{K, \varepsilon} X h^{-1/2 + \varepsilon}$ exceptions $x \in [X, 2X]$. Since every integer of the form $p_1 \ldots p_k m$ with $p_j \in \mathcal{P}$ has $O_\varepsilon(1)$ representations in such form, we get that, for some $\delta > 0$ depending only on $K, \varepsilon$,
$$
\sum_{x < n \leq x + h} g_K(n) \geq \delta \cdot h_0
$$
with at most $\ll_{K, \varepsilon} X h^{-1/2 + \varepsilon}$ exceptions $x \in [X, 2X]$. 

Let us now concentrate on the conditional part of the claim in the case $h_0 \in (X^{\varepsilon^3 / 20000}, X^{1-\varepsilon^2}]$ --- we shall show that in this range the claim holds with $\ll X h_0^{-1+\varepsilon/2}$ exceptions, and so the claim follows also for $h_0 \geq X^{1-\varepsilon^2}$ . 
We define
$$
S_0(x) := \frac{1}{h_0} \sum_{x < n \leq x + h_0 \delta_K(X)^{-1}} g_K(n) W \Big ( \frac{n}{X} \Big )
$$
where $W$ is a smooth function such that $W(x) = 1$ for $x \in [1,2]$ and $W$ is compactly supported in $[1/2, 3]$. Let also
\begin{align*}
M(x) := \frac{1}{2\pi i h_0} & \int_{-T_1}^{T_1} \Big ( \sum_{m} \frac{g_K(m)}{m^{1 + it}} W \Big ( \frac{m}{X} \Big ) \Big ) \cdot \frac{(x + h_0 \delta_{K}^{-1}(X))^{1 + it} - x^{1 + it}}{1 + i t} dt
\end{align*}
with $T_1 := X^{\varepsilon'}$, and $\varepsilon'$ a small constant depending only on $K$. 

Using Taylor expansion, 
$$
\frac{(x + h_0 \delta_K(X)^{-1})^{1 + it} - x^{1 + it}}{1 + it} = h_0 \delta_K(X)^{-1} \cdot x^{it} + O\left((1+|t|) \frac{(h_0 \delta_K(X)^{-1})^2}{X}\right). 
$$
By Lemma \ref{lem:normforms}(vi) we see that the error term contributes $o(X^{-\varepsilon^2/2})$ to $M(x)$ since $h_0 \leq X^{1-\varepsilon^2}$ and furthermore that the contribution of $|t| \geq T_0 := (\log x)^{\varepsilon'}$ to $M(x)$ is $O_{\varepsilon', A}((\log X)^{-A})$ for any $A \geq 0$. Hence
\[
M(x) = \frac{\delta_K(X)^{-1}}{2\pi i} \int_{-T_0}^{T_0} \Big ( \sum_{m} \frac{g_K(m)}{m^{1 + it}} W \Big ( \frac{m}{X} \Big ) \Big ) x^{it} dt + O_K((\log X)^{-10}),
\]
say.

 Let $F(x) = (1 - |x|)_{+}$ be the triangular function so that $F$ is compactly supported in $[-1, 1]$. Using the decay of the Dirichlet polynomial 
we can replace this 
$$
\frac{\delta_K(X)^{-1}}{2\pi i} \int_{\mathbb{R}} F \Big ( \frac{t}{T_0} \Big ) \Big ( \sum_{m} \frac{g_K(m)}{m^{1 + it}} W \Big ( \frac{m}{X} \Big ) \Big ) \cdot x^{it} dt 
$$
at the price of an error term that is
$$
\ll_{A} \frac{\delta_K(X)^{-1}}{2\pi} \int_{|t| \leq T_0^{1/3}} \frac{|t|}{T_0} \cdot \delta_K(X) dt + T_0^{-A} \ll (\log x)^{-\varepsilon' / 3} .
$$

As a result we see that
$$
M(x) = \delta_K(X)^{-1} \frac{T_0}{2\pi} \sum_{m} \frac{g_K(m)}{m} \widehat{F} \Big ( -\frac{T_0}{2\pi} \log \frac{x}{m} \Big ) W \Big ( \frac{m}{X} \Big ) + O((\log x)^{-\varepsilon' / 3}),
$$
where $\widehat{F}(\xi) = \int_{\mathbb{R}} F(x) e^{-2\pi i x \xi} dx$ is the Fourier transform.
Since $\widehat{F}(x) \geq 0$ for all $x \in \mathbb{R}$ and $\widehat{F}(x) \geq 1/2$ for $|x| \leq 1/100$ we can select as a lower bound an interval $|x - m | \leq x / (1000 T_0)$. This gives
$$
M(x) \gg T_0 \delta_K(X)^{-1} \sum_{|x - m| \leq x / (1000 T_0)} \frac{g_K(m)}{m} \cdot W \Big ( \frac{m}{X} \Big ). 
$$
Once again we use Lemma~\ref{lem:normforms} to write $g_K(n) = c_0 f_0 + \dotsc c_R f_R$. Then we use Lemma~\ref{le:Lipschitz}(i) for each $f_\ell$ (once $\varepsilon'$ is small enough in terms of $K$ it is applicable) and then Lemma~\ref{le:GrKoMa}(iii) for $f_0, \dotsc, f_M$ and Lemma~\ref{le:SparseHalaszComplex}(i) for $f_{M+1}, \dotsc, f_R$. This way we obtain $M(x) \gg 1$ and it remains to show that
$$
\frac{1}{X} \int_{X}^{2X} |S_0(x) - M(x)|^2 dx \ll_{\varepsilon} h_0^{-1 + \varepsilon/3}
$$
for every $\varepsilon > 0$. By Lemma \ref{le:Parseval} this ensues provided that we can show that,
$$
\max_{T \geq X/(h_0 \delta_K(X)^{-1})} \frac{X / (h_0 \delta_K(X)^{-1})}{T} \int_{T_1}^{T} \Big | \sum_{m} \frac{g_K(m)}{m^{1 + it}} W \Big ( \frac{m}{X} \Big ) \Big |^2 dt \ll_{\varepsilon} h_0^{-1 + \varepsilon/3}
$$
for all $\varepsilon > 0$. For $T \geq X$ this follows trivially from the mean value theorem (see \eqref{eq:contMVT}). Since $h_0 > X^{\varepsilon^3 / 20000}$ we notice that for the remaining $T$ this follows from applying the point-wise bound of Lemma \ref{lem:normforms}(vi).

\end{proof}

  \begin{proof}[Proof of (ii) of Theorem \ref{thm:NormFormLowBound}]
    This follows from Theorem \ref{thm:NormFormLowBound}(i) in the same way as Corollary \ref{cor:HooleyGen}(ii) followed from Corollary \ref{cor:HooleyGen}(i). 
  \end{proof}

\appendix
\section{A ``trivial'' inequality}\label{se:appendix}

In this appendix we prove~\eqref{eq:f(p)coslb}, i.e. the following lemma.
\begin{lemma}
\label{le:appendix}
Let $\varepsilon > 0$, let $f \colon \mathbb{N} \to \mathbb{U}$ be an $(\alpha, X^\theta)$-non-vanishing multiplicative function, let $|t| \in [\frac{2}{\theta \log X}, 2X]$, and let
\[
Y := \max\{\exp((\log X)^{2/3+\varepsilon}), \exp(1/|t|)\}.
\]
Then 
\[
\begin{split}
&\sum_{Y < p \leq X^\theta} \frac{|f(p)|}{p}\Big(1-\Big|\cos\Big(\pi \Big\Vert \frac{t \log p}{2\pi} \Big\Vert\Big)\Big|\Big) \\
&\geq \Big ( 2 \int_{0}^{\alpha/2} (1-\cos(\pi x)) dx + O\Big(\frac{1}{\log \log X}\Big) \Big ) \log \frac{\log X^\theta}{\log Y},
\end{split}
\]
where the implied constant depends only on $\theta$ and $\varepsilon$.
\end{lemma}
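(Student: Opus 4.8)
The plan is to reduce the claim, via a layer‑cake identity, to an equidistribution estimate for $\{t\log p/2\pi\}$ over the primes $p\in(Y,X^\theta]$, and to prove that estimate from Vinogradov–Korobov cancellation for $\sum_p p^{-imt}/p$. First I would record the elementary identity: for $\psi\in[0,\tfrac12]$ one has $1-\cos(\pi\psi)=\int_0^{1/2}\pi\sin(\pi s)\mathbf 1[\psi>s]\,ds$; writing $\psi(p):=\|t\log p/2\pi\|\in[0,\tfrac12]$ and using $|\cos(\pi\|x\|)|=\cos(\pi\|x\|)$, this gives
\[
\Sigma:=\sum_{Y<p\le X^\theta}\frac{|f(p)|}{p}\Bigl(1-\bigl|\cos(\pi\psi(p))\bigr|\Bigr)=\int_0^{1/2}\pi\sin(\pi s)\,N_f(s)\,ds,\qquad N_f(s):=\sum_{\substack{Y<p\le X^\theta\\ \psi(p)>s}}\frac{|f(p)|}{p}.
\]
With $M:=\sum_{Y<p\le X^\theta}1/p=\log(\log X^\theta/\log Y)+O(1/\log Y)$, the $(\alpha,X^\theta)$‑non‑vanishing hypothesis applied with $w=Y,\ z=X^\theta$ gives $M_f:=\sum_{Y<p\le X^\theta}|f(p)|/p\ge\alpha M-O(1/\log Y)$, so since $|f|\le1$,
\[
N_f(s)\ \ge\ M_f-\sum_{\substack{Y<p\le X^\theta\\ \psi(p)\le s}}\frac1p\ \ge\ \alpha M-\sum_{\substack{Y<p\le X^\theta\\ \psi(p)\le s}}\frac1p-O(1/\log Y).
\]

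The crux is then the uniform bound
\[
\sum_{\substack{Y<p\le X^\theta\\ \|t\log p/2\pi\|\le s}}\frac1p\ \le\ 2sM+o(M)\qquad\text{uniformly for }s\in[0,\tfrac12].
\tag{$\star$}
\]
Granting $(\star)$ we get $N_f(s)\ge(\alpha-2s)_+M-o(M)$, and hence, integrating by parts with $\phi(s)=1-\cos\pi s$, $\phi'(s)=\pi\sin\pi s$, $\phi(0)=0$,
\[
\Sigma\ \ge\ M\int_0^{1/2}\pi\sin(\pi s)(\alpha-2s)_+\,ds-o(M)\ =\ M\cdot 2\int_0^{\alpha/2}\bigl(1-\cos(\pi s)\bigr)ds-o(M),
\]
which, since $M=\log(\log X^\theta/\log Y)(1+o(1/\log\log X))$, is exactly the asserted inequality (here I would remark that this crude use of non‑vanishing is in fact sharp, the lower bound being attained by $|f(p)|=\mathbf 1[\psi(p)\le\alpha/2]$). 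To prove $(\star)$ I would take a Beurling–Selberg majorant $S^+$ of $\mathbf 1_{\{\|x\|\le s\}}$ of degree $K:=\lfloor(\log X)^{10}\rfloor$, for which $\widehat{S^+}(0)=2s+\tfrac1{K+1}$ and $\sum_{1\le|m|\le K}|\widehat{S^+}(m)|\ll\log K$ uniformly in $s$; since $\|t\log p/2\pi\|\le s$ is periodic of period $1$ in $t\log p/2\pi$, the $m$‑th Fourier mode is $p^{imt}$, so the left side of $(\star)$ is at most $(2s+\tfrac1{K+1})M+\sum_{1\le|m|\le K}\widehat{S^+}(m)\sum_{Y<p\le X^\theta}p^{imt}/p$, and it suffices to show $\sum_{Y<p\le X^\theta}p^{imt}/p=o(1/\log K)$ for $1\le|m|\le K$. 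By partial summation this follows from $\sum_{n\le x}\Lambda(n)n^{-iu}\ll x\exp\bigl(-c\log x/((\log(|u|+3))^{2/3}(\log\log(|u|+20))^{1/3})\bigr)(\log x)^{O(1)}+x/(|u|+1)$: since $x\ge Y\ge\exp((\log X)^{2/3+\varepsilon})$ while $|u|=|mt|\le 2KX$, we have $\log x/(\log|u|)^{2/3}\gg(\log X)^{\varepsilon}/(\log\log X)^{1/3}\to\infty$, killing the first term, and the second term is harmless provided $|u|\log Y$ is large, i.e. provided $(Y,X^\theta]$ contains many full periods of $t\log p$.

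This last condition fails only when $|t|$ is so small that $t\log Y$ is of bounded size; in the intermediate range it is recovered by first peeling off the short segment $(Y,Y^{T_1}]$ with $T_1=\exp((\log\log X)^{1/2})$, which costs only $\log T_1=o(M)$ in the main term while forcing $|u|\log Y^{T_1}\ge T_1\to\infty$, and then running the Beurling–Selberg argument on $(Y^{T_1},X^\theta]$. When $|t|\asymp1/\log X$ (equivalently $M=O(1)$) the oscillation is genuinely absent, but then $\log Y=1/|t|\gg\log X$ so $Y$ is a fixed power of $X$; the classical prime number theorem then replaces every prime sum by the smooth integral $\int_{\log Y}^{\log X^\theta}(\cdot)\,d\ell/\ell$ with error $o(1)$, and $(\star)$ (up to an $O(1/\log\log X)$ term, which is all one needs since $M=O(1)$ there) reduces to a direct computation over the $O(1)$ relevant periods of $t\log p$.

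I expect the main obstacle to be precisely this uniformity in $t$ across the whole range $|t|\in[2/(\theta\log X),2X]$ — forcing the single estimate $(\star)$, with its specific main term $2sM$, to hold in the transitional regime where $Y=\exp((\log X)^{2/3+\varepsilon})$ is only barely large enough for Vinogradov–Korobov to bite and where $t\log Y$ is only of bounded size, so that neither "rich oscillation'' nor "PNT at a power of $X$'' is directly available and the peeling maneuver must be invoked. The remaining ingredients — the layer‑cake reduction, the Beurling–Selberg input, and the final integral evaluation — are routine.
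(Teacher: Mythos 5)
Your architecture is genuinely different from the paper's: you run a global layer--cake reduction and then try to prove a single equidistribution estimate $(\star)$ for $\Vert t\log p/2\pi\Vert$ with $1/p$--weights over all of $(Y,X^\theta]$, via a Beurling--Selberg majorant and Vinogradov--Korobov cancellation in $\sum_p p^{-imt}/p$; the paper instead splits $(Y,X^\theta]$ into blocks $(x,x\log x]$ and micro-intervals $(w_r,w_{r+1}]$, uses the prime number theorem in short intervals, encodes the non-vanishing hypothesis through the rearrangement inequality (Lemma~\ref{le:rearrangement}), and proves equidistribution only locally (Erd\H{o}s--Tur\'an plus the zero-free region inside each micro-interval when $|t|\geq(\log X)^{10}$, and near-constancy of the phase on micro-intervals combined with the even spacing of $t\log w_r$ across a block when $|t|$ is smaller). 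Your step $N_f(s)\geq(\alpha-2s)_+M-o(M)$ is the exact analogue of the paper's rearrangement step, and in the range $|t|\geq(\log X)^{-2/3-\varepsilon}$ (so $\log Y=(\log X)^{2/3+\varepsilon}$, $|t|\log Y\to\infty$, many periods, $M\asymp\log\log X$) your majorant argument does give $(\star)$ with an additive $O(1)$ error, which is acceptable there; this part is sound and arguably cleaner than the paper's case analysis.

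The genuine gap is the small-$|t|$ regime, and your proposed patch does not close it. When $\log Y=1/|t|$ the phase $v=t\log p$ runs over $[1,e^{M}]$ with weight $dv/v$, so it sweeps only $\asymp e^{M}$ periods and the weight is genuinely non-uniform over the first few of them; the discrepancy in $(\star)$ is therefore of absolute size $\asymp 1$, i.e.\ relative size $\asymp 1/M$, and your method can never do better than the stated inequality with an extra additive $O(1)$. That is strictly weaker than the error term $O\bigl(\tfrac{1}{\log\log X}\bigr)\log\tfrac{\log X^\theta}{\log Y}$ in Lemma~\ref{le:appendix} as soon as $M=o(\log\log X)$, and when $M=O(1)$ (i.e.\ $|t|\asymp 1/\log X$) your argument proves nothing: there the phases $\Vert t\log p/2\pi\Vert$ are confined to a fixed subinterval of $[0,1/2]$ (at $|t|=2/(\theta\log X)$ they lie in $[\tfrac{1}{2\pi},\tfrac1\pi]$), so $(\star)$ with main term $2sM$ is simply false, and your remark that an $O(1/\log\log X)$ error ``is all one needs since $M=O(1)$'' inverts the budget: when $M=O(1)$ the lemma's \emph{total} allowance is $O(1/\log\log X)$, while the deviation from uniformity you must absorb is of size $\asymp 1$. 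Indeed for $f\equiv1$ and $|t|=2/(\theta\log X)$ the $1/p$--weighted average of $1-|\cos(\pi\Vert t\log p/2\pi\Vert)|$ over $(Y,X^\theta]$ is numerically below $1-\tfrac2\pi$, so no argument can recover the displayed constant in that corner without additional slack; the paper's own treatment of the sub-range $|t|\ll 1/\log\log X$ is equally terse (its sorted-values step presupposes $|t|\log\log x\gg1$), and in the actual application (Lemma~\ref{le:sparseDistEst}) only some $\rho<\rho_\alpha$ with an additive $O_{\rho,\theta}(1)$ is needed, which is what your argument genuinely delivers. Two smaller points: your peeling parameter $T_1=\exp((\log\log X)^{1/2})$ is not always admissible, since its cost $\log T_1$ need not be $o(M)$ for intermediate $M$ (it must be chosen in terms of $M$, e.g.\ $\log T_1=\sqrt{M}$); and even after such fixes you should state honestly that your route yields the lemma with an additive $O(1)$ (equivalently, restrict to $M\gg\log\log X$ or adopt the paper's block-plus-rearrangement structure for small $|t|$).
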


In the proof of this we will use the following simple auxiliary lemma based on the rearrangement inequality.
\begin{lemma}
\label{le:rearrangement}
Let $N, N_0 \in \mathbb{N}$ with $N \geq N_0$. For $i=1, \dotsc, N$, let $\alpha_i \in [0, 1]$ and $b_i \in \mathbb{R}_{\geq 0}$. Assume that
\begin{equation}
\label{eq:alisumcond}
\sum_{i = 1}^N \alpha_i \geq N_0
\end{equation}
and write $b_i^\ast$ for the sequence $b_i$ rearranged in the increasing order. Then
\[
\sum_{i = 1}^N \alpha_i b_i \geq \sum_{i = 1}^{N_0} b_i^\ast.
\]
\end{lemma}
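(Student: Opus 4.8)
The plan is to reduce immediately to the case in which $b_1 \le b_2 \le \cdots \le b_N$. Relabelling the indices $i$ by a permutation changes neither the left-hand side $\sum_{i=1}^{N}\alpha_i b_i$ (a finite sum) nor the right-hand side (since $b_i^\ast$ is by definition the non-decreasing rearrangement of $(b_i)_{i=1}^{N}$, independent of the original ordering), so after permuting we may and do assume $b_i = b_i^\ast$ for all $i$. Morally this is the statement that, in order to minimise $\sum_i \alpha_i b_i$ subject to the constraints $\alpha_i \in [0,1]$ and $\sum_i \alpha_i \ge N_0$, one should load all the available weight onto the smallest values $b_i$; the rest is a short telescoping computation.

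With this normalisation I would write
\[
\sum_{i=1}^{N} \alpha_i b_i - \sum_{i=1}^{N_0} b_i^\ast = \sum_{i=1}^{N_0} (\alpha_i - 1) b_i + \sum_{i=N_0+1}^{N} \alpha_i b_i,
\]
and then estimate each of the two sums by comparison with $b_{N_0}$. In the first sum one has $\alpha_i - 1 \le 0$ and $b_i \le b_{N_0}$, hence $(\alpha_i - 1) b_i \ge (\alpha_i - 1) b_{N_0}$; in the second sum one has $\alpha_i \ge 0$ and $b_i \ge b_{N_0}$, hence $\alpha_i b_i \ge \alpha_i b_{N_0}$. Summing these, the displayed difference is at least
\[
b_{N_0}\Big( \sum_{i=1}^{N_0} (\alpha_i - 1) + \sum_{i=N_0+1}^{N} \alpha_i \Big) = b_{N_0}\Big( \sum_{i=1}^{N} \alpha_i - N_0 \Big),
\]
which is $\ge 0$ by the hypothesis \eqref{eq:alisumcond} together with $b_{N_0} \ge 0$. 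This yields the claim.

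I do not anticipate a genuine obstacle here; the argument is elementary. The only two points requiring a moment's care are that the relabelling step is harmless (addressed by the observation that neither side depends on the ordering of the indices, once $b_i^\ast$ is read as the sorted sequence) and that $b_{N_0} \ge 0$, which holds since every $b_i$ is non-negative, so that multiplying the non-negative quantity $\sum_i \alpha_i - N_0$ by $b_{N_0}$ preserves the inequality.
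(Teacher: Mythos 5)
Your proof is correct, and it takes a genuinely different (and more elementary) route than the paper. The paper first normalises to $\sum_i \alpha_i = N_0$ exactly, then argues by induction on $N$ with $N_0$ fixed, using the rearrangement inequality to pass to the decreasing rearrangement $\alpha_i^\sharp$ paired with $b_i^\ast$, and in the inductive step redistributes the last weight $\alpha_{N+1}^\sharp$ among the earlier indices before invoking the induction hypothesis. You instead sort the $b_i$ once and for all (harmless, as you note, since neither side depends on the labelling), split the difference as $\sum_{i\le N_0}(\alpha_i-1)b_i+\sum_{i>N_0}\alpha_i b_i$, and compare every term against the threshold value $b_{N_0}$: each estimate amounts to $(\alpha_i-1)(b_i-b_{N_0})\ge 0$ for $i\le N_0$ and $\alpha_i(b_i-b_{N_0})\ge 0$ for $i> N_0$, so the whole difference is bounded below by $b_{N_0}\bigl(\sum_i\alpha_i-N_0\bigr)\ge 0$. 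This avoids both the induction and the appeal to the rearrangement inequality, and gives a one-display proof; the paper's formulation is slightly more structural in that it makes the comparison with the rearrangement inequality explicit, but your argument proves exactly the same statement with less machinery, and both handle the non-strict hypothesis $\sum_i\alpha_i\ge N_0$ correctly (the paper by truncating the $\alpha_i$, you by keeping the slack as a non-negative multiple of $b_{N_0}$).
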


\begin{proof}
Decreasing some of $\alpha_i$ if necessary, we can 
assume that~\eqref{eq:alisumcond} holds with equality. 

We fix $N_0$ and prove the claim by induction on $N$. In case $N = N_0$ one has $\alpha_i = 1$ for every $i$, and the claim is trivial. Let us now assume that the claim holds for some $N \geq N_0$ and prove it with $N+1$ in place of $N$. By the rearrangement inequality
\[
S:= \sum_{i=1}^{N+1} \alpha_i b_i \geq \sum_{i=1}^{N+1} \alpha_i^\sharp b_i^\ast,
\]
where $\alpha_i^\sharp$ is the sequence $\alpha_i$ in  the decreasing order. 

Since $\sum_{i=1}^{N+1} \alpha_i^\sharp = N_0 \leq N$, one can write $\alpha_{N+1}^\sharp = \sum_{i=1}^N \alpha_i'$ for some $\alpha_i' \in [0,1]$ such that $\alpha_i^\sharp+\alpha_i' \leq 1$ for every $i = 1, \dotsc, N$. Thus
\[
S \geq \alpha_{N+1}^\sharp b_{N+1}^\ast + \sum_{i=1}^{N} \alpha_i^\sharp b_i^\ast =\sum_{i = 1}^N (\alpha_i^\sharp b_i^\ast + \alpha_i' b_{N+1}^\ast) \geq \sum_{i=1}^N (\alpha_i^\sharp + \alpha_i') b_i^\ast \geq \sum_{i=1}^{N_0} b_i^\ast,
\]
where the last step followed from the induction hypothesis.
\end{proof}

\begin{proof}[Proof of Lemma~\ref{le:appendix}]
First we note that by splitting $[Y, X^\theta]$ into shorter intervals, it suffices to prove that for any $x \in [Y, X^\theta]$, one has 
\[
\begin{split}
S_x &:= \sum_{x < p \leq x (\log x)} \frac{|f(p)|}{p}\Big(1-\Big|\cos\Big(\pi \Big\Vert \frac{t \log p}{2\pi} \Big\Vert\Big)\Big|\Big) \\
&\geq \Big ( 2 \int_{0}^{\alpha/2} (1-\cos(\pi x)) dx + O\Big(\frac{1}{\log \log x}\Big) \Big ) \log \frac{\log (x\log x)}{\log x}.
\end{split}
\]
We further split the summation range $(x, x\log x]$ into shorter intervals in order to stabilize the weights and to control the behaviour of $\cos(\pi \Vert \frac{t\log p}{2\pi} \Vert)$. More precisely, we consider intervals $(w_r, w_{r+1}]$ with
\[
w_r := x\left(1+\frac{1}{(\log x)^\nu}\right)^r \quad\text{for $r = 0, \dotsc, R$}
\]
with
\[
R := \left\lfloor\frac{\log \log x}{\log(1+1/(\log x)^\nu)}\right\rfloor - 1 = (\log x)^{\nu}\log \log x + O(\log \log x).
\]
Here $\nu \geq 2$ will be chosen later depending on the size of $|t|$.

We also let $J_r := \sum_{w_r < p \leq w_{r+1}} 1$.
By the prime number theorem in short intervals, recalling that $w_r \in [x, x \log x]$, we have, for each $r = 0, \dotsc, R$,
\[
\frac{J_r}{w_r} = \left(1+O\left(\frac{1}{\log x}\right)\right) \frac{w_{r+1}-w_r}{w_r \log w_r} = \frac{1}{(\log x)^{\nu+1}} \left(1+O\left(\frac{\log \log x}{\log x}\right)\right)
\]
In particular, for every $r = 0, \dotsc, R$
\begin{equation}
\label{eq:JrwrJ0w0}
\frac{J_r}{w_r} = \left(1+O\left(\frac{\log \log x}{\log x}\right)\right)\frac{J_0}{w_0} 
\end{equation}
and
\begin{equation}
\label{eq:JrwrSumRJ0w0}
\begin{split}
(R+1)\frac{J_0}{w_0} &= \left(1+O\left(\frac{\log \log x}{\log x}\right)\right)\sum_{r=0}^R \frac{J_r}{w_r} \\&= \left(1+O\left(\frac{\log \log x}{\log x}\right)\right) \sum_{x < p \leq x \log x} \frac{1}{p} = \frac{\log \log x}{\log x} + O\left(\frac{1}{\log x}\right).
\end{split}
\end{equation}
Furthermore we define $\alpha_r$ by the equality
\begin{equation}
\label{eq:alphardef}
\sum_{w_r < p \leq w_{r+1}} |f(p)| = \alpha_r J_r.
\end{equation}
Note that $\alpha_r \in [0, 1]$ for every $r$. On one hand
\[
\sum_{x < p \leq x \log x} \frac{|f(p)|}{p} \geq 
\alpha \sum_{x < p \leq x \log x} \frac{1}{p} + O\left(\frac{1}{\log x}\right) \geq \alpha(R+1) \frac{J_0}{w_0} + O\left(\frac{1}{\log x}\right)
\]
and on the other hand
\[
\begin{split}
 \sum_{x < p \leq x \log x} \frac{|f(p)|}{p} &\leq \sum_{r=0}^{R} \sum_{w_r < p \leq w_{r+1}} \frac{|f(p)|}{p} + O\left(\frac{J_0}{w_0}\right) \\
 &\leq \sum_{r=0}^R \alpha_r \frac{J_r}{w_r} + O\left(\frac{J_0}{w_0}\right) = \frac{J_0}{w_0} \sum_{r=0}^R \alpha_r + O\left(\left(\frac{\log \log x}{\log x}\right)^2\right).
\end{split}
\]
Hence
\begin{equation}
\label{eq:sumalpr}
\sum_{r=0}^R \alpha_r \geq \left(\alpha+O\left(\frac{1}{\log \log x}\right)\right) (R+1).
\end{equation}
Now
\begin{equation}
\label{eq:appInBound}
\begin{split}
S_x = \sum_{r=0}^R \frac{1}{w_r} \sum_{w_r < p \leq w_{r+1}} |f(p)| \Big(1-\Big|\cos\Big(\pi \Big\Vert \frac{t \log p}{2\pi} \Big\Vert\Big)\Big|\Big) + O\left(\frac{\log \log x}{(\log x)^{\nu+1}}\right).
\end{split}
\end{equation}

We split into two cases according to the size of $|t|$. First consider the case $2/(\theta \log X) \leq |t| \leq (\log X)^{10}$. In this case we choose $\nu = 20$, so that, for any $p \in (w_r, w_{r+1}]$, one has
\[
\frac{t \log p}{2\pi} = \frac{t \log w_r}{2\pi} + O\left(\frac{1}{(\log x)^{4}}\right)
\]
and hence
\[
\begin{split}
S_x &\geq \sum_{r=0}^R \frac{1}{w_r} \sum_{w_r < p \leq w_{r+1}} |f(p)| \Big(1-\Big|\cos\Big(\pi \Big\Vert \frac{t \log w_r}{2\pi} \Big\Vert\Big)\Big|\Big) + O\left(\frac{\log \log x}{(\log x)^{5}}\right).
\end{split}
\]
Recalling also~\eqref{eq:alphardef} and~\eqref{eq:JrwrJ0w0} we get that
\[
\begin{split}
S_x &\geq \sum_{r=0}^R \frac{\alpha_r J_r}{w_r} \Big(1-\Big|\cos\Big(\pi \Big\Vert \frac{t \log w_r}{2\pi} \Big\Vert\Big)\Big|\Big) + O\left(\frac{\log \log x}{(\log x)^{5}}\right) \\
& \geq \frac{J_0}{w_0} \sum_{r=0}^R \alpha_r \Big(1-\Big|\cos\Big(\pi \Big\Vert \frac{t \log w_r}{2\pi} \Big\Vert\Big)\Big|\Big) + O\left(\frac{1}{(\log x)^2}\right).
\end{split}
\]
For $r=0, \dotsc, R$, write
\[
b_r := 1-\Big|\cos\Big(\pi \Big\Vert \frac{t \log w_r}{2\pi} \Big\Vert\Big)\Big|.
\] 
Note that $\log w_r$ are evenly spaced by $\log (1+1/(\log x)^{\nu}) = (\log x)^{-20} + O((\log x)^{-40})$, so that when $b_r^\ast$ is $b_r$ in increasing order, we have
\[
b_r^\ast = 1-\cos\Big(\frac{\pi r}{2R}\Big)+O\left(\frac{1}{(\log x)^5}\right)
\]
Hence, by~\eqref{eq:sumalpr} and Lemma~\ref{le:rearrangement},
\[
S_x \geq \frac{J_0}{w_0} \sum_{r=0}^{\lfloor \alpha (R+1) \rfloor - 1} \Big(1-\cos\Big(\frac{\pi r}{2R}\Big)\Big) + O\left(\frac{1}{\log x}\right) \geq 2R \frac{J_0}{w_0} \int_0^{\alpha/2} \left(1-\cos(\pi x)\right) dx + O\left(\frac{1}{\log x}\right)
\]
and the claim follows from~\eqref{eq:JrwrSumRJ0w0}.

Now consider the case $|t| \geq (\log X)^{10}$. Choose in this case $\nu = 2$. We show that, for any $[\alpha, \beta] \subseteq [0, 1/2]$, one has
\[
\# \left\{p \in (w_r, w_{r+1}] \colon \left\Vert \frac{t \log p}{2\pi} \right\Vert \in [\alpha, \beta] \right\} = J_r \left(2(\beta-\alpha)+ O\left(\frac{1}{(\log x)^2}\right)\right).
\]
To prove this we note that
\[
e\Big(\frac{kt \log p}{2\pi}\Big) = p^{ik t}
\]
and one has, for $x \in [Y, X]$,
\[
\sum_{w_r < p \leq w_{r+1}} p^{-2ikt} = O\Big(\frac{x}{(\log x)^6}\Big)
\]
for any $k \leq (\log x)^3$ by the zero-free region for the Riemann zeta function. Hence the claimed equidistribution follows from the Erd{\H o}s-Tur{\'a}n inequality.

Now, if we write, for $p \in (w_r, w_{r+1}]$, 
\[
b_p = 1-\Big|\cos\Big(\pi \Big\Vert \frac{t \log p}{2\pi} \Big\Vert\Big)\Big|,
\]
then arranging $b_p$ in increasing order, the $j$th element will be
\[
1-\cos\left(\frac{j}{2J_r} \pi\right)+O\left(\frac{1}{(\log x)^2}\right).
\]
Hence, applying Lemma~\ref{le:rearrangement} to~\eqref{eq:appInBound} and recalling~\eqref{eq:JrwrJ0w0}--\eqref{eq:alphardef}, we obtain
\[
\begin{split}
&S_x \geq \sum_{r=0}^R \frac{1}{w_{r}} \sum_{j \leq \alpha_r J_r-1}  \left(1-\cos\left(\frac{j}{2J_r}\pi\right)+O\left(\frac{1}{(\log x)^2}\right)\right) \\
&= 2\sum_{r=0}^R \frac{J_r}{w_r} \int_0^{\alpha_r/2} \left(1-\cos(\pi x)\right) dx + O\left(\frac{\log \log x}{(\log x)^3}\right) \\
&= 2\frac{J_0}{w_0}\sum_{r=0}^R \int_0^{\alpha_r/2} \left(1-\cos(\pi x)\right) dx + O\left(\frac{(\log \log x)^2}{(\log x)^2}\right).
\end{split}
\]
Applying Jensen's inequality to $F(x) = \int_0^x (1-\cos(\pi x)) dx$, we obtain that the previous expression is 
\[
\geq 2\frac{J_0}{w_0}(R+1) \int_0^{\frac{1}{R+1}\sum_{r=0}^R \alpha_r/2} \left(1-\cos(\pi x)\right) dx + O\left(\frac{(\log \log x)^2}{(\log x)^2}\right)
\]
and the claim follows from~\eqref{eq:JrwrSumRJ0w0} and \eqref{eq:sumalpr}.

\end{proof}

\addtocontents{toc}{\setcounter{tocdepth}{-10}}
\bibliographystyle{plain}
\bibliography{../biblio}

\end{document}